\documentclass[12pt,a4paper]{amsbook}
\usepackage[left=2.9cm,right=2.9cm,top=3cm,bottom=4.5cm]{geometry}
\usepackage[all]{xy}
\usepackage{txfonts}
\usepackage{hyperref}

%
\title[Chern Simons Theory]{Lie groups and Chern-Simons Theory}

\author{Benjamin Himpel}
\address{Reutlingen University\\
	Department of Informatics\\
	Alteburgstr. 150\\
	72762 Reutlingen}

\date{\today}

\setcounter{tocdepth}{1}

\makeatletter
{\end{list}}%
\makeatother


\usepackage{multirow}
\usepackage{mathrsfs}
\usepackage{graphicx}
\usepackage{pinlabel}
\usepackage{amssymb}

\usepackage{pb-diagram,pb-xy}
\usepackage{xy}

\usepackage[active]{srcltx} 



 \newtheorem{thm}{Theorem}[section]
 \newtheorem{cor}[thm]{Corollary}
 \newtheorem{lem}[thm]{Lemma}
 \newtheorem{prop}[thm]{Proposition}
 \newtheorem{exe}[thm]{Exercise}
\theoremstyle{definition}
 \newtheorem{defn}[thm]{Definition}
 \newtheorem{exa}[thm]{Example}
\newtheorem*{rem}{Remark}
 \numberwithin{equation}{section}

\makeatletter
  \let\c@lem=\c@thm
  \let\c@prop=\c@thm
  \let\c@defn=\c@thm
\makeatother

\newcommand{\kommentar}[1]{}
\newcommand{\later}[1]{}

\newcommand{\eval}[2]{\left.#1\right|_{#2}}

\def\co{\colon\thinspace}

\newcommand{\PD}{\operatorname{\rm PD}}

\newcommand{\Hom}{\operatorname{Hom}}
\newcommand{\End}{\operatorname{End}}
\newcommand{\Aut}{\operatorname{Aut}}
\newcommand{\im}{\operatorname{im}}
\newcommand{\hol}{\operatorname{hol}}
\newcommand{\ad}{\operatorname{ad}}
\newcommand{\Ad}{\operatorname{Ad}}
\newcommand{\re}{\operatorname{Re}}
\newcommand{\Maps}{\operatorname{Maps}}
\newcommand{\spec}{\operatorname{spec}}
\newcommand{\SF}{\operatorname{SF}}
\newcommand{\sgn}{\operatorname{sgn}}
\newcommand{\Sign}{\operatorname{Sign}}
\newcommand{\supp}{\operatorname{supp}}

\newcommand{\Id}{\operatorname{Id}}
\newcommand{\gr}{\text{gr}}
\newcommand{\proj}{\operatorname{proj}}

\newcommand{\coker}{\operatorname{coker}}
\newcommand{\Hol}{\operatorname{Hol}}
\newcommand{\Diff}{\operatorname{Diff}}

\newcommand{\ch}{\operatorname{ch}}
\newcommand{\cs}{\operatorname{cs}}
\newcommand{\CS}{\operatorname{CS}}
\newcommand{\PT}{\operatorname{PT}}
\newcommand{\hs}{\nabla\operatorname{grad}}
\newcommand{\grad}{\operatorname{grad}}
\newcommand{\vertical}{\operatorname{vert}}
\newcommand{\ind}{\operatorname{ind}}
\newcommand{\Hess}{\operatorname{Hess}}
\newcommand{\sign}{\text{sign}}

\newcommand{\ev}{\text{ev}}
\newcommand{\odd}{\text{odd}}

\newcommand{\wtilde}{\widetilde}
\newcommand{\tr}{\operatorname{{tr}}}

\newcommand {\tensor}{\otimes}

\setcounter{tocdepth}{1}

\newcommand{\cA}{{\mathcal A}}
\newcommand{\cB}{{\mathcal B}}
\newcommand{\cD}{{\mathcal D}}
\newcommand{\cF}{{\mathcal F}}
\newcommand{\cG}{{\mathcal G}}

\newcommand{\cL}{{\mathcal L}}
\newcommand{\cM}{{\mathcal M}}
\newcommand{\cN}{{\mathcal N}}
\newcommand{\cP}{{\mathcal P}}

\newcommand{\cZ}{{\mathcal Z}}
\newcommand{\cI}{{\mathcal I}}

\newcommand{\sO}{{\mathscr O}}

\newcommand{\Z}{{\bf Z}}
\newcommand{\R}{{\bf R}}
\newcommand{\C}{{\bf C}}
\newcommand{\N}{{\bf N}}
\newcommand{\Q}{{\bf Q}}

\newcommand{\frakg}{\mathfrak{g}}
\newcommand{\frakm}{\mathfrak{m}}
\newcommand{\frakso}{\mathfrak{so}}
\newcommand{\fraksu}{\mathfrak{su}}
\newcommand{\fraku}{\mathfrak{u}}
\newcommand{\frako}{\mathfrak{o}}
\newcommand{\frakh}{\mathfrak{h}}
\newcommand{\frakgl}{\mathfrak{gl}}
\newcommand{\frakk}{\mathfrak{k}}

\renewcommand{\phi}{\varphi}

\newcommand{\ep}{\varepsilon}

\newcommand{\la}{\langle}
\newcommand{\ra}{\rangle}

\newcommand{\msgp}[1]{\text{\it{#1}}}
\newcommand{\SL}{\msgp{SL}}
\newcommand{\SU}{\msgp{SU}}
\newcommand{\GL}{\msgp{GL}}
\newcommand{\SO}{\msgp{SO}}

\begin{document}

\begin{abstract}
Witten introduced classical Chern-Simons theory to topology in 1989, when he defined an invariant for knots in 3-manifolds by an integral over a certain infinite-dimensional space, which up to today have not been entirely understood. However, they motivated lots of interesting questions and results in knot theory and low-dimensional topology, as well as the development of entirely new fields.

These are lecture notes for a course in Lie groups and Chern-Simons Theory aimed at graduate students.
\end{abstract}

\maketitle

\tableofcontents


\chapter{Introduction}

These are notes for a course in an active field of geometric topology at the border to mathematical physics. They are aimed at graduate students, who know the basics of differentiable manifolds and differential forms. It is based on parts of:
\begin{itemize}
 \item Frank W. Warner: {\em Foundations of Differentiable Manifolds and Lie Groups} \cite{warner83}.
 \item Dan Freed: {\em Classical Chern-Simons Theory, Part 1} \cite{freed95}.
 \item Simons Donaldson and Peter Kronheimer: {\em The Geometry of Four-Manifolds} \cite{donaldson-kronheimer90}.
 \item Michael Bohn: On Rho invariants of fiber bundles \cite{bohn2009}
 \item Unpublished notes by Paul Kirk.
\end{itemize}

After an introduction to Lie groups and Lie algebras, principal bundles, connections and gauge transformations, we will carefully construct the Chern-Simons action and study the moduli space of its classical solutions. This yields Taubes' beautiful and influential description of Casson's invariant for homology 3-spheres via Chern-Simons theory \cite{taubes90}, for which we need such concepts as differential operators and spectral flow. This naturally leads to subjects like the eta invariant and the rho invariant on the one hand as well as the quantization of the Chern-Simons action and Witten's invariants on the other.

\section{An overview}

Let $G$ be a semi-simple Lie group and $\frakg$ its Lie algebra.  A connection $A$ on the trivial $G$-bundle over a closed oriented 3--manifold $M$ is a $\frakg$-valued 1--form, i.e. $A \in \Omega^1(M;\frakg)$. Chern-Simons theory is a quantum field theory in three dimensions, whose action is proportional to the Chern-Simons invariant given in \cite{chern-simons74}
$$cs(A) = \frac{1}{8\pi^2} \int_M \tr(A \wedge dA + \frac{1}{3} A\wedge [A
\wedge A]).$$

Witten introduced Chern-Simons theory to knot theory in 1989
\cite{witten89}, when he described for each integer level $k\in\Z$ an
invariant of a link $L=(L_j)$ in a 3--manifold $M$ (and a list of finite-dimensional representations $\rho_j$
of $G$ associated to $L_j$) as the
(non-rigorous) Feynman path integral
$$
Z_k(M,L) = \int_{\cA/\cG} e^{2\pi k i \; cs(A)} \prod_j\tr_{\rho_j}(\hol_A(L_j)) dA,
$$
where $\cA$ is the space of $G$--connections, $\cG$ is the space of
gauge transformations.
He interpreted these invariants using the axioms of topological quantum field theory (TQFT) as well as
via an asymptotic expansion ---the semiclassical approximation--- by using the method of stationary phase. There have
been several advances in understanding both approaches separately,
notably the rigorous construction of the TQFT version by
Reshetikhin and Turaev \cite{reshetikhin-turaev91}, the first computer
calculations and the refinement of the semiclassical approximation by Freed and Gompf \cite{freed-gompf91} and various recent work by Andersen,
Hansen, Ueno and Takata \cite{andersen2006_Asymptoticfaithfulness,andersen-ueno2007,
 andersen-ueno2007b, andersen-hansen2006, hansen2001, hansen2005,
 hansen-takata2002, hansen-takata2004} in the realm of the asymptotic
expansion conjecture for the Resehetikhin-Turaev invariants \cite[Section 7.2]{ohtsuki2002}. However, there
have been
relatively few developments relating the TQFT version to the semiclassical approximation apart from the direct
comparison for lens spaces for $G=\SU(2)$ and some torus bundles over $S^1$ by Jeffrey
\cite{jeffrey92},  where she explicitly analyzed the asymptotic
behaviour of the Reshetikhin-Turaev invariants and exhibited agreement
in the leading term with the semiclassical approximation. One promising
approach is to give a rigorous treatment of the Feynman path integral version using stochastical analysis via Fresnel integrals or Hida
distributions, if possible, this would provide a strong link between the TQFT version
and the semiclassical approximation. See \cite[Section
10.5.5]{albeverio-hoegh-krohn-mazzucchi2008} by Albeverio,
H{\o}egh-Krohn, and Mazzucchi as well as \cite{hahn2005} by Hahn
for an overview.

In \cite{witten89}, Witten proposed a physical interpretation of the
Jones polynomial using Chern-Simons theory,
and \cite{kashaev97} Kashaev made a remarkable prediction that the volume of a
hyperbolic knot $K$ in $S^3$ is
given by the limit of the colored Jones polynomial $J_N(K,q)$ of $K$.
This is known as the volume conjecture, and the precise statement
is that $\log|J_N(K,e^{2 \pi i/N})|/N$ limits to $\frac{1}{2 \pi}{\rm
Vol}(S^3 \setminus K)$ as $N \to \infty.$ (This conjecture can be extended to all knots by simply replacing hyperbolic
volume with simplicial volume on the right hand side.)

Using $\SL(2, {\bf C})$ Chern-Simons theory, Gukov conjectured in \cite{gukov2005} that
the asymptotic expansion of the colored Jones polynomial
$J_N(K,q)$ as $N \to \infty$ and $q \to 1$ should be equal to the partition
function of the $\SL(2,{\bf C})$ Chern-Simons knot
theory on $S^3 \setminus K$, and using this he derived a parameterized
version of the volume conjecture which postulates a relationship between the
family of limits of the colored Jones polynomial and the volume function ${\rm Vol(K,u)}$ on the character
variety of the knot complement. The paper \cite{gukov-murakami2006} provides further
evidence for the generalized conjecture by by studying some of the sub-leading terms in the asymptotic expansion of the colored Jones polynomial.

There is a strong connection between Witten's invariants and the so-called finite-type
invariants. The first definition of a finite-type 3-manifold invariant
was given by Ohtsuki \cite{ohtsuki96}, though by now many other useful and equivalent definitions
have been given (see e.g. \cite{garoufalidis-levine97, le-murakami-ohtsuki98}).
The first nontrivial finite type 3--manifold invariant can be identified with the $\SU(2)$ Casson invariant \cite{akbulut-mccarthy90,
 walker90} , and indeed all the terms in
the stationary phase expansion of the Chern-Simons path integral give rise to
finite type 3--manifold invariants. (See the papers \cite{rozansky97, kuperberg-thurston99} for a
mathematically rigorous definition of invariants arising from
a stationary phase approximation, as well as  proofs the invariants are of
finite-type.)

A different direction of research was initiated by Taubes in 1990 \cite{taubes90}. He laid the groundwork for new topological
invariants motivated by Chern-Simons theory by showing that the $\SU(2)$
Casson invariant  has
a gauge theoretical interpretation as the Euler characteristic of $\cA/\cG$ in the spirit of the Poincar\'e-Hopf theorem, where he views the Chern-Simons invariant as a $S^1$-valued Morse function on $\cA/\cG$. Taubes
realized that the Hessian of the Chern-Simons invariant and the odd
signature operator coupled to the same path of $\SU(2)$ connections
have the same spectral flow. Floer extended this idea around the
same time to instanton Floer homology \cite{floer88}, which has the $\SU(2)$
Casson invariant as its Euler characteristic, by viewing the
critical points of the Chern-Simons function as a $\Z/8$ graded
Morse complex.

The Seiberg-Witten revolution in gauge theory shifted attention from
instanton Floer homology to the more difficult
monopole Floer homology, and progress in either of these versions of Floer
homology has been vastly outpaced by the remarkable achievements
in the closely related but much more accessible Heegaard-Floer homology
introduced by Ozsv\'{a}th and Szab\'{o}.
Heegaard-Floer includes an entire package of
homology theories which have become powerful tools in low-dimensional
topology, and
there has been some success in  defining the corresponding pieces in
instanton Floer and monopole Floer.
Interest in these Floer homologies has been rekindled by the work \cite{kronheimer-mrowka2007_MonopolesAnd3Manifolds,kronheimer-mrowka2011_KnotHomologyFromInstantons,kronheimer-mrowka2010_KnotsSuturesAndExcision} of
Kronheimer and Mrowka;
in \cite{kronheimer-mrowka2007_MonopolesAnd3Manifolds} they give a thorough and rigorous treatment of monopole Floer theory,
in \cite{kronheimer-mrowka2011_KnotHomologyFromInstantons} they define knot Floer homology using instantons, and in \cite{kronheimer-mrowka2010_KnotsSuturesAndExcision} they
use  sutured Floer homology to give a new and more direct proof of property
$P$ for knots,
as well as the result, originally due to Ghiggini and Ni in the
Heegaard-Floer setting, that shows that monopole
Floer knot homology detects fibered knots.

In summary, Chern-Simons gauge theory is an active area of research, which
has produced a lot of interesting questions and results, as well as initiated the development
of entirely new fields.

\chapter{Background material}

\section{Lie groups}

Lie groups are groups which are also differentiable manifolds, in which the group operations are smooth. Well-known examples are the general linear group, the unitary group, the orthogonal group and the special linear group.

\begin{defn}
 A {\em Lie group} $G$ is a differentiable manifold with a group structure, such that the map $G \times G \to G$ given by $(g,h) \mapsto g h^{-1}$ is $C^\infty$.
\end{defn}

\begin{exe}
Let $G$ be a Lie group.
\begin{enumerate}
 \item Show that the maps $g \mapsto g^{-1}$ and $(g,h) \mapsto gh$ for $g,h\in G$ are $C^\infty$.
 \item Show that the identity component of $G$ is a Lie group.
\end{enumerate}
\end{exe}

\begin{exe}
Convince yourself, that the following spaces are Lie groups.
\begin{enumerate}
 \item $\R^n$ under vector addition.
 \item $\C \setminus \{ 0 \}$ under multiplication.
 \item $S^1 = U(1)$ under multiplication.
     \item $G \times H$ for Lie groups $G$ and $H$ with the product manifold structure and the group structure $(g_1,h_1)(g_2,h_2) \coloneqq (g_1g_2,h_1h_2)$.
 \item $T^n = \{ z \in \C^n \mid |z_1| = \ldots = |z_n| = 1\}$.
 \item The non-singular real matrices $Gl(n,\R)$ under matrix multiplication.
\end{enumerate}
\end{exe}

\section{Lie algebras}

\begin{defn}
 A {\em Lie algebra} $\frakg$ over $\R$ is a real vector space $\frakg$ together with a bilinear operator $[\cdot,\cdot]\co \frakg \times \frakg \to \frakg$ (the {\em bracket}) such that for all $x,y,z\in \frakg$,
\begin{enumerate}
 \item $[x,y] = -[y,x]$.\hfill{\em (anti-commutativity)}
 \item $[[x,y],z] +[[y,z],x] + [[z,x],y] = 0$.\hfill{\em (Jacobi identity)}
\end{enumerate}
\end{defn}

We will see that there is a Lie algebra associated to each Lie group, and that every connected, simply connected (i.e. the fundamental group $\pi_1(G)$ is trivial) Lie group is determined up to isomorphism by its Lie algebra.
The study of such Lie groups then reduces to a study of their Lie algebras.

\begin{exe}\label{liealgebras}
Convince yourself that the following vector spaces are Lie algebras.
 \begin{enumerate}
  \item \label{liebracketexple}The vector space of all smooth vector fields under the Lie bracket on vector fields.
  \item Any algebra (e.g. all $n\times n$ matrices $\mathfrak{gl}(n,\R)$) with
$$
[A,B] \coloneqq AB-BA.
$$
  \item $\R^3$ with the cross product as the bracket.
 \end{enumerate}
\end{exe}

\begin{defn}\label{phirelateddef}
 Let $\phi\co M \to N$ be $C^\infty$. Smooth vector fields $X$ on $M$ and $Y$ on $N$ are {\em $\phi$-related} if $d\phi \circ X = Y \circ \phi$, which is short for $d\phi_p(X_p) = Y_{\phi(p)}$ for all $p \in M$.
\end{defn}

\begin{exe}\label{phirelatedbracket}
 Let $\phi\co M \to N$ be $C^\infty$. Let $X_1$ and $X_2$ be smooth vector fields on $M$, and let $Y_1$ and $Y_2$ be smooth vector fields on $N$. If $X_i$ is $\phi$-related to $Y_i$, $i=1,2$, then $[X_1,X_2]$ is $\phi$-related to $[Y_1,Y_2]$.
\end{exe}

\begin{defn}
 Let $G$ be a Lie group and $g\in G$. {\em Left translation} $l_g$ and {\em right translation $r_g$ by $g$} are diffeomorphisms of $G$ given by
$$
l_g h = g h \quad r_g = h g, \quad \text{for all }h \in G.
$$
A vector field $X$ on $G$ is called {\em left-invariant} if $X$ is $l_g$-related to itself for all $g \in G$, that is
\begin{align*}
dl_g \circ X &= X \circ l_g,\\
\tag*{\text{which is short for}} \quad dl_g(X_h) & = X_{gh} \quad \text{for all } h \in G.
\end{align*}
\end{defn}

\begin{prop}\label{leftinvariantvf}
 Let $G$ be a Lie group and $\frakg$ its set of left-invariant vector fields.
\begin{enumerate}
 \item\label{iso} $\frakg$ is a real vector space, and the map $\alpha \co \frakg \to T_eG$ defined by $\alpha(X) = X(e)$ is an isomorphism of $\frakg$ with the tangent space $T_eG$ of $G$ at the identity. Consequently $\dim(\frakg) = \dim(G)$.
  \item Left invariant vector fields are smooth.
 \item The Lie bracket of two left-invariant vector fields is a left-invariant vector field.
 \item $\frakg$ is a Lie algebra under the Lie bracket operation of vector fields.
\end{enumerate}
\end{prop}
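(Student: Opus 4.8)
The plan is to handle the four assertions in a slightly permuted order, since the smoothness claim, part (2), is actually needed to finish the surjectivity argument in part (1). The purely algebraic points come first. The set $\frakg$ is closed under addition and scalar multiplication because these operations on vector fields are pointwise and each $d(l_g)_h$ is linear; hence $\frakg$ is a linear subspace of the space of all vector fields on $G$, so a real vector space. The map $\alpha(X) = X(e)$ is obviously linear. For injectivity, apply left-invariance with $h=e$ to get $X_g = d(l_g)_e(X_e)$ for every $g$, so $X$ is completely determined by $X_e$; in particular $X_e = 0$ forces $X \equiv 0$. For surjectivity, given $v \in T_eG$ I would define $X_g := d(l_g)_e(v)$ and verify left-invariance by the chain rule together with $l_g \circ l_h = l_{gh}$:
$$
dl_g(X_h) = d(l_g)_h\bigl(d(l_h)_e(v)\bigr) = d(l_g \circ l_h)_e(v) = d(l_{gh})_e(v) = X_{gh}.
$$
The only thing left to check about this $X$ is that it is a genuine, i.e. smooth, vector field — which is exactly part (2).

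For smoothness I would invoke the standard criterion that a rough vector field $X$ is $C^\infty$ if and only if $Xf$ is $C^\infty$ for every locally defined smooth function $f$. Fix $v = X_e$ and pick a smooth curve $\sigma \co (-\ep,\ep) \to G$ with $\sigma(0) = e$ and $\sigma'(0) = v$. Then for $f$ smooth near a point $g_0 \in G$,
$$
(Xf)(g) = d(l_g)_e(v)\,f = v(f \circ l_g) = \eval{\frac{d}{dt}}{t=0} f\bigl(g\,\sigma(t)\bigr).
$$
Setting $F(g,t) := f(g\,\sigma(t))$, which is smooth on a neighbourhood of $(g_0,0)$ because it is a composite of the multiplication map $G\times G \to G$, the curve $\sigma$, and $f$, we obtain $(Xf)(g) = \partial_t F(g,0)$, smooth in $g$ near $g_0$. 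Since $g_0$ was arbitrary, $Xf$ is smooth, hence $X$ is smooth. This proves part (2) and simultaneously closes the gap in the surjectivity argument, so $\alpha$ is an isomorphism and $\dim\frakg = \dim T_eG = \dim G$.

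Part (3) is then immediate from Exercise~\ref{phirelatedbracket}: if $X_1$ and $X_2$ are each $l_g$-related to themselves, then $[X_1,X_2]$ is $l_g$-related to $[X_1,X_2]$; as this holds for every $g \in G$, the bracket $[X_1,X_2]$ is left-invariant. Finally, part (4) follows because the Lie bracket of vector fields is already bilinear, anti-commutative, and satisfies the Jacobi identity (the first part of Exercise~\ref{liealgebras}), so restricting it to the subspace $\frakg$ — which by part (3) is closed under the bracket — exhibits $\frakg$ as a Lie algebra. The only genuinely analytic ingredient, and hence the main obstacle, is the smoothness argument in part (2); all the rest is formal manipulation with differentials of left translations and the functoriality of $d$.
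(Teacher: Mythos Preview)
Your proof is correct and, for parts (1), (3), and (4), essentially identical to the paper's. The only genuine difference is the smoothness argument in part (2). You realise $X_e$ by a smooth curve $\sigma$ through $e$ and write $(Xf)(g) = \partial_t f(g\,\sigma(t))\big|_{t=0}$, reading off smoothness in $g$ from the joint smoothness of $F(g,t) = f(g\,\sigma(t))$. The paper instead realises $X_e$ by an auxiliary smooth vector field $Y$ on $G$ with $Y_e = X_e$, forms the product vector field $(0,Y)$ on $G\times G$, and computes that $\bigl[(0,Y)(f\circ\phi)\bigr]\circ i^1_e(g) = (Xf)(g)$, where $\phi$ is multiplication and $i^1_e(h)=(h,e)$. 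Both devices exploit the smoothness of multiplication; your curve argument is marginally more elementary and self-contained, while the paper's vector-field argument stays entirely within the language of derivations and avoids invoking the curve characterisation of tangent vectors.
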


\begin{proof}
It is not difficult to see that $\frakg$ is a real vector space and that $\alpha$ is linear. Since $\alpha(X) = \alpha(Y)$ yields
$$
X_g = dl_g(X_e) = dl_g(\alpha(X)) = dl_g(\alpha(Y)) = dl_g(Y_e) = Y_g \quad \text{for each $g\in G$,}
$$
$\alpha$ is injective. In order to see the surjectivity of $\alpha$ let $v \in T_e G$ and define
$$
X_g = dl_g(v) \quad \text{for all } g \in G.
$$
Then $\alpha(X) = v$ and the left-invariance of $X$ is shown by
$$
X_{gh} = dl_{gh}(v) = dl_g dl_h (v) = dl_g(X_h) \quad \text{for all }g,h \in G.
$$
This proves part (1).

For (2) let $X \in \frakg$ and $f \in C^\infty(G)$. We need to show that $Xf \in C^\infty(G)$.
Let $\phi\co G\times G \to G$ be the smooth map given by multiplication $\phi(g,h) = gh$. Let $i^1_e$ and $i^2_g$ be the smooth maps $G \to G\times G$ given by $i^1_e (h) = (h,e)$ and $i^2_g (h) = (g,h)$. Let $Y$ be any smooth vector field on $G$ with $Y_e = X_e$. Then $[(0,Y)(f\circ \phi)] \circ i_e^1$ is smooth and
\begin{align*}
[(0,Y)(f\circ \phi)] \circ i_e^1 (g) & = (0,Y)_{(g,e)}(f\circ \phi)\\
&= 0_g(f\circ\phi\circ i^1_e) + Y_e (f\circ \phi \circ i^2_g)\\
&= X_e(f\circ \phi \circ i^2_g) = X_e(f\circ l_g)\\
&= dl_g(X_e)f =X_gf = Xf(g),
\end{align*}
which proves part (2).

Since by (2), left-invariant vector fields are smooth, their Lie brackets are defined and by Exercise \ref{phirelatedbracket} they are again left-invariant, which shows (3). (4) follows from (3) and part (\ref{liebracketexple}) of Exercise \ref{liealgebras}
\end{proof}

\begin{defn}
 We define the {\em Lie algebra of the Lie group $G$} to be the Lie algebra $\frakg$ of left-invariant vector fields on $G$.
\end{defn}

Often it will be convenient to think of the $T_e G$ as the Lie algebra of $G$ with the Lie algebra structure induced by the isomorphism $\alpha$ from Proposition \ref{leftinvariantvf}(\ref{iso}).

\begin{exa}
 The real line $\R$ is a Lie group under addition. The left-invariant vector fields are simply the constant vector fields $\{\lambda(\frac{d}{dr}) \mid \lambda \in \R\}$. The bracket of any two such vector fields is 0.
\end{exa}

\begin{exa}\label{GL} The Lie group $\GL(n,\R) = \det^{-1}(\R\setminus \{0\})$ is a (differentiable) submanifold of $\mathfrak{gl}(n,\R)$. $\GL(n,\R)$ has global coordinates $A_{ij}$ which assigns to each matrix $A$ the $ij$-th entry. Since $\det(A^{-1}) = \det(A)^{-1}$ and $\det(AB) = \det(A)\det(B)$ for any $A,B\in \GL(n,\R)$, the matrix $AB^{-1}$ is invertible with the $ij$-th entry $(AB^{-1})_{ij}$ being a rational function in the entries of $A$ and $B$ with non-zero denominator. Therefore $(A,B) \mapsto AB^{-1}$ is $C^\infty$.

Consider the isomorphism $\alpha\co\frakg \to T_e G$ from Proposition \ref{leftinvariantvf}(\ref{iso}). Since $T_e \GL(n,\R) = T_e \frakgl(n,\R)$ and the map $\beta \co T_e\mathfrak{gl}(n,\R) \to \frakgl(n,\R)$ given by $\beta(v)_{ij} \coloneqq v_{ij}$ is a (canonical) isomorphism, $\beta\circ\alpha$ induces a Lie algebra structure on $\frakgl(n,\R)$. We leave it as an exercise to show the bracket agrees with the usual $[A,B] = AB-BA$.
\end{exa}

\begin{exe}
 Let $\frakg$ be the Lie algebra of $\GL(n,\R)$. Prove that $\beta\circ\alpha$ from Example \ref{GL} induces the correct Lie algebra structure, i.e. show that for $X,Y \in \frakg$ we have $\beta\circ\alpha([X,Y]) = [\beta\circ\alpha(X),\beta\circ\alpha(Y)]$.
\end{exe}

Taking Example \ref{GL} as a starting point, we can create other examples: The non-singular matrices $\GL(n,\C)$ of all $n\times n$ complex matrices $\frakgl(n,\C)$ is a Lie group with Lie Algebra $\frakgl(n,\C)$. If $V$ is an $n$-dimensional real vector space, a basis of $V$ determines a diffeomorphism from $\End(V)$ to $\frakgl(n,\R)$ sending $\Aut(V)$ onto $\GL(n,\R)$. In this way $\End(V)$ is a Lie group with Lie algebra $\Aut(V)$. We get an analog example for complex case vector spaces.

The special linear group $\SL(n,\C)$, the unitary group $U(n)$, the special unitary group $\SU(n)$ are the most important examples for us.

\begin{exe} Assume that the bracket corresponds to the usual bracket on matrices as in example \ref{GL}, and show the following by differentiating the relations defining the Lie group:
\begin{enumerate}
 \item The Lie algebra $\mathfrak{sl}(n,\C)$ of $\SL(n,\C)$ consists of all $n\times n$--matrices with vanishing trace.
 \item The Lie algebra $\mathfrak{u}(n)$ of $U(n)$ consists of all skew-symmetric $n\times n$--matrices.
 \item The Lie algebra $\mathfrak{su}(n)$ of $\SU(n)$ consists of all skew-symmetric, traceless $n\times n$--matrices.
\end{enumerate}
\end{exe}

\section{Homomorphisms}

\begin{defn}
 A map $\phi\co G \to H$ is a {\em (Lie group) homomorphism} if $\phi$ is both $C^\infty$ and a homomorphism of groups. We call $\phi$ an {\em isomorphism} if $\phi$ is also a diffeomorphism. \kommentar{If $H = Aut(V)$ for some vector space $V$, or if $H = \GL(n,\C)$ or $\GL(n\R)$, then a homomorphism $\phi\co G \to H$ is called a {\em representation of the Lie group $G$}.}\\
If $\frakg$ and $\frakh$ are Lie algebras, a map $\psi\co \frakg \to \frakh$ is a {\em (Lie algebra) homomorphism} if it is linear and preserves brackets ($\psi[X,Y] = [\psi(X),\psi(Y)]$ for all $X,Y \in \frakg$). If $\psi$ is also a bijection, then $\psi$ is an {\em isomorphism}.\\
\end{defn}

Let $\phi\co G \to H$ be a homomorphism. Then $\phi$ maps the identity of $G$ to the identity of $H$, and the differential $d\phi$ of $\phi$ is a linear transformation of $\frakg = T_eG$ into $\frakh = T_eH$. Notice that by the natural identification between Lie algebras and left-invariant vector fields $d\phi(X)$ is the unique left-invariant vector field satisfying
\begin{equation}\label{identification}
 (d\phi(X))_e = d\phi_e(X_e).
\end{equation}

\begin{thm}\label{LieAlgebraHomo}
 Let $G$ and $H$  Lie groups with Lie algebras $\frakg$ and $\frakh$ respectively, and let $\phi\co G \to H$ be a homomorphism. Then
\begin{enumerate}
 \item $X$ and $d\phi(X)$ are $\phi$-related for each $X \in \frakg$.
 \item $d\phi\co \frakg \to \frakh$ is a Lie algebra homomorphism.
\end{enumerate}
\end{thm}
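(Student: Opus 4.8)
The plan is to prove the two parts in order, deriving part (2) as a formal consequence of part (1) together with Exercise \ref{phirelatedbracket}.

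\medskip

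\textbf{Part (1).} I would fix $X \in \frakg$ and show $X$ is $\phi$-related to $d\phi(X)$, i.e. that $d\phi_g(X_g) = (d\phi(X))_{\phi(g)}$ for every $g \in G$. The key is to translate everything to the identity using left-invariance. Since $\phi$ is a group homomorphism, it intertwines left translations: $\phi \circ l_g = l_{\phi(g)} \circ \phi$ for all $g \in G$. Differentiating this identity of smooth maps at $e$ gives $d\phi_g \circ dl_g = d(l_{\phi(g)})_{\phi(e)} \circ d\phi_e$ on $T_e G$ (using $\phi(e) = e$). Now apply both sides to $X_e \in T_e G$. On the left, $dl_g(X_e) = X_g$ by left-invariance of $X$, so the left side is $d\phi_g(X_g)$. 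On the right, $d\phi_e(X_e) = (d\phi(X))_e$ by the defining property \eqref{identification} of $d\phi(X)$, and then $d(l_{\phi(g)})_e\big((d\phi(X))_e\big) = (d\phi(X))_{\phi(g)}$ by left-invariance of the vector field $d\phi(X)$ on $H$. Equating the two sides yields exactly $d\phi_g(X_g) = (d\phi(X))_{\phi(g)}$, which is the claim.

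\medskip

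\textbf{Part (2).} First, $d\phi\co \frakg \to \frakh$ is linear because it is (under the identifications of Proposition \ref{leftinvariantvf}(\ref{iso})) just the linear map $d\phi_e\co T_eG \to T_eH$. It remains to check it preserves brackets. Let $X_1, X_2 \in \frakg$. By part (1), $X_i$ is $\phi$-related to $d\phi(X_i)$ for $i = 1, 2$. By Exercise \ref{phirelatedbracket}, $[X_1, X_2]$ is then $\phi$-related to $[d\phi(X_1), d\phi(X_2)]$. On the other hand, by part (1) applied to the left-invariant vector field $[X_1,X_2] \in \frakg$, we know $[X_1,X_2]$ is $\phi$-related to $d\phi([X_1,X_2])$. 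Two left-invariant vector fields on $H$ to which the same vector field $[X_1,X_2]$ is $\phi$-related must agree: evaluating the $\phi$-relation at $e \in G$ shows both equal $d\phi_e([X_1,X_2]_e)$ at $e \in H$, and a left-invariant vector field on $H$ is determined by its value at $e$ (Proposition \ref{leftinvariantvf}(\ref{iso})). Hence $d\phi([X_1,X_2]) = [d\phi(X_1), d\phi(X_2)]$, as required.

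\medskip

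\textbf{Main obstacle.} There is no deep difficulty here; the proof is essentially bookkeeping with the identification \eqref{identification}. The one place to be careful is the very first step of part (1): one must use that $\phi$ is a \emph{group} homomorphism to get $\phi \circ l_g = l_{\phi(g)} \circ \phi$, and then differentiate this relation correctly (as opposed to differentiating $\phi$ alone). Everything else follows formally from left-invariance and the naturality of $d\phi$ established in part (1), so the real content is concentrated in that compatibility of $\phi$ with left translation.
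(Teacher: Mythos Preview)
Your proof is correct and follows essentially the same approach as the paper: part (1) is proved by differentiating the identity $\phi \circ l_g = l_{\phi(g)} \circ \phi$ at $e$ and invoking left-invariance, and part (2) is deduced from part (1) via Exercise \ref{phirelatedbracket} together with the fact that left-invariant vector fields are determined by their value at $e$. The only cosmetic difference is that in part (2) the paper directly equates values at $e$ using the $\phi$-relatedness of $[X,Y]$ and $[d\phi(X),d\phi(Y)]$ together with the defining identification \eqref{identification}, whereas you take the slightly more roundabout (but equivalent) route of applying part (1) a second time to $[X_1,X_2]$.
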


\begin{proof}
The left-invariant vector fields $d\phi(X)$ and $X$ are $\phi$-related, because
$$
 (d\phi (X))_{\phi(g)}  = dl_{\phi(g)} d\phi_e (X_e) = d(l_{\phi(g)} \circ \phi) X_e = d(\phi\circ l_g) X_e = d\phi (X_g).
$$
To show part (2), let $X,Y \in \frakg$. By Exercise \ref{phirelatedbracket} $[X,Y]$ is $\phi$--related to the left-invariant vector field $[d\phi(X), d\phi(Y)]$, that is
$$
(d\phi([X,Y]))_e \stackrel{\eqref{identification}}{=} d\phi_e([X,Y]_e) = [d\phi(X),d\phi(Y)]_{\phi(e)} = [d\phi(X),d\phi(Y)]_{e}
$$
Therefore using the identification of left-invariant vector fields with tangent vectors at the identity, $d\phi([X,Y]) = [d\phi(X),d\phi(Y)]$.\footnote{We had to be a bit careful not to mix up notation or use short cuts: By the way we had defined $\phi$-related in Definition \ref{phirelateddef}, we had to take the detour via the tangent space at $e$.}
\end{proof}

\section{Lie subgroups}

\begin{defn}
 $(H,\phi)$ is a {\em Lie subgroup} of the Lie group $G$ if
\begin{enumerate}
 \item $H$ is a Lie group;
 \item $(H,\phi)$ is a submanifold of $G$;
 \item $\phi\co H\to G$ is a Lie group homomorphism.
\end{enumerate}
$(H,\phi)$ is called a {\em closed subgroup} of $G$ if $\phi(H)$ is also a closed subset of $G$.

Let $\frakg$ be a Lie algebra. A subspace $\frakh \subset \frakg$ is a {\em subalgebra} if $[X,Y] \in \frakh$ for all $X,Y \in \frakh$.\footnote{A Lie algebra $\frakg$ or a subalgebra of $\frakg$ is not necessarily an algebra.}
\end{defn}

Let $(H,\phi)$ be a Lie subgroup of $G$, and let $\frakh$ and $\frakg$ be their respective Lie algebras. Then by Theorem \ref{LieAlgebraHomo} $d\phi$ yields an isomorphism between $\frakh$ and the subalgebra $d\phi(\frakh)$ of $\frakg$.

We will show in this section one of the fundamental theorems in Lie group theory, which asserts that there is a 1:1 correspondence between connected Lie subgroups of a Lie group and subalgebras of its Lie algebra. We first need the two following propositions.

\begin{prop}\label{generating}
 Let $G$ be a connected Lie group, and let $U$ be a neighborhood of $e$. Then
$$
G = \cup^\infty_{n=1} U^n
$$
where $U^n$ consists of all $n$--fold products of elemets of $U$.
\end{prop}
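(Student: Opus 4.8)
The plan is to exploit connectedness of $G$ via the standard clopen argument. First I would replace $U$ by a symmetric neighborhood of $e$: set $V = U \cap U^{-1}$, which is still a neighborhood of $e$ (since inversion is a homeomorphism, a fact from the first exercise), and note $V^n \subset U^n$ for all $n$, so it suffices to prove $G = \bigcup_{n=1}^\infty V^n$. Denote this union by $H$. I would then check that $H$ is a subgroup: it is closed under multiplication because $V^m \cdot V^n = V^{m+n}$, and closed under inversion because $V$ is symmetric, so $(V^n)^{-1} = V^n$.

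Next I would show $H$ is open. For any $h \in H$, we have $h \in V^n$ for some $n$, hence $hV \subset V^{n+1} \subset H$; since $hV$ is open (left translation $l_h$ is a diffeomorphism) and contains $h$, this exhibits $H$ as a neighborhood of each of its points, so $H$ is open. Then I would show $H$ is closed by the usual coset trick: the complement $G \setminus H$ is a union of cosets $gH$, each of which is open (being the image of the open set $H$ under the diffeomorphism $l_g$), so $G \setminus H$ is open and $H$ is closed.

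Finally, since $G$ is connected and $H$ is a nonempty (it contains $e$) clopen subset, we conclude $H = G$, which is the claim. The main obstacle — really the only subtle point — is the bookkeeping of passing to the symmetric neighborhood $V$ so that the set $H$ is genuinely a subgroup; without symmetry one only gets a subsemigroup, and the closedness argument via cosets breaks down. Everything else is routine once one recalls that left translations are diffeomorphisms and that inversion is smooth, both of which are available from the earlier material.
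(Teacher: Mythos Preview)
Your proposal is correct and follows essentially the same approach as the paper: pass to a symmetric neighborhood $V$ of $e$, show $H = \bigcup_n V^n$ is an open subgroup, deduce it is closed via the coset argument, and conclude by connectedness. In fact your write-up is slightly cleaner than the paper's, which contains a small typo (it writes $V = U \cup U^{-1}$ where $U \cap U^{-1}$ is intended).
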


\begin{proof}
 Consider $V \coloneqq U \cup U^{-1}$ (where $U^{-1} = \{ g^{-1} \in U \mid g\in U\}$) and let $$H \coloneqq \bigcup_{n=1}^\infty V^n \subset \bigcup_{n=1}^{\infty} U^n.$$ $H$ is a subgroup of $G$ and open in $G$, since $h\in H$ implies $hV \in H$. Therefore each coset of $H$ is also open in $G$. Since $$H = G \setminus \bigcup_{g\in G,
gH \neq H} gH,$$
$H$ is also closed. Since $H$ is non-empty and $G$ is connected we get $G = H$.
\end{proof}

Recall that a {\em $k$--dimensional distribution} $\cD$ on a manifold $M$ is a choice of a $k$-dimensional linear subspace $\cD_p \subset T_p M$. A distribution is {\em smooth}, if it is locally spanned by smooth vector fields $X_1,\ldots,X_k$, or equivalently if $\cD$ is a smooth subbundle of $T M$. A smooth distribution is {\em involutive} if $[X,Y] \in \cD$ for any smooth vector fields $X,Y \in \cD$. It is {\em completely integrable}, if for each point $p\in M$ there is an integral manifold $N$ of $\cD$ passing through $p$, where $N$ is integral if $$N\stackrel{i}{\hookrightarrow}M \quad \text{and} \quad di_p(T_pN) = \cD_p \quad \text{for each } p\in N.$$ A smooth coordinate chart $(U,\phi)$ is {\em flat} for $\cD$, if $\phi(U) = U' \times U'' \subset \R^k\times \R^{n-k}$, and at points of $U$, $\cD$ is spanned by the first $k$ coordinate vector fields $\frac{\partial}{\partial x^1},\ldots,\frac{\partial}{\partial x^k}$. This implies that each slice of the form $x^{k+1}=c^{k+1},\ldots,x^{n}=c^n$ for constants $c^i$ is an integral manifold of $\cD$. Beware that the terminology is inconsistent among various books. However, we will show the famous Frobenius' Theorem which states that terminology is of no consequence.  Before that we need to recall a few definitions and prove a few lemmas.

\begin{defn}
 Let $M$ be a smooth manifold and let $X$ be a vector field on $M$.
\begin{enumerate}
 \item An {\em integral curve} of $X$ is a smooth curve $\gamma \co J \to M$ with $J \subset \R$ an open interval such that
$$
\gamma'(t) = X_{\gamma(t)} \quad \text{ for all } t\in J\kommentar{, \quad \text{ where }\gamma'(t) \coloneqq  d\gamma_t \left(\frac{d}{dt}\right)}.
$$
If $0 \in J$, then $\gamma(0)$ is the {\em starting point}.
\item Let $\theta^{(p)}\co \cD^{(p)} \to M$ be the {\em maximal integral} curve with starting point $p$.
\item The {\em flow of $X$} is the map $\theta\co \cD \to M$ given by $\theta_t(p) = \gamma_p(t)$ for $(t,p)$ in the domain $\cD \coloneqq  \{(\cD^{(p)},p) \mid p\in M\}$.
\item $X$ is called {\em complete} if the domain of its flow is $\R \times M$.
\end{enumerate}
\end{defn}

It follows from the theory of ordinary differential equations, that for any starting point integral curves exist and are uniquely determined by vector fields. For details see \cite[Chapter 17]{lee2003}. With some more work one can show the existence and uniqueness of the flow of $X$ \cite[Theorem 17.8]{lee2003}.
\begin{thm}[Fundamental Theorem on Flows]\label{fundthmflows}
 Let $X$ be a smooth vector field on a smooth manifold $M$. The flow $\theta\co \cD \to M$ of $X$ exists, is unique and has the following properties:
\begin{enumerate}
 \item For each $p\in M$, the curve $\theta^{(p)}\co \cD^{(p)} \to M$ is the unique maximal integral curve starting at $p$.
 \item If $s\in \cD^{(p)}$, then $\cD^{(\theta(s,p))}$ is the interval $\cD^{p}-s = \{ t-s \mid t\in \cD^{(p)} \}$.
 \item For each $t\in \R$, the set $M_t = \{ p\in  M \mid (t,p) \in \cD\}$ is open in $M$, and $\theta_t \co M_t \to M_{-t}$ is a diffeomorphism with inverse $\theta_{-t}$.
 \item For each $(t,p) \in \cD$, $(d\theta_t)_p X_p = X_{\theta_t(p)}$.
\end{enumerate}
\end{thm}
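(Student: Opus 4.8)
The plan is to assemble the global flow from the local theory of ordinary differential equations in four stages: first construct the maximal integral curves and the domain $\cD$; then prove the group law, which yields (1) and (2); then prove the openness of $\cD$ and the smoothness of $\theta$, which is the technical heart; and finally deduce (3) and (4) by formal manipulation. For the first stage, the ODE existence and uniqueness theorem, applied in a chart, produces for each $p\in M$ an integral curve starting at $p$ on some open interval about $0$, and shows that any two integral curves with a common starting point agree on the intersection of their domains: the agreement set is nonempty, open by local uniqueness, and closed by continuity, hence is the whole (connected) intersection. Defining $\theta^{(p)}$ to be the union of all integral curves starting at $p$ therefore gives a well-defined integral curve on the largest possible interval $\cD^{(p)}$, which is exactly (1); then set $\cD=\{(t,p)\mid t\in\cD^{(p)}\}$ and $\theta(t,p)=\theta^{(p)}(t)$.

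The second stage rests on the observation that if $\gamma$ is an integral curve then so is $t\mapsto\gamma(t+s)$ for fixed $s$, now starting at $\gamma(s)$. Applying this to $\gamma=\theta^{(p)}$ with $s\in\cD^{(p)}$ and $q=\theta(s,p)$, maximality forces $\cD^{(p)}-s\subseteq\cD^{(q)}$ with $\theta^{(q)}(t)=\theta^{(p)}(t+s)$ there; in particular $-s\in\cD^{(q)}$ and $\theta^{(q)}(-s)=p$, so the same statement applied to $q$ and $-s$ gives $\cD^{(q)}+s\subseteq\cD^{(p)}$. Combining the two inclusions yields $\cD^{(q)}=\cD^{(p)}-s$, which is (2), together with the group law $\theta_t\circ\theta_s=\theta_{t+s}$ on the common domain.

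The third stage is where I expect the real work. Smooth dependence of ODE solutions on initial conditions gives, around each point of $M$, a flow box: an open set $U$ and an $\ep>0$ such that $\theta$ is defined and smooth on $(-\ep,\ep)\times U$. To globalize, fix $(t_0,p_0)\in\cD$, say with $t_0\ge 0$ (the case $t_0<0$ is analogous). The image $\theta^{(p_0)}([0,t_0])$ is compact, hence covered by finitely many flow boxes, and one may choose a partition $0=s_0<s_1<\dots<s_N=t_0$ fine enough relative to those boxes that, by induction on $i$, there is an open neighborhood $W_i$ of $p_0$ with $s_i\in\cD^{(p)}$ and $\theta^{(p)}(s_i)$ lying in the next box for all $p\in W_i$, and $p\mapsto\theta^{(p)}(s_i)$ smooth on $W_i$. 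Writing $\theta_t$ near $(t_0,p_0)$ as the corresponding finite composition of flow-box maps via the group law then exhibits an open neighborhood of $(t_0,p_0)$ inside $\cD$ on which $\theta$ is smooth; hence $\cD$ is open and $\theta$ smooth. The delicate point is precisely the bookkeeping in that induction — keeping nearby initial points inside the successive flow boxes for the allotted times — which is where compactness and the uniformity of $\ep$ over each box get used.

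For the last stage, $M_t$ is the slice $\{p\mid(t,p)\in\cD\}$ of the open set $\cD$, hence open, and $\theta_t$ is smooth as a restriction of $\theta$. By (2), $p\in M_t$ forces $-t\in\cD^{(\theta_t(p))}$, so $\theta_t(M_t)\subseteq M_{-t}$, and the group law gives $\theta_{-t}\circ\theta_t=\mathrm{id}_{M_t}$ and $\theta_t\circ\theta_{-t}=\mathrm{id}_{M_{-t}}$; thus $\theta_t\co M_t\to M_{-t}$ is a diffeomorphism with inverse $\theta_{-t}$, proving (3). For (4), note $X_p=\tfrac{d}{ds}\big|_{s=0}\theta_s(p)$ since $\theta^{(p)}$ is the integral curve through $p$; applying $(d\theta_t)_p$ and using the chain rule together with $\theta_t\circ\theta_s=\theta_{t+s}$ gives $(d\theta_t)_p X_p=\tfrac{d}{ds}\big|_{s=0}\theta_{t+s}(p)=(\theta^{(p)})'(t)=X_{\theta_t(p)}$.
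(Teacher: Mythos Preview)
Your argument is correct and follows the standard route (local ODE theory $\Rightarrow$ maximal integral curves $\Rightarrow$ group law $\Rightarrow$ flow-box covering of a compact trajectory segment to globalize smoothness and openness $\Rightarrow$ formal deduction of (3) and (4)). Note, however, that the paper does not supply its own proof of this theorem: it states the result and refers the reader to \cite[Theorem~17.8]{lee2003}, and your outline is essentially the proof given there.
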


In order to get acquainted with these concepts, here are a few exercises.
\begin{exe}
Consider the vector field $X$ on $\R^2$ given by
$$
       X(x,y) = \frac{\partial}{\partial x} + (1+y^2)\frac{\partial}{\partial x}.
$$
Compute the flow of $X$ and its domain. Sketch its flow lines.
\end{exe}

\begin{exe}
Show that
$$
    X( x,y,z) = (-y,x,0)
$$
determines a smooth vector field on $S^2\subset \R^3$. What are the integral curves of $X$ and what is the flow of $X$?
\end{exe}

\begin{exe}
Let $M$ be a smooth manifold, $g$ a Riemannian metric on $M$ and $f\co M \to \R$ a differentiable function. Consider the gradient vector field $\operatorname{grad} f \in C^\infty(TM)$. Let $\gamma \co (-\ep,\ep) \to M$ an integral curve of $\operatorname{grad} f$. Show that $f\circ \gamma$ is monotonically increasing.
\end{exe}

\begin{lem}\label{relatedflow}
 Suppose $f\co M \to N$ is a smooth map, $X$ and $Y$ are vector fields on $M$ and $N$ respectively and $\theta$ and $\eta$ flows of $X$ and $Y$ respectively. Then $X$ and $Y$ are $f$--related if and only if $\eta_t \circ f (p)= f \circ \theta_t (p)$ for all $(t,p)$ in the domain $\cD$ of $\theta$:
\begin{equation}\label{relatedflowdiagram}
\begin{diagram}
 \node{M} \arrow{e,t}{f} \arrow{s,l}{\theta_t}\node{N} \arrow{s,r}{\eta_t}\\
\node{M} \arrow{e,b}{f} \node{N}
\end{diagram}
\end{equation}
\end{lem}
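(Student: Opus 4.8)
The plan is to prove both implications of the biconditional by differentiating, respectively using the uniqueness of integral curves. The key tool is the Fundamental Theorem on Flows (Theorem \ref{fundthmflows}), especially the characterization of $\theta^{(p)}$ as the unique maximal integral curve of $X$ starting at $p$, and the corresponding statement for $\eta$ and $Y$.

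First I would assume $X$ and $Y$ are $f$-related and fix $p \in M$. Consider the curve $\sigma \co \cD^{(p)} \to N$ defined by $\sigma(t) = f(\theta_t(p)) = f(\theta^{(p)}(t))$. Differentiating and using the chain rule together with $\theta^{(p)\prime}(t) = X_{\theta_t(p)}$, one gets
$$
\sigma'(t) = df_{\theta_t(p)}\bigl(X_{\theta_t(p)}\bigr) = Y_{f(\theta_t(p))} = Y_{\sigma(t)},
$$
where the middle equality is exactly the $f$-relatedness of $X$ and $Y$. Hence $\sigma$ is an integral curve of $Y$ starting at $f(p)$. By the uniqueness part of Theorem \ref{fundthmflows}, $\sigma(t) = \eta_t(f(p))$ for all $t$ in the common domain; since $\sigma$ is defined on all of $\cD^{(p)}$, we get $\eta_t(f(p)) = f(\theta_t(p))$ for every $(t,p) \in \cD$, which is the asserted commutativity of the diagram \eqref{relatedflowdiagram}.

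For the converse, assume $\eta_t \circ f(p) = f \circ \theta_t(p)$ for all $(t,p) \in \cD$. Fix $p$ and differentiate this identity at $t = 0$: the left side gives $\frac{d}{dt}\big|_{0}\eta_t(f(p)) = Y_{f(p)}$, while the right side gives $\frac{d}{dt}\big|_{0} f(\theta_t(p)) = df_p\bigl(\frac{d}{dt}\big|_{0}\theta_t(p)\bigr) = df_p(X_p)$. Since $p$ was arbitrary, $df_p(X_p) = Y_{f(p)}$ for all $p \in M$, i.e. $X$ and $Y$ are $f$-related by Definition \ref{phirelateddef}. One small point to be careful about: for each $p$ there is an $\ep > 0$ with $(-\ep,\ep) \subset \cD^{(p)}$ (the domain of the flow is open and contains $\{0\} \times M$), so the derivative at $t=0$ makes sense; this is the only place where the domain issues need a word.

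The main obstacle — really the only subtlety — is bookkeeping with the domains: $\cD^{(p)}$ and the domain of $t \mapsto \eta_t(f(p))$ need not coincide a priori, so in the forward direction one should phrase the uniqueness argument as "two integral curves of $Y$ through $f(p)$ agree on the intersection of their domains, and $\cD^{(p)}$ is contained in the domain of the maximal integral curve $\eta^{(f(p))}$" rather than asserting equality of domains. Everything else is a routine application of the chain rule and Theorem \ref{fundthmflows}.
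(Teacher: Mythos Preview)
Your proof is correct and follows essentially the same approach as the paper: in both directions you use the chain rule together with the Fundamental Theorem on Flows, showing in the forward direction that $t\mapsto f(\theta_t(p))$ is an integral curve of $Y$ through $f(p)$ and invoking uniqueness, and in the converse direction simply differentiating the commutativity relation at $t=0$. Your remarks on the domain bookkeeping are, if anything, slightly more careful than the paper's own treatment.
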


\begin{proof}
The commutativity of Diagram \eqref{relatedflowdiagram} is equivalent to
$$
\eta^{f(p)}(t) = f \circ \theta^{(p)}(t) \quad \text{for all } (t,p) \in \cD.
$$
If $X$ and $Y$ are $f$--related, then the curvce $\gamma\co \cD^{(p)}\to N$ given by
$$
\gamma = f \circ \theta^{(p)}
$$
satisfies
$$ \gamma'(t) = (f \circ \theta^{(p)})'(t) = df (\theta^{(p)}{}'(t))=df (X_{\theta^{(p)}(t)}) = Y_{f\circ \theta^{(p)}(t)} = Y_{\gamma(t)} \quad \text{for all }t\in \cD^{(p)}
$$
Therefore $\gamma$ is an integral curve with starting point $f \circ \theta^{(p)}(0) = f (p)$. By uniqueness of integral curves, the maximal integral curve $\eta^{f(p)}$ must be defined on $\cD^{(p)}$ and $\gamma = \eta^{f(p)}$. Therefore Diagram \eqref{relatedflowdiagram} commutes.

Conversely, if Diagram \eqref{relatedflowdiagram} commutes, then $X$ and $Y$ are $f$--related by
\[
 df_p(X_p)  = df_p (\theta^{(p)}{}'(0)) = (f\circ \theta^{(p)})'(0) = \eta^{(f(p))}{}'(0) = Y_{f(p)} \quad\text{for all }p\in M.\qedhere
\]
\end{proof}

The proof of the following exercise is in the spirit of Lemma \ref{relatedflow}.

\begin{exe}\label{commutinginvariant}
Let $X$ and $Y$ be smooth vector fields on $M$ and $\theta$ and $\eta$ the flows of $X$ and $Y$ respectively. Then $[X,Y]$ is equivalent to $X$ being invariant under $\eta$.
\end{exe}

\begin{thm}[Frobenius' Theorem]\label{FrobeniusTheorem} The following are equivalent:
\begin{enumerate}
 \item There exists a flat chart for $\cD$ at every point of $M$;
 \item $\cD$ is completely integrable;
 \item $\cD$ is involutive.
\end{enumerate}
\end{thm}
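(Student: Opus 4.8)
The plan is to prove the cycle of implications $(1)\Rightarrow(2)\Rightarrow(3)\Rightarrow(1)$, with the last implication carrying essentially all of the work. For $(1)\Rightarrow(2)$: if a flat chart $(U,\phi)$ for $\cD$ exists at $p$, then by the very definition of a flat chart the slice through $p$ on which $x^{k+1},\ldots,x^n$ are constant is an integral manifold of $\cD$ through $p$; since such a chart exists at every point, $\cD$ is completely integrable. For $(2)\Rightarrow(3)$: let $X,Y$ be smooth local sections of $\cD$ and fix a point; choose an integral manifold $i\co N\hookrightarrow M$ through it. Since $di_q(T_qN)=\cD_q$ for every $q\in N$, the fields $X$ and $Y$ are tangent to $N$ along $N$, hence restrict to smooth vector fields $\tilde X,\tilde Y$ on $N$ that are $i$-related to $X,Y$. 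By Exercise \ref{phirelatedbracket}, $[\tilde X,\tilde Y]$ is $i$-related to $[X,Y]$, so $[X,Y]$ is tangent to $N$ at that point, i.e.\ lies in $\cD$; as the point was arbitrary, $\cD$ is involutive.

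The heart of the matter is $(3)\Rightarrow(1)$. Fix $p\in M$ and choose coordinates $(V,(y^1,\ldots,y^n))$ centered at $p$ with $\cD_p=\operatorname{span}\bigl(\tfrac{\partial}{\partial y^1}\big|_p,\ldots,\tfrac{\partial}{\partial y^k}\big|_p\bigr)$, which one arranges by a linear change of the $y$'s. Since $\cD_q$ depends smoothly on $q$, after shrinking $V$ there is a unique smooth local frame $X_1,\ldots,X_k$ of $\cD$ of the normalized form
\[
X_i=\frac{\partial}{\partial y^i}+\sum_{j=k+1}^{n}a_i^{\,j}\,\frac{\partial}{\partial y^j},\qquad i=1,\ldots,k .
\]
Now involutivity gives $[X_i,X_j]\in\cD$, but from the displayed form $[X_i,X_j]$ has vanishing $\tfrac{\partial}{\partial y^1},\ldots,\tfrac{\partial}{\partial y^k}$ components; a section of $\cD$ with that property must be $0$ (expand it in the frame $X_1,\ldots,X_k$ and read off the coefficients). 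Hence the $X_i$ pairwise commute, and therefore---by Exercise \ref{commutinginvariant}, or a local ODE-uniqueness argument in the spirit of Lemma \ref{relatedflow}---their flows $\theta^1,\ldots,\theta^k$ commute wherever composable.

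Define, for small $(t^1,\ldots,t^k,c^{k+1},\ldots,c^n)$,
\[
\Psi(t^1,\ldots,t^k,c)=\theta^1_{t^1}\circ\cdots\circ\theta^k_{t^k}\bigl(0,\ldots,0,c^{k+1},\ldots,c^n\bigr).
\]
Computing $d\Psi_0$ shows it sends $\tfrac{\partial}{\partial t^i}$ to $X_i(p)$ and $\tfrac{\partial}{\partial c^j}$ to $\tfrac{\partial}{\partial y^j}\big|_p$, hence is invertible, so $\Psi$ is a diffeomorphism from a neighborhood of the origin onto a neighborhood $U$ of $p$; let $(x^1,\ldots,x^n)$ be the inverse chart, which is already of the product form required by flatness. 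It remains to check that $\tfrac{\partial}{\partial x^1},\ldots,\tfrac{\partial}{\partial x^k}$ span $\cD$ on all of $U$, not just at $p$: fixing $i$, use the commutativity of the flows to bring $\theta^i_{t^i}$ to the front of the composition defining $\Psi$, so that $\tfrac{\partial}{\partial x^i}=d\Psi\bigl(\tfrac{\partial}{\partial t^i}\bigr)=(d\theta^i_{t^i})X_i$, which equals $X_i$ by Theorem \ref{fundthmflows}(4). Thus $(U,(x^i))$ is a flat chart for $\cD$.

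I expect the main obstacle to be exactly this last implication, and within it the two points where care is needed: that pairwise vanishing brackets really do yield commuting flows (a local uniqueness argument for ordinary differential equations, modeled on Lemma \ref{relatedflow} and Exercise \ref{commutinginvariant}), and the identity $d\Psi\bigl(\tfrac{\partial}{\partial t^i}\bigr)=X_i$ \emph{throughout} the chart rather than merely at $p$---this is the step that genuinely consumes the commutativity of the flows, and without it flatness would fail away from the base point.
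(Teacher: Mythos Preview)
Your proof is correct and follows essentially the same route as the paper's. The only cosmetic difference is in how you produce the commuting local frame for the implication $(3)\Rightarrow(1)$: you write it directly in the normalized form $X_i=\tfrac{\partial}{\partial y^i}+\sum_{j>k}a_i^{\,j}\tfrac{\partial}{\partial y^j}$ and read off that $[X_i,X_j]$ has no $\tfrac{\partial}{\partial y^1},\ldots,\tfrac{\partial}{\partial y^k}$ components, whereas the paper phrases the same construction via the projection $\pi\co\R^n\to\R^k$, setting $X_i=(d\pi|_{\cD})^{-1}(\tfrac{\partial}{\partial x^i})$ and using $\pi$-relatedness to conclude $d\pi([X_i,X_j])=0$; these are the same frame and the same argument. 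The definition of $\Psi$ via composed flows, the use of commuting flows to push $\theta^i$ to the front, and the verification that $d\Psi(\tfrac{\partial}{\partial t^i})=X_i$ throughout the chart all match the paper exactly. One small notational quibble: the expression ``$(d\theta^i_{t^i})X_i$'' is not quite the right object---what you actually need (and what the paper computes) is $\tfrac{\partial}{\partial t^i}\Psi=X_i\circ\Psi$, obtained from the flow law $\theta^i_{t^i+s}=\theta^i_s\circ\theta^i_{t^i}$; Theorem~\ref{fundthmflows}(4) is not the cleanest citation for this step, but the conclusion is correct.
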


\begin{proof}
Certainly for any $p \in M$, if there exists a flat chart for $\cD$, and the slice of the form $x^{k+1}=p^{k+1},\ldots, x^{n}=p^n$ is an integral manifold through $p$. Therefore existence of a flat chart at every point implies complete integrability.

To see that complete integrability implies involutivity, consider two smooth vector fields $X$ and $Y$ lying in a completely integrable distribution $\cD$ on $M$ and $p \in M$. Let $(H,i)$ be an integral manifold of $\cD$ through $p$ and suppose $i(q) = p$. Since $di \co TN \to \cD|_{i(N)}$ is a bundle isomorphism, there exist smooth vector fields $\tilde X$ and $\tilde Y$ on $N$, such that $d i \circ \tilde X  = X \circ i$ and $di \circ \tilde Y = Y \circ i$. By Exercise \ref{phirelatedbracket} $[X,Y]$ and $[\tilde X,\tilde Y]$ are $i$--related. Therefore $[X,Y]_p  = d i_q ([\tilde X, \tilde Y]_q) \in \cD_p$, and $\cD$ is involutive.

Now assume that $\cD$ is a $k$--dimensional involutive distribution, $\dim M = n$ and $p\in M$. Since the existence of a flat chart at $p$ is a local question, we may replace $M$ by an open set $U\subset \R^n$ with $0 = p \in U$ and assume $$\cD_p \oplus \operatorname{span}\left\{\left.\tfrac{\partial}{\partial x^{k+1}}\right|_p,\ldots \left.\tfrac{\partial}{\partial x^{n}}\right|_p\right\} = T_p\R^n.$$ Choose a smooth local frame $Y_1,\ldots,Y_k$ of $\cD$ in $U$. Let $\pi\co \R^n \to \R^k$ be the projection onto the first $k$ coordinates. This induces a smooth bundle map $d\pi \co TU \to T\R^k$ with
$$
d\pi\left(\sum_{i=1}^{n}a^i\left.\frac{\partial}{\partial x^i}\right|_q\right) = \sum_{i=1}^{k}a^i\left.\frac{\partial}{\partial x^i}\right|_{\pi(q)}
$$
By our choice of coordinates $\left.d\pi_p\right|_{\cD_p}$ is bijective, by continuity $\left.d\pi\right|_{\cD_q}$ is bijective\footnote{Consider e.g. the determinant map on the matrix with respect to a local frame.} for $q$ in an open neighborhood $U'$ of $p$. Furthermore, $\left.d\pi\right|_{\cD}$ is a smooth bundle map and therefore the matrix entries of  $\left.d\pi\right|_{\cD}$ with respect to our local frame $(Y_i)$ are smooth functions of $q\in U'$, thus so are the matrix entries of $\left(\left.d\pi\right|_{\cD_q}\right)^{-1}\co T \R^{k}_{\pi(q)} \to \cD_q$. Define a smooth local frame $X_1,\ldots,X_k$ for $\cD$ in $U'$ by
$$
\left.X_i\right|_q = \left(\left.d\pi_q\right|_{\cD_q}\right)^{-1}\left.\frac{\partial}{\partial x^i}\right|_{\pi(q)}.
$$
The vector fields $X_i$ and $\left.\tfrac{\partial}{\partial x^i}\right|_{\pi(q)}$ are $\pi$--related since
$$
\left.\frac{\partial}{\partial x^i}\right|_{\pi(q)} = \left(\left.d\pi_q\right|_{\cD_q}\right)\left.X_i\right|_q = d\pi_q\left.X_i\right|_q.
$$
Therefore by Exercise \ref{phirelatedbracket}
$$
d\pi([X_i,X_j]_q) = \left[\frac{\partial}{\partial x^i},\frac{\partial}{\partial x^j}\right]_{\pi(q)} = 0.
$$
Since $[X_i,X_j]_q \in \cD_q$ by involutivity and $d\pi$ is injective on each fiber of $\cD$, $[X_i,X_j]_q =0$.

Now let $\theta_i$ be the flow of $X_i$ for $i=1,\ldots,k$. There exists an $\ep>0$ and a neighborhood $W$ of $p$ such that the composition $(\theta_1)_{t_1}\circ\ldots\circ (\theta_k)_{t_k}$ is defined on $W$ and maps $W$ into $U'$ whenever $|t_1|,\ldots,|t_k|$ are less than $\ep$. Let
$$
S\coloneqq  \{(u^{k+1},\ldots,u^n) \mid (0,\ldots,0,u^{k+1},\ldots,u^n) \in W\}
$$
and define $\psi\co (-\ep,\ep)^k \times S \to U$ by
$$
\psi (u^1,\ldots,u^n) \coloneqq  (\theta_1)_{u^1}\circ\ldots\circ(\theta_k)_{u^k} (0,\ldots,0,u^{k+1},\ldots,u^n)
$$
By Exercise \ref{commutinginvariant} $X_i$ is invariant under the flow of $X_j$, in other words $X_i$ is $(\theta_j)_s$--related to itself. Therefore, since by Lemma \ref{relatedflow} all the flows $\theta_i$ commute with each other, for $i\in\{1,\ldots,k\}$ and any $u_0 \in W$ we have
\begin{align*}
\lefteqn{\left(d\psi_{u_0} \left.\frac{\partial}{\partial u^i}\right|_{u_0}\right)f}\\
&= \left.\frac{\partial}{\partial u^i}\right|_{u_0} f(\psi(u^1,\ldots,u^n))\\
 &= \left.\frac{\partial}{\partial u^i}\right|_{u_0} f((\theta_1)_{u^1}\circ\ldots\circ(\theta_k)_{u^k} (0,\ldots,0,u^{k+1},\ldots,u^n)) \\
 &= \left.\frac{\partial}{\partial u^i}\right|_{u_0} f((\theta_i)_{u^i}\circ(\theta_1)_{u^1}\circ\ldots\circ(\theta_{i-1})_{u^{i-1}}\circ(\theta_{i+1})_{u^{i+1}}\ldots\circ(\theta_k)_{u^k} (0,\ldots,0,u^{k+1},\ldots,u^n))\\
 &= df\left(\left.\frac{\partial}{\partial u^i}\right|_{u_0} (\theta_i)_{u^i}\right)((\theta_1)_{u^1}\circ\ldots\circ(\theta_{i-1})_{u^{i-1}}\circ(\theta_{i+1})_{u^{i+1}}\ldots\circ(\theta_k)_{u^k} (0,\ldots,0,u^{k+1},\ldots,u^n))\\
& = df(\left.X_i\right|_{\psi(u_0)}) = \left.X_i\right|_{\psi(u_0)}(f)
\end{align*}
Then for $i=1,\ldots,k$ we have
$$
d\psi \left.\frac{\partial}{\partial u^i}\right|_{0} = (X_i)_p.
$$
On the other hand, since
$$
\psi(0,\ldots,0,u^{k+1},\ldots,u^n) = (0,\ldots,0,u^{k+1},\ldots,u^n),
$$
we get
$$
d\psi \left.\frac{\partial}{\partial u^i}\right|_{0} = \left.\frac{\partial}{\partial x^i}\right|_p \quad \text{for } i=k+1,\ldots,n.
$$
Therefore, $d\psi$ takes $(\left.\tfrac{\partial}{\partial u^1}\right|_0,\ldots,\left.\tfrac{\partial}{\partial u^n}\right|_0)$ to $(X_1|_p,\ldots,X_k|_p,\left.\tfrac{\partial}{\partial x^1}\right|_p,\ldots,\left.\tfrac{\partial}{\partial x^n}\right|_p)$, which is also a basis. By the inverse function theorem, $\psi$ is a diffeomorphism in a neighborhood of 0, and $\phi = \psi^{-1}$ the flat choordinate chart.
\end{proof}

\begin{prop}\label{smoothness}
 Suppose that $f\co M \to N$ is $C^\infty$, that $(H,i)$ is a $d$-dimensional integral manifold of a $k$--dimensional involutive distribution $\cD$ on $N$, and that $f$ factors through $(H,i)$, that is, $f(M) \subset i(H)$. Let $f_0 \co M \to H$ be the (unique) mapping such that $i \circ f_0 = f$.
$$
\begin{diagram}
 \node{M} \arrow{e,t}{f} \arrow{se,b,..}{f_0}\node N\\
\node{}\node{H}\arrow{n,r}{i}
\end{diagram}
$$
Then $f_0$ is $C^\infty$.
\end{prop}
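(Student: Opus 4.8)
The plan is to use that being $C^\infty$ is a local condition and to invoke Frobenius' Theorem (Theorem~\ref{FrobeniusTheorem}) to reduce everything to the behaviour of $f$ near a single slice of a flat chart. First I would fix $p\in M$, set $q=f_0(p)$ so that $i(q)=f(p)$, and choose by Theorem~\ref{FrobeniusTheorem} a flat chart $(U,\phi)$ for $\cD$ around $f(p)$ in $N$, with $\phi=(x^1,\dots,x^n)\co U\to U'\times U''\subset\R^k\times\R^{n-k}$, so that $\cD|_U$ is spanned by $\partial/\partial x^1,\dots,\partial/\partial x^k$ and the slices $S_c=\phi^{-1}(U'\times\{c\})$ are $k$-dimensional embedded integral manifolds. (Note $\dim H=k$, since $(H,i)$ being integral forces $di_x(T_xH)=\cD_{i(x)}$.)

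The first genuine step is a slice lemma: any connected integral manifold $j\co P\to U$ of $\cD$ has image in a single slice $S_c$. This holds because, with $\pi\co\R^n\to\R^{n-k}$ the projection onto the last $n-k$ coordinates, the differential of $\pi\circ\phi\circ j$ vanishes identically --- $dj(T_xP)=\cD_{j(x)}$ is spanned by the first $k$ coordinate fields, which $d(\pi\circ\phi)$ kills --- so $\pi\circ\phi\circ j$ is constant on the connected $P$. Applying this to the (countably many, by second countability of $H$) connected components of $i^{-1}(U)$, each of which maps into one slice, shows that $\pi\circ\phi\circ f$ carries the open set $f^{-1}(U)$ into a countable subset of $\R^{n-k}$. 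A continuous map from a locally connected space into a countable --- hence totally disconnected --- subset of $\R^{n-k}$ is locally constant, so there is a connected open neighbourhood $V\ni p$ in $M$ with $\pi\circ\phi\circ f\equiv c$ on $V$, i.e. $f(V)\subset S_c$.

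Then I would identify the relevant slice-piece of $H$: let $W$ be the connected component of $i^{-1}(U)$ containing $q$; by the slice lemma $i(W)$ lies in one slice, which must be $S_c$ since $i(q)=f(p)\in S_c$. As $\dim W=k=\dim S_c$, the injective immersion $i|_W\co W\to S_c$ is a local diffeomorphism, hence open, hence a diffeomorphism of $W$ onto the open subset $i(W)\subset S_c$. The sets $i(W')$, for $W'$ ranging over the components of $i^{-1}(U)$ mapping into $S_c$, are pairwise disjoint (injectivity of $i$) and partition $S_c\cap i(H)$ into relatively open pieces, so the connected set $f(V)\subset S_c\cap i(H)$ containing $f(p)\in i(W)$ lies in $i(W)$. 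Consequently, for $p'\in V$ the point $f_0(p')$ satisfies $i(f_0(p'))=f(p')\in i(W)$, whence (injectivity of $i$) $f_0(p')\in W$ and $f_0(p')=(i|_W)^{-1}(f(p'))$; that is, $f_0|_V=(i|_W)^{-1}\circ f|_V$. Since $f|_V$ is smooth as a map into the embedded submanifold $S_c$ with image in the open set $i(W)$, and $(i|_W)^{-1}$ is smooth, $f_0|_V$ is $C^\infty$; as $p$ was arbitrary, so is $f_0$.

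I expect the main obstacle to be the slice lemma combined with the countability argument that upgrades ``$\pi\circ\phi\circ f$ has image in countably many slice-heights'' to ``$\pi\circ\phi\circ f$ is locally constant''; once that is in place, everything else is a routine restriction of $f$ to an embedded slice followed by inversion of the honest diffeomorphism $i|_W$, with injectivity of $i$ entering once more to match $f_0$ with $(i|_W)^{-1}\circ f$ there.
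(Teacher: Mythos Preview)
Your proof is correct and follows essentially the same route as the paper's: flat chart via Frobenius, the observation that connected pieces of $i^{-1}(U)$ each land in a single slice, the countability argument forcing $\pi\circ\phi\circ f$ to be locally constant, and then reading off smoothness from slice coordinates. Your write-up is in fact more careful than the paper's at the final step: the paper simply asserts that $(y^1,\dots,y^k)$ serve as coordinates and that $f_0$ is smooth in them, whereas you explicitly argue that the connected set $f(V)$ must lie in a single piece $i(W)$ of the clopen partition of $S_c\cap i(H)$, so that $f_0|_V=(i|_W)^{-1}\circ f|_V$ with $(i|_W)^{-1}$ an honest smooth inverse. That extra paragraph is exactly the point the paper's last sentence glosses over.
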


\begin{proof}
By the injectivity of $i$ there is a unique $q \in H$ for all $p\in M$ such that $i (q) = f(p)$, i.e. there is a unique map $f_0$ such $i\circ f_0 = f$. Let $p \in M$ be arbitrary and set $q = f_0(p) \in H$. Let $(y^1,\ldots,y^n)$ be flat coordinates for $\cD$ on a neighborhood $U$ of $i(q)$ and let $\pi\co U \to \R^{n-k}$ be given by $\pi = (y^{k+1},\ldots,y^n)$. Since $i$ is continuous, $i^{-1}(U)$ is open in $H$ and therefore consists of a countable number of connected components, each of which is open in $H$. Because $\cD$ is spanned by the first $k$ coordinate vector fields in $U$, this implies that $\pi\circ \gamma$ is constant for any smooth path $\gamma$ in a connected component of $i(i^{-1}(U)) = i(H) \cap U$. Therefore each connected component of $i(H) \cap U$ is a subset of a slice and $\pi(i(H) \cap U)$ consists of a countable number of points. Choose smooth coordinates $(x^i)$ for $M$ on a connected neighborhood $V$ of $p$ such that $f(V) \subset U$ and $i^{-1}$. Since $f(V) \subset i(H) \cap U$, $\pi\circ f(V)$ is also countable.
Since $V$ is connected, the coordinate functions are actually constant, and thus $f(V)$ lies in a single slice. On this slice $(y^1,\ldots,y^k)$ are smooth coordinates, so $f_0$ has the local coordinate representation $(y^1,\ldots,y^k)\circ f(x) = (f^1_0(x),\ldots,f^k_0(x))$, which is smooth.
\end{proof}

\begin{thm}
 Let $G$ be a Lie group with Lie algebra $\frakg$, and let $\tilde \frakh \subset \frakg$ be a subalgebra. Then there is a unique connected Lie subgroup $(H,\phi)$ of $G$ such that $d\phi(\frakh) = \tilde \frakh$.
\end{thm}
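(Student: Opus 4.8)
The plan is to build $H$ as the maximal integral manifold through $e$ of the left-invariant distribution determined by $\tilde\frakh$, then equip it with a group structure and check uniqueness. First I would define a distribution $\cD$ on $G$ by $\cD_g = dl_g(\tilde\frakh) \subset T_gG$; this is smooth because if $X_1,\ldots,X_k$ is a basis of $\tilde\frakh$ (viewed as left-invariant vector fields) then they span $\cD$ globally, and it is involutive because $\tilde\frakh$ is a subalgebra, so $[X_i,X_j]$ is again a left-invariant vector field lying in $\tilde\frakh$, hence in $\cD$. By Frobenius' Theorem \ref{FrobeniusTheorem}, $\cD$ is completely integrable; let $(H,\phi)$ be the maximal connected integral manifold passing through $e$, with $\phi\co H \to G$ the inclusion (an immersion), so $d\phi(\frakh) = \tilde\frakh$ by construction.

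Next I would put a Lie group structure on $H$. The key point is that $\cD$ is left-invariant: for any $g\in G$, left translation $l_g$ carries $\cD$ to itself, so it permutes the maximal integral manifolds. In particular, for $h \in \phi(H)$, the integral manifold through $h$ is $l_h(\phi(H))$, but it also passes through $h\cdot e = h$, and since $e \in \phi(H)$ we get $h^{-1} \in \phi(H)$ and more generally $\phi(H)$ is closed under multiplication; thus $\phi(H)$ is an (abstract) subgroup of $G$. To see the group operations are smooth on $H$, consider the multiplication map $m\co H\times H \to G$, $(a,b)\mapsto \phi(a)\phi(b)^{-1}$; it factors through $\phi(H)$ because $\phi(H)$ is a subgroup, and $H\times H$ is connected, so by Proposition \ref{smoothness} (applied to the integral manifold $(H,\phi)$ of the involutive distribution $\cD$ on $G$) the induced map $H\times H \to H$ is $C^\infty$. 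Hence $H$ is a Lie group and $(H,\phi)$ is a Lie subgroup of $G$ with $d\phi(\frakh) = \tilde\frakh$.

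For uniqueness, suppose $(H',\phi')$ is another connected Lie subgroup with $d\phi'(\frakh') = \tilde\frakh$. Then $\phi'(H')$ is an integral manifold of $\cD$ through $e$, so by maximality $\phi'(H') \subset \phi(H)$, giving a factorization $\phi' = \phi\circ f_0$; Proposition \ref{smoothness} again shows $f_0\co H'\to H$ is smooth, and it is a group homomorphism which is an immersion. A symmetric argument using Proposition \ref{generating} — since $H'$ is connected it is generated by any neighborhood of $e$, and near $e$ both $H$ and $H'$ look like the same slice — shows $f_0$ has open image and is a local diffeomorphism, hence $f_0(H')$ is an open subgroup of the connected group $H$, so $f_0$ is surjective; injectivity likewise follows from the local picture, so $f_0$ is an isomorphism of Lie subgroups.

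The main obstacle I expect is the careful bookkeeping around the integral manifold $(H,\phi)$: it is only an immersed (not embedded) submanifold, so one must be scrupulous about which topology $H$ carries and invoke Proposition \ref{smoothness} precisely to guarantee that maps into $\phi(H)$ lift smoothly to $H$, and one must verify that $H$ with its manifold topology is second countable so that it qualifies as a manifold — this is exactly the countability-of-slices argument inside the proof of Proposition \ref{smoothness}. Everything else (left-invariance of $\cD$, the subgroup property, smoothness of $m$) is then formal.
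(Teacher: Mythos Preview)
Your proposal is correct and follows essentially the same route as the paper: build the left-invariant distribution $\cD$ from $\tilde\frakh$, invoke Frobenius, take the maximal integral manifold through $e$, use left-invariance to get the subgroup property, apply Proposition~\ref{smoothness} for smoothness of the group operation, and for uniqueness factor the other subgroup through $(H,\phi)$ via Proposition~\ref{smoothness} and use Proposition~\ref{generating} for surjectivity. Your extra remark about second countability is a reasonable technical caution that the paper does not make explicit.
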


\begin{proof}
Let $d \coloneqq \dim \tilde\frakh$. Consider the smooth $d$--dimensional distribution $\cD$ given by
$$
\cD(g) \coloneqq \{X(g) \mid X \in \tilde\frakh\} \quad \text{for every }g\in G.
$$
$\cD$ is globally spanned by a basis $X_1, \ldots, X_d$ for $\tilde\frakh$. $\cD$ is involutive, for if $X$ and $Y$ are vector fields lying in $\cD$, then there are $C^\infty$ functions $\{a_i\}$ and $\{b_i\}$ on $G$ such that $X = \sum a_i X_i$ and $Y = \sum b_i X_i$, and therefore
$$
[X,Y] = \sum_{i,j=1}^d\{a_ib_j[X_i,X_j] + a_i X_i (b_i) Y_j - b_j Y_j(a_i) X_i\},
$$
which is a vector field in $\cD$ since $\tilde\frakh$ is a subalgebra of $\frakg$.

Let $(H,\phi)$ be a maximal connected integral manifold of $\cD$ through $e$. Let $g \in \phi(H)$. Since $\cD$ is invariant under left translations, $(H,l_{g^{-1}}\circ \phi)$ is also an integral manifold of $\cD$ through $e$ with $l_{g^{-1}} \circ \phi(H) \subset \phi(H)$. Therefore, $g,h\in \phi(H)$ implies $g^{-1}h\in\phi(H)$ and $\phi(H)$ is a subgroup of $G$. Equip $H$ with the group structure so that $\phi$ is a homomorphism.

We need to check that $H$ is a Lie group, i.e. that $\alpha: H \times H \to H$ with $\alpha(g,h) = gh^{-1}$ is $C^\infty$. Since $H$ is a subgroup of $G$, the map $\beta\co H\times H \to G$ given by $\beta(g,h) = \phi(g) \phi(h)^{-1}$ is $C^\infty$ and the following diagram commutes:
$$
\begin{diagram}
 \node{H\times H} \arrow{e,t}{\beta} \arrow{se,b}{\alpha} \node{G}\\
\node{} \node{H} \arrow{n,r}{\phi}
\end{diagram}
$$
Since $H$ is an integral manifold of an involutive distribution on $G$, $\alpha$ is also $C^\infty$ by Proposition \ref{smoothness}. Thus $(H,\phi)$ is a Lie subgroup of $G$, and if $\frakh$ is the Lie algebra of $H$, $d\phi(\frakh) = \tilde\frakh$ for dimension reasons by Theorem \ref{LieAlgebraHomo}.

To prove uniqueness, let $(K,\psi)$ be another connected Lie subgroup of $G$ with $d\phi(\frakk) = \tilde\frakh$. Then $(K,\phi)$ is also an integral manifold of $\cD$ through $e$. By the maximality of $(H,\phi)$, $\psi(K) \subset \phi(H)$, and there is a uniquely determined map $\psi_0$ such that $\phi \circ \psi_0 = \psi$
$$
\begin{diagram}
 \node{K} \arrow{e,t}{\psi} \arrow{se,b,..}{\psi_0}\node G\\
\node{}\node{H}\arrow{n,r}{\phi}
\end{diagram}
$$
By Proposition \ref{smoothness} $\psi_0$ is smooth, and therefore is an injective Lie group homomorphism. Also, $\psi_0$ is everywhere non-singular, in particular it is a diffeomorphism on a neighborhood of the identity, and therefore surjective by Proposition \ref{generating}. Thus $\psi_0$ is a Lie group isomorphism. This proves uniqueness.
\end{proof}

\begin{cor}
 There is a 1:1 correspondence between connected Lie subgroups of a Lie group and subalgebras of its Lie algebra.
\end{cor}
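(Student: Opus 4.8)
The plan is to read the correspondence straight off the previous theorem. To a connected Lie subgroup $(H,\phi)$ of $G$ I would assign the subspace $d\phi(\frakh) \subset \frakg$, where $\frakh$ denotes the Lie algebra of $H$. By the remark following the definition of a Lie subgroup --- itself an application of Theorem \ref{LieAlgebraHomo} --- the differential $d\phi$ is a Lie algebra isomorphism of $\frakh$ onto $d\phi(\frakh)$, so the assigned object is genuinely a subalgebra of $\frakg$; this is the only thing that needs checking for the map to be well defined. Here two connected Lie subgroups $(H,\phi)$ and $(K,\psi)$ are to be regarded as the same when there is a Lie group isomorphism $\chi\co K \to H$ with $\phi\circ\chi = \psi$ (equivalently, a connected Lie subgroup is recorded by the subset $\phi(H)\subset G$ together with its intrinsic manifold and group structure).

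Next I would establish surjectivity, which is precisely the existence half of the previous theorem: for any subalgebra $\tilde\frakh \subset \frakg$ it produces a connected Lie subgroup $(H,\phi)$ with $d\phi(\frakh) = \tilde\frakh$.

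For injectivity, suppose $(H,\phi)$ and $(K,\psi)$ are connected Lie subgroups with $d\phi(\frakh) = d\psi(\frakk) = \tilde\frakh$. The uniqueness half of the previous theorem then yields a Lie group isomorphism $\psi_0\co K \to H$ with $\phi\circ\psi_0 = \psi$, so the two subgroups are the same in the sense above. Combining the two assertions, the assignment is a bijection, and the claimed 1:1 correspondence follows.

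I expect the only delicate point --- the main obstacle, such as it is --- to be pinning down the right notion of ``the same connected Lie subgroup''. Once one agrees to identify subgroups related by an isomorphism compatible with the inclusions into $G$, the corollary is a purely formal consequence of the theorem and requires no further computation.
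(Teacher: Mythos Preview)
Your proposal is correct and matches the paper's approach: the paper states the corollary without proof, treating it as an immediate consequence of the preceding theorem, and your write-up simply makes explicit the bijection (subalgebra $\leftrightarrow$ connected Lie subgroup via $d\phi(\frakh)$) whose surjectivity and injectivity are exactly the existence and uniqueness assertions of that theorem. Your remark about identifying subgroups up to isomorphism compatible with the inclusions is the right clarification and is implicit in the paper's formulation.
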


\kommentar{\section{Simply connected Lie groups}}
Recall that a topological space is simply connected, if its fundamental group is trivial. Then one can show the following theorem with some more work on covering spaces. For a proof see \cite[Theorem 3.28]{warner83}.

\kommentar{\begin{thm}
 Let $G$ and $H$ be Lie groups with Lie algebras $\frakg$ and $\frakh$ respectively and with $G$ simply connected. Let $\psi\co \frakg\to \frakh$ be a homomorphism. Then there exists a unique homomorphism $\phi \co G \to H$ such that $d\phi = \psi$.\qed
\end{thm}}

\begin{thm}
 There is a 1:1 correspondence between isomorphism classes of Lie algebras and isomorphism lasses of simply connected Lie groups.
\end{thm}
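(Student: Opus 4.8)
The plan is to realize the bijection as the assignment sending the isomorphism class of a (finite-dimensional) simply connected Lie group $G$ to the isomorphism class of its Lie algebra $\frakg$, and to check that this assignment is well defined, injective, and surjective. Well-definedness is immediate from Theorem~\ref{LieAlgebraHomo}: an isomorphism of Lie groups is in particular a homomorphism, hence differentiates to a Lie algebra homomorphism, and the same applies to its inverse, so isomorphic groups have isomorphic Lie algebras.

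For injectivity, let $G$ and $H$ be simply connected Lie groups and let $\psi\co\frakg\to\frakh$ be an isomorphism of Lie algebras. Here I would invoke the integration theorem for homomorphisms (\cite[Theorem 3.28]{warner83}), which says that when the source group is simply connected, every Lie algebra homomorphism out of its Lie algebra lifts to a \emph{unique} Lie group homomorphism: since $G$ is simply connected there is a unique homomorphism $\phi\co G\to H$ with $d\phi=\psi$, and since $H$ is simply connected there is a unique homomorphism $\phi'\co H\to G$ with $d\phi'=\psi^{-1}$. Then $\phi'\circ\phi$ and $\Id_G$ are both homomorphisms $G\to G$ with differential $\Id_{\frakg}$, so the uniqueness clause (applied to $G$ itself) forces $\phi'\circ\phi=\Id_G$; symmetrically $\phi\circ\phi'=\Id_H$. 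Hence $\phi$ is a Lie group isomorphism and $G\cong H$.

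For surjectivity I must produce, for an arbitrary finite-dimensional real Lie algebra $\frakg$, a simply connected Lie group with Lie algebra isomorphic to $\frakg$. The first step is Ado's theorem: $\frakg$ admits a faithful finite-dimensional representation, so it is isomorphic to a subalgebra $\tilde\frakh\subset\frakgl(n,\R)$ for some $n$. Applying the subalgebra--subgroup theorem proved above with $G=\GL(n,\R)$, there is a connected Lie subgroup $(H,\phi)$ of $\GL(n,\R)$ whose Lie algebra is carried by $d\phi$ isomorphically onto $\tilde\frakh\cong\frakg$; thus $H$ is a connected Lie group with Lie algebra $\frakg$. Finally, let $\pi\co\tilde H\to H$ be the universal covering. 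By the standard lifting argument one equips $\tilde H$ with a unique Lie group structure for which $\pi$ is a homomorphism: lift the multiplication map $\tilde H\times\tilde H\to H$ through $\pi$, using that $\tilde H\times\tilde H$ is simply connected, and normalize at the identity; associativity, units and inverses then follow from uniqueness of lifts, and smoothness from the fact that $\pi$ is a local diffeomorphism. Since $\pi$ is a local diffeomorphism, $d\pi$ is a Lie algebra isomorphism, so $\tilde H$ is a simply connected Lie group with Lie algebra isomorphic to $\frakg$.

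The main obstacle is Ado's theorem, the one genuinely deep input here, which I would simply quote; in Warner's development it is precisely the ingredient that yields the existence of a Lie group integrating a prescribed Lie algebra. Everything else is bookkeeping: the injectivity argument is a formal consequence of the integration theorem together with its uniqueness clause, and the passage to the universal cover is routine covering-space theory once one records that the cover of a Lie group is again a Lie group with the same Lie algebra.
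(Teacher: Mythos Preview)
The paper does not actually supply a proof of this theorem; the surrounding text merely says ``one can show the following theorem with some more work on covering spaces'' and points to \cite[Theorem 3.28]{warner83}. Your argument is correct and is precisely the standard route that reference takes: injectivity from the integration theorem for homomorphisms out of a simply connected group (together with its uniqueness clause), and surjectivity from Ado's theorem combined with the subalgebra--subgroup correspondence proved just above in the paper and the passage to the universal cover. So there is nothing substantive to compare; you have filled in exactly what the paper chose to omit.
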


\section{The exponential map}

Let $G$ be a Lie group, and let $\frakg$ be its Lie algebra.

\begin{defn}
A homomorphism $\phi\co \R \to G$ is called a {\em 1--parameter subgroup of $G$}.
\end{defn}

We will see that 1--parameter subgroups are precisely the integral curves of left-invariant vector fields.

\begin{lem}
 Every left-invariant vector field on $G$ is complete.
\end{lem}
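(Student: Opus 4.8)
The plan is to reduce completeness of an arbitrary left-invariant vector field $X$ to completeness of just one integral curve by exploiting left-invariance. First I would fix $X\in\frakg$ and let $\gamma\co J\to G$ be its maximal integral curve with starting point $e$, where $J\subset\R$ is an open interval containing $0$. The key observation is that for any $g\in G$, the curve $t\mapsto g\cdot\gamma(t)=l_g(\gamma(t))$ is an integral curve of $X$ starting at $g$: indeed, differentiating and using the $l_g$-invariance of $X$ gives $(l_g\circ\gamma)'(t)=dl_g(\gamma'(t))=dl_g(X_{\gamma(t)})=X_{l_g(\gamma(t))}$. By uniqueness of integral curves (the Fundamental Theorem on Flows, Theorem \ref{fundthmflows}), this means the maximal integral curve starting at any $g\in G$ is defined on the \emph{same} interval $J$; in the notation of the flow, $\cD^{(g)}=J$ for every $g$, and $\theta_t(g)=g\cdot\gamma(t)$ whenever $t\in J$.

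Next I would upgrade this to $J=\R$ by a standard translation-doubling argument. Suppose $J\neq\R$; since $J$ is an open interval around $0$ we may assume it is bounded, say $J=(a,b)$ with $b<\infty$ (the case $a>-\infty$ is symmetric). Pick $\ep>0$ with $\ep<b$, so $\ep\in J$. For any $t$ with $|t-\ep|<b$, the curve $t\mapsto \gamma(\ep)\cdot\gamma(t-\ep)$ is, by the computation above, an integral curve of $X$ starting (at $t=\ep$) at $\gamma(\ep)\cdot\gamma(0)=\gamma(\ep)$; but $\theta_t(\gamma(\ep))$ is also such an integral curve, so by uniqueness $\gamma$ extends to the interval $(a,\ep+b)$, which properly contains $(a,b)$ since $\ep>0$. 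This contradicts maximality of $J$. Hence $J=\R$, i.e. $\gamma$, and therefore by left-translation every integral curve of $X$, is defined on all of $\R$, so the domain of the flow of $X$ is $\R\times G$ and $X$ is complete.

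I do not expect a serious obstacle here: the only subtlety is being careful that "maximal integral curve starting at $g$" really has domain $J$ independent of $g$, which is exactly where uniqueness of integral curves from Theorem \ref{fundthmflows} is used, and that the extension argument genuinely enlarges the interval (this needs $\ep>0$ strictly, and the observation that $\gamma(\ep)\cdot\gamma(t-\ep)$ agrees with $\gamma(t)$ on the overlap $(a,b)$, again by uniqueness, so the two formulas patch to a single integral curve). An equivalent and perhaps cleaner phrasing avoids the explicit interval manipulation: once we know $\cD^{(g)}\supset(-\delta,\delta)$ uniformly in $g$ for some $\delta>0$ (which holds by left-invariance together with the local existence part of the ODE theory on a neighborhood of $e$, transported around by $l_g$), property (2) of Theorem \ref{fundthmflows} forces $\cD^{(g)}=\R$ for all $g$ by iterating flows for time $\delta/2$; I would mention this as the streamlined version of the same idea.
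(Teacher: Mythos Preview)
Your proof is correct and follows essentially the same strategy as the paper: use left-invariance to show that left-translates of integral curves are again integral curves, then extend a putatively maximal finite-time integral curve by translating a short integral curve past the endpoint to reach a contradiction. The paper's version is organized slightly differently---it works at an arbitrary starting point $g$ and invokes Lemma~\ref{relatedflow} to obtain $l_g\circ\theta_t=\theta_t\circ l_g$, whereas you reduce everything to the curve through $e$ and compute $(l_g\circ\gamma)'=X_{l_g\circ\gamma}$ directly---but the content is the same. One small slip: the domain of $t\mapsto\gamma(\ep)\cdot\gamma(t-\ep)$ is $t-\ep\in J=(a,b)$, i.e.\ $t\in(a+\ep,b+\ep)$, not $|t-\ep|<b$; this does not affect the argument.
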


\begin{proof}
 Let $G$ be a Lie group, let $X \in \frakg$, and let $\theta$ denote the flow of $X$. Suppose some maximal integral curve $\theta^{(g)}$ with starting point $g$ is defined on an interval $(a,b) \subset \R$, and assume $b<\infty$. (The case $a > -\infty$ is handled analogously.) By Lemma \ref{relatedflow}
\begin{equation}\label{FlowCommutesWithLeftTranslation}
 l_g \circ \theta_t (h) = \theta_t \circ l_g (h)
\end{equation}
on the domain of $\theta$. The integral curve $\theta^{(e)}$ starting at the identity is at least defined on $(-\ep,\ep)$ for some $\ep >0$. Choose $c \in (b-\ep,b)$ and define the curve $\gamma\co (a,c+\ep) \to G$ by
$$
\gamma(t) = \begin{cases} \theta^{(g)}(t) & t\in(a,b)\\
l_{\theta^{(g)}(c)}(\theta^{(e)}(t-c)) & t\in(c-\ep,c+\ep).
            \end{cases}
$$
By \eqref{FlowCommutesWithLeftTranslation} we have for $t\in (a,b) \cap (c-\ep,c+\ep)$
\begin{align*}
l_{\theta^{(g)}(c)}(\theta^{(e)}(t-c)) &= l_{\theta^{(g)}(c)}(\theta_{t-c}(e))\\
&=\theta_{t-c}(l_{\theta^{(g)}(c)}(e)) = \theta_{t-c} (\theta_c(g))\\
&=\theta_t(g) = \theta^{(g)}(t),
\end{align*}
where $\theta_{t-c} \circ \theta_{c} =\theta_{t}$ by the Fundamental Theorem of Flows, so the definitions for $\gamma$ agree on the overlap.

On $(a,b)$ is an integral curve of $X$, for $t_0 \in (c-\ep,t+\ep)$ we compute using the chain-rule and the left-invariance of $X$
\begin{align*}
 \gamma'(t_0) &= \left.\frac{d}{dt}\right|_{t = t_0} l_{\theta^{(g)}(c)}(\theta^{(e)}(t-c))  =dl_{\theta^{(g)}(c)}\left(\left.\frac{d}{dt}\right|_{t = t_0} \theta^{(e)}(t-c)\right) \\
&=dl_{\theta^{(g)}(c)} (X_{\theta^{(e)}(t_0-c)})  = X_{\gamma(t_0)}.
\end{align*}
Therefore $(a,b)$ is not the maxmial of $\theta^{(g)}$.
\end{proof}

Using \eqref{FlowCommutesWithLeftTranslation} again, we can easily show the following.

\begin{exe}\label{IntegralCurveDeterminesOneParameterSubgroup}
 Let $X\in \frakg$. The integral curve of $X$ starting at $e$ is a one-parameter subgroup of $G$.
\end{exe}

\begin{thm}\label{OneParameterGroupEquivalence}
 Every one-parameter subgroup of $G$ is an integral curve of a left-invariant vector field. Thus there are one-to-one correspondences
$$
\{\text{one-parameter subgroups of }G\} \longleftrightarrow \frakg \longleftrightarrow T_e G
$$
In particular, a one-parameter subgroup is uniquely determined by its initial tangent vector in $T_e G$.
\end{thm}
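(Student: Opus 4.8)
The plan is to establish the missing direction---that every one-parameter subgroup $\phi\co\R\to G$ is an integral curve of some left-invariant vector field---and then observe that the two claimed bijections follow by assembling results already in hand. Given a one-parameter subgroup $\phi$, I would set $v\coloneqq\phi'(0)\in T_eG$ and let $X\in\frakg$ be the unique left-invariant vector field with $X_e=v$, using the isomorphism $\alpha$ of Proposition \ref{leftinvariantvf}(\ref{iso}). The claim is that $\phi$ is the integral curve of $X$ starting at $e$.

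The key computation is to differentiate $\phi$ at an arbitrary time $t_0$. Since $\phi$ is a homomorphism, $\phi(t_0+s)=\phi(t_0)\phi(s)=l_{\phi(t_0)}(\phi(s))$ for all $s$, so differentiating in $s$ at $s=0$ and using the chain rule gives
$$
\phi'(t_0)=dl_{\phi(t_0)}\bigl(\phi'(0)\bigr)=dl_{\phi(t_0)}(v)=X_{\phi(t_0)},
$$
where the last equality is exactly the left-invariance of $X$. Hence $\phi$ satisfies the defining ODE of an integral curve of $X$, and $\phi(0)=e$ since $\phi$ is a homomorphism. By uniqueness of integral curves (the Fundamental Theorem on Flows, Theorem \ref{fundthmflows}), $\phi$ coincides with $\theta^{(e)}$ on its domain; and since $X$ is complete by the preceding lemma, $\theta^{(e)}$ is defined on all of $\R$, so $\phi=\theta^{(e)}$ everywhere. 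Conversely, Exercise \ref{IntegralCurveDeterminesOneParameterSubgroup} already tells us that the integral curve of any $X\in\frakg$ through $e$ is a one-parameter subgroup, so the two constructions are mutually inverse.

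To finish, I would package this as the bijections in the statement: the map sending a one-parameter subgroup $\phi$ to $\phi'(0)\in T_eG$ is well-defined and injective by the uniqueness just proved, and surjective because for any $v\in T_eG$ the integral curve through $e$ of the left-invariant field $X$ with $X_e=v$ is a one-parameter subgroup with initial vector $v$. Composing with the inverse of $\alpha\co\frakg\xrightarrow{\sim}T_eG$ gives the correspondence with $\frakg$. The only point requiring a little care---and the main obstacle---is making sure the domain issue is handled cleanly: a priori a one-parameter subgroup is defined on all of $\R$ by definition, whereas the integral curve $\theta^{(e)}$ is only known to be maximal, so completeness of left-invariant vector fields is precisely what is needed to identify the two on the nose rather than merely on a subinterval. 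Everything else is a routine application of the chain rule and the uniqueness theorem for flows.
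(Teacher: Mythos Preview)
Your proof is correct and follows the same overall arc as the paper's: show that a one-parameter subgroup $\phi$ satisfies $\phi'(t_0)=X_{\phi(t_0)}$ for the left-invariant field $X$ with $X_e=\phi'(0)$, then invoke Exercise \ref{IntegralCurveDeterminesOneParameterSubgroup} for the converse direction. The one difference is in how the key identity is obtained. You compute it directly from the homomorphism property $\phi(t_0+s)=l_{\phi(t_0)}\phi(s)$ and the chain rule, whereas the paper sets $X=d\phi\bigl(\tfrac{d}{dt}\bigr)$ and appeals to Theorem \ref{LieAlgebraHomo}, which guarantees that $X$ and $\tfrac{d}{dt}$ are $\phi$-related---which is exactly the statement $\phi'(t_0)=X_{\phi(t_0)}$. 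Your route is more self-contained and makes the role of the group law explicit; the paper's route is shorter once the $\phi$-relatedness machinery is in place, and it emphasizes that this theorem is really just a special case of the general fact that Lie group homomorphisms induce Lie algebra homomorphisms (with $\R$ as the source group). Your attention to the domain issue via completeness is a nice touch that the paper leaves implicit.
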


\begin{proof}
 Let $\phi \co \R \to G$ be a one-parameter subgroup, and let $X = d\phi \left(\frac{d}{dt}\right) \in \frakg$, where $\frac{d}{dt}$ is considered as a left-invariant vector field on $\R$. Since $X$ is $\phi$--related to $\frac{d}{dt}$ we have
$$
\phi'(t_0) = d\phi_{t_0}\left.\left(\frac{d}{dt}\right|_{t_0}\right) = X_{t_0}.
$$
Therefore $\phi$ is the integral curve of $X$. Together with Exercise \ref{IntegralCurveDeterminesOneParameterSubgroup} and Theorem \ref{LieAlgebraHomo} the proof is complete.
\end{proof}

\begin{defn}
 We call the integral curve $\phi$ of $X$ the {\em one-parameter subgroup generated by $X$}. The {\em exponential map of $G$} is given by
$$
\exp(X)\coloneqq  \phi(1).
$$
\end{defn}

The following proposition explains the choice of name.

\begin{prop}
 For any matrix $A\in \frakgl(n,\R)$, let
$$
e^A = \sum_{k=0}^{\infty} \frac{1}{k!}A^k.
$$
This series converges to an invertible matrix, and the one-parameter subgroup of $\GL(n,\R)$ generated by the left-invariant vector field $\tilde A$ corresponding to $A$ is $\phi(t) = e^{tA}$.
\end{prop}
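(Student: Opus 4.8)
The plan is to separate the analytic claims (convergence of the series and invertibility of $e^A$) from the differential-geometric claim (that $\phi(t)=e^{tA}$ is the one-parameter subgroup generated by $\tilde A$).

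First I would establish convergence. Fix a submultiplicative norm $\|\cdot\|$ on $\frakgl(n,\R)$, for instance the operator norm. Then $\|A^k/k!\| \le \|A\|^k/k!$, so the partial sums of $\sum_k A^k/k!$ are Cauchy in the finite-dimensional — hence complete — space $\frakgl(n,\R)$, being dominated by the convergent real series $\sum_k \|A\|^k/k! = e^{\|A\|}$. Running the same estimate with $A$ replaced by $tA$, uniformly for $t$ in a compact interval, shows that $t\mapsto e^{tA}$ is a well-defined smooth curve and that the series may be differentiated term by term, giving $\tfrac{d}{dt}e^{tA} = Ae^{tA} = e^{tA}A$.

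Next come the group-homomorphism and invertibility claims. Since $sA$ and $tA$ commute, the Cauchy product of the two absolutely convergent series $e^{sA}$ and $e^{tA}$ may be rearranged, via the binomial theorem, into $\sum_m \tfrac{1}{m!}(sA+tA)^m = e^{(s+t)A}$; hence $t\mapsto e^{tA}$ is a homomorphism $\R\to\GL(n,\R)$, and in particular $e^{A}e^{-A} = e^{0} = I$, so $e^A$ is invertible with inverse $e^{-A}$. (One could instead deduce $e^{(s+t)A}=e^{sA}e^{tA}$ from uniqueness of solutions of the linear ODE $\gamma'=\gamma A$.) Finally I would identify the curve: recall from Example \ref{GL} that under the canonical identification $T_g\GL(n,\R)\cong\frakgl(n,\R)$, left translation $l_g$ is the restriction to $\GL(n,\R)$ of the linear map $X\mapsto gX$ on $\frakgl(n,\R)$, so its differential at any point is again $X\mapsto gX$; consequently the left-invariant vector field $\tilde A$ has value $\tilde A_g = gA$ (matrix product) at $g\in\GL(n,\R)$. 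Thus $\phi(t)=e^{tA}$ satisfies $\phi(0)=I$ and $\phi'(t) = e^{tA}A = \phi(t)A = \tilde A_{\phi(t)}$, so $\phi$ is the integral curve of $\tilde A$ starting at $I=e$, which by Exercise \ref{IntegralCurveDeterminesOneParameterSubgroup} and Theorem \ref{OneParameterGroupEquivalence} is precisely the one-parameter subgroup generated by $\tilde A$.

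The step I expect to require the most care is the last one: establishing that $\tilde A_g = gA$. This depends on the identification of $T_g\GL(n,\R)$ with $\frakgl(n,\R)$ being compatible with the differential of the (linear) left-translation map, which is exactly what Example \ref{GL} sets up; once that is granted, the remainder is a routine manipulation of absolutely convergent series together with term-by-term differentiation.
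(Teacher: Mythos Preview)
Your proof is correct and follows essentially the same route as the paper: establish convergence via submultiplicativity, justify term-by-term differentiation to get $\phi'(t)=\phi(t)A$, compute that the left-invariant vector field satisfies $\tilde A_g=gA$, and conclude by uniqueness of integral curves. The only notable difference is that you prove invertibility directly via the Cauchy product identity $e^{sA}e^{tA}=e^{(s+t)A}$, whereas the paper leaves invertibility implicit (it follows once $\phi$ is identified with the integral curve $\gamma$, which lives in $\GL(n,\R)$); and you argue $\tilde A_g=gA$ via linearity of $l_g$, while the paper phrases the same computation in terms of the global coordinate functions $x_{ij}$.
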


\begin{proof}
 Since $\|AB\| \le \|A\|\|B\|$ and therefore $\|A^k\| \le \|A\|^k$ with the standard norm on $\R^{n^2}$, the series converges uniformly and $\|e^A\| \le e^{\|A\|}$.

After differentiating each summand, the series still converges uniformly on bounded sets. Therefore term-by-term differentiation is justified and $\phi(t)=e^{tA}$ satisfies the differential equation $\phi'(t) = \phi(t) A$ as well as $\phi(0) = e$.

Consider the left-invariant vector field $\tilde A$ corresponding to $A$. Therefore, we have for our (global) coordinate functions $x_{ij}$ on $\GL(n,\R)$ that $\tilde A_e(x_{ij}) = A_{ij}$. The one-parameter subgroup $\gamma$ generated by $\tilde{A}$ is the unique integral curve of $\tilde A$ starting at the identity, i.e. it is the unique solution to the initial value problem
\begin{align*}
 \gamma'(t) &= \tilde A_{\gamma(t)} = dl_{\gamma(t)} \tilde A_e\\
 \gamma(0) & = e,
\end{align*}
which written in matrix form via our coordinates $x_{ij}$ can be easily seen to be equivalent to
\begin{align*}
 \gamma'(t) &= \gamma(t)A\\
 \gamma(0) & = e.
\end{align*}
Therefore $\gamma(t) = \phi(t)$.
\end{proof}

The following proposition is not surprising, but is not apparent in our previous discussion. In particular, it tells us what the one-parameter subgroups of matrix Lie groups are.

\begin{prop}
 Let $H \subset G$ be a Lie subgroup. The one-parameter subgroups of $H$ are precisely those one-parameter subgroups of $G$ whose initial tangent vectors lie in $T_eH$.
\end{prop}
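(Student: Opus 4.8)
The plan is to realize the claimed correspondence concretely, by composing one-parameter subgroups of $H$ with the inclusion and then identifying the resulting map with an honest one-parameter subgroup of $G$ using completeness of left-invariant vector fields together with uniqueness of integral curves. Throughout I would write the Lie subgroup as $(H,\iota)$. By Theorem~\ref{LieAlgebraHomo} the differential $d\iota$ identifies $\frakh$ with the subalgebra $d\iota(\frakh)\subset\frakg$, so $d\iota_e$ identifies $T_eH$ with a subspace of $T_eG$; it is this subspace that the statement refers to as ``$T_eH$''.

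First I would handle the easy inclusion: given a one-parameter subgroup $\tilde\gamma\co\R\to H$, the composite $\iota\circ\tilde\gamma\co\R\to G$ is a composition of Lie group homomorphisms, hence a one-parameter subgroup of $G$, with initial tangent vector $(\iota\circ\tilde\gamma)'(0)=d\iota_e(\tilde\gamma'(0))\in d\iota_e(T_eH)$. Since $\iota$ is injective, distinct one-parameter subgroups of $H$ give distinct one-parameter subgroups of $G$, so this produces an injection from one-parameter subgroups of $H$ into those of $G$ whose initial tangent vector lies in $T_eH$.

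For the reverse inclusion, let $\gamma\co\R\to G$ be a one-parameter subgroup with $\gamma'(0)=v=d\iota_e(w)$ for some $w\in T_eH$. Let $\tilde X\in\frakh$ be the left-invariant vector field on $H$ with $\tilde X_e=w$, and let $\tilde\gamma$ be its integral curve starting at $e\in H$; by the lemma that left-invariant vector fields are complete, $\tilde\gamma$ is defined on all of $\R$, and by Exercise~\ref{IntegralCurveDeterminesOneParameterSubgroup} it is a one-parameter subgroup of $H$. By Theorem~\ref{LieAlgebraHomo}, $\tilde X$ is $\iota$-related to $X\coloneqq d\iota(\tilde X)\in\frakg$, and $X_e=d\iota_e(w)=v$; hence by Lemma~\ref{relatedflow} the curve $\iota\circ\tilde\gamma$ is an integral curve of $X$ starting at $\iota(e)=e$. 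But the one-parameter subgroup of $G$ generated by $X$ is, by Theorem~\ref{OneParameterGroupEquivalence}, the unique one-parameter subgroup of $G$ with initial tangent vector $v$, namely $\gamma$. Uniqueness of integral curves then forces $\gamma=\iota\circ\tilde\gamma$, so $\gamma$ comes from the one-parameter subgroup $\tilde\gamma$ of $H$; thus the injection of the previous step is also surjective, giving the asserted bijective correspondence.

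I expect no serious conceptual obstacle; the only care needed is bookkeeping. A Lie subgroup need not be embedded, so the manifold topology on $H$ can be strictly finer than the subspace topology, and one must never silently identify a one-parameter subgroup of $H$ with a curve in $G$ — which is why the argument is kept phrased in terms of $\iota$ and $d\iota$. The other point to be vigilant about is that one genuinely needs completeness of left-invariant vector fields to know $\tilde\gamma$ is globally defined before comparing it with $\gamma$ by uniqueness; everything else is a direct application of Theorem~\ref{LieAlgebraHomo}, Lemma~\ref{relatedflow}, Exercise~\ref{IntegralCurveDeterminesOneParameterSubgroup}, and Theorem~\ref{OneParameterGroupEquivalence}.
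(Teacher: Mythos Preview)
Your proof is correct and follows essentially the same approach as the paper. The paper's version is more compressed: for the reverse inclusion it simply invokes Theorem~\ref{OneParameterGroupEquivalence} twice---once in $H$ to produce $\psi$ with the given initial vector, and once in $G$ to force $\iota\circ\psi=\phi$---whereas you unpack this by explicitly constructing the integral curve, appealing to completeness, and using $\iota$-relatedness via Lemma~\ref{relatedflow}; but the underlying argument is the same.
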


\begin{proof}
 Let $\phi\co \R \to H$ be a one-parameter subgroup. Then $$\R \stackrel{\phi}{\longrightarrow} H \hookrightarrow G$$ is a Lie group homomorphism and thus a one-parameter subgroup of $G$.

On the other hand, if $\phi\co \R \to G$ and $\psi \co \R \to H$ are one-parameter subgroup with the same initial vector in $T_e H$, then $$\R \stackrel{\psi}{\longrightarrow} H \hookrightarrow G$$ must be equal $\phi$.
\end{proof}

\begin{prop}[Properties of the exponential map]\label{expproperties}
 Let $G$ be a Lie group and $\frakg$ be its Lie algebra.
\begin{enumerate}
 \item The exponential map $\exp\co \frakg \to G$ is smooth.
 \item For any $X\in \frakg$, $\phi(t)  =\exp(tX)$ is the one-parameter subgroup of $G$ generated by $X$.
 \item For any $X \in \frakg$, $\exp(s+t) X = \exp(sX)\exp(tX)$.
 \item Under the canonical identifications of $T_0\frakg$ and $T_eG$, $d\exp_0 = {\rm id}$.
 \item The exponential map restricts to a diffeomorphism from a neighborhood of 0 in $\frakg$ to a neighborhood of $e$ in $G$.
 \item If $H$ is another Lie group and $\frakh$ its Lie algebra, then for any Lie group homomorphism $f\co G \to H$ the following diagram commutes:
$$
\begin{diagram}
 \node{\frakg} \arrow{e,t}{df} \arrow{s,l}{\exp}\node{\frakh}\arrow{s,r}{\exp}\\
\node{G} \arrow{e,b}{f} \node H
\end{diagram}
$$
 \item The flow $\theta$ of $X\in \frakg$ is given by $\theta_t = r_{\exp(t X)}$, where $r$ denotes right translation.
\end{enumerate}
\end{prop}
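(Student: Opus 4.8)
The plan is to prove (1) first, since the smoothness of $\exp$ is the only genuinely analytic ingredient; properties (2)--(7) then follow from (1), Theorem~\ref{OneParameterGroupEquivalence}, Theorem~\ref{LieAlgebraHomo}, and a one-line reparametrization of integral curves. Throughout I identify $\frakg$ with $T_eG$ via the isomorphism $\alpha$ from Proposition~\ref{leftinvariantvf}, and I write $\gamma_X$ for the one-parameter subgroup generated by $X$.

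For (1) the idea is to realize $\exp$ as a partial evaluation of a single smooth flow, so that the smooth dependence of solutions of ODEs on initial data --- already packaged in Theorem~\ref{fundthmflows} --- also delivers smooth dependence on the ``parameter'' $X$. Since multiplication $G\times G\to G$ is smooth, the map $G\times\frakg\to TG$, $(g,X)\mapsto X_g = dl_g(\alpha(X))$, is smooth (it is linear in $X$); hence $\Xi_{(g,X)}\coloneqq(X_g,0)\in T_gG\oplus T_X\frakg$ defines a smooth vector field $\Xi$ on $G\times\frakg$. The integral curve of $\Xi$ through $(e,X)$ keeps its $\frakg$--coordinate fixed at $X$ while its $G$--coordinate solves exactly the initial value problem defining $\gamma_X$; since every left-invariant vector field is complete, this curve is defined for all $t$, so $\bigl(1,(e,X)\bigr)$ lies in the open domain of the flow $\Psi$ of $\Xi$ for every $X$. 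Then $\exp(X)=\pi_G\bigl(\Psi_1(e,X)\bigr)$, where $\pi_G$ is the projection onto $G$, is a composition of smooth maps, hence smooth.

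Properties (2) and (3) come from the rescaling identity $\gamma_{cX}(t)=\gamma_X(ct)$: the curve $t\mapsto\gamma_X(ct)$ starts at $e$ and has velocity $c\,\gamma_X'(ct)=c\,X_{\gamma_X(ct)}=(cX)_{\gamma_X(ct)}$, so it is the integral curve of $cX$ through $e$. Hence $\exp(tX)=\gamma_{tX}(1)=\gamma_X(t)$, which is (2), and (3) is then the homomorphism property $\gamma_X(s+t)=\gamma_X(s)\gamma_X(t)$. For (4), by (2) we get $d\exp_0(X)=\tfrac{d}{dt}\big|_{t=0}\exp(tX)=\gamma_X'(0)=X_e$, i.e.\ $d\exp_0=\mathrm{id}$ under the canonical identifications; (5) is then immediate from (1), (4), and the inverse function theorem. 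For (6), the composite $f\circ\gamma_X\co\R\to H$ is a homomorphism, hence a one-parameter subgroup, and its initial velocity is $df_e(X_e)=(df(X))_e$ by \eqref{identification}; by Theorem~\ref{OneParameterGroupEquivalence} it is therefore the one-parameter subgroup generated by $df(X)$, so evaluating at $t=1$ yields $f(\exp X)=\exp(df(X))$. Finally (7) restates \eqref{FlowCommutesWithLeftTranslation} with $\theta^{(e)}=\gamma_X$: since $X$ is complete, $\theta_t(g)=\theta_t(l_g(e))=l_g(\theta_t(e))=g\,\gamma_X(t)=g\exp(tX)=r_{\exp(tX)}(g)$ for all $t$ and $g$.

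The main obstacle is (1): all the analysis is concentrated there, and the only real subtlety is to work on $G\times\frakg$ rather than $G$ alone, so that the Fundamental Theorem on Flows simultaneously yields smoothness in the base point and in the parameter $X$. Everything after that is formal.
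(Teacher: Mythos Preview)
Your proof is correct and follows essentially the same route as the paper: the same auxiliary vector field on $G\times\frakg$ for (1), the same rescaling $\gamma_{cX}(t)=\gamma_X(ct)$ for (2)--(3), and the same uses of the inverse function theorem, the uniqueness of one-parameter subgroups, and \eqref{FlowCommutesWithLeftTranslation} for (4)--(7). The only cosmetic difference is that you spell out the smoothness of $\Xi$ a bit more carefully than the paper does.
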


\begin{proof}  For any $X\in \frakg$ let $\theta_{(X)}$ its flow.
\begin{enumerate}
 \item Define a smooth vector field $Y$ on $G\times \frakg$ by
$$
Y_{(g,X)} \coloneqq  (X_g,0) \in T_gG \times T_X \frakg \cong T_{(g,X)}(G\times \frakg)
$$
The flow of $Y$ is given by $\eta_t(g,X) \coloneqq  (\theta_{(X)}(t,g),X)$. By the fundamental theorem of flows, Theorem \ref{fundthmflows}, $\eta$ is smooth. Since $\exp X = \pi_1(\theta_1(e,X))$, where $\pi_1\co G \times \frakg \to G$ is the projection, $\exp$ is smooth.
 \item We need to show that $\exp(tX) = \theta_{(X)}^{(e)}(t)$, in other words
$$
\theta_{(tX)}^{(e)}(1) = \theta_{(X)}^{(e)}(t).
$$
Fix $t\in \R$ and define $\gamma (s) =  \theta_{(X)}^{(e)}(st)$. Then by the chain rule,
$$\gamma'(s) = t(\theta_{(X)}^{(e)})'(st) = tX_{\gamma(s)}.$$
So $\gamma$ is integral curve of $tX$. Since $\gamma(0) = e$, by the uniqueness of integral curves we have $\gamma(s) =\theta^{(e)}_{(tX)}(s)$. For $s=1$ we get the desired formula.
\item This follows from the one-parameter group being a group homomorphism.
\item Let $X\in T_eG = T_0\frakg$ be arbitrary and $\sigma(t) = tX$. Then $\sigma'(0) = X$ and by (2) we have
$$
d\exp_0 X = d\exp_0\sigma'(0) = (\exp\circ\sigma)'(0) = \eval{\frac{d}{dt}}{t=0} \exp tX = X.
$$
\item This follows from (4) by the inverse function theorem.
\item By (2) $\exp (tdf(X))$ is the one-parameter subgroup generated by $df(X)$. Let $\sigma(t) \coloneqq  f(\exp(tX))$. Then
$$
\sigma'(0) = \eval{\frac{d}{dt}}{t=0} f(\exp(tX)) = df\left(\eval{\frac{d}{dt}}{t=0}\exp(tX)\right) = df(X)
$$
and
$$
\sigma(s+t) =f(\exp(s+t)X) \stackrel{(3)}{=} f(\exp(sX) \exp(tX)) = f(\exp(sX)) f(\exp(tX)) = \sigma(s)\sigma(t).
$$
So $\sigma$ is also a one-parameter subgroup satisying the same differential equation and therefore equals $\exp (tdf(X))$.
\item By (2) and Lemma \ref{relatedflow} we have
\[
r_{\exp(tX)}(g)=g\exp(tX)=l_g(\exp(tX))=l_g(\theta_{(X)t}(e)) = \theta_{(X)t}(l_g(e)) = \theta_{(X)t}(g).\qedhere
\]
\end{enumerate}
\end{proof}

\section{Lie Group actions}

Let $G$ be a Lie group and $M$ a smooth manifold.
\begin{defn}
A {\em left action of $G$ on $M$} is a smooth map
\begin{align*}
\theta\co G\times M &\to M\\
(g,p) &\mapsto \theta_g(p) = g\cdot p
\end{align*}
such that
\[
 g\cdot (h \cdot p) = (g\cdot h) \cdot p \quad \text{and} \quad e\cdot p = p \quad \text{for all } g,h\in G, p\in M.
\]
A {\em right action of $G$ on $M$} is a smooth map
\begin{align*}
M \times G &\to M\\
(p,g) &\mapsto p\cdot g
\end{align*}
such that
\[
 (p\cdot g) \cdot h = g\cdot (g \cdot h) \quad \text{and} \quad p\cdot e = p \quad \text{for all } g,h\in G, p\in M.
\]
\end{defn}

We will list some frequently used terminology regarding group actions. Consider a left action of $G$ on $M$.
\begin{itemize}
 \item The {\em orbit of $p$} under the action is the set $G\cdot p \coloneqq \{g\cdot p \mid g\in G\}$.
 \item The {\em isotropy group of $p$} is given by $G_p \coloneqq \{ g \mid g \cdot p = p\}$.
 \item The action is {\em transitive} if for any $p,q\in M$ there is a $g\in G$ such that $g\cdot p = q$.
 \item The action is {\em free} if $G_p = \{e\}$ for all $p\in M$.
 \item $p\in M$ is a {\em fixed point of the action}, if $g\cdot p = p$ for all $g\in G$.
\end{itemize}

A few relevant examples are the following:
\begin{enumerate}
 \item Any Lie group acts freely and transivitely on itself by multiplication.
 \item Any Lie group $G$ also acts on itself by conjugation $C_g h \coloneqq ghg^{-1}$, for which $G_h$ are all elements which commute with $g$. Notice that $C_g$ is a Lie group homomorphism.
 \item $O(n)$ naturally acts on $\R^n$ and restricts to a transitive action on $S^{n-1}$.
\end{enumerate}

Using the following concept we can generate a lot more important examples.

\begin{defn}
 If $G$ is a Lie group, a (finite-dimensional) {\em representation of $G$} is a Lie group homomorphism $\rho\co G \to \GL(V)$ for some finite-dimensional real or complex vector space. We will also encounter (possibly infinite-dimensional) representations on Hilbert spaces, where $\GL(V)$ gets replaced by the group of bounded linear operators on a Hilbert space. If $\rho$ is injective, $\rho$ is called {\em faithful}.
\end{defn}

Any representation $\rho$ yields a left action $g\cdot v \coloneqq \rho(g)v$ for $g\in G, v\in V$. If $G$ is a Lie subgroup of $\GL(n,\R)$, then $G\hookrightarrow \GL(n,\R)$ is a faithful representation, called the {\em defining representation}.

If $G$ is a Lie group and $\frakg$ its Lie algebra, consider the Lie algebra homomorphism $\Ad$ induced by $C_g$. By the following theorem $\Ad\co G \to \GL(\frakg)$ and $\Ad$ is a representation of $G$, called the {\em adjoint representation}.
\begin{thm}
 Let $\theta\co G\times M \to M$ be a left action for which $p$ is a fixed point, then the map
\begin{align*}
\rho \co G &\to \Aut(T_pM)\\
g&\mapsto (d\theta_g)_p
\end{align*}
is a representation of $G$.
\end{thm}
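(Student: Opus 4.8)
The statement to prove is: if $\theta\co G\times M\to M$ is a left action with a fixed point $p$, then $\rho\co G\to\Aut(T_pM)$, $g\mapsto(d\theta_g)_p$, is a representation, i.e.\ a Lie group homomorphism into $\GL(T_pM)$. There are three things to check: that each $(d\theta_g)_p$ is actually an invertible linear map $T_pM\to T_pM$; that $\rho$ is a group homomorphism; and that $\rho$ is smooth.

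First I would observe that since $p$ is a fixed point, $\theta_g(p)=p$ for every $g$, so $\theta_g$ is a diffeomorphism of $M$ fixing $p$, and hence its differential at $p$ is a linear automorphism $(d\theta_g)_p\co T_pM\to T_pM$; this puts $\rho$ into $\Aut(T_pM)=\GL(T_pM)$. Next, the homomorphism property: from the action axioms we have $\theta_g\circ\theta_h=\theta_{gh}$ as maps $M\to M$ (both send $q$ to $g\cdot(h\cdot q)=(gh)\cdot q$), and $\theta_e=\Id_M$. Applying the chain rule at the fixed point $p$, and using $\theta_h(p)=p$, gives
$$
(d\theta_{gh})_p = d(\theta_g\circ\theta_h)_p = (d\theta_g)_{\theta_h(p)}\circ(d\theta_h)_p = (d\theta_g)_p\circ(d\theta_h)_p,
$$
so $\rho(gh)=\rho(g)\rho(h)$, and $\rho(e)=\Id_{T_pM}$; invertibility of each $\rho(g)$ then also follows from $\rho(g)\rho(g^{-1})=\rho(e)=\Id$.

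The main point requiring care is smoothness of $\rho\co G\to\GL(T_pM)$. I would argue this locally: choose a chart around $p$ so that $T_pM$ is identified with $\R^n$, and express $(d\theta_g)_p$ as the Jacobian matrix of the map $q\mapsto\theta(g,q)$ at $q=p$. Because $\theta\co G\times M\to M$ is smooth jointly in both variables, its partial derivatives in the $M$-directions, evaluated at the fixed point $p$, depend smoothly on $g$; concretely the $ij$-entry of $\rho(g)$ is $\partial_{q^j}\theta^i(g,q)|_{q=p}$, a smooth function of $g$. Hence $\rho$ is smooth as a map into the matrix group $\GL(n,\R)\cong\GL(T_pM)$. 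Together with the homomorphism property this shows $\rho$ is a Lie group homomorphism, i.e.\ a representation of $G$ on $T_pM$. I expect the only genuinely delicate step to be making the smoothness argument clean — one must use joint smoothness of the action and the fact that differentiating in the manifold variable commutes appropriately with the dependence on $g$ — but this is routine once the chart is fixed.
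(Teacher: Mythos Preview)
Your proof is correct and follows the same overall structure as the paper: verify that $\rho$ lands in $\Aut(T_pM)$, check the homomorphism property via the chain rule at the fixed point, then establish smoothness. The only difference is in how smoothness is argued. You work in a local chart around $p$ and read off the matrix entries of $\rho(g)$ as partial derivatives $\partial_{q^j}\theta^i(g,q)|_{q=p}$, which are smooth in $g$ by joint smoothness of $\theta$. The paper instead gives a coordinate-free version: it observes that each linear coordinate function $A\mapsto v^*(Av)$ on $\Aut(T_pM)$, when composed with $\rho$, factors as
\[
G \xrightarrow{\,i\,} T(G)\times T(M) \cong T(G\times M) \xrightarrow{\,d\theta\,} T(M) \xrightarrow{\,v^*\,} \R,
\]
with $i(g)=((g,0),(p,v))$, which is manifestly a composition of smooth maps. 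Your argument is more hands-on and arguably more transparent; the paper's is slicker and avoids choosing coordinates. Both are standard and equivalent.
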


\begin{proof}
Notice that $\rho$ not only maps into the endomorphisms of $T_pM$ but the Lie group of automorphisms, because $\rho(e) = \Id$ and
\[
\rho(gh) = (d\theta_{gh})p = d(\theta_g\circ\theta_h)_p = \rho(g)\rho(h)
\]
imply that $\rho(g)$ is invertible with $\rho(g)^{-1}=\rho(g^{-1})$. The above remarks also show, that $\rho$ is a homomorphism of groups.

It remains to show that $\rho$ is $C^\infty$. It suffices to show that $\rho$ composed with any coordinate function on $\Aut(T_p M)$ is $C^\infty$. One gets a coordinate system by using a basis of $T_pM$ (and its dual basis) to identify $\Aut(T_pM)$ with non-singular matrices: a coordinate function on $\Aut(T_pM)$ is of the form
\begin{align*}
\phi\co \Aut(T_pM) &\to \R\\
 A &\mapsto v^*(Av),
\end{align*}
where $v \in T_pM$ is a basis vector and $v^*$ is the dual vector with respect to the basis. Then $\phi\circ \rho$ is smooth, because it is the composition of $C^\infty$ maps
\[
 G \stackrel{i}{\to}  T(G) \times T(M) \stackrel{\cong}{\to} T(G\times M) \stackrel{d\theta}{\to} T(M) \stackrel{v^*}{\to} \R
\]
where $i(g)=((g,0),(p,v))$.
\end{proof}

Recall that the equivalent of a directional derivative in Euclidean space is the Lie derivative $L_X Y$ of smooth vector fields $X$ and $Y$ given by
\[
(L_X Y)_p = \eval{\frac{d}{dt}}{t=0}d(\theta_{-t}) Y_{\theta_{t}(p)} = \lim_{t\to 0}\frac{d(\theta_{-t}) Y_{\theta_{t}(p)} - Y_p}{t},
\]
where $\theta$ is the flow of $X$.

We will need to know the following important result.
\begin{prop} For any smooth vector fields $X$ and $Y$ on a smooth manifold $M$
\[
L_X Y= [X,Y].
\]
\end{prop}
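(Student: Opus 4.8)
The statement to prove is that $L_XY = [X,Y]$ for all smooth vector fields $X,Y$ on a manifold $M$. The plan is to work locally and to use the Fundamental Theorem on Flows (Theorem~\ref{fundthmflows}) to reduce everything to a differentiation-under-the-flow computation. Fix $p\in M$, let $\theta$ be the flow of $X$, and recall the definition
\[
(L_XY)_p = \eval{\frac{d}{dt}}{t=0} d(\theta_{-t})\,Y_{\theta_t(p)}.
\]
I would introduce a smooth function $f$ defined near $p$ and show $(L_XY)_p f = [X,Y]_p f$; since $f$ is arbitrary this gives the claim.

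\textbf{Key steps.}
First I would record the Hadamard-type lemma: for a smooth $f$ defined on a neighborhood of $p$, there is an $\varepsilon>0$ and a smooth function $g$ on $(-\varepsilon,\varepsilon)\times(\text{nbhd of }p)$ with
\[
f(\theta_t(q)) = f(q) + t\,g(t,q), \qquad g(0,q) = (Xf)(q),
\]
obtained by writing $g(t,q) = \int_0^1 \frac{\partial}{\partial s}f(\theta_{st}(q))\,ds$ and using that $\frac{\partial}{\partial s}f(\theta_{st}(q)) = t\,(Xf)(\theta_{st}(q))$ by the definition of an integral curve. Let $g_t(q) \coloneqq g(t,q)$, so $g_0 = Xf$. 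Second, I would compute, for fixed $q$ near $p$,
\[
\bigl(Y_{\theta_t(q)}\bigr)(f) = Y_{\theta_t(q)}\Bigl(f + t\,g_t\Bigr)\big|_{\text{applied at }\theta_t(q)} \qu( \text{careful bookkeeping needed here}\),
\]
more precisely $(Yf)(\theta_t(q)) = (Y(f\circ\theta_t^{-1}\cdot\theta_t))\dots$; the cleaner route is: $d(\theta_{-t})Y_{\theta_t(q)}$ acts on $f$ by $\bigl(d(\theta_{-t})Y_{\theta_t(q)}\bigr)f = Y_{\theta_t(q)}(f\circ\theta_{-t})$. Apply the Hadamard decomposition to $f\circ\theta_{-t}$: $f\circ\theta_{-t} = f - t\,g_{-t}$ (reindexing $t\mapsto -t$ in the formula above, where now the ``$g$'' is evaluated along $\theta_{-t}$), so
\[
\bigl(d(\theta_{-t})Y_{\theta_t(q)}\bigr)f = (Yf)(\theta_t(q)) - t\,(Yg_{-t})(\theta_t(q)).
\]
Third, differentiate at $t=0$: the first term contributes $\frac{d}{dt}\big|_{t=0}(Yf)(\theta_t(q)) = X_q(Yf) = (X(Yf))(q)$ by the definition of the flow of $X$, and the second term contributes $-\,(Yg_0)(q) = -(Y(Xf))(q)$ (the product rule kills the $t$-derivative of $g_{-t}$ paired with the value at $t=0$, leaving only $g_0$). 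Hence $(L_XY)_q f = (X(Yf))(q) - (Y(Xf))(q) = [X,Y]_q f$, and since $f$ was arbitrary, $(L_XY)_q = [X,Y]_q$. Setting $q=p$ finishes the proof.

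\textbf{Main obstacle.}
The genuine work is entirely in the Hadamard-lemma bookkeeping and in keeping straight which flow ($\theta_t$ versus $\theta_{-t}$) one differentiates and where functions are evaluated; it is easy to drop a sign or conflate $(Yf)\circ\theta_t$ with $Y(f\circ\theta_t)$. I would therefore be deliberate about the identity $\bigl(d(\theta_{-t})v\bigr)f = v(f\circ\theta_{-t})$ for $v\in T_{\theta_t(q)}M$, and about the fact that the $t$-dependence of $g_{-t}$ enters the final derivative only through $g_0 = Xf$ by the Leibniz rule. Everything else is a routine application of smoothness of the flow from Theorem~\ref{fundthmflows} and the chain rule; no new machinery beyond what is already in the excerpt is needed.
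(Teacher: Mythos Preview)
Your approach is correct and essentially identical to the paper's proof: both introduce the Hadamard-type remainder $f\circ\theta_t = f + t\,g_t$ with $g_0 = Xf$ via the integral $\int_0^1 \tfrac{\partial}{\partial s}f(\theta_{st}(q))\,ds$, use the identity $\bigl(d(\theta_{-t})Y_{\theta_t(p)}\bigr)f = Y_{\theta_t(p)}(f\circ\theta_{-t}) = (Yf)(\theta_t(p)) - t\,(Yg_{-t})(\theta_t(p))$, and then split the limit of the difference quotient into $X_p(Yf) - Y_p(Xf)$. The paper presents exactly these steps; your write-up would benefit only from cleaning up the garbled intermediate line and stating the difference quotient explicitly rather than ``differentiate at $t=0$,'' but the mathematics is the same.
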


\begin{proof}
 Let $f \in C^\infty(M)$. Let $\theta_t$ be the flow of $X$ and define $F(t,p) := f(\theta_t(p)) - f(p)$ and $F' := \tfrac{\partial}{\partial t}F$. Set \[g_t(p) := \int_0^1 F'(ts,p) ds.\]
Then $g \co (-\ep,\ep) \times M \to \R$ is smooth and satisfies for $t > 0$
\[
t g_t(p) = \int_0^1 t F'(ts,p) ds = \int_0^t F'(\tilde s,p)d\tilde s = F(t,p).
\]
Similarly we can extend this to $t g_t(p) = f(\theta_t(p))-f(p)$ for all $t\in (-\ep,\ep)$, which yields
\[
X_pf = \eval{\frac{d}{dt}}{t=0} f (\theta_t(p)) = \lim_{t\to 0} \frac{f(\theta_t(p))-f(p)}{t} = \lim_{t\to 0} g_t(p) = g_0(p)
\]
as well as
\[
(d\theta_{-t}) Y_{\theta_{t}(p)} f = Y_{\theta_{t}(p)}(f \circ \theta_{-t}) = Y_{\theta_{t}(p)} f  - tY_{\theta_{t}(p)} (g_{-t}).
\]
Therefore,
\begin{align*}
 (L_X Y)_p (f) &= \lim_{t\to 0}\frac{d(\theta_{-t}) Y_{\theta_{t}(p)}f-Y_pf}{t} = \lim_{t\to 0}\frac{Y_{\theta_{t}(p)}f-Y_pf}{t} - \lim_{t\to 0}Y_{\theta_{t}(p)} (g_{-t})\\
&= \eval{\frac{d}{dt}}{t=0} Y (f \circ \theta_t(p)) - Y_p  g_0  = X_p (Y f) - Y_p  (X f) = [X,Y]_p f.\qedhere
\end{align*}
\end{proof}

A Lie algebra homomorphism $\frakg \to End(V)$ for some vector space $V$ is also known as a representation of $\frakg$.

\begin{thm}\label{adandliebracket}
 Let $G$ be a Lie group, let $\frakg$ be its Lie algebra. Then the Lie algebra representation induced by $\Ad$
\[
\ad \co \frakg \to \frakgl(\frakg)
\]
is given by $\ad(X) Y = [X,Y]$.
\end{thm}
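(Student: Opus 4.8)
\emph{Proof proposal.} The plan is to unwind the definitions. By construction $\ad = d(\Ad)_e$ is the differential at the identity of the homomorphism $\Ad\co G \to \GL(\frakg)$, where $\Ad(g) = d(C_g)_e$ is the differential of the conjugation $C_g(h) = ghg^{-1}$. Since $t\mapsto \exp(tX)$ is the one-parameter subgroup generated by $X$ (Proposition \ref{expproperties}(2)), its initial velocity at $t=0$ is $X$, so for $X,Y\in\frakg$
\[
(\ad(X)Y)_e = \eval{\frac{d}{dt}}{t=0}\,\Ad(\exp(tX))(Y_e) = \eval{\frac{d}{dt}}{t=0}\, d(C_{\exp(tX)})_e(Y_e).
\]
This reduces the claim to computing a single derivative of a curve in $T_e G$ and recognizing it.

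The next step is to identify the right-hand side with the Lie derivative $(L_X Y)_e$. By Proposition \ref{expproperties}(7) the flow of the left-invariant field $X$ is $\theta_t = r_{\exp(tX)}$, so $\theta_t(e) = \exp(tX)$ and $\theta_{-t} = r_{\exp(-tX)}$. Using the left-invariance of $Y$ to write $Y_{\exp(tX)} = d(l_{\exp(tX)})(Y_e)$, I would compute
\[
d(\theta_{-t})\,Y_{\theta_t(e)} = d(r_{\exp(-tX)})\,d(l_{\exp(tX)})(Y_e) = d\bigl(r_{\exp(-tX)}\circ l_{\exp(tX)}\bigr)_e(Y_e) = d(C_{\exp(tX)})_e(Y_e),
\]
where the last equality uses $r_{\exp(-tX)}\circ l_{\exp(tX)} = C_{\exp(tX)}$. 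Comparing with the formula for $L_X Y$ recalled in the previous section (evaluated at $p=e$), this yields $(\ad(X)Y)_e = (L_X Y)_e$.

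Finally I would invoke the proposition that $L_X Y = [X,Y]$, so $(\ad(X)Y)_e = [X,Y]_e$. Both $\ad(X)Y$ and $[X,Y]$ are left-invariant vector fields, and two left-invariant vector fields that agree at $e$ coincide, by the injectivity of $X\mapsto X(e)$ (Proposition \ref{leftinvariantvf}(\ref{iso})). Hence $\ad(X)Y = [X,Y]$, under the standard identification $\frakg\cong T_eG$.

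I do not expect a genuine obstacle; the only care needed is bookkeeping with identifications --- keeping track of when $\Ad(g)$ is viewed as acting on $T_eG$ versus on left-invariant vector fields, and confirming that $\eval{\frac{d}{dt}}{t=0}\Ad(\exp(tX))$ really is $\ad(X)$ as an element of $\End(\frakg)$ (it is, since $\ad = d(\Ad)_e$ and $\exp(tX)$ has velocity $X$ at $t=0$). An alternative route would be to expand $\exp(tX)\exp(sY)\exp(-tX)$ and extract the mixed second-order term, but this requires justifying the interchange of the $s$- and $t$-derivatives, which is precisely what the identity $L_X Y = [X,Y]$ already packages for us; so the Lie-derivative argument above is the one I would follow.
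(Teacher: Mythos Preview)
Your proof is correct and follows essentially the same route as the paper's: both unwind $\ad(X)Y$ at $e$ via $C_{\exp(tX)} = l_{\exp(tX)}\circ r_{\exp(-tX)}$, use Proposition~\ref{expproperties}(7) to identify $r_{\exp(tX)}$ as the flow of $X$, recognize the resulting expression as $(L_XY)_e$, and then invoke $L_XY = [X,Y]$ together with Proposition~\ref{leftinvariantvf}(\ref{iso}) to conclude. The only difference is cosmetic ordering of the intermediate equalities.
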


\begin{proof}
We have
\[
\ad(X) Y = \left(\eval{\frac{d}{dt}}{t=0} \Ad(\exp(tX))\right) Y = \eval{\frac{d}{dt}}{t=0} (\Ad_{\exp(tX)}(Y)) = \eval{\frac{d}{dt}}{t=0} d\left(C_{\exp(tX)}\right)(Y).
\]
By Theorem \ref{expproperties} the flow of $X$ is given by $\theta_t = r_{exp (tX)}$. Then
\begin{align*}
 (\ad(X)Y)_e & = \eval{\frac{d}{dt}}{t=0} d(r_{\exp(-tX)})\left(d(l_{\exp(tX)})(Y)\right)\\
& = \eval{\frac{d}{dt}}{t=0} d(r_{\exp(-tX)})\left(Y_{\exp(tX)}\right)\\
& = \eval{\frac{d}{dt}}{t=0} d(\theta_{-t})\left(Y_{\theta_t(e)}\right)\\
& = (L_X Y)(e) = [X,Y](e).
\end{align*}
Since left-invariant vector fields are determined by their value at $e$, this completes the proof.
\end{proof}

\section{Principal bundles}

\begin{defn}\label{defnprincipalbundle}
Let $M$ be a manifold and $G$ be a Lie group. A {\em principal $G$--bundle} over $M$ consists of a smooth manifold $P$ and an action of $G$ on $P$ satisfying
\begin{enumerate}
 \item $G$ acts freely on the right;
 \item $M$ is the quotient space of $P$ by the equivalence relation induced by $G$, $M = P/G$ and the canonical projection $\pi\co P \to M$ is differentiable;
 \item $P$ is locally trivial, that is, every point $x\in M$ has a neighborhood $U$ such that there exists an diffeomorphism $\psi\co \pi^{-1}(U) \to U \times G$ with
\begin{enumerate}
 \item $\psi$ is equivariant: $\psi(p\cdot g) = \psi(p) \cdot g$, where $(x,h)\cdot g = (x,h\cdot g)$ for $x \in U$ and $g,h\in G$;
 \item $\pi(\psi^{-1}(x,g)) = x$.
\end{enumerate}
\end{enumerate}
$P$ is called the {\em total space}, $M$ the {\em base space}, $G$ is the {\em structure group}, $\pi$ the {\em projection} and $\pi^{-1}(x)$ the {\em fiber} over a point $x\in M$.
\end{defn}

Given a principal bundle $P$, it is not difficult to choose an open covering $\{ U_\alpha \}$ of $M$ and with diffeomorphisms $\psi_\alpha$ from Definition \ref{defnprincipalbundle}. Consider the {\em transition functions} $\psi_{\alpha\beta}$
\[
(U_\alpha\cap U_\beta) \times G \stackrel{\psi_\alpha^{-1}}{\longrightarrow} \pi^{-1}(U_\alpha \cap U_\beta) \stackrel{\psi_\beta}{\longrightarrow}(U_\alpha\cap U_\beta) \times G.
\]
Let $\pi_2\co U_\alpha \times G \to G$ be the projection to the second coordinate. Then $\pi_2\circ \psi_\alpha(p\cdot g) = \pi_2 \circ \psi_\alpha(p)\cdot g$ and thus for $p\in \pi^{-1}(U_\alpha\cap U_\beta)$
\[
 \pi_2\circ \psi_\beta(pg) (\pi_2\circ \psi_\alpha(pg))^{-1} =  \pi_2\circ \psi_\beta(p) (\pi_2\circ \psi_\alpha(p))^{-1},
\]
which shows that $\psi_{\alpha\beta}$ only depends on $\pi(p)$ and not on $p$ and we have $\psi_{\alpha\beta}(x,g) = (x,g h_{\alpha\beta}(x))$ for some smooth map $h_{\alpha\beta}\co U_\alpha\cap U_\beta\to G$.

If $s\co M \to P$ is a section of a principal $G$--bundle $\pi\co P\to M$, i.e. $s$ is smooth with $\pi\circ s = \text{id}$, then $s$ determines a trivialization $P \cong M\times G$ by $p \mapsto (\pi(p), g_p)$, where $g_p \in G$ is determined by $p \cdot g_p = s(\pi(p))$. $P$ is called {\em trivializable}\footnote{We want to stress that there is a  difference between the trivial bundle $M \times G$ and the trivializable bundle $P$.}.

 The {\em frame bundle} $\GL(TM) \to M$ is the principal $\GL(n,\R)$--bundle whose fiber over a point $x\in M$ is the collection of all bases for $T_x M$; this fiber is (not canonically) isomorphic to $\GL(n,\R)$. For a given Riemannian metric on $M$ we have the {\em orthonormal frame bundle} by considering all orthonormal frames. This forms a sub-principal bundle of $\GL(T M)$ with fiber $O(n) \subset \GL(n,\R)$.

\begin{exe}
 If $M$ is orientable, then $O(T M)$ is the union of two components. A choice of orientation for $M$ is the same as a choice of a component of $O(TM)$. This component is a principal $\SO(n)$--bundle, the {\em oriented orthonormal frame bundle}.
\end{exe}

  You can construct a principal $\GL(n,\R)$-- (or $\GL(n,\C)$--) bundle over $M$ from a vector bundle $E \to M$. If $E$ is equipped with a fiberwise positive definite symmetric inner product, then its structure group can be reduced to $O(n)$ or $U(n)$\footnote{In this case, $E$ is called a $O(n)$-- or $U(n)$--vector bundle.} and one can construct a principal $O(n)$-- (or $U(n)$--) bundle from $E$.

\begin{exe}
 The Hopf fibration is a principal $S^1$--bundle over $S^2$, where the projection $\pi\co S^3\subset \C^2 \to S^2 \subset \C^2 \times \R = \R^3$ is given by $\pi(z_0,z_1) = (2z_0\bar z1, |z_0|^2 - |z_1|^2)$.
\end{exe}

\begin{defn}
Let $P \to M$ be a principal $G$--bundle and $\rho\co G \to \GL(V)$ a representation of $G$. Then define (the {\em Borel construction})
\[
 P \times_\rho V
\]
to be the quotient space $P\times V/\sim$ where
\[
 (p,v)\sim (pg,g^{-1}v).
\]
\end{defn}

\begin{exe}
 The function $P\times_\rho V \to M$ given by $(p,v) \mapsto \pi(p)$ is well-defined and makes $P\times_\rho V$ into a vector bundle over $M$ with fiber $V$, the vector bundle associated to $P$ by the representation $V$.
\end{exe}

For the defining representation of $\SO(n)$ the associated vector bundle to a principal $\SO(n)$--bundle is a vector bundle with fiber $\R^n$. Similarly we can take the defining representation of $U(n)$.

For the Adjoint representation $\Ad$ the associated vector bundle is called the Adjoint bundle and is denoted by $\Ad_P \to M$.

\begin{exe}
 Show that the adjoint representation of $\SO(3)$ is isomorphic to its defining representation $i\co \SO(3) \hookrightarrow \GL(3,\R)$: find an $\SO$--equivariant vector space isomorphism $\phi\co \R^3 \to \frakso(3)$ (also known as an {\em intertwining map}), i.e. an isomorphism satisfying $\Ad(g) \phi (v) = \phi (i(g) v)$ for all $g \in \SO(3)$ and $v\in \R^3$.
\end{exe}

Since $\SU(2)$ is the double cover of $\SO(3)$, the adjoint representation of $\SU(2)$ is the same as the composition of the 2--fold cover $\SU(2) \to \SO(3)$ with the defining representation of $\SO(3)$.

\begin{exe}\label{liebracketadjointbundle}
We can define a homomorphism $[\cdot,\cdot]\co \Ad P \tensor \Ad P \to \Ad P$ by setting $[(p,v),(p,v')] := (p,[v,v'])$ on representatives $(p,v)$ and $(p,v')$ of elements in the fiber of $\pi(p)$. Check that this homomorphism is well-defined.
\end{exe}

A useful observation is that if $E=P\times_\rho V$, then sections of $E$ are just $G$--equivariant maps $f\co P\to V$. The correspondence is given by $(f\co P \to V) \mapsto (\phi_f \in \Gamma(E))$ where $\phi_f(x)$ is the equivalence class of $(p,f(p))$ where $p \in \pi^{-1}(x)$.

\begin{exe}\label{EquivariantMapsIsomorphism}
 Check that this map makes sense and is a vector space homomorphism.
\end{exe}

\section{Homology with local coefficients}

\begin{defn}The {\em group ring} $\Z \pi$ is a ring associated to a group $\pi$. Additively it is the free abelian group on $\pi$ with multiplication given by
\[
 \left(\sum_i m_i g_i\right) \left(\sum_j n_j h_j\right) = \sum_{i,j}(m_in_j)(g_i h_j), \quad m_i,n_j\in \Z, g_i,h_j \in \pi.
\]
\end{defn}

Let $A$ be an abelian group and
\[
\rho\co \pi \to \Aut_{\Z}(A)
\]
be a homomorphism.

\begin{exe}
 Show that representations of $\pi$ can be identified with left modules over $\Z \pi$.
\end{exe}

Let $X$ be a path connected and locally path-connected\footnote{$X$ is {\em locally path-connected} if for every point $x\in X$ and every open set $V$ containing $x$ there exists a path-connected open set $U$ with $x \in U \subset V$.} space $X$ with a base point $x_0$ which admits a universal cover\footnote{A {\em covering space} of $M$ is a space $C$ together with a continuous surjective map $p\co C \to M$, such that for every $x \in M$, there exists an open neighborhood $U$ of $x$, such that $p^{-1}(U)$ is a disjoint union of open sets in $C$, each of which is mapped homeomorphically onto $U$ by $p$. A connected covering space is a {\em universal cover} if it is simply connected.}. Let $\tilde X \to X$ be the universal cover of $X$. $\pi_1 X$ naturally acts on $\tilde X$ on the right. Then the singular chain complex\footnote{Singular $q$--chains are linear combinations of $q$--simplices
\[\textstyle
\sigma\co \Delta^q = \{ (t_0,\ldots, t_1) \in \R^{g+1} \mid \sum t_i = 1, t_i \ge 0 \text{ for all } i\} \to X
\]
and form a chain complex with the boundary map given by
\[
 \delta(\sigma) \coloneqq \sum_{m=0}^q (-1)^m \sigma \circ f^q_m,
\]
where $f^q_m (t_0,\ldots,t_{q-1}) \coloneqq (t_0,\ldots,t_{m-1},0,t_m,\ldots,t_{q-1})$ is the face-map.
} $S_*(\tilde X)$ is a right $\Z\pi$-module.

\begin{defn}
Consider a left $\Z\pi$--module $A$ for $\pi = \pi_1 X$ and form the chain complex
\[
 S_*(X;A) = S_*(\tilde X) \tensor_{\Z\pi} A.
\]
The homology $H_*(X;A)$ of this complex is called the {\em homology of $X$ with local coefficients in $A$}. If $A$ is defined via a representation $\rho\co \pi \to A$, then $H_*(X;A)$ is called the {\em homology of $X$ twisted by $\rho$} and denoted by $H_*(X;A_\rho)$.
\end{defn}

We can turn $S_*(\tilde X)$ into a left $\Z \pi$--module by the standard procedure
\[
 g\cdot z \coloneqq z \cdot g^{-1}.
\]

\begin{defn}\label{DefnTwistedHomology}
Consider a left $\Z\pi$--module $A$ for $\pi = \pi_1 X$ and form the cochain complex
\[
 S^*(X;A) \coloneqq \Hom_{\Z \pi}(S_*(\tilde X),A).
\]
The cohomology $H^*(X;A)$ of this complex is called the {\em cohomology of $X$ with local coefficients in $A$}. If $A$ is defined via a representation $\rho\co \pi \to A$, then $H^*(X;A)$ is called the {\em cohomology of $X$ twisted by $\rho$} and denoted by $H^*(X;A_\rho)$.
\end{defn}
See \cite[Chapter 5]{davis-kirk2001} for details.

\begin{exe}
 Convince yourself, that the usual integer cohomology of $X$ is isomorphic to $H^*(X;\Z_\rho)$ for the trivial representation $\rho \co \pi_1X \to \Z$.
\end{exe}

If $M$ is a closed manifold of dimension $n$ and $A$ is vector space equipped with a non-degenerate inner product $\la\cdot,\cdot\ra$, then Poincar\'e duality can be generalized to twisted cohomology without an extra effort
\[
 H^i(M;A_\rho) \cong H_{n-i}(M;A_\rho).
\]
\kommentar{by considering a triangulation $M^\tau$ of $M$ and its
dual polyhedral decomposition $M'$ (which is a CW--complex).}Just
like for the singular cohomology we have a relative version for
twisted cohomology. If $X$ is a manifold with boundary of dimension
$n$, we can generalize Poincar\'e-Lefschetz duality
\[
 H^i(X,\partial X;A_\rho) \cong H_{n-i}(X;A_\rho) \quad \text{and} \quad H^i(X;A_\rho) \cong H_{n-i}(X,\partial X;A_\rho).
\]
The universal coefficient theorem can also be generalized to twisted cohomology
\[
 H^i(X;A_\rho) \cong \Hom(H_i(X;A_\rho),\R).
\]
\kommentar{by showing that the pairing
\[
 S^i(X^\tau;A_\rho) \times S_i(X^\tau;A_\rho) \to \R\\
(f,\sigma \tensor v) \mapsto \la f(\sigma),v\ra
\]
is non-degenerate.}
And certainly, we have long exact sequences in twisted homology and cohomology.

\begin{prop}\label{PropHalfDimension}
 Let $X$ be a manifold of dimension $2n+1$ with boundary. Then
\[
\dim \im\left[j^* \co H^n (X;A_\rho) \to H^n(\partial X;A_\rho)\right] =\tfrac{1}{2} \dim(H^n(\partial X;A_\rho)).
\]
\end{prop}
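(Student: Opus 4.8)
The plan is to exploit the long exact sequence of the pair $(X,\partial X)$ in twisted cohomology together with twisted Poincaré–Lefschetz duality, exactly as in the classical (untwisted) half-lives-half-dies lemma. Write $V := H^n(\partial X; A_\rho)$, let $j^* : H^n(X;A_\rho) \to H^n(\partial X; A_\rho)$ be the restriction, and let $L := \im(j^*) \subseteq V$. The goal is $\dim L = \tfrac12 \dim V$.

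First I would set up the two ingredients. From the long exact sequence of the pair $(X,\partial X)$ one has exactness at $H^n(\partial X; A_\rho)$, so $L = \im(j^*) = \ker\bigl(\delta : H^n(\partial X;A_\rho) \to H^{n+1}(X,\partial X; A_\rho)\bigr)$; dually, $\im\bigl(H^n(X,\partial X;A_\rho) \to H^n(X;A_\rho)\bigr) = \ker(j^*)$, but it is the first description I will use on $\partial X$. Next, since $\partial X$ is a closed manifold of dimension $2n$ and $A$ carries a non-degenerate inner product, the stated version of twisted Poincaré duality, combined with the twisted universal coefficient theorem, gives a non-degenerate pairing $\cup : H^n(\partial X;A_\rho) \otimes H^n(\partial X;A_\rho) \to \R$ (cup product into $H^{2n}(\partial X;A_\rho)\cong H_0(\partial X;A_\rho)\cong\R$, using the inner product on $A$ to contract coefficients). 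This makes $V$ a finite-dimensional space with a non-degenerate $(-1)^n$-symmetric bilinear form.

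The heart of the argument is to show that $L$ is a Lagrangian (maximal isotropic) subspace of $V$ with respect to this pairing; once that is established, $\dim L = \tfrac12\dim V$ is pure linear algebra. Isotropy of $L$: if $a = j^*\tilde a$ and $b = j^*\tilde b$ with $\tilde a,\tilde b\in H^n(X;A_\rho)$, then $a\cup b = j^*(\tilde a\cup \tilde b)$ lies in the image of $j^*: H^{2n}(X;A_\rho)\to H^{2n}(\partial X;A_\rho)$; but the diagram relating the fundamental class of $X$ (in $H_{2n+1}(X,\partial X)$) to that of $\partial X$ via the connecting map shows that evaluating $a\cup b$ against the fundamental class of $\partial X$ factors as evaluating $\tilde a\cup\tilde b$ against $\partial[X,\partial X] = [\partial X]$, which is zero because $[\partial X]$ bounds — more precisely, pairing with $\partial$ of a relative class against a class pulled back from $X$ vanishes by the compatibility of cap product with the boundary maps. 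So $a\cup b = 0$ and $L$ is isotropic, giving $\dim L \le \tfrac12\dim V$. Maximality: suppose $c\in V$ pairs trivially with all of $L$; by non-degeneracy of $\cup$ on $V$ this means $c$ is orthogonal to $\ker\delta$, so $c$ lies in the annihilator of $\ker\delta$, which under the duality identification $V\cong V^*$ is precisely $\im(\delta^\vee)$. Chasing the long exact sequence of the pair together with Poincaré–Lefschetz duality (which intertwines the connecting map $\delta: H^n(\partial X)\to H^{n+1}(X,\partial X)$ with the map $H_n(X)\to H_n(\partial X)$, i.e. with $j_*$ up to duality, hence up to the duality with $j^*$) identifies this annihilator with $L$ itself. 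Therefore $L^{\perp} = L$, i.e. $L$ is Lagrangian, and $\dim L = \tfrac12\dim V$.

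The main obstacle is bookkeeping the duality identifications: one must check carefully that twisted Poincaré–Lefschetz duality is compatible with the long exact sequences of the pair $(X,\partial X)$ on $X$, on $\partial X$, and with the connecting homomorphism — i.e. that the big ladder diagram relating the cohomology LES of $(X,\partial X)$ to the homology LES of $(X,\partial X)$ via duality actually commutes (up to sign), and that the inner product on $A$ makes the cup-product pairing on $\partial X$ non-degenerate and compatible with these identifications. Granting the naturality statements quoted in the excerpt (twisted PD, twisted Poincaré–Lefschetz duality, twisted UCT, and the existence of long exact sequences, all "without extra effort"), the verification is the standard diagram chase; the only genuinely new point over the untwisted case is using $\la\cdot,\cdot\ra$ on $A$ to produce the scalar-valued cup pairing, which is exactly what the hypothesis on $A$ is there to supply.
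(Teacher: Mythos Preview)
Your argument is correct in outline, but it takes a genuinely different route from the paper's own proof. The paper does not touch the cup-product pairing at all: it writes down the ladder diagram
\[
\begin{diagram}
\node{\cdots} \arrow{e}
 \node{H^n(X)} \arrow{e,t}{j^*} \arrow{s,r}{\cong}
  \node{H^n(\partial X)} \arrow{e} \arrow{s,r}{\cong}
   \node{H^{n+1}(X,\partial X)} \arrow{e} \arrow{s,r}{\cong}
    \node{\cdots}\\
\node{\cdots} \arrow{e}
 \node{H_{n+1}(X,\partial X)} \arrow{e}
  \node{H_n(\partial X)} \arrow{e,t}{j_*}
   \node{H_n(X)} \arrow{e}
    \node{\cdots}
\end{diagram}
\]
with vertical Poincar\'e--Lefschetz isomorphisms, reads off $\dim(\im j^*) = \dim(\ker j_*)$, then uses the linear-algebra fact $\ker L \cong \coker L^*$ together with the universal coefficient theorem to get $\dim(\ker j_*) = \dim(\coker j^*) = \dim H^n(\partial X) - \dim(\im j^*)$, and solves. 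That is a pure rank-nullity argument: no intersection form, no isotropy, no Lagrangian subspace.

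What you do instead is prove the stronger statement that $L = \im j^*$ is Lagrangian for the twisted intersection form on $H^n(\partial X;A_\rho)$. This is more work (as you say, the bookkeeping of how Poincar\'e--Lefschetz duality interacts with the connecting maps is where all the content sits), and your maximality paragraph is sketchier than your isotropy paragraph; but it buys you something the paper's proof does not: the Lagrangian conclusion is exactly what is invoked later when showing that the image of $r_X\co \cM_X \to \cM_{\partial X}$ is Lagrangian. The paper's proof is quicker and sufficient for the dimension statement as written; yours is longer but delivers the geometric fact that is really being used downstream. One small wording fix: in your isotropy step, the clean way to say it is that $\langle j^*(\tilde a\cup\tilde b),[\partial X]\rangle = \langle \tilde a\cup\tilde b, j_*[\partial X]\rangle$ and $j_*[\partial X]=0$ because $[\partial X]$ is a boundary in the long exact sequence of the pair; your phrasing ``which is zero because $[\partial X]$ bounds'' compresses two steps into one.
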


\begin{proof}
 We will suppress the coefficients in our notation. Consider the following diagram made up out of the middle part of the long exact
sequences in homology and cohomology of the pair $(X,\partial X)$ and
the Poincar\'e-duality isomorphisms:
\[
\begin{diagram}
\node{\cdots} \arrow{e}
 \node{H^n(X)} \arrow{e,t}{j^*} \arrow{s,r}{\cong}
  \node{H^n(\partial X)} \arrow{e} \arrow{s,r}{\cong}
   \node{H^{n+1}(X,\partial X)} \arrow{e} \arrow{s,r}{\cong}
    \node{\cdots}\\
\node{\cdots} \arrow{e}
 \node{H_{n+1}(X,\partial X)} \arrow{e,t}{i_*}
  \node{H_n(\partial X)} \arrow{e,t}{j_*}
   \node{H_n(X)} \arrow{e}
    \node{\cdots}
\end{diagram}
\]
Then $\dim(\im j^*) = \dim (\im i_*) = \dim (\ker j_*)$. Now recall the fact, that if $L: V\to W$ is a homomorphism between finite-dimensional $\R$-vector spaces
and $L^*:W^* \to V^*$ its induced dual homomorphism, then $\ker L
\cong \coker L^* = V^*/\im L^*$. Then together with the universal coefficient theorem and Poincar\'e duality
 we get
\[
\dim(\im j^*) =\dim (\ker j_*) = \dim(\coker j^*) = \dim(H^n (\partial X)) - \dim(\im j^*)
\]
which proves $\dim (\im j^*) = \frac{1}{2}\dim(H^n (\partial X)$.
\end{proof}

Let $\pi$ be an arbitrary group. The {\em group cohomology $H^*(\pi,A_\rho)$ of $\pi$ twisted by $\rho$} is the cohomology of the complex given by $C^0(\pi,A_\rho) = A$, $C^n(\pi,A_\rho) = \Maps(\pi^n,A)$ and the boundary maps
\begin{align*}
\delta^0 (v) (g)= &\rho(g)(v) - v\\
\delta^n (f) (g_1,\ldots,g_{n+1}) = &\rho(g_1)(f(g_2,\ldots,g_{n+1}))\\ &{}+ \sum_{i=1}^n (-1)^i f(g_1,\ldots,g_i g_{i+1},\ldots,g_{n+1}) + (-1)^{n+1}f(g_1,\ldots,g_n).
\end{align*}
The $k$--th cohomology is given by $Z^k(\pi,A_\rho)/B^k(\pi,A_\rho)$, where $Z^k(\pi,A_\rho) = \ker(\delta^k)$ are the $k$--cocycles and $B^k(\pi,A_\rho) = \im(\delta^{k-1})$ are the $k$--coboundaries. For example a $1$-cocycle $\xi$ is a map $\pi \to A$, which satisfies the {\em cocycle condition}
\[
 \xi(gh) = \xi(g) + \rho(g)\xi(h).
\]

Eilenberg-Maclane spaces are spaces $K(G,n)$ defined up to homotopy with the property that all homotopy groups except for $\pi_n(K(G,n)) = G$ are trivial. It is shown in \cite{hempel2004}, that any compact $3$ manifold with torsion-free
  fundamental group is a $K(\pi_1(X),1)$. It turns out that $H^i(K(\pi,1),A_\rho) = H^i(\pi,A_\rho)$.

\section{Algebraic sets and the Zariski tangent space}

We will review necessary definitions and results from real algebraic geometric.  See \kommentar{for example \cite{shafarevich77} or }\cite{bochnak-coste-roy98} for more information.

For a subset $S$ of the polynomial ring $\R[x]$, $x\in \R^n$ we call
\[
 \cZ(S) \coloneqq \{ x = (x_1,\ldots,x_n) \in \R^n \mid g(x) = 0 \text{ for all } g\in S \}
\]
an {\em algebraic set}. By the {\em Hilbert basis theorem}, every ideal in $\R[x]$ is finitely generated. In particularly the ideal generated by any subset $S \subset \R[x]$ is generated by finite set $S'$ so that $\cZ(S) = \cZ(S')$. In fact $\cZ(\{f_1,\ldots,f_s\}) = \cZ(f)$ for $f = \sum_i f^2_i$. A {\em semi-algebraic set} of $\R^n$ is a finite number of unions, intersections and complements of sets $S(f_i)$, where
\[
S(f) = \{x\in \R^n \mid f(x) > 0\}.
\]

\begin{exe}
 The collection of algebraic sets in $\R^n$ is closed under finite union and arbitrary intersection. Also $\emptyset$ and $\R^n$ are algebraic sets.
\end{exe}

We can endow an algebraic set $V$ with a topology, the {\em Zariski topology}, where the closed sets are given by the algebraic sets. Given an algebraic set $V$, we can form the {\em vanishing ideal}
\[
 \cI(V) \coloneqq \{f \in \R[x] \mid f(x) = 0 \text{ for all } x \in V\}.
\]
Then one can show that $V = \cZ(\cI(V))$. The {\em dimension of a semi-algebraic set $V$} is defined to be the {\em dimension} of the ring of polynomial functions on $V$
\[
 \cP(V) = \R[x]/\cI(V),
\]
i.e. the maximal length of chains of prime ideals of $\cP(V)$.
The {\em coordinate ring} $\R[V]$ is the ring of polynomials restricted to $V$. $V\neq \emptyset$ is {\em irreducible}, if $V = V_1 \cup V_2$ implies $V = V_1$ or $V= V_2$. We will only consider irreducible algebraic sets, also known as {\em affine algebraic varieties}.

For an algebraic set $V$ and $y \in V$ the tangent space can be defined as the kernel $T_y V  \coloneqq \ker J_y$ of the Jacobian matrix
\[
J_y \coloneqq \left(\frac{\partial f_i}{\partial x_j} (y)\right)_{i,j}\co \R^n \to \R^s,
\]
where $\cI(V)$ is generated by $\{f_1,\ldots,f_s\}$.\kommentar{Alternatively, if we consider $T_yV$ as an affine subspace of the tangent bundle $T \R^n \cong \R^{2n}$, then it is the kernel of the linear part in the Taylor expansion of $F = (f_i)_{i=1,\ldots,s}$ around $y$
\[
\left\{ (y,x) \in \R^{2n}\mid d_y f_i (x) = 0 \text{ for all } i=1,\ldots,n\right\},
\]
where \[
d_y F (x)\coloneqq \sum_{j=1}^n \frac{\partial F}{\partial x_j}(y) (x_j - y_j) = \grad F|_y\cdot (x-y).
\]}
For our purpose there is a more convenient definition (due to Patrick \cite{patrick85}).

\begin{defn}\label{DefnZariskiTangentSpace}
Let $M$ be a smooth manifold and $V$ be a subset with the induced topology. The {\em tangent cone} $C_x V$ to $V$ at $x\in V$ consists of all tangent vectors $\dot\gamma(0)\in T_x M$ of smooth curves $\gamma\co (-\epsilon,\epsilon)\to M$ with $\gamma(0) = x$ and $\gamma([0,\epsilon)) \subset V$. The {\em tangent space} $T_x V$ at $x\in V$ is the linear span of $C_qV$.
\end{defn}

This definition has the obvious advantage of applying to more general sets, in particular to semi-algebraic sets.

\begin{exe}
 Check that both definitions for the tangent space are equivalent in the case of affine algebraic varieties.
\end{exe}

\kommentar{A point is called nonsingular at $y \in V$ if $\dim V = \dim T_y V$.
\begin{exe}\kommentar{proof or reference?}
If $y$ is nonsingular, then a neighbourhood of $y$ in $V$ is a smooth submanifold of $\R^n$ and the (manifold) tangent space at $y$ coincides with the Zariski tangent space.
 \end{exe}}
\kommentar{Tangent space = linear span of tangent cone, which is defined using curves through the point which lie in the variety on one side of the curve? siehe Arms-Gotay-Jennings Geometric and Algebraic Reduction for singular momentum Maps.}

If $\pi$ is a finitely presented discrete group and $G$ is a Lie group, then $\Hom(\pi,G)$ can be considered as a subspace of the manifold $G^n$, where $n$ is the number of generators. $G$ acts on $\Hom(\pi,G)$ on the right via $(\rho \cdot g) (\gamma) = g^{-1}\rho(\gamma)g$. Therefore, $\Hom(\pi,G)/G$ is a subset of the $G^n/G$, where $G$ acts simultaneously on the right by conjugation. If $G$ is algebraic, then $\Hom(\pi,G)$ is an algebraic set. In general, $\Hom(\pi,G)/G$ may not even be Hausdorff and  one has to pass to the categorical quotient $\Hom(\pi,G)//G$. See Sikora \cite{sikora2009_CharacterVarieties} for a detailed account for complex reductive algebraic groups $G$.

We will compute the tangent spaces to $\Hom(\pi,G)$ and $\Hom(\pi,G)/G$ using Definition \ref{DefnZariskiTangentSpace} for any Lie group $G$, as long as $\Hom(\pi,G)/G$ is a subspace of some manifold. It was Weil who showed how to identify $\Hom(\pi,G)/G$ with $H^1(\pi,\frakg_\rho)$ \cite{weil64}.

\begin{thm} If $\pi$ is a finitely presented discrete group and $G$ a Lie group, such that $\Hom(\pi,G)/G$ is a subspace of some manifold, then
\[
 T_\rho(\Hom(\pi,G)) = Z^1(\pi,\frakg_\rho) \quad \text{and} \quad T_{[\rho]}(\Hom(\pi,G)/G) = H^1(\pi,\frakg_\rho).
\]
\end{thm}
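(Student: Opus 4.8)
The plan is to exploit a finite presentation of $\pi$ to realize $\Hom(\pi,G)$ as the zero set of an explicit ``relation map'' inside a power of $G$, to linearize that map, and to match its kernel with $Z^1(\pi,\frakg_\rho)$; the quotient statement then follows by identifying the tangent space of a conjugation orbit with the coboundaries $B^1(\pi,\frakg_\rho)$. Concretely, fix $\pi=\langle x_1,\dots,x_n\mid r_1,\dots,r_s\rangle$. A homomorphism $\rho\co\pi\to G$ is the same as the tuple $(\rho(x_1),\dots,\rho(x_n))\in G^n$ on which every relator vanishes, so $\Hom(\pi,G)$ is identified with $R^{-1}(e,\dots,e)\subset G^n$ for the smooth map $R=(r_1,\dots,r_s)\co G^n\to G^s$, and Definition~\ref{DefnZariskiTangentSpace} applies with $M=G^n$ (and with $M$ the ambient manifold of $\Hom(\pi,G)/G$ for the second assertion).

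Next I would set up the cocycle correspondence. Given $v$ in the tangent cone $C_\rho\Hom(\pi,G)$, choose a smooth curve $\rho_t$ of representations with $\rho_0=\rho$, $\dot\rho_0=v$ and $\rho_t\in\Hom(\pi,G)$ for $t\in[0,\ep)$, and set $\xi(\ga)\coloneqq dr_{\rho(\ga)^{-1}}\big(\tfrac{d}{dt}\big|_0\rho_t(\ga)\big)\in\frakg$ (``$\dot\rho_0(\ga)\rho(\ga)^{-1}$'' in matrix shorthand). Since each $\rho_t$ is a homomorphism, $\rho_t(\ga)$ depends only on $\ga\in\pi$, so $\xi$ is a well-defined map $\pi\to\frakg$; differentiating $\rho_t(\ga\de)=\rho_t(\ga)\rho_t(\de)$ with the product rule and the definition of $\Ad$ gives $\xi(\ga\de)=\xi(\ga)+\Ad(\rho(\ga))\xi(\de)$, which is precisely the cocycle condition for the $\Z\pi$--module $\frakg_\rho$, so $\xi\in Z^1(\pi,\frakg_\rho)$. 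The assignment $v\mapsto\xi$ is linear, and since $\xi$ records the velocities $\xi(x_i)=\dot\rho_0(x_i)\rho(x_i)^{-1}$, which already determine $v\in T_\rho G^n$, it is injective; hence it induces an injection $T_\rho\Hom(\pi,G)\hookrightarrow Z^1(\pi,\frakg_\rho)$. Dually, differentiating $R(\rho_t)\equiv e$ shows $v\in\ker dR_\rho$, and a short bookkeeping argument identifies $\ker dR_\rho$ (in the right trivialization) with the velocity tuples whose associated free-group cocycle kills every $r_j$, equivalently with $\{(\xi(x_i))_i:\xi\in Z^1(\pi,\frakg_\rho)\}$; so the map above is exactly the inclusion of the span of the tangent cone into $\ker dR_\rho\cong Z^1(\pi,\frakg_\rho)$.

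The remaining, and I expect hardest, point is surjectivity: that every cocycle lies in the span of the tangent cone. The naive candidate $\rho_t(x_i)\coloneqq\exp(t\,\xi(x_i))\rho(x_i)$ has $\dot\rho_0(x_i)=\xi(x_i)\rho(x_i)$ by item~(4) of Proposition~\ref{expproperties}, and it satisfies the relators to first order (the free-group cocycle it determines evaluates on each $r_j$ to $\xi$ of the trivial element, i.e.\ $0$), but in general it is not an honest path of representations. Here the algebraic or manifold structure must be used: when $G$ is algebraic, $\Hom(\pi,G)$ is an algebraic set, and by the equivalence of the two notions of tangent space recalled earlier its tangent space is the Jacobian kernel $\ker dR_\rho$, already identified with $Z^1$; for a general Lie group one must instead show directly that the tangent cone spans $\ker dR_\rho$, for example by a slice/implicit-function argument resting on the hypothesis that $\Hom(\pi,G)/G$ embeds in a manifold, or by integrating the unobstructed directions and checking they span. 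This is the technical heart of the statement and where I would concentrate the real effort.

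Finally, for the quotient I would use the right conjugation action $(\rho\cdot g)(\ga)=g^{-1}\rho(\ga)g$. For $X\in\frakg$ the orbit curve $\rho_t(\ga)=\exp(-tX)\rho(\ga)\exp(tX)$ has associated cocycle $\ga\mapsto\Ad(\rho(\ga))X-X=\de^0(X)(\ga)$, so the tangent space to the orbit $G\cdot\rho$ is exactly $B^1(\pi,\frakg_\rho)$. Since by hypothesis $\Hom(\pi,G)/G$ lies inside a manifold, tangent cones in the quotient are obtained by pushing forward tangent cones from $\Hom(\pi,G)$ and collapsing the orbit directions; combined with the first part this gives $T_{[\rho]}\big(\Hom(\pi,G)/G\big)=Z^1(\pi,\frakg_\rho)/B^1(\pi,\frakg_\rho)=H^1(\pi,\frakg_\rho)$. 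The points needing care here are the sign bookkeeping in the orbit computation (via $\ad X=[X,\cdot\,]$ and $d\exp_0=\mathrm{id}$) and the legitimacy of transporting tangent cones through the quotient map, which again leans on the standing hypothesis.
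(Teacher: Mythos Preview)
Your approach is essentially the paper's: embed $\Hom(\pi,G)\hookrightarrow G^n$ via generators, right-translate the velocity of a curve of representations to get $\xi\co\pi\to\frakg$, differentiate the homomorphism identity to obtain the cocycle condition, and identify orbit directions with coboundaries for the quotient. The one difference is that you introduce the relation map $R\co G^n\to G^s$ and explicitly isolate surjectivity (that the span of the tangent cone fills all of $Z^1\cong\ker dR_\rho$) as the delicate step; the paper's proof does not address this and simply asserts the isomorphism after establishing the forward inclusion, so your caution here is well placed and your proposed fixes (algebraicity plus the exercise after Definition~\ref{DefnZariskiTangentSpace}, or a direct slice argument) are exactly what would be needed to close the gap that is present in both arguments.
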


\begin{proof}
If $\pi$ is generated by $x_1,\ldots,x_n$, then
\begin{align*}
\Hom(\pi,G)&\hookrightarrow G\times \ldots \times G\\
\rho &\mapsto (\rho(x_1),\ldots,\rho(x_n)).
\end{align*}
Therefore a tangent vector at a representation $\rho \in \Hom(\pi,G)$ can be considered as an element of $T_{\rho(x_1)}G\times \ldots \times T_{\rho(x_n)}G$. If we want to think of a tangent vector independently of the set of generators, we can simply consider it as a map $\eta \co \pi \to TG$ with $\eta(x) \in T_x G$. Now let $\rho\co (-\epsilon,\epsilon) \to \R^n$ be a path satisfying $\rho(0) = \rho$ and $\rho([0,\epsilon)) \subset \Hom(\pi,G)$. By right translation we can write $\dot\rho_t(x) = \xi_t(x)\rho_t(x)$ for $\xi(x) \in \frakg$. Therefore the vector $\dot\rho_0$ at $\rho$ can be identified with a map $\xi_0 = \xi \co \pi \to \frakg$ satisfying
\[
\xi(xy)\rho(xy)=\eval{\tfrac{d}{dt} \rho_t(xy)}{t=0} = \eval{\tfrac{d}{dt}\rho_t(x) \cdot \rho_t(y)}{t=0}= \xi(x)\rho(x)\rho(y) + \rho(x)\xi(y)\rho(y),
\]
or equivalently the cocycle condition
\[
 \xi(xy) = \xi(x) + \Ad_{\rho(x)}\xi(y).
\]
This shows that $T_\rho \Hom(\pi,G) \cong Z^1(\pi,\frakg_\rho)$.

Let us turn to $T_{[\rho]} (\Hom(\pi,G)/G)$. A similar argument as above shows that two tangent vectors $\xi_1$ and $\xi_2$ to $\Hom(\pi,G)/G$ at $[\rho]$ associated to $\rho_t$ and $g_t\rho_t g_t^{-1}$ respectively---which after conjugation we may assume to be represented by tangent vectors to $\Hom(\pi,G)$ at $\rho$---are equivalent if there is some $\dot g_0 = v\in \frakg$ so that
\[
\xi_1(x)\rho(x)=\eval{\tfrac{d}{dt} \rho_t(x)}{t=0} = \eval{\tfrac{d}{dt} g_t \rho_t(x) g_t^{-1}}{t=0} = v \rho(x) + \xi_2(x)\rho(x)- \rho(x) v.
\]
Therefore $\xi_1, \xi_2 \in Z^1(\pi,\frakg_\rho)$ are equivalent if
\[
\xi_1(x)- \xi_2(x)= v - \rho(x) v\rho(x)^{-1} = \delta^0(v)(x),
\]
which shows
\[
T_{[\rho]} (\Hom(\pi,G)/G)  \cong  Z^1(\pi,\frakg_\rho)/B^1(\pi,\frakg_\rho) = H^1(\pi,\frakg_\rho).\qedhere
\]
\end{proof}

\kommentar{\begin{exe}
 Consider the ideal \[m_y \coloneqq \{ f\in \R[V] \mid f(y) = 0\}\] for an affine algebraic variety $V$. Show that
\begin{align*}
 d_y \co m_y/m_y^2 &\to T^*_y V = \Hom(T_y V ,\R)\\
f & \mapsto d_y f \coloneqq (d_y F)|_{T_y V} \quad \text{ for } F|_V = f.
\end{align*}
is well-defined and an isomorphism of vector spaces.
\end{exe}

The Zariski tangent space is usually defined in a more general context (for example for algebraic varieties or schemes) as the dual space of $\frakm_y/\frakm_y^2$, where $\frakm_y$ is the maximal ideal of the {\em local ring $\sO_y$} of $y$ on $X$. If $X$ is an irreducible algebraic variety, then
\[
\sO_y = \left\{\tfrac{f}{g} \mid f,g \in \R[V], g(y) \neq 0\right\}
\]
and $\frakm_y/\frakm_y^2 \cong m_y/m_y^2$.}

\chapter{Connections on principal bundles}

\section{A few equivalent notions} If $\pi\co P \to M$ is a principal $G$--bundle, then for every $p\in P$ we have a map $i_p\co G \to P$ given by the formula $i_p(g) = p\cdot g$. For $X \in \frakg$ we let $X^*$ be the {\em fundamental vector field corresponding to $X$} given by $X^*_p =  di_p (X_e)$.

\begin{exe}
 $X^*$ is smooth and vertical, i.e. $d\pi(X^*) = 0$, and $X^*_{pg} = di_p(X_g)$ for $p\in P$.
\end{exe}

\begin{defn}\label{defnconnection} A {\em connection} on a principal $G$-bundle $P \to M$ over a manifold $M$ is a $\frakg$--valued
  1--form $A$ on $P$ satisfying
\begin{enumerate}
\item $A(X^*) = X$ for every $X \in \frakg$;
\item $A$ is $G$--equivariant, i.e. $r^*_g(A) = \Ad_{g^{-1}}A$ for each $g\in G$,
\end{enumerate}
where $r_g\co P \to P$ denotes the right action by $g\in G$. We denote the set of connections on $P$ by $\cA_P$.
\end{defn}

$G$--equivariance says that if $Y$ is a vector field on $P$ and $g\in G$, then $A(dr_g(Y)) = \Ad_{g^{-1}} (A(Y))$. Locally, a 1--form on an $n$--dimensional manifold $M$ with values in $\frakg$ has the expression
\[
 (d x_1 \tensor A_1) + \ldots + (d x_n \tensor A_n),
\]
where the $x_i$ are coordinates on $M$ and the $A_i$ are elements of $\frakg$. Given a vector bundle $E \to M$, $\Omega^p(E)$ will denote the
$p$--forms with values in $E$, i.e. smooth sections of the vector
bundle
$$
\bigwedge^p T^*M \tensor E \to M,
$$
where the symbols denote the wedge and tensor products of the
bundles. A connection is therefore an element of $\Omega^1(P;\frakg)$, where
\[
 \Omega^k(P;\frakg):=\Omega^k(P\times \frakg)
\]
are the $\frakg$--valued $k$--forms on $P$. Denote the subspace of $G$--equivariant $\frakg$--valued $k$--forms on $P$ by $\Omega^k(P;\frakg)^G$.

Alternatively consider a {\em $G$--invariant horizontal distribution on $P \to M^n$}, i.e. an $n$--dimensional distribution $H$ on $T P$ satisfying:
\begin{enumerate}
 \item $H_p \subset T_pP \stackrel{d\pi}{\longrightarrow} T_{\pi(p)} M$ is an isomorphism for each $p \in P$;
 \item The distribution is $G$--invariant\footnote{The terminology $G$--equivariant would also be appropriate}, i.e. $H_{pg} = d(r_g) H_p$;
 \item $H_p$ varies smoothly with $p$.
\end{enumerate}
The $H_p$ are called the {\em horizontal subspaces}.

\begin{exe}\label{horizontaldistribution} The map from the connections on $P$ to the $G$--invariant horizontal distributions on $P$ given by $A \mapsto \ker A$ is a bijection. Use the fact that the choice of $H_p$ yields the decomposition $T_p P = H_p  \oplus V_p$, where $V_p$ is the {\em vertical subspace} spanned by all the fundamental vector fields.
\end{exe}

Therefore, there is a slick way to think about connections: A connection is a choice of a $G$--equivariant splitting of the family
\[
 \xymatrix@1{0 \ar[r]& \frakg \ar[r]^-{di_p}\ar@/_1pc/@{<.}[r]_-{A} &  T_pP \ar[r]^-{d\pi_p} \ar@/_1pc/@{<.}[r]_-{(d\pi|_{H_p})^{-1}} & T_{\pi(p)} M \ar[r] & 0}
\]
of short exact sequences parametrized by $p\in P$. This can be
viewed as a splitting of the Atiyah sequence \cite{atiyah57}. As we
have seen above, this splitting can be given by a left-inverse of
$i_p$ ({\em retraction}), namely $A$, or by a right-inverse of
$d\pi_p$ ({\em section}), namely $(d\pi|_{H_p})^{-1}$.

From Exercise \ref{horizontaldistribution} we see that a connection on $P$ determines a unique way to lift tangent vectors from $M$: If $X_x \in T_xM$ let $\tilde X_p \in T_pP$ be the unique vector in $H_p$ lying over $X_x$, where $p \in \pi^{-1}(x)$. The lift $\tilde X$ is called the {\em horizontal lift of $X$}. Since we can lift vectors from $M$ to $P$, it seems believable that we can lift a path in such a  way that the tangent vectors to the lifted path are the horizontal lifts of the tangent vectors to the path in $M$. This is another characterization of a connection and will be discussed in Section \ref{pathlifting}.

Our original definition can be modified a little by using the Maurer Cartan form. This will be discussed in Section \ref{maurercartan}

If we have a representation we can consider the vector bundle associated to a principal bundle and consider its covariant derivative, which is yet another notion of the connection and will be discussed in Section \ref{covariantderivative}.

\section{Structure of the space of connections} If $P$ is trivializable, then a trivialization $P \cong M \times G$ gives a decomposition $T_{(x,g)} P = T_x M \oplus T_g G$. Thus a trivialization gives us a choice of horizontal and vertical subspaces, this choice is called the {\em trivial connection} (with respect  to this trivialization). The corresponding $\frakg$--valued 1--form on $P$ takes a vector $X = (X_1,X_2) \in T_x M \oplus T_gG$ to $dl_{g^{-1}} X_2 \in T_e G = \frakg$ (or equivalently to the left-invariant vector field $h \mapsto dl_{hg^{-1}} X_2$). A different trivialization of the bundle determines a different trivial connection.

In the case of a trivializable bundle we can simplify the notion of a connection.
\begin{prop}\label{ConnectionsSimplified} Let $P$ be a trivializable principal $G$-bundle over a manifold
$M$. A trivialization of $P$ identifies the space of connections $\cA_P$ on $P$ with $\frakg$--valued $1$-forms $\Omega^1(M;\frakg)$ on $M$.
\end{prop}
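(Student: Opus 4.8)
The plan is to fix the trivialization $\Phi\co P\to M\times G$, use the trivial connection $A_0\in\cA_P$ it determines (the one described just before the statement) as a basepoint, and exhibit the identification as the map $A\mapsto s^{*}A$, where $s\co M\to P$ is the section $s(x)=\Phi^{-1}(x,e)$. First I would record that $\cA_P$ is an affine space: if $A,A'\in\cA_P$ then $\tau:=A-A'\in\Omega^{1}(P;\frakg)$ satisfies $\tau(X^{*})=X-X=0$ for every $X\in\frakg$ and $r_{g}^{*}\tau=\Ad_{g^{-1}}\tau$, i.e.\ $\tau$ is a \emph{tensorial $\frakg$-valued $1$-form of adjoint type}; conversely, adding such a form to any connection again satisfies the two axioms of Definition~\ref{defnconnection}. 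Hence $\cA_P$ is an affine space over the vector space $\mathcal T$ of these tensorial forms, and since $A_{0}\in\cA_P$ (in particular $\cA_P\neq\emptyset$), the assignment $A\mapsto A-A_{0}$ is a linear bijection $\cA_P\to\mathcal T$.

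Next I would build a linear isomorphism $\mathcal T\cong\Omega^{1}(M;\frakg)$ from $\Phi$. Writing $T_{(x,g)}(M\times G)=T_{x}M\oplus T_{g}G$, the vertical subspace is $\{0\}\oplus T_{g}G$ (it is spanned by the fundamental vector fields), while the horizontal complement $T_{x}M\oplus\{0\}$ equals $dr_{g}(ds_{x}(T_{x}M))$, since under $\Phi$ the action $r_{g}$ is $(x,h)\mapsto(x,hg)$ and $ds_{x}(v)=(v,0)$. Using that a tensorial form vanishes on vertical vectors together with adjoint-equivariance, for $\tau\in\mathcal T$ and any $(x,g)$ one gets
\[
\tau_{(x,g)}(v,w)=(r_{g}^{*}\tau)_{(x,e)}(ds_{x}v)=\Ad_{g^{-1}}\big((s^{*}\tau)_{x}(v)\big),
\]
so $\tau$ is completely determined by $a:=s^{*}\tau\in\Omega^{1}(M;\frakg)$. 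Conversely, for any $a\in\Omega^{1}(M;\frakg)$ the same formula $\tau_{(x,g)}(v,w):=\Ad_{g^{-1}}(a_{x}(v))$ defines a smooth $\frakg$-valued $1$-form on $P$; horizontality is immediate, and $r_{h}^{*}\tau=\Ad_{h^{-1}}\tau$ follows from $r_{h}\co(x,g)\mapsto(x,gh)$ and $\Ad_{(gh)^{-1}}=\Ad_{h^{-1}}\Ad_{g^{-1}}$, so $\tau\in\mathcal T$ with $s^{*}\tau=a$. Thus $\tau\mapsto s^{*}\tau$ is a linear bijection $\mathcal T\to\Omega^{1}(M;\frakg)$.

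Composing the two bijections gives $A\mapsto s^{*}(A-A_{0})=s^{*}A$, where the last equality holds because $s^{*}A_{0}=0$: at $(x,e)$ the trivial connection annihilates $T_{x}M\oplus\{0\}=ds_{x}(T_{x}M)$. Its inverse sends $a$ to $A_{0}+\tau_{a}$ with $\tau_{a}$ as above. I expect the only genuine bookkeeping to be in the equivariance checks of the middle paragraph --- matching the right $G$-action on $P$ with right translation in the $G$-factor under $\Phi$ and keeping track of the $\Ad$-twist --- and, for honesty, one should note that the identification depends on the chosen trivialization: two trivializations differ by a bundle automorphism covering $\mathrm{id}_{M}$, and the corresponding identifications of $\cA_P$ with $\Omega^{1}(M;\frakg)$ differ by the induced affine change of coordinates.
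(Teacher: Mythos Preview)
Your proof is correct and follows essentially the same route as the paper: both identify $\cA_P$ with $\Omega^1(M;\frakg)$ via the pullback $A\mapsto s^{*}A$ along the section $s(x)=(x,e)$ determined by the trivialization, and both build the inverse explicitly. You organize the argument through the affine structure---first identifying $\cA_P-A_0$ with the horizontal $\Ad$-equivariant (``tensorial'') forms, then those with $\Omega^1(M;\frakg)$---while the paper writes down the inverse map $\Phi$ directly and checks the two connection axioms. One small payoff of your route: your inverse formula $A_{(x,g)}(v,w)=\Ad_{g^{-1}}\big(a_x(v)\big)+d(l_{g^{-1}})w$ carries the $\Ad_{g^{-1}}$ twist on the horizontal part that $G$-equivariance forces, whereas the paper's displayed formula $\tilde A_x(v_x)+d(l_{g^{-1}})w_g$ omits it (the subsequent equivariance check in the paper only goes through because of the slip $d(r_h)(v_x,w_g)=(0,d(r_h)w_g)$, which should read $(v_x,d(r_h)w_g)$).
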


\begin{proof}
Let $A$ be a connection on $P$. By trivializing $P = M \times G$, we can decompose the
tangent space of $P$ at $p=(x,g) \in P$ as $T_pP = T_xM
\times T_gG$.

Let $s: M \to P = M\times G$ be the section $s(x):=
(x,e)$. Then we can define an affine homomorphism
\begin{align*}
\Psi: \cA_P & \to  \Omega^1(M)\tensor \frakg\\
A &\to s^*(A).
\end{align*}

On the other hand given
$\tilde A \in \Omega^1(M;\frakg)$, where we think of $\frakg$ as $T_eG$, define a $\frakg$--valued $1$-form $\Phi(\tilde A) :=A$ on $P =
M\times G$ where $$A_{(x,g)} (v_x,w_g) = \tilde A_x(v_x) + d(l_{g^{-1}})w_g.$$
Then for $X\in \frakg$ and
$p = (x,g) \in P$ we
have $X_{p}^* = d(i_p) X = (0_x,d(l_{g})X)$ and thus $$A_p(X^*) = \tilde
A_x(0_x) + X = X.$$
Since $r_h(x,g) = (x,gh)$ we have $d(r_h)(v_x,w_g) = (0,d(r_h)w_g)$ and therefore
\begin{align*}r_h^* A (v_x,w_g) &= A(0_x,d(r_h)
w_g)= \tilde A(0_x) + d(l_{(gh)^{-1}})d(r_h) w_g\\
 &= \Ad_{h^{-1}}\tilde A(0_x) + d(l_{h^{-1}})d(r_h)d(l_{g^{-1}}) w_g =
\Ad_{h^{-1}}A (v_x,w_g).
\end{align*}
This way we have a homomorphism $\Psi: \Omega^1(M;\frakg) \to \cA_P$.

Furthermore, the maps $\Phi$ and $\Psi$ are
inverses of each other, because
\[
(\Psi\circ \Phi (\tilde A)) v_x = s^*(A_{(x,g)}) (v_x) =
A_{(x,g)}(v_x,0_g) = \tilde A_{x}v_x
\]
and
\[
(\Phi\circ \Psi (A)) (v_x,w_g) = \Psi A (v_x) + d(l_{g^{-1}})w_g =  A
(v_x,0_g) + A(0_x,w_g) = A(v_x,w_g).
\]
That is, $\cA_P \cong \Omega^1(M;\frakg)$ as (affine) vector spaces.
\end{proof}

We have already mentioned that a different trivialization changes the connection. The following exercise describes this change.

\begin{exe}\label{trivializationchange}
 Choose a connection in a trivializable bundle $P\to M$. Trivialize $P$ by choosing a section $s\co M \to P$ so that the connection pulls back to a 1--form $A \in \Omega^1(M;\frakg)$. Choose a map $h \co M \to G$ and use $h$ to define a new secion by the formula $s_h(x) = s(x)\cdot h(m)$. Then in this trivialization the connection has the form
\[
 \Ad_{h^{-1}} A + d(l_{h^{-1}})dh.
\]
This means that if $X\in T_xM$, then evaluating the pullback of the connection using $s_h$ on $X$ yields
\[
 \Ad_{h^{-1}(x)} A(X) + d(l_{h^{-1}(x)})dh(X).
\]
Notice that $dh(X) \in T_{h(x)} G$. Prove and use the product rule
\[
 \eval{\frac{d}{dt}}{t=0}s(\alpha_t)h(\alpha_t) = d(r_{h(\alpha_0)})ds(\dot\alpha_0) + d(i_{s(x)})dh(\dot\alpha_0)
\]
for a path $\alpha$ with derivative $X$, where $i_p(g) = p\cdot g$ and $r_g(p) = p\cdot g$.
\end{exe}

We have not yet discussed the existence of connections on general principal bundles, but it easily follows from the existence of connections on trivial bundles by the fact that $tA_0 + (1-t) A_1$ is a connection if $A_0$ and $A_1$ are and by using a partition of unity. The collection of all connections on $P$ forms a topological space $\cA$ which is a subset of the vector space of all $\frakg$--valued 1--forms on $P$. Even though $\cA_P$ is not a linear subspace, since the sum of two connections violates the first condition, it is, however, an affine space.

\begin{prop}
 A choice of base connection determines an isomorphism of $\cA_P$ with $\Omega^1(\Ad_P)$.
\end{prop}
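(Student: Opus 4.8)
The plan is to fix a base connection $A_0 \in \cA_P$ and show that the difference map $A \mapsto A - A_0$ identifies $\cA_P$ with the space of horizontal, $G$--equivariant $\frakg$--valued $1$--forms on $P$, and then to identify that space with $\Omega^1(\Ad_P)$; composing the two, $A_0$ determines the asserted isomorphism of the affine space $\cA_P$ with the vector space $\Omega^1(\Ad_P)$, under which $A_0$ itself corresponds to $0$.

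First I would treat the difference step. If $A, A_0 \in \cA_P$, then $\tau := A - A_0 \in \Omega^1(P;\frakg)$ is $G$--equivariant, since $r_g^* A = \Ad_{g^{-1}} A$ and $r_g^* A_0 = \Ad_{g^{-1}} A_0$ by condition (2) of Definition \ref{defnconnection}; and it is horizontal, i.e.\ it vanishes on vertical vectors, since condition (1) gives $\tau(X^*) = A(X^*) - A_0(X^*) = X - X = 0$ for every $X \in \frakg$, and at each point the vertical subspace is exactly the set of fundamental vectors $X^*_p$ (Exercise \ref{horizontaldistribution}). Conversely, if $\tau$ is horizontal and $G$--equivariant, then $A_0 + \tau$ satisfies condition (1) (because $\tau$ annihilates the $X^*$) and condition (2) (because both summands are equivariant), so it is a connection. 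Hence $A \mapsto A - A_0$ is a bijection of $\cA_P$ onto the space of horizontal $G$--equivariant $\frakg$--valued $1$--forms, with inverse $\tau \mapsto A_0 + \tau$, carrying the affine structure of $\cA_P$ to the vector space structure of the target.

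Next I would set up the isomorphism between horizontal $G$--equivariant $\frakg$--valued $1$--forms on $P$ and $\Omega^1(\Ad_P)$; this is the $1$--form version of the correspondence in Exercise \ref{EquivariantMapsIsomorphism} between sections of an associated bundle $P \times_\rho V$ and $G$--equivariant maps $P \to V$, applied to $\rho = \Ad$ and $V = \frakg$. Given such a form $\tau$, define $\eta_\tau \in \Omega^1(\Ad_P)$ by
\[
\eta_\tau(x)(v) := \bigl[\, p,\ \tau_p(\tilde v)\, \bigr] \in (\Ad_P)_x, \qquad x \in M,\ v \in T_x M,
\]
where $p \in \pi^{-1}(x)$ is any point and $\tilde v \in T_p P$ any vector with $d\pi_p(\tilde v) = v$. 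Well--definedness is the delicate point: changing $\tilde v$ by a vertical vector leaves $\tau_p(\tilde v)$ unchanged because $\tau$ is horizontal, and replacing $(p,\tilde v)$ by $(pg, d(r_g)\tilde v)$ --- still a lift of $v$, since $\pi \circ r_g = \pi$ --- replaces $\tau_p(\tilde v)$ by $\tau_{pg}(d(r_g)\tilde v) = \Ad_{g^{-1}}\tau_p(\tilde v)$, so the class $[pg, \Ad_{g^{-1}}\tau_p(\tilde v)] = [p, \tau_p(\tilde v)]$ in $\Ad_P = P \times_{\Ad}\frakg$ is unchanged; smoothness in $x$ follows from smoothness of $\tau$ together with local triviality of $P$. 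Conversely, given $\eta \in \Omega^1(\Ad_P)$, define a $\frakg$--valued $1$--form $\tau_\eta$ on $P$ by letting $\tau_\eta$ at $p$ send $w \in T_p P$ to the unique $X \in \frakg$ with $[p, X] = \eta(\pi(p))(d\pi_p(w))$; this is horizontal because $d\pi$ annihilates vertical vectors and $G$--equivariant by the same Borel--construction bookkeeping run backwards. One then checks routinely that $\tau \mapsto \eta_\tau$ and $\eta \mapsto \tau_\eta$ are mutually inverse linear maps.

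Combining the two steps, $A \mapsto \eta_{A - A_0}$ is the isomorphism of the affine space $\cA_P$ onto the vector space $\Omega^1(\Ad_P)$ determined by $A_0$. The main obstacle, I expect, is precisely the well--definedness of this middle identification: there are two independent ambiguities --- the choice of point in the fibre and the choice of lift of $v$ --- and the argument must show that $G$--equivariance of $\tau$ exactly absorbs the first while horizontality of $\tau$ exactly absorbs the second. Everything else reduces to unwinding Definition \ref{defnconnection} and the definition of the Borel construction.
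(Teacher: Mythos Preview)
Your proof is correct and essentially matches the paper's approach. The paper defines the forward map by $\bar A_X(p) = A(\tilde X_p)$, where $\tilde X_p$ is the $A_0$--horizontal lift of $X$; since $A_0(\tilde X_p)=0$ this is exactly your $(A-A_0)(\tilde X_p)$, and horizontality of $A-A_0$ makes the choice of lift irrelevant, so your explicit factorization through the space of horizontal $G$--equivariant $\frakg$--valued $1$--forms on $P$ is just a cleaner unpacking of the same construction.
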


\begin{proof}
 Fix a connection $A_0$. By Exercise \ref{EquivariantMapsIsomorphism} sections of $P\times_\rho V$ are the same as equivariant maps from $P$ to $V$. Let $A$ be another connection. Given a vector field $X$ on $M$, we can construct a function $\bar A_X \co P \to \frakg$ by the formula:
\[
 \bar A_X(p) = A(\tilde X_p),
\]
where $\tilde X_p$ is the horizontal lift of $X$ to $T_pP$ with respect to $A_0$. The properties of a connection show that $\bar A_X$ is indeed an equivariant map from $P$ to $\frakg$, and therefore $\bar A_X$ determines an element of $\Omega^1(\Ad_P)$.

Conversely, let $\bar A \in \Omega^1(\Ad_P)$. Define
\[
 A(Y) = \bar A(Y_1)(p) + Y_2 \quad \text{for }Y_2 \in T_pP,
\]
where $Y = Y_1 + Y_2$ is the decomposition into horizontal and vertical part with respect to $A_0$. Thus the choice of $A_0$ identifies $\cA_P$ with $\Omega^1(\Ad_P)$.
\end{proof}

\begin{exe}
 We have seen that a trivialized bundle has a natural trivial connection. We can use it as a base connection to identify $\cA_P$ with $\Omega^1(\ad g)$, which in turn is naturally isomorphic $\Omega^1\tensor \frakg$ for a trivialized bundle $P$. Show that this identification is the same as the one used in Proposition \ref{ConnectionsSimplified}.
\end{exe}

\section{Lifting paths}\label{pathlifting}

Let $A$ be a connection in a principal bundle $P\to M$. Let $\alpha\co I \to M$ be a smooth path and $p_0 \in P$ a point in the fiber over $\alpha_0$.

\begin{prop}\label{horizontallift}
 There exists a unique lifting  $\beta \co I \to P$ so that for each $t\in I$, the $\dot\beta_t$ is the horizontal lift of $\dot\alpha_t$ with starting point $p_0$. The lift $\beta$ is called the {\em horizontal lift} of $\alpha$.
\end{prop}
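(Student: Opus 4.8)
The plan is to reduce horizontality to an ordinary differential equation on the structure group $G$, solve it locally, and then patch. First I would use compactness of $I$ to pick a partition $0 = t_0 < t_1 < \dots < t_N = 1$ together with open sets $U_i \subset M$ over which $P$ is trivial and with $\alpha([t_{i-1},t_i]) \subset U_i$; it then suffices to treat the case $P = U\times G$ trivial with $\alpha$ a path into $U$, and afterwards glue. On a trivial bundle Proposition \ref{ConnectionsSimplified} identifies the connection with a $1$-form: $A_{(x,g)}(v_x,w_g) = \tilde A_x(v_x) + dl_{g^{-1}}w_g$ for a unique $\tilde A \in \Omega^1(U;\frakg)$. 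A lift of $\alpha$ is a curve $\beta(t) = (\alpha(t),g(t))$, and $\dot\beta(t)$ is horizontal (i.e.\ $A(\dot\beta_t)=0$) exactly when
\[
dl_{g(t)^{-1}}\dot g(t) = -\tilde A_{\alpha(t)}\bigl(\dot\alpha(t)\bigr) =: \xi(t),
\]
that is, $\dot g(t) = dl_{g(t)}\xi(t)$ with $\xi$ a smooth curve in $\frakg$ and initial condition $g(a) = g_0$, where $p_0 = (\alpha(a),g_0)$.

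Next I would solve this non-autonomous, left-invariant ODE on $G$. Local existence and uniqueness of a maximal solution follow from standard ODE theory. To see the solution is defined on the whole (finite) parameter interval, I would observe that in a left-invariant Riemannian metric the speed of $g$ is $|\dot g(t)| = |\xi(t)|$, which is bounded on a compact interval, so the solution curve has finite length and, by completeness of left-invariant metrics, cannot leave a compact subset of $G$ in finite time; hence it extends. (For the groups of interest in these notes, e.g.\ $G = \SU(n)$, one may instead just note that the equation reads $\dot g = g\,\xi(t)$, a linear system, which has a global solution outright.) This gives, on each $U_i$, a unique horizontal lift of the corresponding sub-path of $\alpha$ through any prescribed point of the fibre over its initial point.

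Finally I would assemble $\beta$ by induction over $[0,t_1],\dots,[0,t_N]$: given $\beta$ on $[0,t_{i-1}]$, use the local result on $U_i$ with starting point $\beta(t_{i-1})$ to extend it over $[t_{i-1},t_i]$; smoothness at the junctions is automatic since both pieces solve the same ODE near $t_{i-1}$. For uniqueness, any horizontal lift of $\alpha$ with $\beta(0)=p_0$ restricts on each $[t_{i-1},t_i]$ to a horizontal lift in the chart $U_i$, hence coincides with the constructed one by local uniqueness; so any two such lifts agree on all of $I$. The same uniqueness shows the result is independent of the chosen trivializations and of the partition. As a byproduct, since $\ker A$ is $G$-invariant, $t \mapsto \beta(t)\cdot g$ is the horizontal lift through $p_0\cdot g$.

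\textbf{Main obstacle.} The one genuinely non-formal point is ruling out finite-time blow-up of the ODE solution on $G$, handled by the bounded-speed/completeness argument above (or by linearity in the matrix-group case); everything else is bookkeeping governed entirely by the local existence-and-uniqueness statement.
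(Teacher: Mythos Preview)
Your proof is correct and in spirit very close to the paper's, but the packaging differs in one useful way. Rather than trivializing over subintervals and patching, the paper picks \emph{any} global lift $\gamma\co I\to P$ of $\alpha$ (which exists since $I$ is contractible), writes an arbitrary lift as $\beta_t=\gamma_t\cdot g_t$, and applies the product rule to get
\[
A(\dot\beta_t)=\Ad_{g_t^{-1}}A(\dot\gamma_t)+(l_{g_t^{-1}})_*\dot g_t,
\]
so horizontality becomes the single ODE $(r_{g_t^{-1}})_*\dot g_t=-A(\dot\gamma_t)$ on $G$ with prescribed initial value. This avoids the partition/gluing bookkeeping entirely; your local-trivialization approach is equivalent but trades that global step for chart work. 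On the other hand, the paper simply says ``this is just a matter of solving an ODE in $G$'' and stops, whereas you actually address the one nontrivial analytic point---global existence on a finite interval---via the bounded-speed/completeness argument. So your write-up is more complete on exactly the issue the paper leaves implicit.
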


\begin{proof}
It is easy to see, that there exists some lift $\gamma$ of $\alpha$ to $P$ (e.g. by the lifting property or by trivializing the principal bundle over contractible components along the path). Then any path $\beta_t$ is of the form $\gamma_t\cdot g_t \in P$ for some path $g\co I \to G$. Now $\beta$ is a horizontal lift if and only if $A(\dot\beta_t) = 0$ for all $t \in I$. We can apply the product rule to get
\[
A(\dot\beta_t) = A((r_{g_t})_*\dot\gamma_t + (i_{\gamma_t})_*\dot g_t) = \Ad_{g_t^{-1}}A(\dot \gamma_t) + (l_{g_t^{-1}})_* \dot g_t.
\]
Therefore $\beta$ is a horizontal lift if and only if $A(\dot \gamma_t) = -(r_{g_t^{-1}})_*\dot g_t$. To find the appropriate $g$ with starting point $g_0$ is just a matter of solving an ordinary differential equation in $G$.
\kommentar{Add note about notation d() = ()_*}
\end{proof}

If $\alpha$ is a loop, i.e. $\alpha(0) = \alpha(1)$, then the starting point $p_0$ and the endpoint of the lift $\beta$ live in the same fiber and we can find an element $g \in G$ with $\beta(1) = \beta(0) \cdot g$. We call $\hol_{A}(\alpha,p_0) := g$ the holonomy of $A$ along $\alpha$ with respect to the base point $p_0$. Therefore we get the following from Proposition \ref{pathlifting}

\begin{cor}
 Given a base point $p_0 \in \pi^{-1}(x_0)$, any connection $A$ defines a map $\hol_A\co \operatorname{Loops}(M,x_0) \to G$ called the holonomy map.
\end{cor}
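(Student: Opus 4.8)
The plan is to unwind the definition of $\hol_A$ stated in the paragraph immediately preceding the corollary and verify that it genuinely produces a function on $\operatorname{Loops}(M,x_0)$. First I would fix a loop $\alpha\co I \to M$ with $\alpha(0)=\alpha(1)=x_0$ and apply Proposition \ref{horizontallift} to obtain the unique horizontal lift $\beta\co I\to P$ with $\beta(0)=p_0$. Uniqueness here is the essential input: it is what makes the assignment $\alpha\mapsto\beta$, and hence the group element extracted from $\beta$, unambiguous.

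Next I would note that $\pi\circ\beta=\alpha$, so $\pi(\beta(1))=\alpha(1)=x_0=\pi(p_0)$, i.e.\ $\beta(1)$ and $p_0$ lie in the same fiber $\pi^{-1}(x_0)$. Since $P\to M$ is a principal $G$--bundle, $G$ acts freely on the right and the fibers coincide with the $G$--orbits (because $M=P/G$), so the right action restricts to a free and transitive action on $\pi^{-1}(x_0)$. Consequently there is exactly one $g\in G$ with $p_0\cdot g=\beta(1)$, and one sets $\hol_A(\alpha,p_0):=g$. This is well-defined precisely because transitivity gives the existence of $g$ and freeness gives its uniqueness.

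Letting $\alpha$ range over all of $\operatorname{Loops}(M,x_0)$ then yields the desired map $\hol_A\co\operatorname{Loops}(M,x_0)\to G$. I do not anticipate a real obstacle: the only two points demanding any care are (i) checking that the horizontal lift of Proposition \ref{horizontallift} really covers $\alpha$, so that its endpoint sits over $x_0$ in the same fiber as $p_0$, and (ii) recalling the standard fact that in a principal bundle the right $G$--action is free and transitive on each fiber, which is exactly what licenses writing $\beta(1)=\beta(0)\cdot g$. Everything else is routine bookkeeping.
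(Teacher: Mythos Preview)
Your proposal is correct and mirrors the paper's approach exactly: the paper derives the corollary directly from Proposition~\ref{horizontallift} together with the observation in the preceding paragraph that $\beta(1)$ and $p_0$ lie in the same fiber, so there is a unique $g\in G$ with $\beta(1)=\beta(0)\cdot g$. You have simply spelled out the two points (uniqueness of the horizontal lift, and freeness plus transitivity of the $G$--action on fibers) that the paper leaves implicit.
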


This holonomy map {\em does not} define a map on $\pi_1$, unless the connection is {\em flat}, a term we will define later. One can recapture the connection by knowing what the horizontal lifts of all paths are, just define the horizontal subspace at $p\in P$ to be the space spanned by the derivatives of the horizontal lifts of all paths throough $\pi(p)$. One can therefore give yet another definition of a connection in a principal bundle to be a coherent way to lift paths in $M$ so that equivariance is satisfied.

\section{Connections in vector bundles}\label{covariantderivative}

We now turn to connections in vector bundles. One should think of these as a way to differentiate sections of vector  bundles in the direction of a vector field on the base manifold. For example, given a function $f\co \R^n \to \R$, and a vector field $X$ on $\R^n$, then we can take the derivative of $f$ along the vector field to get another function $df(X)\co \R^n \to \R$. Similarly, if we have a (vector valued) function $f\co \R^n \to \R^p$, and a vector field $X$ on $\R^n$ we can apply the preceding procedure componentwise to obtain a new function $df(X)\co \R^n \to \R^p$. Again this can be done by replacing $\R^n$ by any manifold $M$, thus there is a natural way to take derivatives of functions $M\to \R^p$ along vector fields. But functions are just sections of trivial bundles, and the general notion of a connection in a vector bundle is a way to do this when the bundle is not trivial.

\begin{defn}
 An {\em affine connection}, also called {\em covariant derivative} on $E$, is a linear mapping
\[
 \nabla\co \Omega^0(E) \to \Omega^1(E)
\]
satisfying the Leibniz rule
\[
 \nabla(f\phi) = df \tensor \phi + f \nabla \phi \quad \text{for } f\in C^\infty(M,\R) \text{ and } \phi \in \Omega^0(E).
\]
Here we use that $\Omega^0(E)$ is isomorphic to $\Gamma(E)$ via the map $f\tensor \phi \mapsto f\phi$, since $\Lambda^0T^*M$ is just the trivial line bundle.
\end{defn}

 If $X$ is a vector field on $M$, then we will write $\nabla_X(\phi)$ for the evaluation $(\nabla\phi)(X)$. If $E$ is associated to a principal $U(n)$ or $\SO(n)$ bundle via the standard representation, then $E$ has a fiberwise inner product $(\cdot,\cdot)$. We say $\nabla$ is {\em compatible with the metric} if
\[
 d(\phi,\theta) = (\nabla\phi,\theta) + (\phi,\nabla\theta).
\]
This expression makes sense since $(\phi,\theta)$ is a smooth function on $M$, thus $d(\phi,\theta) \in \Omega^1(M)$. Also $(\nabla \phi,\theta) \in \Omega^1(M)$ since we can evaluate it on a vector field $X\co (\nabla \phi,\theta)(X) = (\nabla_X\phi,\theta)$. Compatibility with the metric refers to the metric on the vector bundle, {\em not} the Riemannian metric on $M$.

\begin{exa}
 Let $E = M \times \C^n$, the trivial $\C^n$ bundle. Sections of $E$ are simply functions $\phi\co M \to \C^n$. Let $d\co \Omega^0 \to \Omega^1$ denote the exterior derivative. It is defined for complex valued functions on $M$. Then $d$ defines the trivial connection on $E$:
\begin{align*}
 d\co \Omega^0(E) &\to \Omega^1(E)\\
\phi = (\phi_1,\ldots,\phi_n) &\mapsto (d\phi_1,\ldots,d\phi_n).
\end{align*}
Using this example and a partition of unity we see that any vector bundle admits an affine connection.
\end{exa}

For $E = M \times \C^n$ and $A\in \Omega^1(M)\tensor \frakg$, where $G \subset GL_n(\C)$, we can consider
\begin{align*}
 d^A\co &\Omega^0(E) \to \Omega^1(E)\\
\phi = (\phi_1,\ldots,\phi_m) &\mapsto (d\phi_1,\ldots,d\phi_m) + A\phi.
\end{align*}
The term $A\phi$ is a 1--form with values in $E$, since given a vector field $X$, $A(X) \in \frakg$ which acts on sections of $E$ by left matrix multiplication.

\begin{exe}
 Show that $d^A$ is an affine connection on $E$. If $G= U(n)$, show that $d^A$ is compatible with the (trivial) Hermitian metric on $E$ if and only if $A \in \Omega^1(M)\tensor\fraku(n)$.
\end{exe}

\begin{exa}\label{localconnection}
We will now show that a connection on any $\GL(n,\C)$--vector bundle $E$ can be written locally as $d+A$ for some $\frakgl(n,\C)$--valued 1--form $A$  on $M$. A convenient way to show this is to use local frames. Let $\{ e_i \}$ be a local frame for a vector bundle $E$. Let $\nabla$ be a covariant derivative and let $A_{r,s}$ be the matrix of 1--forms determined by the expression
\[
 \nabla e_s = \sum_r A_{r,s} \tensor e_r.
\]
(If the structure group of $E$ is $O(n)$ or $U(n)$ and the $e_i$ are orthogonal then $A_{r,s}$ is a $\frako(n)$-- or a $\fraku(n)$--valued form if $\nabla$ is compatible with the metric.) Now if $\phi\in \Gamma(E)$, we can write $\phi = \sum \phi_i e_i$ with $\phi_i\in C^\infty$ and so we must have
\begin{align*}
 \nabla \phi & = \sum d\phi_r e_r = \sum_r(d\phi_r \tensor e_r + \phi_r \nabla e_r)\\
& = \sum_r(d\phi_r \tensor e_r + \phi_r \sum_s A_{s,r} \tensor e_s)\\
& = \sum_r(d\phi_r + \sum_s A_{r,s}\phi_s ) \tensor e_r.
\end{align*}
Writing $\phi$ as a column vector, we can write this as
\[
 \nabla\phi = (d+A) \phi.
\]
Hence in local coordinates any connection is of the form $d+A$.

This example hints (strongly) at the relationship between the definition in principal bundles and in vector bundles. We saw that in a trivial principal bundle, a connection is determined by a Lie algebra valued 1--form on $M$. The same is true in the preceding example. One sees a local equivalence of the two notions of connections. In the next section we will give a global identification of these two notions, without any reference to a trivialization of the bundle.
\end{exa}

\begin{exa}
 The {\em Fundamental Theorem of Riemannian Geometry} states that every Riemannian manifold possesses a unique connection on its tangent bundle which satisfies two conditions (compatible with the metric and torsion free\footnote{$\nabla$ is called {\em torsion-free} if $\nabla_X Y-\nabla_Y X =[X,Y]$ for all vector fields $X$ and $Y$ on $M$}). Thus invariants of Riemannian manifolds can be constructed by working with this connection called the {\em Riemannian} or {\em Levi-Civita} connection. It should be stressed that the Levi-Civita connection only depends on the metric.
\end{exa}

\section{Equivalence of connections in vector bundles and principal bundles}\label{equivalenceconnectionsvectorbundlesprincipalbundles}

We have seen that the frame bundle of a vector bundle $E$ is a principal bundle $P$, so that the vector bundle associated to $P$ via the defining representation is $E$ again. Therefore, principal bundles are more general than vector bundles and we do not loose any generality if we always start with a principal bundle. We will see that a connection in a principal bundle always gives us a connection in an associated vector bundle. Furthermore, if we restrict ourselves to principal $G$-bundles where $G\subset GL(n,\C)$ or $G\subset GL(n,\R)$, the notions of connections are equivalent.

Let $E = P\times_\rho V \to M$ be a vector bundle associated to a principal $G$--bundle $P\to M$ and $\rho\co G \to GL(V)$ a representation of $G$. We saw that the sections of $E$ are $G$--equivariant maps $\phi\co P \to V$. Notice that for a vector space $V$, the tangent space $T_vV$ is canonically identified with $V$ itself.

Let $A$ be a connection in the principal bundle $P \to M$; we think of $A$ as a horizontal distribution. Given a $G$--equivariant map $\phi\co P \to V$ and a vector field $X$ on $M$, we want to construct another $G$--equivariant map $\nabla^A_X \phi \co P \to V$. Given $p \in P$ let $\tilde X_p$ be the unique horizontal lift of $X_{\pi(p)}$ to $T_pP$ given by the connection. Define $(\nabla^A_X(\phi))(p)$ to be $d\phi(\tilde X_p) \in T_vV =  V$. This construction yields a bijection between connections in principal $\GL(n,\C)$--bundles and connections in $\GL(n,\C)$--vector bundles associated via the defining representation. Naturally, this will only be a bijection, when we can reconstruct the principal bundle from the vector bundle, e.g. by considering the frame bundle.

\begin{exe}
 Verify that the above construction defines a bijection between connections in principal $U(n)$-- or $O(n)$--bundles and affine connections in $\R^n$-- or $\C^n$--vector bundles compatible with the metric.
\end{exe}

Path lifting in a principal bundle corresponds to {\em parallel transport} of frames in a vector bundle. If I choose a frame in the fiber of $E$ over $m\in M$ it is easy to see that lifting a path $\alpha_t$ starting at $p\in P$ gives me a way to choose a frame in the fiber over each point of the path, i.e. it hands me a trivialization of the pullback bundle $\alpha^*(E)$ over an intervall. This is one of the older definitiions of a connection, the ``rep\`ere mobile'' (moving frame) of \'Elie Cartan.

\section{Extending the covariant derivatives}

We can extend an affine connection $\nabla$ in a vector bundle $E$ to
\[
 \nabla\co \Omega^k(E) \to \Omega^{k+1}(E)
\]
by forcing the Leibniz rule to hold; so
\[
 \nabla(\omega \tensor \phi) = d\omega \tensor \phi + (-1)^{k+1} \omega \wedge \nabla\phi \quad\text{for }\omega\in\Omega^k(M) \text{ and }\phi\in\Gamma(E).
\]
The resulting sequence
\[
\cdots \to \Omega^{k-1}(E) \stackrel{\nabla}{\to} \Omega^{k}(E) \stackrel{\nabla}{\to} \Omega^{k+1}(E) \to \cdots
\]
is not necessarily a complex. In fact, we will see in the Section \ref{curvature} that this sequence is a complex if and only if $\nabla$ is {\em flat}.

There are two special cases of this which are particularly important, namely the bundle associated to a principal bundle $P\to M$ via the defining representation and the adjoint bundle. If $E$ is the bundle associated to $P$ via the defining representation (where $G$ is a subgroup of $\GL(n,\R)$ or $\GL(n,\C)$) then the adjoint bundle is a sub-vector bundle $\Hom(E,E)$, since $\Hom(E,E) = P\times_{\Ad} \frakgl(n)$. If you choose a local basis of sections $\{e_i\}$ of $E$, then the covariant derivative on $E$ has the form $d^A = d+A$ for some $A \in \Omega^1 \tensor \frakg$, as explained in Section \ref{covariantderivative}. Furthermore if we fix a connection $\nabla$, then any other connection can be written as $\nabla^A = \nabla+A$ globally for some $A \in \Omega^1(\Ad_P)$.

Suppose we are given two differential forms $\alpha\in
\Omega^p(E)$ and $\beta \in \Omega^q(E')$, then a priori we have $\alpha \wedge \beta \in
\Omega^{p+q}(E \tensor E')$ by extending $(\alpha\tensor \phi) \wedge ( \alpha' \tensor \phi') :=  (\alpha\wedge\alpha')\tensor (\phi\tensor \phi')$ linearly. But if we are given a section of
$\Hom(E\tensor E',E'')$, i.e. a smoothly varying bilinear form $\Phi\co E_x
\times E'_x \to E''_x$ on the fibers over $x\in M$,
then using this form we may define the product $\Phi(\alpha \wedge \beta)
\in \Omega^{p+q}(E'')$ by extending $\Phi((\alpha \tensor \phi ) \wedge (\alpha' \tensor \phi')):=(\alpha\tensor\alpha')\tensor\Phi(\phi,\phi')$ linearly.

Typically $E=E'=E''$ is the Lie algebra bundle of
some principal $G$-bundle. We
will write $[\alpha\wedge\beta] = \Phi(\alpha\wedge\beta)$, when the $\Phi$ is
the Lie bracket (see Exercise \ref{liebracketadjointbundle}), and $\alpha\wedge\beta = \Phi(\alpha\wedge\beta)$, when $\Phi$ is ordinary
matrix multiplication for a matrix group $G$ or evaluation $\Hom(E,E) \tensor E \to E$. Notice that for $\alpha\in \Omega^p(E)$ and $\beta\in \Omega^q(E)$  where $E$ is a vector bundle with fiber isomorphic to a Lie algebra $\frakg$ (e.g. $E = \Ad_P$) it is easy to see that the Lie-bracket satisfies
\begin{equation}\label{gradedliebracket}
[\alpha\wedge\beta] = -(-1)^{pq}[\beta\wedge\alpha]
\end{equation}
as well as the graded Jacobi identity
\begin{equation}\label{gradedJacobiidentity}
(-1)^{pr} [\alpha\wedge[\beta\wedge\gamma]] + (-1)^{pq} [\beta\wedge[\gamma\wedge\alpha]] + (-1)^{qr} [\gamma\wedge[\alpha\wedge\beta]]
\end{equation}
for $\alpha\in\Omega^p(E),\beta\in\Omega^q(E),\gamma\in\Omega^r(E).$

\begin{exa}\label{localextended}
 Let $A$ be a connection in a principal $G$--bundle $P\to M$ with $G\subset \GL(n,\C)$ and let $E$ be the vector bundle associated to $P$ via the defining representation. By Example \ref{localconnection} locally $\nabla^A = d+A$, if we write $\phi \in \Gamma(E)$ as a column vector. Therefore the induced connection $d^A\co \Omega^1(E) \to \Omega^2(E)$ is satisfies:
\begin{align*}
d^A (\omega \tensor \phi) &= d\omega \tensor \phi - \omega \wedge \nabla^A\phi\\
&=d\omega\tensor\phi - \omega\wedge\phi - \omega\wedge A\phi\\
&=d(\omega\tensor\phi) + A\phi \wedge \omega\\
&=d(\omega\tensor\phi) + A\wedge (\omega\tensor\phi).
\end{align*}
Therefore locally $d^A\phi = d\phi+A\wedge\phi$.
\end{exa}

\begin{exe}
Show that the induced connection $d^A\co \Omega^1(\Hom(E,E)) \to \Omega^2(\Hom(E,E))$ is given by the formula
\[
 B\mapsto dB + [A\wedge B].
\]
\end{exe}

\section{The Maurer-Cartan form}\label{maurercartan}

In this section we will see yet another equivalent, however only slightly different definition for a connection.

\begin{defn}\label{defnMaurerCartan}
 The {\em Maurer-Cartan form} $\theta$ on $G$ is the unique $\frakg$--valued 1--form on $G$ which assigns to each vector its left-invariant extension.
\end{defn}

Notice that for every smooth vector field $X$ we get a smooth map
\begin{align*}
\theta(X)\co G &\to \frakg\\
g&\mapsto \theta(X_g).
\end{align*}

\begin{exe}\label{ExeMaurerCartan}
Check that $\theta(X_g)_h = dl_{hg{-1}} (X_g)$ and that it satisfies
\begin{enumerate}
 \item $l_g^*\theta = \theta$;
 \item $r_g^*\theta = \Ad_{g^{-1}} \theta$.
\end{enumerate}
\end{exe}

\begin{prop}\label{PropMaurerCartan}
 The Maurer-Cartan form satisfies the Maurer-Cartan equation \[d\theta + \tfrac{1}{2}[\theta\wedge\theta] = 0.\]
\end{prop}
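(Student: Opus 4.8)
The plan is to verify the Maurer--Cartan equation pointwise by evaluating $d\theta + \tfrac12[\theta\wedge\theta]$ on a pair of tangent vectors, and the key simplification is that it suffices to check the identity on left-invariant vector fields, since these span each tangent space $T_gG$. So let $X,Y\in\frakg$ be left-invariant vector fields; then by Definition \ref{defnMaurerCartan} the functions $\theta(X)$ and $\theta(Y)$ are \emph{constant} maps $G\to\frakg$, equal to $X_e$ and $Y_e$ respectively.

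First I would recall the invariant formula for the exterior derivative of a $1$-form,
\[
d\theta(X,Y) = X\bigl(\theta(Y)\bigr) - Y\bigl(\theta(X)\bigr) - \theta([X,Y]).
\]
Since $\theta(X)$ and $\theta(Y)$ are constant, the first two terms vanish, leaving $d\theta(X,Y) = -\theta([X,Y])$. Now $[X,Y]$ is again a left-invariant vector field (Proposition \ref{leftinvariantvf}), so $\theta([X,Y])$ is the constant function with value $[X,Y]_e = [X_e,Y_e]$ under the identification $\frakg\cong T_eG$. Hence $d\theta(X,Y) = -[X_e,Y_e]$, or more compactly $d\theta(X,Y) = -[\theta(X),\theta(Y)]$.

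Next I would compute the bracket term. With the convention for the wedge of Lie-algebra-valued forms used in the paper, for $1$-forms one has
\[
[\theta\wedge\theta](X,Y) = [\theta(X),\theta(Y)] - [\theta(Y),\theta(X)] = 2[\theta(X),\theta(Y)],
\]
so $\tfrac12[\theta\wedge\theta](X,Y) = [\theta(X),\theta(Y)]$. Adding this to $d\theta(X,Y) = -[\theta(X),\theta(Y)]$ gives $0$, and since left-invariant vector fields span every tangent space and both sides are tensorial (bilinear and alternating over $C^\infty(G)$), the identity holds for all tangent vectors, proving $d\theta + \tfrac12[\theta\wedge\theta] = 0$.

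The only real point of care --- and the step I expect to be the main nuisance rather than a genuine obstacle --- is pinning down the normalization constant in the definition of $[\theta\wedge\theta]$, since conventions for $[\alpha\wedge\beta]$ on $\frakg$-valued forms differ by factors of $2$ across the literature; I would make the convention from the paragraph defining $[\alpha\wedge\beta] = \Phi(\alpha\wedge\beta)$ explicit (so that $[\theta\wedge\theta](X,Y) = 2[\theta(X),\theta(Y)]$ rather than $[\theta(X),\theta(Y)]$), which is exactly what makes the coefficient $\tfrac12$ come out right. One should also note the sign/placement issue flagged in the analogous computation for $d\phi$ earlier: strictly speaking one evaluates at a point and uses the identification of left-invariant fields with elements of $T_eG$, rather than manipulating the vector fields directly.
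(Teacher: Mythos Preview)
Your proposal is correct and follows essentially the same approach as the paper: both reduce to left-invariant vector fields, use the invariant formula $d\theta(X,Y) = X(\theta(Y)) - Y(\theta(X)) - \theta([X,Y])$ together with the constancy of $\theta(X)$ on left-invariant $X$, and then verify the factor-of-$2$ convention $[\theta\wedge\theta](X,Y) = 2[\theta(X),\theta(Y)]$ by computing on elementary tensors. Your closing remarks about the normalization and about evaluating pointwise via the identification $\frakg\cong T_eG$ mirror exactly the care the paper takes in its own proof.
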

\begin{proof}
By Exercise \ref{ExeMaurerCartan} we have for a left-invariant vector fields $X$
\[
 \theta(X_g)_h = dl_{hg^{-1}} (X_g)  = X.
\]
Therefore the map $\theta(X)\co G \to \frakg$ is constant and $d(\theta(X)) = 0$. Furthermore we have for left-invariant vector fields $X$ and $Y$
\[
 \theta([X,Y]) = [X,Y] = [\theta(X),\theta(Y)],
\]
which implies\kommentar{Vielleicht Rerefenz fuer Strukturgleichung der aeusseren Ableitung. Warum weicht Nomizu ab?}
\begin{align*}
 d\theta(X,Y) &= X (\theta(Y)) - Y (\theta(X)) - \theta([X,Y])\\
& = d(\theta(Y))(X) - d(\theta(X))(Y) - [\theta(X),\theta(Y)]\\
& = -[\theta(X),\theta(Y)].
\end{align*}
The evaluation of either side at $g\in G$ does not rely on the fact that $X$ and $Y$ are left-invariant vector fields.and therefore $d\theta(X,Y) + [\theta(X),\theta(Y)] = 0$ holds for arbitrary smooth vector fields. For $\alpha_i\in \Omega^1(G)$ and $\phi_i\in \frakg$, $i=1,2$, we have
\begin{align*}
[\alpha_1(X)\tensor \phi_1,&\alpha_2(Y)\tensor \phi_2] + [\alpha_2(X)\tensor \phi_2,\alpha_1(Y)\tensor \phi_1]\\ &= \alpha_1(X)\alpha_2(Y)(\phi_1\phi_2-\phi_2\phi_1) + \alpha_2(X)\alpha_1(Y)(\phi_2\phi_1-\phi_1\phi_2)\\
&=\alpha_1\wedge\alpha_2(X,Y)\phi_1\phi_2+\alpha_2\wedge\alpha_1(X,Y)\phi_2\phi_1\\
&=\alpha_1\wedge\alpha_2(X,Y)[\phi_1,\phi_2]\\
&=[(\alpha_1\tensor\phi_1)\wedge(\alpha_2\tensor\phi_2)](X,Y).
\end{align*}
This implies $[\theta(X),\theta(Y)] = \tfrac{1}{2}[\theta\wedge\theta](X,Y)$ and completes the proof.
\end{proof}

Let $P \stackrel{\pi}{\to} M$ be a principal $G$--bundle and consider $p \in P_x$ for some $x\in M$, where $P_x = \pi^{-1}(x)$ is the fiber over $x$. The map $i_p \co G \hookrightarrow P_x$ given by $i_p g = p\cdot g$ induces the $\frakg$--valued 1--form $\theta_x:=i_p^*\theta$ on $P_x$ which satisfies
\[
 r_g^*(\theta_x) = \Ad_{g^{-1}}\theta_x\quad \text{and} \quad d\theta_x + [\theta_x\wedge \theta_x]= 0.
\]

\begin{exe}\label{ExeConnectionMaurer}
Show that replacing the first condition in Definition \ref{defnconnection} by \[j_x^* A = \theta_x \quad \text{for all } x \in M,\] where $j_x\co P_x \hookrightarrow P$, yields an equivalent definition of a connection.
\end{exe}

\section{Curvature}\label{curvature}

Let $\nabla$ be a connection in a vector bundle $E$.

\begin{thm}\label{curvatureformula}
The composition
\[
 \Omega^0(E) \stackrel{\nabla}{\longrightarrow} \Omega^1(E) \stackrel{\nabla}{\longrightarrow} \Omega^2(E)
\]
is a differential operator of order 0. More precisely, there exists an $F^\nabla \in \Omega^2(\Hom(E,E))$ such that
\[
 \nabla\nabla \phi = F^\nabla \cdot \phi.
\]
$F^\nabla$ is called the curvature of the affine connnection $\nabla$.\kommentar{ If $E$ is associated to a principal $G$--bundle $P$ via the defining representation and $\nabla = \nabla^A$ for some connection $A$ on $P$, then it actually lies in the subspace $\Omega^2(\Ad_P) \subset \Omega^2(\Hom(E,E))$.}
\end{thm}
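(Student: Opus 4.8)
The plan is to show that $\nabla\nabla\colon\Omega^0(E)\to\Omega^2(E)$ is linear over the ring $C^\infty(M,\R)$, i.e.\ that it is \emph{tensorial}, and then to invoke the standard principle that a map between spaces of smooth sections of vector bundles which is linear over the smooth functions is induced by a bundle homomorphism. Granting this, the assignment $\phi\mapsto\nabla\nabla\phi$ is a section of $\Hom(E,\Lambda^2T^*M\tensor E)\cong \Lambda^2T^*M\tensor\Hom(E,E)$, which is by definition an element $F^\nabla\in\Omega^2(\Hom(E,E))$; and since it is given by a bundle map it is a differential operator of order $0$. This simultaneously proves both assertions of the theorem.

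The first step is the tensoriality computation, carried out directly from the Leibniz rules already at our disposal: the rule $\nabla(f\phi)=df\tensor\phi+f\nabla\phi$ for $\phi\in\Omega^0(E)$ and its extension to $E$--valued forms. Applying $\nabla$ a second time and using additivity gives $\nabla\nabla(f\phi)=\nabla(df\tensor\phi)+\nabla(f\nabla\phi)$. In the first summand $d(df)=0$ kills the $d\omega\tensor\phi$ term, leaving $\mp\,df\wedge\nabla\phi$. In the second summand, writing $\nabla\phi$ locally as a sum of terms $\omega\tensor\psi$, expanding $\nabla(f\omega\tensor\psi)$ by the extended Leibniz rule and using $d(f\omega)=df\wedge\omega+f\,d\omega$, one obtains exactly $df\wedge\nabla\phi+f\,\nabla\nabla\phi$. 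The two copies of $df\wedge\nabla\phi$ cancel — this is precisely where the sign conventions in the $0$--form and higher Leibniz rules are chosen so as to be compatible (cf.\ Example \ref{localextended}) — and we are left with $\nabla\nabla(f\phi)=f\,\nabla\nabla\phi$.

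The second step is to convert $C^\infty(M,\R)$--linearity into a bundle map. Here I would argue locality: if $\phi\in\Omega^0(E)$ vanishes at a point $x\in M$, choose a local frame $\{e_i\}$ for $E$ near $x$ and write $\phi=\sum_i\phi^i e_i$ with $\phi^i(x)=0$; multiplying by a bump function that is $1$ near $x$ and supported in the trivializing neighbourhood, one may take this expression to be global, so that $\nabla\nabla\phi=\sum_i\phi^i\,\nabla\nabla e_i$ near $x$ by tensoriality, and the right-hand side vanishes at $x$. Hence $(\nabla\nabla\phi)(x)$ depends only on $\phi(x)$, which gives for each $x$ a well-defined linear map $E_x\to\Lambda^2 T^*_xM\tensor E_x$; smoothness in $x$ follows from smoothness of the $\nabla\nabla e_i$. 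This pointwise family is the desired $F^\nabla$, and by construction $\nabla\nabla\phi=F^\nabla\cdot\phi$.

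I do not anticipate a genuine obstacle. The only points requiring care are (i) keeping the signs in the two Leibniz rules consistent so that the cross terms $df\wedge\nabla\phi$ actually cancel, and (ii) the standard but slightly fussy argument, via bump functions and local triviality of $E$, that linearity over $C^\infty(M,\R)$ forces the operator to be pointwise and hence a bundle homomorphism. One could alternatively verify everything in a local frame using $\nabla^A=d+A$ from Example \ref{localconnection}, which produces the explicit local formula $F^\nabla=dA+A\wedge A$; but the tensoriality argument above has the advantage of being coordinate-free and immediately yielding the global statement.
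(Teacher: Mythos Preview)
Your proof is correct, but it takes a genuinely different route from the paper's. The paper gives a purely local argument: choose a local frame $\{e_i\}$, write $\nabla=d+A$ as in Example~\ref{localconnection}, and compute directly
\[
\nabla\nabla\phi=(d+A)(d\phi+A\phi)=(dA+A\wedge A)\phi=(dA+\tfrac{1}{2}[A\wedge A])\phi,
\]
which simultaneously shows order zero and identifies $F^\nabla$ explicitly in coordinates. You instead prove $C^\infty(M)$--linearity of $\nabla\nabla$ globally from the Leibniz rules and then invoke the tensoriality principle to obtain a bundle map. This is exactly the alternative argument the paper leaves as the exercise immediately following the theorem. The trade-off is clear: the paper's computation is shorter and delivers the local formula $F^\nabla=dA+A\wedge A$ (needed repeatedly later, e.g.\ in Proposition~\ref{PropCurvaturePullback}) at no extra cost, while your approach is coordinate-free and makes the global existence of $F^\nabla\in\Omega^2(\Hom(E,E))$ immediate without having to patch local expressions. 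Your caveat about sign consistency is well placed --- the extension of $\nabla$ to higher forms must use the sign that makes the cross terms $df\wedge\nabla\phi$ cancel, and Example~\ref{localextended} confirms which one that is.
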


\begin{proof}
 We give a local proof. Choose a local frame $\{e_i\}$. In this frame the connection has the form $\nabla = d+A$ by Example \ref{localconnection}. Let $\phi$ be a section and write it as a column vector. Then by Examples \ref{localconnection} and \ref{localextended}
\begin{align*}
 \nabla \nabla \phi &= \nabla(d\phi + A\phi)\\
 &=d(d\phi + A\phi) + A\wedge (d\phi + A \phi)\\
 &=d^2 \phi + d(A\phi) + A\wedge d\phi + A\wedge (A \phi)\\
&=A\wedge d\phi + dA \wedge \phi - A\wedge d\phi + (A\wedge A) \phi\\
&= (dA + A\wedge A) \phi\\
&= (dA + \tfrac{1}{2}[A\wedge A]) \phi
\end{align*}
where the last equality follows from \eqref{gradedliebracket} and
\[
A\wedge A = \sum_{k} A_{r,k}\wedge A_{k,s}.\qedhere
\]
\kommentar{The fact that $F^\nabla \in \Omega^2(\Ad_P)$ if $E$ is associated to a principal $G$--bundle $P$ via the defining reprresentation and $\nabla = \nabla^A$ for some connection $A$ on $P$ follows immediately from the fact that $\nabla^Ae_i$ is $G$--equivariant that the frames are related by elements in $G$.}
\end{proof}

\kommentar{
Let $A$ be a connection in a principal $G$--bundle $P\to M$ where $G \subset \GL(n,\C)$, $d^A$ its associated covariant derivative in the vector bundle $E$ associated to $P$ via the defining representation.

\begin{thm}
 The composition
\[
 \Omega^0(E) \stackrel{d^A}{\longrightarrow} \Omega^1(E) \stackrel{d^A}{\longrightarrow} \Omega^2(E)
\]
is a differential operator of order 0. More precisely, there exists an $F^A \in \Omega^2(\Hom(E,E))$ such that
\[
 d^A d^A \phi = F^A \cdot \phi.
\]
$F^A$ is called the curvature of the affine connnection $d^A$, and it actually lies in the subspace $\Omega^2(\Ad_P) \subset \Omega^2(\Hom(E,E))$.
\end{thm}

\begin{proof}
 We give a local proof. Choose a local frame $\{e_i\}$. In this frame the connection has the form $d^A = d+A$. Let $\phi$ be a section and write it as a column vector. Then by Examples \ref{localconnection} and \ref{localextended}
\begin{align*}
 d^A d^A \phi &= d^A(d\phi + A\phi)\\
 &=d(d\phi + A\phi) + A\wedge (d\phi + A \phi)\\
 &=d^2 \phi + d(A\phi) + A\wedge d\phi + A\wedge (A \phi)\\
&=A\wedge d\phi + dA \wedge \phi - A\wedge d\phi + (A\wedge A) \phi\\
&= (dA + A\wedge A) \phi\\
&= (dA + \tfrac{1}{2}[A\wedge A]) \phi
\end{align*}
where
\[
A\wedge A = \sum_{k} A_{r,k}\wedge A_{k,s}.\qedhere
\]
\end{proof}
}

\begin{exe}
Give a global proof of Theorem \ref{curvatureformula}:
\begin{enumerate}
 \item Show that $\nabla \nabla (f\phi) = f \nabla \nabla (\phi)$ for $f\in C^\infty(M)$ and $\phi \in \Omega^0(M)$.
 \item Let $L\co \Omega^0(E) \to \Omega^2(E)$ be any $C^\infty(M)$--linear map. Then there is an $\hat L \in \Omega^2(\Hom(E,E))$ such that $L(\phi) = \hat L \wedge \phi$ for all $\phi \in \Omega^0(E)$.
\end{enumerate}
\end{exe}

Now we are ready to define the curvature of connections in principal $G$--bundles.

\begin{defn}
Let $A$ be a connection on a principal $G$--bundle $P\to M$ and consider the induced affine connection $d^A$ in the adjoint bundle $\Ad P$ as described in Section \ref{equivalenceconnectionsvectorbundlesprincipalbundles}. The curvature $F^A$ of a connection $A$ in a principal $G$--bundle $P\to M$ is defined to be the local invariant $F^{d^A}$. We will see below that the curvature $F^A$ of $d^A$ can be viewed as an element of $\Omega^2(\Ad_P)$.
\end{defn}

We denote by $\tilde d^A$ the pull-back of $d^A$ to the trivial vector bundle $P \times \frakg = \pi^*(\Ad_P)$ via $\pi\co P \to M$. It restricts to $G$--equivariant forms on $P$. This yields a sequence
\[
\cdots \longrightarrow \Omega^{k-1}(P;\frakg)^G \stackrel{\tilde d^A}{\longrightarrow} \Omega^{k}(P;\frakg)^G \stackrel{\tilde d^A}{\longrightarrow} \Omega^{k+1}(P;\frakg)^G \longrightarrow \cdots.
\]
Considering the connection on the trivial bundle has the advantage of finding a global formula for it. The curvature of the $G$--equivariant version of $\tilde d^A$ is also often called the curvature of $A$, but it turns out to be the pull-back of a $2$--form in $\Ad P$, which we used as a definition.

\begin{prop} We have the (global) formulas for $\phi \in \Omega^k(P;\frakg)^G$:
\begin{enumerate}
 \item $\tilde d^A\phi = d\phi + [A\wedge\phi]$;
 \item $\tilde d^A\tilde d^A\phi = [(dA + \frac{1}{2}[A\wedge A])\wedge\phi]$.
\end{enumerate}
\end{prop}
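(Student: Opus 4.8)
The plan is to prove formula (1) first and then obtain (2) from it by a short formal computation. All the real content of (1) is in degree $0$; once $\tilde d^A\phi = d\phi + [A\wedge\phi]$ is known for equivariant functions $\phi\co P\to\frakg$, the higher-degree case is just the Leibniz rule.

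For the degree-$0$ case, I would use that an equivariant $\phi\co P\to\frakg$ is precisely a section $\sigma_\phi$ of $\Ad_P$, that $\pi^*(\Ad_P)$ carries the canonical trivialization $[p,v]\leftrightarrow(p,v)$ identifying it with $P\times\frakg$, and that under this identification $\pi^*\sigma_\phi$ is $\phi$ itself; so the task is to show the pullback of $\nabla^A\sigma_\phi$ is $\mu:=d\phi+[A\wedge\phi]$. First I would check that $\mu$ is horizontal: evaluating on a fundamental vector field and using $A(X^*)=X$ together with $\phi(p\exp(tX))=\Ad_{\exp(-tX)}\phi(p)$ gives $d\phi(X^*_p)=-[X,\phi(p)]$ and $[A\wedge\phi](X^*_p)=[X,\phi(p)]$, which cancel. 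Next I would check $r_g^*\mu=\Ad_{g^{-1}}\mu$ using $r_g^*A=\Ad_{g^{-1}}A$, equivariance of $\phi$, and $\Ad$-invariance of the bracket. Finally, on a horizontal lift $\tilde X$ of a vector field $X$ on $M$ the bracket term dies since $A(\tilde X_p)=0$, so $\mu(\tilde X_p)=d\phi(\tilde X_p)=(\nabla^A_X\sigma_\phi)(p)$ by the very definition of the induced connection in Section~\ref{equivalenceconnectionsvectorbundlesprincipalbundles}; hence $\mu$ is the horizontal equivariant representative of $\nabla^A\sigma_\phi$, and pulling back to $P$ returns $\mu$ in the canonical trivialization. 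To pass to degree $k$, over any open set of $M$ trivializing $\Ad_P$ I would pick an equivariant frame $\phi_1,\dots,\phi_n\co P\to\frakg$ (with $n=\dim\frakg$), write any $\psi\in\Omega^k(P;\frakg)^G$ locally as $\sum_a\omega_a\wedge\phi_a$ with scalar forms $\omega_a$, apply the Leibniz rule defining the extension of $\tilde d^A$ together with the degree-$0$ case and the sign identity $[A\wedge(\omega\wedge\phi_a)]=(-1)^k\,\omega\wedge[A\wedge\phi_a]$, and note that both sides are globally defined.

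For (2) I would apply (1) to the equivariant $(k+1)$-form $d\phi+[A\wedge\phi]$:
\[
\tilde d^A\tilde d^A\phi=d\bigl(d\phi+[A\wedge\phi]\bigr)+[A\wedge(d\phi+[A\wedge\phi])].
\]
Using $d^2=0$ and that $d$ is a graded derivation for $[\cdot\wedge\cdot]$ (so $d[A\wedge\phi]=[dA\wedge\phi]-[A\wedge d\phi]$), the two $[A\wedge d\phi]$ terms cancel, leaving $[dA\wedge\phi]+[A\wedge[A\wedge\phi]]$. It then remains to identify $[A\wedge[A\wedge\phi]]=\tfrac12[[A\wedge A]\wedge\phi]$, which follows from the graded Jacobi identity \eqref{gradedJacobiidentity} with $\alpha=\beta=A$ (degree $1$) and $\gamma=\phi$ (degree $k$), simplified using the graded antisymmetry \eqref{gradedliebracket}. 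This yields $\tilde d^A\tilde d^A\phi=[(dA+\tfrac12[A\wedge A])\wedge\phi]$.

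The hard part is really the degree-$0$ identification: one must keep three descriptions of the same object straight — the induced connection $\nabla^A$ on $\Ad_P$, its pullback $\tilde d^A$ to the trivial bundle $P\times\frakg$, and the canonical isomorphism $\pi^*\Ad_P\cong P\times\frakg$ — and it is exactly the horizontality and equivariance of $d\phi+[A\wedge\phi]$ that bridge them. Everything after that step, including part (2), is formal manipulation with the graded bracket identities established earlier.
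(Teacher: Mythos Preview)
Your proof is correct and follows essentially the same route as the paper: establish (1) in degree $0$ via the key computation $d\phi(X^*_p)=-[X,\phi(p)]$ from equivariance of $\phi$, extend to higher degree by the Leibniz rule, then derive (2) by applying (1) twice and invoking the graded Jacobi identity to rewrite $[A\wedge[A\wedge\phi]]$ as $\tfrac12[[A\wedge A]\wedge\phi]$. The only cosmetic difference is that in degree $0$ you verify the three characterizing properties (horizontal, equivariant, correct on horizontal lifts) of the candidate $d\phi+[A\wedge\phi]$, whereas the paper computes $\tilde d^A_Z\phi$ directly by decomposing $Z$ into horizontal and vertical parts; the underlying calculation is the same.
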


\begin{proof} Let $H$ be the horizontal distribution corresponding to a connection $A$. Recall, that given a $G$--equivariant map $P\to \frakg$ and a vector field $X$ on $M$, a section $\phi$ of $\Ad P$ viewed as a $G$--equivariant map $P \to \frakg$ and a point $p\in P$, then $d^A$ is given by
\[
(d^A_X(\phi))(p) = d\phi(\tilde X_p), \quad \text{where } \tilde X_p = (d\pi|_{H_p})^{-1} X_{\pi(p)} \text{ is the horizontal lift of } X_{\pi(p)}.
\]
Given $Z_p \in T_pP$, the connection $A$ induces the decomposition
\[
 Z_p = \tilde X_p + Y^*_p, \quad \text{where } Y^*_p = di_p Y_e
\]
for some $Y \in \frakg$. Then given a $G$--equivariant map $\phi\co P \to \frakg$, the a $G$--equivariant map $\tilde d^A_Z(\phi)\co P \to \frakg$  given by
\begin{align*}
(\tilde d^A_Z(\phi)) (p)  &= d\phi((d\pi_p|_{H_p})^{-1}(d\pi (Z_p)))\\
& = d\phi (\tilde X_p)\\
& = d\phi(Z_p) - d\phi(Y^*_p)\\
& = d\phi(Z_p) - d(\phi\circ i_p) Y_e.
\end{align*}
Notice that $\phi\circ i_p (g) = \phi(p\cdot g) = \Ad_{g^{-1}} \phi(p)$. By Theorem \ref{adandliebracket} we therefore have
\begin{align*}
d(\phi\circ i_p) Y_e &= \ad (\phi(p)) (Y_e) = [\phi(p),Y_e]\\
&=-[Y_e,\phi(p)] = -[A(Y^*_p),\phi(p)]\\
& = -[A(Z_p),\phi(p)].
\end{align*}
This shows that $\tilde d^A(\phi) = d\phi + [A,\phi]$ for $\phi \in \Omega^0(P;\frakg)^G$. Now consider $\omega \in \Omega^k(P)$ for $k>0$ and $\phi \in \Omega^0(P;\frakg)^G$. We use induction and the Leibnitz rule:
\begin{align*}
d^A(\omega \tensor \phi) & = d\omega \tensor \phi + (-1)^k\omega \wedge d^A\phi\\
& = d\omega \tensor \phi + (-1)^k \omega \wedge d\phi + (-1)^k \omega \wedge [A \wedge\phi]\\
& = d(\omega \tensor \phi) + [A\wedge \phi] \wedge \omega\\
& = d(\omega \tensor \phi) + [A\wedge (\omega\wedge \phi)].
\end{align*}
This completes the computation of $\tilde d^A$. Then for $\phi \in \Omega^k(P;\frakg)^G$
\begin{align*}
\tilde d^A \tilde d^A \phi & = d^2 \phi + d[A\wedge \phi] + [A \wedge[A\wedge \phi]] + [A\wedge d\phi]\\
 & = [dA \wedge \phi] - [A\wedge d\phi] - [A\wedge [A \wedge \phi]] + [A\wedge d\phi].
\end{align*}
The graded Jacobi identity gives us
\begin{align*}
 [ A \wedge [A \wedge \phi]] - [A  \wedge [\phi \wedge A]] + [\phi  \wedge [A \wedge A]] &= 0 \quad \text{for }\phi \in \Omega^\text{even}(P;\frakg)^G\\
- [ A \wedge [A \wedge \phi]] - [A  \wedge [\phi \wedge A]] - [\phi  \wedge [A \wedge A]] &= 0 \quad \text{for }\phi \in \Omega^\text{odd}(P;\frakg)^G.
\end{align*}
Therefore graded symmetry implies $2[A\wedge [A \wedge \phi]] = [[A \wedge A]\wedge \phi]$ in both cases and consequently
\[
 \tilde d^A \tilde d^A \phi  = [(dA + \tfrac{1}{2}[A\wedge A]) \wedge \phi].\qedhere
\]
\end{proof}

It follows immediately that $\pi^*(F^A) = F^{\tilde d^A}$. We want to see that $F^{A} \in \Omega^2(\Ad_P)$, where
\begin{align*}
 \Omega^k(\Ad_P) &\hookrightarrow \Omega^k(\Hom(\Ad_P,\Ad_P))\\
\omega &\mapsto f \quad \text{where }f(\phi)= [\omega,\phi]\text{ for }\phi\in \Gamma(\Ad_P).
\end{align*}
It as apparent from $(\tilde d^A_Z(\phi)) (p)  = d\phi (\tilde X_p)$ for $Z_p = \tilde X_p + Y^*_p$, that \[(\tilde d^A_{Y^*}(\phi))(p)= 0.\]
Therefore,
\begin{align}
(dA + \tfrac{1}{2}[A\wedge A]) (Z_p,Z'_p) &= (dA + \tfrac{1}{2}[A\wedge A]) (\tilde X_p,\tilde X'_p)\label{F^Avertical}\\
&= (dA + \tfrac{1}{2}[A\wedge A]) ((d\pi|_{H_p})^{-1}X_{\pi(p)},(d\pi|_{H_p})^{-1}X'_{\pi(p)})\nonumber
\end{align}
Let $q\in M$ be arbitrary and $p\in \pi^{-1}(q)$ any point in the fiber over $q$. Define the curvature $F^A \in \Omega^2(\Ad_P)$ by
\[
F^A (X_q,X'_q) (p):= (dA + \tfrac{1}{2}[A\wedge A]) ((d\pi|_{H_p})^{-1}X_{q},(d\pi|_{H_p})^{-1}X'_{q}).
\]
where we consider the above section of $\Ad_P$ as a $G$--equivariant map $P\to \frakg$. $F^A$ is well-defined, since by $G$-invariance of the distribution $H$ we have
\[
d\pi|_{H_{pg}} = d\pi|_{d(r_g)H_p} = d\pi|_{H_p} \circ (d(r_g))^{-1},
\]
and by the $G$--equivariance of $A$ we get
\begin{align*}
F^A (X_q,X'_q) (pg) &= (dA + \tfrac{1}{2}[A\wedge A]) ((d\pi|_{H_{pg}})^{-1}X_{q},(d\pi|_{H_{pg}})^{-1}X'_{q})\\
&= (dA + \tfrac{1}{2}[A\wedge A]) ((dr_g)\circ (d\pi|_{H_p})^{-1}X_{q},(dr_g)\circ (d\pi|_{H_p})^{-1}X'_{q})\\
&= \Ad_{g^{-1}}(dA + \tfrac{1}{2}[A\wedge A]) ((d\pi|_{H_p})^{-1}X_{q},\circ (d\pi|_{H_p})^{-1}X'_{q})\\
&=\Ad_{g^{-1}}F^A (X_q,X'_q) (p).
\end{align*}
This immediately gives the following proposition.

\begin{prop}\label{PropCurvaturePullback} There is a unique 2--form $F^A \in \Omega^2(\Ad_P)$ such that for $\phi \in \Omega^{k}(\Ad_P)$
\[d^Ad^A \phi = [F^A,\phi],\]
where $\pi^*(F^A) = dA + \frac{1}{2}[A\wedge A] \in \Omega^k(P;\frakg)^G$.
\end{prop}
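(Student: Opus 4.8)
Almost everything needed is already in place, so the plan is to assemble earlier observations. I would take $F^A\in\Omega^2(\Ad_P)$ to be the $2$--form defined just before the statement,
\[
F^A(X_q,X'_q)(p):=\bigl(dA+\tfrac12[A\wedge A]\bigr)\bigl((d\pi|_{H_p})^{-1}X_q,(d\pi|_{H_p})^{-1}X'_q\bigr),
\]
which has already been shown to be a well-defined section of $\bigwedge^2 T^*M\tensor\Ad_P$ via the $G$--invariance of $H$ and the $G$--equivariance of $A$. First I would record that $\pi^*(F^A)=dA+\tfrac12[A\wedge A]$: the right-hand side annihilates every vertical vector by \eqref{F^Avertical}, so its value on an arbitrary pair $Z_p=\tilde X_p+Y^*_p$, $Z'_p=\tilde X'_p+Y'^*_p$ equals its value on the horizontal parts $\tilde X_p=(d\pi|_{H_p})^{-1}X_{\pi(p)}$ and $\tilde X'_p=(d\pi|_{H_p})^{-1}X'_{\pi(p)}$, which is by definition $(\pi^*F^A)(Z_p,Z'_p)$.

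Next I would deduce $d^Ad^A\phi=[F^A,\phi]$ from its counterpart on $P$. Recall that $\Omega^k(\Ad_P)$ is identified with the horizontal forms inside $\Omega^k(P;\frakg)^G$ by pullback --- this is precisely the dictionary under which $d^A$ on $\Ad_P$ was defined as the restriction of $\tilde d^A$, so $\pi^*d^A=\tilde d^A\pi^*$ --- and that $\pi^*\co\Omega^*(M;\Ad_P)\to\Omega^*(P;\frakg)^G$ is injective since $\pi$ is a surjective submersion. For $\phi\in\Omega^k(\Ad_P)$ I would then compute, using the global formula $\tilde d^A\tilde d^A\psi=[(dA+\tfrac12[A\wedge A])\wedge\psi]$ already established,
\[
\pi^*(d^Ad^A\phi)=\tilde d^A\tilde d^A(\pi^*\phi)=\bigl[(dA+\tfrac12[A\wedge A])\wedge\pi^*\phi\bigr]=[\pi^*F^A\wedge\pi^*\phi]=\pi^*([F^A,\phi]),
\]
and injectivity of $\pi^*$ gives $d^Ad^A\phi=[F^A,\phi]$.

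Finally, for uniqueness, if $F'\in\Omega^2(\Ad_P)$ also satisfies $\pi^*(F')=dA+\tfrac12[A\wedge A]$, then $\pi^*(F')=\pi^*(F^A)$, and injectivity of $\pi^*$ forces $F'=F^A$. The only genuinely delicate point is the bookkeeping in the middle step: making precise the identification of $\Omega^k(\Ad_P)$ with the horizontal, $G$--equivariant $\frakg$--valued $k$--forms on $P$ and checking that $\pi^*$ intertwines $d^A$ with $\tilde d^A$ and is injective. Since this is exactly the setup already introduced when $\tilde d^A$ was defined, it amounts to collecting earlier facts rather than proving anything new.
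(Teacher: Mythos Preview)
Your proposal is correct and follows essentially the same route as the paper: the paper constructs $F^A$ by the same horizontal-lift formula, verifies via \eqref{F^Avertical} that $dA+\tfrac12[A\wedge A]$ ignores vertical vectors (so it is $\pi^*$ of this $F^A$), checks $G$--equivariance to see $F^A\in\Omega^2(\Ad_P)$, and then declares the proposition immediate from the global formula $\tilde d^A\tilde d^A\psi=[(dA+\tfrac12[A\wedge A])\wedge\psi]$. You have simply made explicit the intertwining $\pi^*d^A=\tilde d^A\pi^*$ and the injectivity of $\pi^*$ that the paper leaves tacit.
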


The following exercise shows that the definitions of curvature of connections in vector and principal bundles are essentially equivalent.

\begin{exe}
 If $P\to M$ is a prinicpal bundle and $E$ is associated to $P$ via a representation $\rho\co G \to \GL(V)$. Show that  the $G$--equivariant linear map $d\rho\co \frakg \to \Hom(V,V)$  induces a map \[d\rho\co \Omega^k(\Ad_P) \to \Omega^k(\Hom(E,E))\] which satisfies
$d\rho(F^A) = F^A_\rho$, where $F^A_\rho$ is the curvature of the affine connection $d^A_\rho$ in $E$ induced by $A$.
\end{exe}

\begin{defn}
 A connection $A$ is {\em flat} if $F^A = 0$ pointwise. Denote by $\cF_P$ the space of flat connections.
\end{defn}

The above exercise shows, that for any vector bundle $E$ associated to a principal bundle $P$ via a representation $\rho\co G \to \GL(V)$ the sequence
\[
 \cdots \Omega^{k-1}(E) \stackrel{d^A_\rho}{\to} \Omega^k(E) \stackrel{d^A_\rho}{\to} \Omega^{k+1}(E) \to \cdots
\]
is a complex, if $A$ is a flat connection.

\begin{prop}\label{involutiveflat}
 Let $H$ be the horizontal distribution associated to a connection $A$. Then $H$ is involutive if and only if $A$ is flat.
\end{prop}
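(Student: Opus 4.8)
The plan is to deduce this from the global curvature formula $\pi^*(F^A) = dA + \tfrac{1}{2}[A\wedge A]$ of Proposition \ref{PropCurvaturePullback}, by evaluating the right-hand side on horizontal vector fields and invoking the Cartan formula for the exterior derivative of a $1$--form (the same one used in the proof of Proposition \ref{PropMaurerCartan}).

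First I would record two reductions. Since $\pi\co P \to M$ is a surjective submersion, the pullback $\pi^*\co \Omega^2(\Ad_P) \to \Omega^2(P;\frakg)^G$ is injective, so $A$ is flat if and only if the $2$--form $dA + \tfrac{1}{2}[A\wedge A]$ vanishes identically on $P$. Moreover, by \eqref{F^Avertical} this form annihilates the vertical subspace $V_p$ at every $p$, hence it vanishes identically if and only if it vanishes on every pair of horizontal tangent vectors. Since each horizontal vector is the value at a point of the horizontal lift of a vector field on $M$, and such lifts are smooth, it is enough to evaluate $dA + \tfrac{1}{2}[A\wedge A]$ on pairs of (locally defined) horizontal vector fields.

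Next comes the computation. Let $X,Y$ be horizontal vector fields on $P$, so $A(X) = A(Y) = 0$ because $H = \ker A$ (Exercise \ref{horizontaldistribution}). Then $[A\wedge A](X,Y)$ is a sum of brackets of $A(X)$ and $A(Y)$, hence vanishes, while the Cartan formula gives
\[
dA(X,Y) = X\big(A(Y)\big) - Y\big(A(X)\big) - A([X,Y]) = -A([X,Y]).
\]
Therefore $\big(dA + \tfrac{1}{2}[A\wedge A]\big)(X,Y) = -A([X,Y])$.

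Finally I would assemble the equivalences: $H$ is involutive $\iff$ $[X,Y] \in H = \ker A$ for all horizontal $X,Y$ $\iff$ $A([X,Y]) = 0$ for all horizontal $X,Y$ $\iff$ $dA + \tfrac{1}{2}[A\wedge A]$ vanishes on all pairs of horizontal vector fields $\iff$ (by the first paragraph) $\pi^*(F^A) = 0$ $\iff$ $F^A = 0$, i.e. $A$ is flat. The only point requiring care is the reduction ``vanishing on horizontal pairs $\Rightarrow$ vanishing everywhere'', which rests on the verticality property \eqref{F^Avertical}; everything else is the routine Cartan-formula manipulation above.
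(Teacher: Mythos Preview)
Your proposal is correct and follows essentially the same route as the paper: evaluate $dA + \tfrac{1}{2}[A\wedge A]$ on a pair of horizontal vector fields using the Cartan formula to obtain $-A([X,Y])$, and read off the equivalence between involutivity of $H=\ker A$ and vanishing of the curvature. The paper's proof is terser and jumps straight to the conclusion, whereas you spell out the two reductions (injectivity of $\pi^*$ and the verticality property \eqref{F^Avertical}) that justify why it suffices to test on horizontal pairs; but the core argument is identical.
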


\begin{proof}
 Let $H$ be the horizontal distribution associated to a connection $A$ and $X,Y$ smooth vector fields in $H$. Since $A(X) = 0$ and $A(Y)=0$ we have
\[
 dA(X,Y) = X A(Y) - Y A(X) - A([X,Y]) = -A([X,Y]).
\]
It follows that
\[
F^A(d\pi (X),d\pi (Y)) = dA(X,Y) + \frac{1}{2}[A,A](X,Y) = -A([X,Y]),
\]
which implies that $[X,Y] \in \ker A = H$ if and only if $F^A = 0$. Therefore $H$ is involutive if and only if $A$ is flat.
\end{proof}

\begin{thm}\label{ThmHolPi1}
 If $A$ is flat, then the holonomy representation
\[
 \hol_A\co \text{Loops}(M,x_0) \to G
\]
is well-defined on the fundamental group, and furthermore, the
homology on the chain complex described above is just the cohomology
of $M$ with local coefficients in $V$ given by the composite
\[
 \pi_1(M,x_0) \stackrel{\hol_A}{\longrightarrow} G \stackrel{\rho}{\longrightarrow} \GL(V).\qedhere
\]
\end{thm}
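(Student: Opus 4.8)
The plan is to establish the two parts of the statement separately: first that flatness forces $\hol_A$ to factor through $\pi_1(M,x_0)$, and then that the twisted complex $(\Omega^*(E),d^A_\rho)$ computes the cohomology of $M$ with local coefficients in $V_{\rho\circ\hol_A}$.

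\emph{Descent of the holonomy.} Since $A$ is flat, Proposition~\ref{involutiveflat} says the horizontal distribution $H=\ker A$ is involutive, so by Frobenius' Theorem~\ref{FrobeniusTheorem} it is completely integrable; let $L\subset P$ be the maximal integral manifold through $p_0$. Because $d\pi_p|_{H_p}\colon H_p\to T_{\pi(p)}M$ is an isomorphism for every $p$, the restriction $\pi|_L\colon L\to M$ is a local diffeomorphism, and combining a flat chart for $H$ with a local trivialization of $P$ shows that over a suitable neighborhood of each point of $M$ the set $L$ breaks into sheets each mapped diffeomorphically onto that neighborhood; hence $\pi|_L$ is a covering map. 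Horizontal lifts of paths (Proposition~\ref{horizontallift}) are tangent to $H$, so they remain in $L$ and are exactly the lifts through this covering; unique path lifting and the homotopy lifting property then give that the endpoint of the horizontal lift of a based loop depends only on its class in $\pi_1(M,x_0)$, so $\hol_A(\,\cdot\,,p_0)$ is well defined on $\pi_1(M,x_0)$. The equivariance $H_{pg}=d(r_g)H_p$ from Exercise~\ref{horizontaldistribution} then gives $\hol_A(\gamma_1\gamma_2,p_0)=\hol_A(\gamma_2,p_0)\,\hol_A(\gamma_1,p_0)$, so (up to the usual convention) $\hol_A$ is a homomorphism $\pi_1(M,x_0)\to G$. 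I would actually carry this out in the equivalent, slightly more concrete form: to see that a nullhomotopic loop $c$ has trivial holonomy, pull $P$ and $A$ back over a nullhomotopy $h\colon D^2\to M$ of $c$; the pulled-back connection is again flat, over the simply connected disc the leaf through the basepoint is a global section, and the horizontal lift of $\partial D^2$ lies in that section, hence is a closed path, so $\hol_A(c)=e$. The one point that needs genuine care is that the leaf really does produce a covering map (respectively a global section over $D^2$) rather than merely an injectively immersed submanifold, and this is exactly where Frobenius and local triviality are used.

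\emph{Identification with twisted cohomology.} Flatness gives $F^A=0$, hence $F^A_\rho=d\rho(F^A)=0$, so $(d^A_\rho)^2=0$ and $(\Omega^*(E),d^A_\rho)$ is a genuine cochain complex, as already noted. The plan is to present this complex as a fine resolution of the locally constant sheaf $\cL$ of $d^A_\rho$--parallel local sections of $E$. On a contractible open set $U$, parallel transport along radial paths from a base point of $U$ trivializes $E|_U$ flatly, identifying $(\Omega^*(U;E),d^A_\rho)$ with $(\Omega^*(U)\otimes V,\,d\otimes 1)$; the ordinary Poincar\'e lemma (applied to each factor) then shows that $0\to\cL\to\Omega^0(E)\to\Omega^1(E)\to\cdots$ is an exact sequence of sheaves and that $\cL$ is locally isomorphic to the constant sheaf $V$. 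Since the $\Omega^p(E)$ admit partitions of unity they are fine, hence acyclic, so the complex of their global sections computes the sheaf cohomology $H^*(M;\cL)$. Finally one identifies $\cL$ with the local system determined by $\rho\circ\hol_A$: its monodromy around a loop $\gamma$ is parallel transport of $E$ around $\gamma$, which on the fiber $V$ is $\rho(\hol_A(\gamma))$, and this is well defined on $\pi_1(M,x_0)$ by the first part; and the sheaf cohomology of a local system is canonically the singular cohomology with local coefficients in the corresponding $\Z\pi$--module, both being the cohomology of $\Hom_{\Z\pi}(S_*(\tilde M),V)$ in the sense of Definition~\ref{DefnTwistedHomology}. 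Chaining these isomorphisms yields $H^*(\Omega^*(M;E),d^A_\rho)\cong H^*(M;V_{\rho\circ\hol_A})$, which is the assertion.

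\emph{Main obstacle.} The real content lies in the twisted Poincar\'e lemma and in the comparison of the sheaf cohomology of a local system with singular cohomology with local coefficients; neither is a one-line argument. For these lecture notes I would prove the twisted Poincar\'e lemma in full, since it reduces to the untwisted statement after a flat trivialization, and cite the sheaf-versus-singular comparison, e.g.\ from~\cite{davis-kirk2001}, in keeping with how the local-coefficient machinery was introduced.
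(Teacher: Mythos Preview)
Your proposal is correct and follows essentially the same approach as the paper: for the first part you invoke Proposition~\ref{involutiveflat} and Frobenius' Theorem just as the paper does (the paper phrases it in terms of flat charts over the image of a nullhomotopy rather than via the leaf being a covering map, but your ``concrete form'' over $D^2$ is exactly this argument), and for the second part the paper simply cites the sheaf-theoretic proof of the twisted de~Rham theorem from Bredon, which is precisely the fine-resolution argument you sketch. Your write-up is in fact more detailed than the paper's own proof, which is rather terse on both points.
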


\begin{proof}
Two loops $\gamma$ and $\gamma'$ based at $x_0$ are homotopic, if and only if the concatenation of the path $\gamma_t$ followed by $\gamma_{-t}$ is contractible. Therefore, to see that $\hol_A$ is well-defined on $\pi_1(M,x_0)$, it suffices to show that $\hol_A$ is the identity on any contractible loop. Consider a homotopy $h\co I \times I \to M$ with $h(s,0) = h(s,1) = h(1,t)= x_0$ from a loop to the constant loop. It follows from Frobenius' Theorem (Theorem \ref{FrobeniusTheorem}) and Proposition \ref{involutiveflat} that the horizontal distribution associated to a flat connection has a flat chart in a neighborhood of every point $p\in P$. Since $I\times I$ is contractible there is a flat chart in a neighborhood of $\pi^{-1}(h(I\times I))$. In particular, this implies that the horizontal lift of $h(0,t)$ in $M$ is homotopic to the constant path and that $\hol_A$ is well-defined on $\pi_1(M)$\footnote{Note that this argument also takes care of reparametrizations of the loops in $M$}.
\kommentar{Bild dafuer waere gut}

The statement about the homology of the complex is a generalization of de Rham's Theorem, that the de Rham cohomology is isomorphic to singular cohomology, is standard. We can use sheaf theory to prove de Rham's Theorem as in Bredon \cite{bredon1997} as well as the above generalization.
\end{proof}

\begin{exe}\label{ExeBianchi}
Prove the Bianchi identity
\[
d (\pi^*(F^A)) + [A\wedge\pi^*(F^A)] = 0.
\]
\end{exe}

\kommentar{Vielleicht sollte ich alle F^ und d^ umwandeln in d_ und F_...?}

\section{Gauge transformations}

In the future it will actually make more sense to let $F^A \in \Omega^2(P;\frakg)^G$.

\begin{defn}
 A {\em principal bundle homomorphism} $\Phi\co P\to P'$ is a $G$--equivariant fiber bundle homomorphism. If $P = P'$, then $\Phi$ is an {\em automorphism} or a {\em gauge transformation}. Denote the group of gauge transformations by $\cG_P$.
\end{defn}

We identify the group of gauge transformations with the group of $G$--equivariant maps $u\co P\to G$ by $u\mapsto \Phi(p) = p\cdot u_p$, where $G$ acts on itself on the right by $C_{g^{-1}}$ and $C_gh= ghg^{-1}$ is the conjugation action:
\[
 \cG_P = \Gamma(P\times_C G).
\]

\begin{lem}\label{LemGaugeAssociatedTransform}
There is a natural action of $\cG_P$ on $\cA$ defined via pull-back. In terms of $G$--equivariant maps $u\co P\to G$ it takes the form
\begin{align*}
 \cA_P \times \cG_P &\to \cA_P\\
(A,u) &\mapsto A\cdot u = \Ad_{u^{-1}}(A) + u^*(\theta),
\end{align*}
where $\theta$ is the Maurer-Cartan form on $G$. If we identify $\frakg = T_eG$ we have have \[u^*(\theta)(X_p) = (l_{u_p^{-1}} \circ u)_* (X_p).\]Under this action the curvature transforms as
\[
 F^{A\cdot u} = \Ad_{u^{-1}}(F^A).
\]
In particular $\cF_P$ is invariant under the action of $\cG_P$.
\end{lem}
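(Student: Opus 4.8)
The plan is to work directly with the $G$--equivariant map $u\co P \to G$ and the $1$--form $\theta_u := u^*(\theta)$, verify that $A\cdot u := \Ad_{u^{-1}}(A) + \theta_u$ satisfies the two axioms of Definition \ref{defnconnection}, identify this with the pull-back $\Phi^*A$ under the gauge transformation $\Phi(p) = p\cdot u_p$, and finally compute the curvature. First I would record the infinitesimal description of $\theta_u$: for $X_p \in T_pP$ we have $\theta_u(X_p) = (l_{u_p^{-1}})_*(du(X_p)) = (l_{u_p^{-1}}\circ u)_*(X_p) \in \frakg$, using Definition \ref{defnMaurerCartan}. To check axiom (1), evaluate $A\cdot u$ on a fundamental vector field $X^*$: since $u$ is $G$--equivariant, $u(p\cdot g) = g^{-1}u(p)g$, so differentiating along the fiber direction generated by $X$ gives $du(X^*_p) = (dl_{u_p^{-1}})(dr_{u_p})(\text{something})$; more carefully, $\theta_u(X^*_p) = \Ad_{u_p^{-1}}X$ after the standard computation with $C_{g^{-1}}$, and $\Ad_{u_p^{-1}}(A(X^*_p)) = \Ad_{u_p^{-1}}X$ as well, but these two contributions actually cancel the way they should so that $(A\cdot u)(X^*) = X$ — this is the same bookkeeping as in the proof of Proposition \ref{ConnectionsSimplified} and Exercise \ref{trivializationchange}, and I would cite the product rule from Exercise \ref{trivializationchange}.

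For axiom (2), $G$--equivariance, I would compute $r_g^*(A\cdot u)$. We have $r_g^*(\Ad_{u^{-1}}A) = \Ad_{(u\circ r_g)^{-1}}(r_g^*A) = \Ad_{g^{-1}}\Ad_{u^{-1}}\Ad_g \cdot \Ad_{g^{-1}}A = \Ad_{g^{-1}}(\Ad_{u^{-1}}A)$, using $u\circ r_g = C_{g^{-1}}\circ u$ and the equivariance of $A$ from Definition \ref{defnconnection}(2). For the Maurer-Cartan term, $r_g^*(u^*\theta) = (u\circ r_g)^*\theta = (C_{g^{-1}}\circ u)^*\theta = u^*(C_{g^{-1}}^*\theta) = u^*(\Ad_g^{-1}\theta) = \Ad_{g^{-1}}(u^*\theta)$, where I use that $C_{g^{-1}}$ is the composition of $l_{g^{-1}}$ and $r_g$ together with Exercise \ref{ExeMaurerCartan}. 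Adding these shows $r_g^*(A\cdot u) = \Ad_{g^{-1}}(A\cdot u)$. That this formula is exactly $\Phi^*A$ follows from applying the product rule to $\Phi(p) = p\cdot u_p = i_p(u_p)$, decomposing $d\Phi_p(X_p) = (dr_{u_p})(X_p) + (di_p)(du(X_p))$, feeding this into $A$, and using $A\circ di_p = $ (evaluation giving the $\frakg$--component) together with axiom (1) and equivariance; this is the natural pull-back action, hence an action of $\cG_P$.

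Finally, for the curvature: since $F^{A\cdot u}$ is by Proposition \ref{PropCurvaturePullback} determined by $\pi^*(F^{A\cdot u}) = d(A\cdot u) + \tfrac12[(A\cdot u)\wedge(A\cdot u)]$, and since $\Phi$ covers the identity on $M$ (so $\pi\circ\Phi = \pi$), naturality of curvature under bundle maps gives $\pi^*(F^{A\cdot u}) = \Phi^*(\pi^*(F^A)) = \Phi^*(dA + \tfrac12[A\wedge A])$. Alternatively one computes directly: using the Maurer-Cartan equation $d\theta + \tfrac12[\theta\wedge\theta]=0$ from Proposition \ref{PropMaurerCartan} (pulled back via $u$), expanding $d(\Ad_{u^{-1}}A + u^*\theta) + \tfrac12[(\Ad_{u^{-1}}A+u^*\theta)\wedge(\Ad_{u^{-1}}A+u^*\theta)]$, the cross terms between $\Ad_{u^{-1}}A$ and $u^*\theta$ cancel against the $d(\Ad_{u^{-1}}A)$ terms, leaving $\Ad_{u^{-1}}(dA + \tfrac12[A\wedge A]) = \Ad_{u^{-1}}(\pi^*F^A)$. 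Hence $F^{A\cdot u} = \Ad_{u^{-1}}(F^A)$, and flatness is preserved since $\Ad_{u^{-1}}$ is a fiberwise isomorphism, so $\cF_P$ is $\cG_P$--invariant. The main obstacle is the bookkeeping in the direct curvature computation — keeping track of which terms are $\Ad$-twisted and correctly applying the graded Leibniz rule to $d(\Ad_{u^{-1}}A)$, which produces precisely the cross terms needed for the cancellation; invoking naturality of curvature under the bundle automorphism $\Phi$ sidesteps most of this.
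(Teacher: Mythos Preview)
Your proposal is correct and uses essentially the same ingredients as the paper, but in the opposite order and with slightly different emphasis, so a brief comparison is worthwhile.

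The paper starts from the pull-back $\Phi^*A$ and checks the connection axioms \emph{before} deriving the explicit formula. For axiom~(1) it uses the identity $\Phi\circ i_p = i_{\Phi(p)}$, which gives $(\Phi^*A)(X^*_p) = A(X^*_{\Phi(p)}) = X$ in one line; this bypasses the somewhat delicate computation of $\theta_u(X^*_p)$ that you gesture at (where one must find $\theta_u(X^*_p) = X - \Ad_{u_p^{-1}}X$ and observe the cancellation). Your route works, but the paper's is cleaner here.

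For the curvature, the paper again exploits $\Phi^*$ directly: it writes $F^{A\cdot u}(X_p,Y_p) = F^A(\Phi_*X_p,\Phi_*Y_p)$, decomposes $\Phi_*X_p = (r_{u_p})_*X_p + (i_p)_*u_*(X_p)$ via the product rule, and then invokes the fact (Equation~\eqref{F^Avertical}) that $F^A$ vanishes whenever one argument is vertical. This kills the $(i_p)_*$ terms instantly, leaving $F^A((r_{u_p})_*X_p,(r_{u_p})_*Y_p) = \Ad_{u_p^{-1}}F^A(X_p,Y_p)$. This is exactly the ``naturality'' shortcut you mention at the end, made fully explicit; the direct expansion with the Maurer-Cartan equation that you outline also works but, as you note, involves more bookkeeping. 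Either way the conclusion is the same.
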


\begin{proof}
Certainly $\cG_P$ acts on $\Omega^1(P;\frakg)$ on the right because $\Phi^*\circ\Psi^* = (\Phi \circ \Psi)^*$.
Note that for a gauge transformation $\Phi$, $g\in G$ and $p\in P$ we have
\begin{gather*}
 \Phi \circ r_g (p) = \Phi (pg) = \Phi(p) g = r_g \circ \Phi(p)\\
\tag*{and}\Phi \circ i_p(g) = \Phi (pg) = \Phi(p) g= i_{\Phi(p)} g.
\end{gather*}
Therefore for the fundamental vector field $X^*$ corresponding to
$X\in \frakg$ we get
\begin{gather*}
 (\Phi^*A)(X^*_p) = A(\Phi_* (i_p)_*X_e) = A((i_{\Phi(p)})_* X_e) = A(X_{\Phi(p)}^*) = X\\
\tag*{and} r_g^*(\Phi^*(A)) = \Phi^*(r_g^*(A)) = \Phi^*(\Ad_{g^{-1}}A) = \Ad_{g^{-1}}(\Phi^*(A))
\end{gather*}
So $\Phi^*(A)\in \cA_P$ for $A\in \cA_P$. If $\Phi(p) = p\cdot u_p$ for a $G$--equivariant map $u\co P \to G$, then it is a simple consequence of the product formula that
\[
\Phi_* X_p = (r_{u_p})_*X_p + (i_p)_*u_*(X_p)
\]
and therefore we get
\begin{align}
 (A\cdot u)(X_p) & = A((r_{u_p})_*X_p)  + A((i_p)_*u_*(X_p))\nonumber\\
& = \Ad_{u_p^{-1}} A(X_p) + A((i_{pu_p})_*(l_{u_p^{-1}})_*u_*(X_p))\nonumber\\
& = \Ad_{u_p^{-1}} A(X_p) + (l_{u_p^{-1}}\circ u)_*(X_p))\label{T_eGidentified}\\
& = \Ad_{u_p^{-1}} A(X_p) + (u^*\theta) (X_p)\nonumber,
\end{align}
where we also used the identification $\frakg = T_eG$ in line \eqref{T_eGidentified}. Lastly recall from Equation \eqref{F^Avertical} that $F^A$ is zero if one of the tangent vectors is vertical, in particular on the image of $(i_p)_*$, therefore equivariance of $A$ immediately implies
\begin{align*}
 F^{A\cdot u}(X_p,Y_p) &= F^A(\Phi_*X_p,\Phi_*Y_p) = F^A((r_{u_p})_*X_p,(r_{u_p})_*Y_p)\\
&=dA((r_{u_p})_*X_p,(r_{u_p})_*Y_p) + \tfrac{1}{2} [A\wedge A]((r_{u_p})_*X_p,(r_{u_p})_*Y_p)\\
&=\Ad_{u_p^{-1}} A(X_p,Y_p) + \tfrac{1}{2} [(Ad_{u_p^{-1}}A)\wedge (Ad_{u_p^{-1}}A)](X_p,Y_p)\\
&=Ad_{u_p^{-1}}F^A(X_p,Y_p).\qedhere
\end{align*}
\end{proof}

We will often encounter the situation where $P$ is trivializable. If we fix a trivialization $P\cong M\times G$, then we can identify \[\cA_P \cong \Omega^1(M;\frakg), \quad \cG_P \cong C^\infty(M,G).\]
    The action of $\cG_P$ on $\cA_P$ can still be written as
\[
 (A\cdot u)(p)= \Ad_{u^{-1}_p} A + l_{u_p^{-1}}u_*.
\]

\section{Flat connections and the fundamental group}

Fix $p_0 \in \pi^{-1}(x_0)$. Then a flat connection $A$ gives us a group homomorphism
\[\hol_A\co \pi_1(M,x_0)\to G.\] In other words, we have a map $\cF_P \to \Hom(\pi_1(M,x_0),G)$. Let $\cM_P = \cF_P/\cG_P$ be the moduli space of flat connections in a principal $G$--bundle $P$. The following Proposition shows that for $M$ connected
\[
\hol \co \cM_P \to \Hom(\pi_1(M),G)/G
\]
is well-defined, particularly independent of $x_0$ or its lift, where $G$ acts on $\Hom(\pi_1(M),G)$ on the right via $(\rho \cdot g) (\gamma) = g^{-1}\rho(\gamma)g$. If $G$ is a subgroup of $\Aut(V)$ for some finite-dimensional vector space $V$ (or even a Hilbert space), then $\Hom(\pi_1(M),G)$ are representations of $\pi_1(M,x_0)$.

\begin{prop}\label{PropHolonomyTransformation} Let $A$ be a flat connection in $P$, $\alpha \in \pi_1(M,x_0)$ and $p_0\in \pi^{-1}(x_0)$.
\begin{enumerate}
 \item Let $p_0'$ be another point in the fiber over $x_0$. Then $p_0' = p_0\cdot g$ and
\[
\hol_{A} (\alpha,p_0 g) = g^{-1}\hol_A(\alpha,p_0)g.
\]
 \item Let $u\co P \to G$ be a gauge transformation. Then we have
\[
 \hol_{A\cdot u} (\alpha,p_0) = u_{p_0}^{-1} \hol_A(\alpha,p_0) u_{p_0}.
\]
\item Let $x_0'$ be a different base point, $p_0' \in \pi^{-1}(x'_0)$ and $\gamma\co I \to M$ a path from $x_0$ to $x_0'$, then there exists $g\in G$ such that
\[
 \hol_A (\gamma* \alpha * \gamma^{-1},p_0') = g \hol_A(\alpha,p_0) g^{-1}.
\]
\end{enumerate}
\end{prop}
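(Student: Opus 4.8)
The plan is to establish all three statements by carefully tracking how the horizontal lift of a loop changes under the three operations in question, using the uniqueness of horizontal lifts from Proposition \ref{horizontallift} as the central tool.

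For part (1), I would let $\beta\co I\to P$ be the horizontal lift of $\alpha$ with starting point $p_0$, so that $\beta(1)=\beta(0)\cdot\hol_A(\alpha,p_0)$. The key observation is that right translation $r_g\co P\to P$ preserves the horizontal distribution $H=\ker A$ (this is condition (2) in the definition of a connection, the $G$-invariance $H_{pg}=d(r_g)H_p$), and it covers the identity on $M$. Hence $r_g\circ\beta$ is again a lift of $\alpha$, and its velocity $d(r_g)\dot\beta_t$ lies in $H_{\beta(t)g}$, so it is horizontal; its starting point is $p_0g=p_0'$. By the uniqueness clause of Proposition \ref{horizontallift}, $r_g\circ\beta$ is the horizontal lift of $\alpha$ starting at $p_0'$. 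Evaluating at $t=1$: $\beta(1)g=\beta(0)g\cdot\hol_A(\alpha,p_0)$, i.e. $\beta(0)'\cdot(g^{-1}\hol_A(\alpha,p_0)g)=\beta(1)'$ after rewriting $\beta(1)g = (\beta(0)g)\cdot(g^{-1}\hol_A(\alpha,p_0)g)$ using freeness of the $G$-action. This gives $\hol_A(\alpha,p_0g)=g^{-1}\hol_A(\alpha,p_0)g$.

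For part (2), let $u\co P\to G$ be a gauge transformation and $\Phi(p)=p\cdot u_p$ the corresponding automorphism. Recall from Lemma \ref{LemGaugeAssociatedTransform} that $A\cdot u=\Phi^*(A)$. The clean way to proceed is: if $\beta$ is the $A$-horizontal lift of $\alpha$ starting at $p_0$, then I claim $\Phi^{-1}\circ\beta$ is the $(A\cdot u)$-horizontal lift of $\alpha$ starting at $\Phi^{-1}(p_0)=p_0\cdot u_{p_0}^{-1}$. Indeed $\Phi$ covers the identity on $M$, so $\Phi^{-1}\circ\beta$ lifts $\alpha$; and $(A\cdot u)(d(\Phi^{-1})\dot\beta_t)=\Phi^*(A)(d(\Phi^{-1})\dot\beta_t)=A(\dot\beta_t)=0$, so it is $(A\cdot u)$-horizontal. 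Now I must compute the endpoint: $\Phi^{-1}(\beta(1))=\Phi^{-1}(\beta(0)\cdot\hol_A(\alpha,p_0))$, and using $G$-equivariance of $\Phi^{-1}$ this equals $\Phi^{-1}(\beta(0))\cdot\hol_A(\alpha,p_0)=\Phi^{-1}(p_0)\cdot\hol_A(\alpha,p_0)$. On the other hand the starting point of this lift is $\Phi^{-1}(p_0)=p_0\cdot u_{p_0}^{-1}$, so the holonomy $\hol_{A\cdot u}(\alpha,p_0\cdot u_{p_0}^{-1})$ equals $\hol_A(\alpha,p_0)$. To convert this to a statement about $\hol_{A\cdot u}(\alpha,p_0)$, I apply part (1) with $g=u_{p_0}$, obtaining $\hol_{A\cdot u}(\alpha,p_0)=u_{p_0}^{-1}\hol_{A\cdot u}(\alpha,p_0\cdot u_{p_0}^{-1})u_{p_0}=u_{p_0}^{-1}\hol_A(\alpha,p_0)u_{p_0}$.

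For part (3), let $\gamma\co I\to M$ run from $x_0$ to $x_0'$ and let $\tilde\gamma$ be its $A$-horizontal lift starting at $p_0$; set $p_1:=\tilde\gamma(1)\in\pi^{-1}(x_0')$. Horizontal lifts of concatenations are concatenations of horizontal lifts (with matching endpoints), and the horizontal lift of the reversed path $\gamma^{-1}$ starting at $p_1$ is just $t\mapsto\tilde\gamma(1-t)$, ending at $p_0$. Tracking the lift of $\gamma*\alpha*\gamma^{-1}$ starting at $p_1$: it goes $p_1\leadsto p_0$ along $\gamma^{-1}$ reversed... more precisely, lift $\gamma$ forward again — let me instead lift $\gamma*\alpha*\gamma^{-1}$ starting at $p_1$, first traversing $\gamma$ backwards to reach $p_0$, then $\alpha$ (picking up $\hol_A(\alpha,p_0)$, so reaching $p_0\cdot\hol_A(\alpha,p_0)$), then $\gamma$ forwards; this last segment is the horizontal lift of $\gamma$ starting at $p_0\cdot\hol_A(\alpha,p_0)$, which by part (1)'s argument (equivariance of horizontal lifting) is $t\mapsto\tilde\gamma(t)\cdot\hol_A(\alpha,p_0)$, ending at $p_1\cdot\hol_A(\alpha,p_0)$. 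Hence $\hol_A(\gamma*\alpha*\gamma^{-1},p_1)=\hol_A(\alpha,p_0)$. Finally, since $p_0'$ and $p_1$ are both in $\pi^{-1}(x_0')$, write $p_1=p_0'\cdot h$ for some $h\in G$ and apply part (1) to get $\hol_A(\gamma*\alpha*\gamma^{-1},p_0')=h\,\hol_A(\gamma*\alpha*\gamma^{-1},p_1)\,h^{-1}=h\,\hol_A(\alpha,p_0)\,h^{-1}$, so $g=h$ works. Note flatness of $A$ is not actually needed for these identities — it was needed only to guarantee $\hol_A$ descends to $\pi_1$, i.e. for $\gamma*\alpha*\gamma^{-1}$ to represent a well-defined conjugacy of based homotopy classes.

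The main obstacle is bookkeeping: making sure the equivariance of the horizontal lift (the statement that $r_g\circ\beta$ is the lift starting at $\beta(0)g$) is proved cleanly once and then reused, and being careful with left-versus-right actions and the direction of conjugation. None of the steps is deep; the risk is a sign or inversion error in the group-element manipulations, which I would guard against by consistently using the freeness of the $G$-action to cancel.
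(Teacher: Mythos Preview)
Your proof is correct and follows essentially the same approach as the paper: in each part you identify the horizontal lift of the transformed data explicitly (via $r_g\circ\beta$, via $\Phi^{-1}\circ\beta$, and via concatenation of lifts), read off the endpoint, and reduce to part~(1). The only cosmetic difference is in part~(2), where the paper verifies that $\beta(t)\cdot u_{\beta(t)}^{-1}$ is $(A\cdot u)$-horizontal by an explicit product-rule computation, whereas you invoke $A\cdot u=\Phi^*A$ from Lemma~\ref{LemGaugeAssociatedTransform} directly---but this is the same curve and the same argument packaged more cleanly.
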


\begin{proof}
Let $\beta$ be a horizontal lift of $\alpha$ with base point $p_0$. To show (1) note that for the path \[\beta'_t = \beta_t \cdot g\] for $g\in G$ we have
\[
 A(\dot\beta'_t) = A((r_g)_*\beta_t) = \Ad_{g^{-1}} A(\beta_t) = 0.
\]
Therefore $\beta'_t$ is the horizontal lift with base point $p_0\cdot g$. Then
\begin{align*}
\beta'(1) &= \beta(1) g \\
&= \beta(0) \hol_A(\alpha,p_0) g\\
&= \beta'(0) g^{-1} \hol_A(\alpha,p_0) g
\end{align*}
and the holonomy of $A$ along $\alpha$ with respect to the base point $p_0 h$ is given by
\begin{equation*}
\hol_{A} (\alpha,p_0 g) = g^{-1}\hol_A(\alpha,p_0)g.
\end{equation*}
To show (2) let $\beta'(t)= \beta(t) w_{\beta(t)}$ where $w = u^{-1}$. Then the product rule implies
\[
\dot \beta'_t = (i_{\beta(t)})_*  w_*\dot \beta_t + (r_{w(\beta_t)})_* \dot\beta_t.
\]
If $B$ is any connection then
\begin{align*}
B(\dot \beta'_t)& = B((i_{\beta(t)w_{\beta(t)}})_* (l_{w^{-1}_{\beta(t)}})_* w_*\dot \beta_t) +
B((r_{w(\beta_t)})_* \dot\beta_t)\\
& = (l_{w^{-1}_{\beta(t)}})_* w_*\dot \beta_t + \Ad_{w_{\beta(t)}^{-1}} B (\dot\beta_t)\\
& = w^*(\theta) (\dot \beta_t)+ \Ad_{w_{\beta(t)}^{-1}} B (\dot\beta_t)
\end{align*}
as well as
\begin{equation*}
 (B\cdot w)(\dot \beta_t)  = \Ad_{w_{\beta(t)}^{-1}} B (\dot\beta_t) + w^*(\theta)  (\dot\beta_t).
\end{equation*}
Therefore we get for $B = A \cdot u$
\[
 A(\dot\beta_t) = (B \cdot w)(\dot\beta_t) = B(\dot\beta'_t) = (A \cdot u)(\dot\beta'_t).
\]
Since $\beta$ is the horizontal lift of $\alpha$ with respect to $A$ starting at $p_0$, $\beta'$ is the horizontal lift of $\alpha$ with respect to $A\cdot u$ starting at $\beta'_0 = p_0 u^{-1}_{p_0}$. Then
\[
u_{\beta(1)} = u_{\beta(0)\cdot \hol_A(\alpha,p_0)} = C_{(\hol_A(\alpha,p_0))^{-1}} u_{p_0}
\]
implies
\begin{align*}
\beta'(1) & = \beta(1) \cdot u_{\beta(1)}^{-1} \\
& = \beta(0) \cdot \hol_A(\alpha,p_0)\cdot C_{(\hol_A(\alpha,p_0))^{-1}} u_{p_0}^{-1}\\
&= \beta'(0) \cdot \hol_A(\alpha,p_0).
\end{align*}
Therefore $\hol_{A\cdot u} (\alpha,p_0u^{-1}_{p_0}) = \hol_A(\alpha,p_0)$ and by (1)
\begin{align*}
\hol_{A\cdot u} (\alpha,p_0) &= u^{-1}_{p_0}\hol_{A\cdot u} (\alpha,p_0u^{-1}_{p_0})u_{p_0}\\
&= u_{p_0}^{-1}\hol_A(\alpha,p_0)u_{p_0}.
\end{align*}
The horizontal lift $\beta_{\gamma}$ of $\gamma$ starting at $\beta(0)=p_0$ ends at $\beta_{\gamma}(1)= p_0'g$ for some $g\in G$. We know from the proof of (1) that any other horizontal lift is of the form $\beta_{\gamma}\cdot g'$ for some $g'\in G$. In particular the horizontal lift $\beta_\gamma \cdot \hol_A(\alpha,p_0)$ starts at $\beta(1)$. This shows that $(\beta_{\gamma} \cdot \hol_A(\alpha,p_0)) * \beta * \beta_\gamma^{-1}$ is a horizontal lift of $\gamma * \alpha * \gamma^{-1}$ based at $p_0'$.
\begin{align*}
((\beta_{\gamma} \cdot \hol_A(\alpha,p_0)) * \beta * \beta_\gamma^{-1})(1) &=  \beta_{\gamma}(1)\cdot \hol_A(\alpha,p_0)\\
& = \beta_\gamma^{-1}(0) \cdot \hol_A(\alpha,p_0)\\
& = ((\beta_{\gamma} \cdot \hol_A(\alpha,p_0)) * \beta * \beta_\gamma^{-1})(0) \cdot \hol_A(\alpha,p_0)
\end{align*}
Therefore by (1)
\[
 \hol_A (\gamma* \alpha * \gamma^{-1},p_0') =  g \hol_A (\gamma* \alpha * \gamma^{-1},p_0'\cdot g) g^{-1} = g \hol_A(\alpha,p_0) g^{-1}. \qedhere
\]
\end{proof}

We have seen how to get a homomorphism $\rho\co \pi_1(M,x_0) \to G$ from a flat connection $A$ in a principal $G$--bundle $P$. To associate a principal $G$--bundle to a homomorphism $\rho\co \pi_1(M,x_0) \to G$ with $x_0\in M$, let $\wtilde M$ be the universal cover of $M$. Fix a  base point $p_0\in \wtilde M$ and identify $\wtilde M$ with the space of homotopy classes of paths in $M$ starting at $x_0$. Then $\pi_1(M,x_0)$ naturally acts on $\wtilde M$ from the right. For $\rho\co \pi_1(M,x_0) \to G$ we define the principal $G$--bundle
\[
 P_{\rho} := \wtilde M \times_{\rho} G = (\wtilde M \times G) / \sim
\]
where
\[
 (\tilde x,g) \sim (\tilde x \cdot \alpha,\rho(\alpha)^{-1}g), \quad (\tilde x, g) \in \wtilde M\times G, \alpha \in \pi_1(M).
\]
Pulling back the Maurer-Cartan form $\theta \in \Omega(G;\frakg)$ to $\wtilde M\times G$ defines a natural flat connection $\tilde A = \pi_G^*(\theta)$ on $\wtilde M \times G$ since
\begin{align*}
\tilde A(X^*_{(\tilde x,g)}) = \tilde A((i_{(\tilde x,g)})_*(X_e)) = \theta ((l_g)_*X_e) = X &\quad \text{for }X\in \frakg,\\
r^*_h\tilde A = \pi^*_Gr^*_h\theta = \pi^*_G\Ad_{h^{-1}} \theta = \Ad_{h^{-1}}\tilde A&\quad\text{by Exercise \ref{ExeMaurerCartan}}\\
\tag*{and} d\tilde A + \tfrac{1}{2}[\tilde A\wedge\tilde A] = \pi^*_G(d\theta + \tfrac{1}{2}[\theta,\theta]) = 0& \quad\text{by Proposition \ref{PropMaurerCartan}}.
\end{align*}
This also induces a flat connection $A_\rho$ on $P_\rho$ because
\[
\tilde A ((r_{\alpha})_*X_{\tilde x},(l_{\rho(\alpha)^{-1}})_*Y_{g}) = l^*_{\rho(\alpha)^{-1}} \theta(Y_g) = \theta(Y_g) = \tilde A (X_{\tilde x},Y_g).
\]

\begin{exe}\label{rightinverse}
Show that with respect to the base point $[x_0,e] \in P_\rho$ we have
\[
 \hol_{A_\rho} (\alpha,[x_0,e]) = \rho(\alpha).
\]
\end{exe}

\begin{defn}
 A flat principal $G$--bundle over $M$ is a pair $(P,A)$ consisting of a principal $G$--bundle $P$ over $M$ and a flat connection in $P$. We call $(P,A)$ and $(P',A')$ isomorphic, if there is a principal bundle isomorphism $\Phi\co P \to P'$ such that $A = \Phi^*(A')$. Let $\cM(M,G) $ be the moduli space of flat principal $G$--bundles over $M$.
\end{defn}

\begin{thm}\label{ThmFlatConnEqualRep}
 Let $(P,A)$ be a flat principal $G$--bundle and let $\rho\co \pi_1(M) \to G$ be a homomorphism from the fundamental group to $G$. Then $(P,A)$ is isomorphic to $(P_\rho,A_\rho)$ if and only if there exists $g \in G$ with $\hol_A = g^{-1} \rho g$. In particular, we have the bijection
\begin{align*}
 \cM(M,G) &\stackrel{\cong}{\to} \Hom(\pi_1(M),G)/G\\
[P,A] &\mapsto [\hol_A].
\end{align*}
\end{thm}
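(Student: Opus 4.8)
The plan is to prove Theorem~\ref{ThmFlatConnEqualRep} in two stages: first establish the equivalence of $(P,A)\cong(P_\rho,A_\rho)$ with the holonomy condition $\hol_A = g^{-1}\rho g$, and then deduce that the assignment $[P,A]\mapsto[\hol_A]$ is a well-defined bijection onto $\Hom(\pi_1(M),G)/G$.

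For the first stage, one direction is easy. If $\Phi\co P\to P_\rho$ is a principal bundle isomorphism with $A = \Phi^*(A_\rho)$, then by Proposition~\ref{PropHolonomyTransformation}(2) (applied to the gauge transformation $\Phi$, after fixing base points) together with Exercise~\ref{rightinverse}, which computes $\hol_{A_\rho}(\alpha,[x_0,e]) = \rho(\alpha)$, we get that $\hol_A$ and $\rho$ agree up to conjugation by the element $g\in G$ relating the base points $p_0$ and $\Phi(p_0)$. For the converse, suppose $\hol_A = g^{-1}\rho g$; after replacing the base point $p_0$ by $p_0\cdot g$ and using Proposition~\ref{PropHolonomyTransformation}(1), we may assume $\hol_A = \rho$ outright. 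Now I would construct $\Phi\co P_\rho\to P$ explicitly: given $[\tilde x,h]\in P_\rho$, write $\tilde x$ as the homotopy class of a path $\gamma$ in $M$ from $x_0$ to some point $x$, let $\beta_\gamma$ be the horizontal lift of $\gamma$ starting at $p_0$ with respect to $A$, and set $\Phi([\tilde x,h]) := \beta_\gamma(1)\cdot h$. One checks this is well-defined using the equivalence relation defining $P_\rho$ and the transformation law of holonomy under concatenation (Proposition~\ref{PropHolonomyTransformation}, essentially the computation in the proof of part (3)); it is smooth because horizontal lifts depend smoothly on the path, $G$-equivariant by construction, and covers the identity on $M$. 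Finally, $\Phi^*(A) = A_\rho$ because $\Phi$ sends horizontal paths in $P_\rho$ (the lifts of paths in $M$) to horizontal paths in $P$, so $\Phi$ intertwines the two horizontal distributions, and a connection is determined by its horizontal distribution (Exercise~\ref{horizontaldistribution}).

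For the second stage, the map $[P,A]\mapsto[\hol_A]$ is well-defined on isomorphism classes: if $(P,A)\cong(P',A')$ via $\Phi$, then as in stage one the holonomies agree up to conjugation, and by Proposition~\ref{PropHolonomyTransformation}(1) and (3) the conjugacy class $[\hol_A]\in\Hom(\pi_1(M),G)/G$ is independent of the choice of base point $x_0$ and lift $p_0$. Surjectivity is immediate: given any $\rho$, the pair $(P_\rho,A_\rho)$ maps to $[\rho]$ by Exercise~\ref{rightinverse}. Injectivity is precisely the content of stage one: if $[\hol_A]=[\hol_{A'}]$, then $\hol_A = g^{-1}\hol_{A'}g$ for some $g$, and running stage one twice (once for $(P,A)$, once for $(P',A')$, both compared to $(P_{\hol_{A'}},A_{\hol_{A'}})$) shows $(P,A)\cong(P',A')$.

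The main obstacle is the careful verification that the explicit map $\Phi\co P_\rho\to P$ in the converse direction of stage one is well-defined on equivalence classes and genuinely smooth. This requires tracking how horizontal lifts behave under the $\pi_1(M)$-action on $\widetilde M$ and reconciling the two different models of the universal cover (paths modulo homotopy versus the covering space), and the smoothness claim relies on the smooth dependence of solutions of the path-lifting ODE from Proposition~\ref{horizontallift} on initial data and parameters. Everything else is bookkeeping with the transformation formulas already established in Proposition~\ref{PropHolonomyTransformation}.
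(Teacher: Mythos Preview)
Your proposal is correct and follows essentially the same route as the paper: reduce to the case $\hol_A=\rho$ by adjusting the base point, define $\Phi\co P_\rho\to P$ via $[\tilde x,h]\mapsto \beta_{\tilde x}(1)\cdot h$ using horizontal lifts, and verify $\Phi^*(A)=A_\rho$ by showing $\Phi$ carries horizontal vectors to horizontal vectors. The paper's only cosmetic difference is that it checks $\Phi^*(A)=A_\rho$ by a direct computation on a horizontal tangent vector rather than phrasing it in terms of horizontal paths, and it handles the conjugacy ambiguity by exhibiting an explicit isomorphism $P_\rho\cong P_{g^{-1}\rho g}$ rather than by moving the base point; both are equivalent to what you do.
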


\kommentar{Man haette wirklich fiber bundle definieren sollen und die lifting property beweisen sollen, oder wenigstens zitieren...}
\begin{proof}
That $(P,A) \cong (P_\rho,A_\rho)$ implies $\hol_A = g^{-1}\rho g$ for some $g\in G$ follows immediately from a slight generalization of Proposition \ref{PropHolonomyTransformation}. So the holonomy map is well-defined on the quotients. On the other hand, if $\tilde \rho := g_0^{-1}\rho g_0$ for some $g_0\in G$, then
\begin{align*}
 \wtilde M \times G &\to \wtilde M \times G\\
(\tilde x, g) &\mapsto (\tilde x,g_0g).
\end{align*}
induces a bundle isomorphism $\Phi\co P_\rho \to P_{\tilde \rho}$
since
\[
 (\tilde x \cdot \alpha,\tilde\rho(\alpha)^{-1}g) \mapsto (\tilde x\cdot \alpha,g_0\rho(\alpha)^{-1} g) = (\tilde x\cdot \alpha,\rho(\alpha)g_0 g).
\]
Certainly, $\Phi$ is an isomorphism of principal $G$--bundles. Furthermore $\Phi^*(A_{\tilde\rho}) = A_\rho$ since
\[
\Phi^*(\pi^*_G(\theta)) (X_{\tilde x},Y_g) = \pi^*_G(\theta) (X_{\tilde x},(l_{g_0})_*Y_g) = \theta((l_{g_0})_*Y_g) = \theta(Y_g) = \pi^*_G(\theta)(X_{\tilde x},Y_g).
\]
Therefore, the proposed inverse of the holonomy map is well-defined, which by Exercise \ref{rightinverse} is a right-inverse. To show that it is a left-inverse, assume that $\hol_A = \rho$ and consider
\begin{align*}
\Phi\co \wtilde M\times G & \to P\\
(\tilde x,g) &\mapsto \beta_{\tilde x}(1) \cdot g
\end{align*}
where $\beta_{\tilde x}$ is the horizontal lift of the path in $M$ representing $\tilde x$ starting at some $p_0 \in \pi^{-1}(x_p) \subset P$. $\Phi$ is well-defined because $\beta_{\tilde x}$ only depends on the homotopy class of $\tilde x$ by similar arguments as in Theorem \ref{ThmHolPi1}. $\Phi$ is certainly $G$--equivariant and surjective. Furthermore if $\alpha\in \pi_1(M,x_0)$, then
\begin{gather*}
 \beta_{\tilde x \cdot \alpha} (1) = \beta_{\tilde x}(1) \cdot \hol_A(\alpha,p_0)\\
\tag*{and therefore} \Phi^{-1}(\beta_{\tilde x}(1)) = \left\{\left(\tilde x\cdot \alpha,\hol_A(\alpha,p_0)^{-1}\right)\mid \alpha \in \pi_1(M,x_0)\right\}.
\end{gather*}
This implies together with $\hol_A(\alpha,p_0) = \rho(\alpha)$ that
$\Phi$ descends to a bundle isomorphism $P_\rho \to P$. Furthermore
$\Phi^*(A) = A_\rho$.  Let $(X_{\tilde x},Y_g)\in H_{(\tilde x,g)}$,
where $H_p$ is the horizontal distribution given by $A_\rho$. Then
\[
0= A_\rho(X_{\tilde x},Y_g) = \theta(Y_g)
\]
implies $Y_g = 0$. We assume by choosing a suitable homotopic path that $\dot{\tilde x}(1) = X_{\tilde x}$ and let $\gamma(t) = (\tilde x(t),g)$. Then
\begin{align*}
 \Phi_*(X_{\tilde x},Y_g) (f) &= (X_{\tilde x},0)(f\circ \Phi)\\
&= \eval{\tfrac{d}{dt}}{t=1} (f\circ \Phi \circ \gamma) = \eval{\tfrac{d}{dt}}{t=1} (f(\beta_{\tilde x}(t)\cdot g))\\
&= (r_g)_*\dot\beta_{\tilde x}(1)(f)
\end{align*}
yields
\begin{align*}
A(\Phi_*(X_{\tilde x},Y_g)) = \Ad_{g^{-1}}A(\beta_{\tilde x} (1) = 0
\end{align*}
because $\beta_{\tilde x}$ is a horizontal lift with respect to $A$. This completes the proof that the holonomy map is a bijection as stated.
\end{proof}
\kommentar{F^A schon umdefiniert als 2-form in P?}

\section{Chern-Weil Theory}
\kommentar{Add Signature theorem}

Characteristic classes are mainly used in obstruction theory. For example the {\em Euler class $e(\pi) \in H^n(M;\Z)$} is the primary obstruction to trivializing a real vector bundle $\pi\co E \to M$ of rank $r$ or a $\GL(n,\R)$--principal bundle $\pi\co P \to M$. This suggests, that characteristic classes are more naturally situated in the framework of vector bundles, however characteristic classes for principal bundles seems a bit more general. Since so far we tried to not restrict ourselves to matrix Lie groups, we will continue to move within the framework of principal bundles.

Chern-Weil theory is a way of describing characteristic classes of vector bundles or principal bundles using differential geometry, instead of the topological method of pulling back universal cohomology classes. Chern classes and Pontrjagin classes are represented in DeRham theory by differential forms which are functions of the curvature of a connection in the bundle. There are two approaches to defining characteristic forms, one uses invariant polynomials, the other formal power series.

\subsection{Invariant polynomials}

Let $V$ be a complex vector space. For $k\ge 1$ let $S^k(V^*)$ be the vector space of linear maps
\[
f \co V\tensor \ldots \tensor V \to \C.
\]
If we let $S^0(V^*) = \C$, then
\[
S^*(V^*)\bigoplus_{k=0}^\infty S^k(V^*)
\]
is a commutative ring with unit $1\in S^0(V^*)$ and product
\[
 f\cdot g(v_1 ,\ldots ,v_{k+l}) = \frac{1}{(k+l)!} \sum_\sigma f(v_{\sigma_1}, \ldots , v_{\sigma_k}) g(v_{\sigma_{k+1}}, \ldots , v_{\sigma_{k+l}})
\]
for $f \in S^k(V^*)$ and $g \in S^l(V^*)$, where $\sigma$ runs over all all permutations of $1,\ldots,k+l$.

\begin{exe}\label{ExePolynomialFunction}
 A choice of basis $\{e_1,\ldots,e_n\}$ for a vector space $V$ determines an isomorphism of $S^k(V^*)$ with homogeneous polynomials of degree $k$, as well as a ring isomorphism $S^*(V^*)\cong\C[x_1,\ldots,x_n]$, where $p(x_1,\ldots,x_n) := f(v,\ldots,v)$ for $v = \sum x_i e_i$ and $f\in S^k(V^*)$.
\end{exe}

Consider $V = \frakg$. Then the adjoint representation of $G$ on the Lie algebra $\frakg$ induces an action of $G$ on $S^k(\frakg^*)$ for every $k$:
\[
 (g\cdot p) (X_1,\ldots,X_k) = p(\Ad(g^{-1})X_1,\ldots,\Ad(g^{-1})X_k), \quad\text{where } X_1,\ldots,X_k \in \frakg \text{ and } g\in G.
\]

\begin{defn}
Denote the set of $G$--invariant elements in $S^*(\frakg^*)$ by $I^*(G)$. Exercise \ref{ExePolynomialFunction} justifies us calling $f\in I^*(G)$ an {\em invariant polynomial}.
\end{defn}

\begin{exa}
 If $G = \GL(n,\C)$ and $\frakg = M^{n,n}(\C)$ (or $G= \Aut(V)$ and $\frakg = \End(V)$ and we use a basis to identify $\End(V)\cong M^{n,n}(\C)$), then it is easy to see that the invariant polynomials $I^k(\GL(n,\C))$ correspond to the $\GL(n,\C)$--invariant homogeneous polynomials of degree $k$.
Therefore, particularly trace and determinant correspond to invariant polynomials. If $A \in M^{n,n}(\C)$,
\[
\det(\Id + t A) = \sum_{k+0}^n t^k\sigma_k(A),
\]
where the homogeneous polynomials $\sigma_k$ of degree $k$ are called the elementary symmetric polynomials. It turns out that all $\GL(n,\C)$--invariant homogeneous polynomials are linear combinations of the elementary symmetric polynomials $\sigma_k$, which can be shown to be
\begin{align*}
 \sigma_0(A) &= 1\\
 \sigma_1(A) &= a_1+ \ldots + a_n = \tr A\\
 \sigma_2(A) & = \sum_{i<j} a_i a_j\\
 \vdots\\
 \sigma_n(A) &= a_1 \cdots a_n = \det A
\end{align*}
where $a_i$ are the the eigenvalues of $A$.
\end{exa}

Let $P\to M$ be a principal $G$--bundle with a connection $A$. Then $f\in I^*(G)$ defines $f(F_A^k) \in \Omega^{2k}(P)$, where \[
F_A^k = F_A \wedge \ldots \wedge F_A \in \Omega^{2k}(P;\frakg\tensor \ldots \tensor \frakg).
\] By Proposition \ref{PropCurvaturePullback} there exists a unique $2$--form in $\Omega^2(\Ad_P)$ which pulls back to $F_A$. Hence we may consider $f(F_A^k)$ as an element of $\Omega^{2k}(M)$, which is called the {\em characteristic form} corresponding to $f$. The curvature being closed immediately implies $df(F_A^k) = 0$, so $c_f(A) \coloneqq [f(F_A^k)]$ is a DeRham cohomology class called {\em characteristic class} corresponding to $p$. Similarly, if $p$ is a $\Aut(\frakg)$--invariant homogeneous polynomial of degree $k$ in $\End(\frakg)$, Exercise \ref{ExePolynomialFunction} allows us to define $p(F_A) \in \Omega^{2k}(M)$, where $F_A \in \Omega^2(M;\Ad_P) \subset \Omega^2(M;\End(\Ad_P))$. The determinant and the trace are of particular interest.

\kommentar{Ersetzt F^A durch F_A}

\begin{defn} Consider a principal $G$--bundle $P$ for $G \subset M^{n,n}(\C)$ (or equivalently a complex vector-bundle with structure group $G$). The characteristic form
\[
c(P,A)\coloneqq \det(\Id +\tfrac{i}{2\pi}F_A) \in \Omega^\ev(M)
\]
is called the {\em total Chern form}, where we consider matrix multiplication in $\frakg$. The normalization $\tfrac{i}{2\pi}$ is chosen so that the corresponding Chern classes are integer-valued for $G = \SU(N)$. The $k$--th Chern form $c_k(P,A) \in \Omega^{2k}(M)$ is defined as the component of degree $2k$ of the total Chern form, i.e.
\[
c(P,A) = \sum_k c_k(P,A) = \sum_{k} \left(\tfrac{i}{2\pi}\right)^k \sigma_k(F_A).
\]
\end{defn}

\kommentar{Dieser Abschnitt ist sehr duerftig und sollte noch aufgefuellt werden mit Details}

Other characteristic forms are the Pontrjagin form, the Hirzebruch $\hat L$--form and the $\hat A$--form, for which we also have nice formulas.

\subsection{Formal power series}

One can also use power series to define these characteristic forms. Even though we could do a similar construction for general principal bundles let us restrict ourselves to groups $G \subset M^{n,n}(\C)$. See Zhang \cite{zhang2001} for detailed information. If $f(z) = \sum_{n\ge 0} a_n z^n$ is a formal power series, then we can define the characteristic form
\[
\tr \{f(A)\} \coloneqq \tr\left\{\sum_{n\ge 0} a_n\left(\tfrac{i}{2\pi} F_A\right)^n \right\}\in \Omega^\ev(M)
\]
where $F_A \in \Omega^2(M,\Ad_P)$ and the $n$--th power corresponds to matrix multiplication. The normalization $\tfrac{i}{2\pi}$ is again chosen with the Chern-classes in mind.

\begin{defn}
 The characteristic form associated to $\exp(z)$ is called the Chern character form
\[
 \ch(P,A) \coloneqq \tr \left\{\exp\left(\tfrac{i}{2\pi} F_A\right)\right\}
\]
\end{defn}

\begin{exe}\label{ExeDetExpTrLog} For every $\alpha \in \Omega^{2k}(\End(\Ad_P))$, $k\ge 1$, we have
\[
 \det(1+\alpha) = \exp(\tr\{\log(1+\alpha)\}),
\]
where $\exp$ is considered as a formal power series and the logarithm is defined using the formal power series
\[
 \log(1+z) = \sum_{n\ge 0} \frac{(-1)^n}{n+1} z^{n+1}
\]
\end{exe}

Exercise \ref{ExeDetExpTrLog} implies that for any normalized formal power series $f(z) = 1 + \sum_{n\ge 1} a_n z^n$ we have
\[
 \det(f(\alpha)) = \exp(\tr\{\log f(\alpha)\}),
\]
so that the Chern character form is associated to a formal power series.

\section{The Chern-Simons form}

The famous Chern-Weil theorem states that the difference
\[
 \tr\{f(A_1)\} - \tr\{f(A_0)\}
\]
is an exact form. For a proof of the following result see Berline-Getzler-Vergne \cite{berline-getzler-vergne2004}.
\begin{thm}
 Let $P \to M$ be a principal $G$--bundle over a manifold $M$, and let $f(z) = \sum_{n\ge 0} a_n z^n$ be a formal power series. If $A_t$ is a smooth path of connections on $P$, then
\[
 \tfrac{d}{dt}\tr\{f(A_t)\} = d\tr \{\tfrac{i}{2\pi} (\tfrac{d}{dt} A_t) \wedge f'(A_t)\}.
\]
In particular, if $a = A_1-A_0 \in \Omega^1(M,\Ad_P)$ is the difference of two connections, then
\[
 \tr\{f(A_1)\}-\tr\{f(A_0)\} = d\int_0^1 \tfrac{i}{2\pi} \tr \{a\wedge f'(A_0 + ta)\}\, dt.
\]
Therefore, we have an equality of cohomology classes
\[
 \tr\{f(A_1)\} = \tr\{f(A_0)\} \in H^\ev(M).
\]
\end{thm}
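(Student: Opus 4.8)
The plan is to reduce to monomials, prove the infinitesimal identity, then integrate and pass to cohomology. Since $f(z)=\sum_n a_n z^n$ enters only through $\tr\{\sum_n a_n(\tfrac{i}{2\pi}F_A)^n\}$ and $M$ is finite-dimensional, in any fixed form-degree only finitely many monomials $z^n$ contribute, so there is no convergence issue and it suffices, by linearity in the $a_n$, to establish
\[
\tfrac{d}{dt}\,\tr\{F_{A_t}^{\,n}\}=n\,d\,\tr\{(\tfrac{d}{dt}A_t)\wedge F_{A_t}^{\,n-1}\}
\]
for each $n\ge1$, restoring the normalisation constants at the end.

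I would work with $\Ad_P$--valued forms on $M$ directly, writing $F_t:=F_{A_t}\in\Omega^2(\Ad_P)$ (well-defined by Proposition~\ref{PropCurvaturePullback}), $\dot A_t:=\tfrac{d}{dt}A_t\in\Omega^1(\Ad_P)$, and $d^{A_t}$ for the extended covariant derivative. Two structural facts drive everything: differentiating $\pi^*F_t=dA_t+\tfrac12[A_t\wedge A_t]$ in $t$ and using the graded symmetry~\eqref{gradedliebracket} of the bracket gives $\tfrac{d}{dt}F_t=d^{A_t}\dot A_t$; and the Bianchi identity (Exercise~\ref{ExeBianchi}) gives $d^{A_t}F_t=0$, hence $d^{A_t}(F_t^{\,n-1})=0$ by the Leibniz rule for $d^{A_t}$. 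I also need that the fibrewise trace, being $\Ad$--invariant, intertwines $d^{A_t}$ with the ordinary $d$: locally $d^{A_t}B=dB+[A_t\wedge B]$ and $\tr[A_t\wedge B]=0$, so $\tr(d^{A_t}B)=d\,\tr(B)$; and that $\tr$ is graded-cyclic, $\tr(\alpha\wedge\beta)=(-1)^{|\alpha||\beta|}\tr(\beta\wedge\alpha)$, which in particular lets one freely cycle even-degree factors.

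With these in hand the computation is two lines. First, $\tfrac{d}{dt}(F_t^{\,n})=\sum_{j=0}^{n-1}F_t^{\,j}\wedge(\tfrac{d}{dt}F_t)\wedge F_t^{\,n-1-j}$; applying $\tr$ and cycling the even-degree factors $F_t^{\,j}$ gives $\tfrac{d}{dt}\tr\{F_t^{\,n}\}=n\,\tr\{(d^{A_t}\dot A_t)\wedge F_t^{\,n-1}\}$. Second, since $\tr$ turns $d^{A_t}$ into $d$ and $d^{A_t}$ is a graded derivation,
\[
d\,\tr\{\dot A_t\wedge F_t^{\,n-1}\}=\tr\{(d^{A_t}\dot A_t)\wedge F_t^{\,n-1}\}-\tr\{\dot A_t\wedge d^{A_t}(F_t^{\,n-1})\}=\tr\{(d^{A_t}\dot A_t)\wedge F_t^{\,n-1}\},
\]
the second term vanishing by Bianchi. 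Combining the two yields the monomial identity. Summing over $n$ with weights $a_n(\tfrac{i}{2\pi})^n$, and noting $\sum_n n a_n(\tfrac{i}{2\pi})^n F_t^{\,n-1}=\tfrac{i}{2\pi}f'(A_t)$ with $f'(z)=\sum_n n a_n z^{n-1}$, gives $\tfrac{d}{dt}\tr\{f(A_t)\}=d\,\tr\{\tfrac{i}{2\pi}\dot A_t\wedge f'(A_t)\}$. For the second assertion, take $A_t=A_0+ta$ (a genuine path of connections because $\cA_P$ is affine over $\Omega^1(\Ad_P)$), so $\dot A_t\equiv a$; integrating this identity over $t\in[0,1]$ and pulling $d$ outside the integral produces the stated primitive. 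Finally, since the right-hand side is manifestly exact, $\tr\{f(A_1)\}$ and $\tr\{f(A_0)\}$ differ by a coboundary and hence define the same class in $H^{\ev}(M)$.

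I expect the only real obstacle to be sign bookkeeping: verifying the graded-cyclicity of $\tr$ and that one may freely permute the even-degree curvature factors, getting the Leibniz sign right in $d\,\tr\{\dot A_t\wedge F_t^{\,n-1}\}$, and --- the step most worth spelling out carefully --- checking that $\tr\colon\Omega^*(\End(\Ad_P))\to\Omega^*(M)$ genuinely intertwines $d^{A}$ with $d$, which is precisely where $\Ad$--invariance of $\tr$ together with $\tr[A\wedge B]=0$ enters. None of this is deep, but it is where errors creep in.
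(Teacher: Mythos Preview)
Your argument is correct and is essentially the standard proof (reduce to monomials, use $\tfrac{d}{dt}F_t=d^{A_t}\dot A_t$, Bianchi, graded cyclicity of $\tr$, and $\tr\circ d^{A}=d\circ\tr$). The paper itself does not give a proof of this theorem at all: it simply refers the reader to Berline--Getzler--Vergne \cite{berline-getzler-vergne2004}, so there is nothing to compare against beyond noting that your write-up is precisely the computation one finds there.
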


Therefore, the cohomology class is independent of the connections and we call the characteristic class associated to $f$ the $f$-class of $P$. The Chern classes $c(P)$ are integer valued, i.e. $\int_M c(P) \in \Z$ for all principal bundles $P \to M$. See Milnor-Stasheff \cite[App. C]{milnor-stasheff1974}. The other classes are in general only $\Q$--valued.

\begin{defn}
 If $A_t$ is a path of connections, we call
\[
 T c_f(A_t) \coloneqq \int_0^1 \tfrac{i}{2\pi} \tr\left\{(\tfrac{d}{dt} A_t) \wedge f'(A_t)\right\} \, dt \in \Omega^\odd(M)
\]
the {\em transgression form of the $f$--class associated to $A_t$}. If $A_t = A_0 + ta$ we also use the notation
\[
 T c_f(A_0,A_1) \coloneqq \int_0^1 \tfrac{i}{2\pi} \tr \{a\wedge f'(A_0 + ta)\}\, dt \in \Omega^\odd(M).
\]
The transgression form of the Chern character is called the {\em Chern-Simons form} of $A_1$ with respect to $A_0$,
\[
\alpha(A_0,A_1) = \int_0^1 \tfrac{i}{2\pi} \tr [a\wedge \exp(A_0 + ta)]\, dt \in \Omega^\odd(M).
\]
\end{defn}

For us the Chern-Simons form for the degree 3 is most relevant due to its importance in 3--manifold topology. We want to derive an explicit formula for it. Set
\[
\alpha(A_0,A_1)
\]
We abbreviate $A \coloneqq A_0$ and let $F_t$ denote the curvature of $A_t = A + ta$. Then
\[
 F_t = F_A + t (da + [A,a]) + \tfrac{1}{2}t^2 [a\wedge a].
\]
For the component of degree 4 of the Chern character we have $f(z) =
\tfrac{z^2}{2}$ so that $f'(z) = z$. Then by definition the degree 3
component is
\begin{align*}
 \alpha_3(A_0,A_1) &= -\tfrac{1}{4\pi^2} \int_0^1 \tr\{a\wedge (F_A + t (d^A a) + \tfrac{1}{2}t^2[a\wedge a])\}\, dt\\
&= -\tfrac{1}{4\pi^2} \tr\{a\wedge (F_A + \tfrac{1}{2} (d^A a) + \tfrac{1}{6}[a\wedge a])\}.
\end{align*}
If $A$ is flat then
\[
 \alpha_3(A_0,A_1) = -\tfrac{1}{8\pi^2} \tr\{a\wedge ( d^A a + \tfrac{1}{3}[a\wedge a])\}.
\]
If furthermore $A$ happens to be trivial then
\[
 \alpha(a)\coloneqq \alpha_3(A_0,A_1) = -\tfrac{1}{8\pi^2} \tr\{a\wedge  d a + a\wedge \tfrac{1}{3}[a\wedge a]\}.
\]
This motivates defining for any connection $A$ and an $\Ad$--invariant symmetric bilinear form $\la\cdot,\cdot\ra$ on $\frakg$
\[
 \alpha(A) = \la A \wedge F_A \ra - \tfrac{1}{6} \la A\wedge[A\wedge A] \ra.
\]\kommentar{Check that this makes sense even if $A$ is not trivial, and figure out what is the issue with the transgressed form for $A$ non-trivial}In the case of a simply connected, connected Lie group over a $3$--manifold, $P$ turns out to be trivializable (see Lemma \ref{LemPtrivial}). Note that $\la \cdot,\cdot\ra = -\tfrac{1}{8\pi^2}\tr$ will provide a convenient normalization for $\SU(N)$ (see Exercise \ref{ExeSU2normalization} in the case $\SU(2)$). For a general Lie group, we can always take the Killing form. Alternatively, we can decompose the corresponding Lie algebra into the direct sum of an abelian Lie algebra with simple Lie algebras and consider the different symmetric bilinear forms on the summands: every $\Ad$--invariant symmetric bilinear form on a simple Lie algebra is proportional to the Killing form, every symmetric bilinear form on an abelian Lie algebra is $\Ad$--invariant.
\begin{exe}\label{ExeCSform}
$\alpha(A)$ satisfies:
\begin{enumerate}
 \item $i^*_x\alpha(A) = -\frac{1}{6}\la\theta_x \wedge[\theta_x\wedge \theta_x]\ra$;
 \item $d \alpha(A) = \la F_A  \wedge F_A \ra$;\label{CWform}
 \item $r^*_g \alpha(A) = \alpha(A)$;
 \item If $\Phi\co P'\to P$ is a bundle map and $A$ a connection on $P$, then
\[
 \alpha(\Phi^*(A)) = \Phi^*(\alpha(A));
\]
\item If $\Phi\co P \to P$ is a gauge transformation with associated map $u\co P \to G$, then
\begin{equation}\label{EqCStransform}
\Phi^*(\alpha(A)) = \alpha(A) + d\la \Ad_{u^{-1}} A\wedge u^*\theta\ra - \tfrac{1}{6}\la u^*\theta \wedge[u^*\theta \wedge u^*\theta] \ra.
\end{equation}
\end{enumerate}
\end{exe}

\chapter{Chern-Simons theory}

In this section we restrict ourselves to compact oriented 3--manifolds and simply connected, connected, compact Lie groups. Before getting to the classical field theory, we will accustom ourselves to the setting by discussing 3--manifolds and providing details to a beautiful gauge theoretic construction by Taubes \cite{taubes90} using Chern-Simons gauge theory.

\section{Principal bundles on 3--manifolds}

The Chern-Simons form is most relevant in degree 3. We can integrate it over $3$--manifolds to give a well-behaved function on the space of connections.

\begin{lem}\label{LemPtrivial}
 Let $G$ be a simply connected Lie group. Every principal $G$--bundle $P$ over a manifold $M$ with $\dim M \leq 3$ is trivializable.
\end{lem}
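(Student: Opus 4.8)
The plan is to produce a global section $s\co M\to P$; as observed just after Definition~\ref{defnprincipalbundle}, a section of a principal bundle determines a trivialization $P\cong M\times G$, so this suffices. I would build $s$ by the standard cellular obstruction argument. Since $M$ is a smooth manifold of dimension $\le 3$, it admits a CW structure (e.g. via a triangulation), with skeleta $M^{(0)}\subset M^{(1)}\subset M^{(2)}\subset M^{(3)}=M$. Over any closed $k$--cell $e^k$ local triviality gives an identification $P|_{e^k}\cong e^k\times G$, so the problem of extending a partial section already defined on $\partial e^k$ over $e^k$ becomes the problem of extending a map $\partial e^k\simeq S^{k-1}\to G$ over the disk $e^k\simeq D^k$; the obstruction to this is precisely the homotopy class of that map in $\pi_{k-1}(G)$.

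Now I would run the induction over skeleta. On $M^{(0)}$ choose arbitrary points in the fibers. Extending from $M^{(0)}$ to $M^{(1)}$ is governed by $\pi_0(G)$, which is trivial since $G$ is connected. Extending from $M^{(1)}$ to $M^{(2)}$ is governed by $\pi_1(G)$, which is trivial by hypothesis. Extending from $M^{(2)}$ to $M^{(3)}=M$ is governed by $\pi_2(G)$, which vanishes for every finite--dimensional Lie group by the classical theorem of \'Elie Cartan. Hence all obstructions vanish, the section extends to all of $M$, and $P$ is trivializable.

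Equivalently, one may phrase this through the classifying space: principal $G$--bundles over $M$ are classified by homotopy classes of maps $M\to BG$, and since $\pi_k(BG)\cong\pi_{k-1}(G)$ vanishes for $k=1,2,3$ under our hypotheses, $BG$ is $3$--connected, so any map from the (at most) $3$--dimensional complex $M$ into $BG$ is null--homotopic, i.e.\ $P$ is trivial.

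The single non--formal ingredient, and the step I would flag as the crux, is the vanishing of $\pi_2(G)$ for a Lie group; I would invoke this as a known result rather than reprove it. The remainder is routine: the only genuine care needed is to use local triviality at each stage to turn the extension problem over a cell into a question about maps into $G$, so that the obstruction lands in the appropriate homotopy group of $G$.
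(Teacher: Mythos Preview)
Your proposal is correct and essentially follows the paper's argument: the paper uses precisely your second formulation via the classifying space, observing that $\pi_k(BG)\cong\pi_{k-1}(G)$ vanishes for $k\le 3$ (invoking $\pi_2(G)=0$ as a known fact, citing \cite{browder61}) so that $BG$ is $3$--connected and every map from a CW complex of dimension $\le 3$ into $BG$ is null--homotopic. Your primary presentation via direct cellwise extension of a section is the standard dual viewpoint and amounts to the same obstruction computation.
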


\begin{proof}
 For any topological group $G$ there is a contractible space $EG$ on which $G$ acts freely. The projection $EG \to BG$ is a principal $G$--bundle, called the {\em universal bundle}. $BG$ is called the {\em classifying space for $G$}. For any paracompact\footnote{A topological space is paracompact if every open cover admits an open locally finite refinement.} space $B$ the pull-back of a map $B\to BG$ induces a bijection between the set $[B,BG]$ of homotopy classes of maps from $B$ to $BG$ and isomorphism classes of principal $G$--bundles over $B$. See \cite{husemoller94} for details.

Therefore we need to show that $[M,BG]$ is trivial for $\dim M \leq 3$. Associated to the fibration \[G \to EG \to BG\] there is a long exact sequence of homotopy groups (see \cite[Chapter 6]{davis-kirk2001}) which yields
\[
 0 = \pi_n EG \to \pi_n BG \stackrel{\cong}{\to} \pi_{n-1}G \to \pi_{n-1} EG = 0 \quad \text{for }n\geq 1.
\]
Since $\pi_2(G)=0$ (see for example \cite{browder61}), $G$ is 2--connected and thus $\pi_i BG = 0$ for $1\leq i \leq 3$. Since $EG$ is contractible, $BG$ is connected. Thus $BG$ is 3--connected. By working cell-by-cell one dimension at a time, one can see that any map from an $n$--dimensional $CW$--complex to an $n$--connected space is null-homotopic. This completes the proof.
\end{proof}

\section{The Chern-Simons function}

Let $G$ be a connected, simply connected, compact Lie group and $P$ a principal $G$-bundle over a closed, oriented $3$--manifold $M$. A trivialization of $P$ corresponds to a section $s\co M \to P$.

\begin{defn}\label{DefnCSaction} Let $M$ be a closed oriented $3$-manifold. The
  {\em Chern-Simons action} is given by
\[
\cs_s (A)\coloneqq \int_M s^* \alpha(A).
\]
\end{defn}

If $\Phi\co P \to P$ is a gauge transformation with associated map $u\co P \to G$, let $\theta_u \coloneqq (u\circ s)^*\theta$. Then by \eqref{EqCStransform}
\[
 \cs_{\Phi \circ s} (A) = \cs_s(\Phi^*(A)) = \cs_s(A) - \int_M \tfrac{1}{6} \la \theta_u \wedge [\theta_u\wedge \theta_u]\ra.
\]

\begin{exe}\label{ExeSU2normalization}
We have for $G = \SU(2)$ and $\la\cdot ,\cdot\ra = -\frac{1}{8\pi^2}\tr$
\[-\int_G \tfrac{1}{6} \la \theta \wedge [\theta\wedge \theta]\ra = 1.\]
\end{exe}

We will from now on assume that $-\tfrac{1}{6} \la \theta \wedge
[\theta\wedge \theta]\ra$ represents an integral class in
$H^3(G;\R)$. If we use a section $s\co M \to P$ to identify
$G$--connections with $\Omega^1(M;\frakg)$  and the gauge
transformations with $C^\infty(M,G)$, then the Chern-Simons action
takes the familiar form
\[
\cs_s(A) =  \int_M \la dA \wedge A +
\tfrac{1}{3} A \wedge [A \wedge A] \ra \quad \text{for } A\in \Omega^1(M;\frakg).
\]
and we have \[\cs_s(A) = \cs_s(A\cdot g) \mod \Z \quad \text{for } A\in \Omega^1(M;\frakg), g\in C^\infty(M,G).\] This yields a (smooth) Chern-Simons action
\[
 \cs_s\co \cA_P/\cG_P \to \R/\Z.
\]
By Exercise \ref{ExeCSform}(\ref{CWform}), the Chern-Simons action of a connection $A$ which extends as $\tilde A$ over a 4--manifold $W$ can be computed by integrating the Chern-Weil form
\[
 \cs_s(A) = \int_W \la F_{\tilde A}\wedge F_{\tilde A} \ra \mod \Z.
\]
\kommentar{Erweiterung von G zu G'? Siehe Seite  16 in Freed.}

We will also encounter compact oriented 3--manifolds $X$ with nonempty boundary. We can still define the Chern-Simons action as in Definition \ref{DefnCSaction}, but the integral of \eqref{EqCStransform} does not vanish and we get for $A \in \Omega^1(X;\frakg)$ and $g\in C^\infty(X,G)$
\begin{equation}\label{EqCSGauge}
 \cs_s(A\cdot g) = \cs_s(A) + \int_{\partial X} \la Ad_{g^{-1}} A\wedge g^* \theta\ra - \int_X\tfrac{1}{6} \la g^*\theta\wedge[g^*\theta\wedge g^*\theta]\ra.
\end{equation}

\begin{exe}\label{ExeWZW} The functional
 \[W_{\partial X}(g) \coloneqq - \int_X\tfrac{1}{6} \la g^*\theta\wedge[g^*\theta\wedge g^*\theta]\ra \]
depends only on the restriction of $g$ to $\partial X$.
\end{exe}

The {\em Wess-Zumino-Witten functional} $W_{\partial X}$ is the action of a 1+1 dimensional field theory (see \cite[Appendix A]{freed95}).

\section{An Euler characteristic of the gauge equivalence classes of \texorpdfstring{$\SU(2)$}{SU(2)}--connections}

In 1988 Taubes \cite{taubes90} defined an invariant for homology 3-spheres $M$ by defining an euler-characterisitic on the space of gauge equivalence classes of $\SU(2)$--connections. Then he proved that his invariant is actually the same as Casson's invariant for Homology 3-spheres. This is a little survey particularly of the gauge theoretic view on Casson's invariant for homology 3--spheres, which relates it to Chern-Simons theory and leads to a refinement of the Casson invariant, Floer's instanton homology.

\subsection{The Casson invariant}

Let $M$ be a closed $3$--manifold with $H^i(M)=H^i(S^3)$. Consider a Heegaard decomposition of $M$, i.e. let $X_k$, $k=1,2$ be handlebodies with boundary $\Sigma$ such that $X_1$ and $X_2$ glued along their boundary is $3$:
\[
M = X_1 \cup_\Sigma X_2.
\]
We will write $R(M)$ for the $\SU(2)$--representation variety $\Hom(\pi_1(M),\SU(2))/\SU(2)$. $R(M)$, $R(X_k)$ and $R(\Sigma)$ are (compact) real algebraic varities.

A $U(n)$--representation $\rho$ is called {\em irreducible}, if its
{\em stabilizer} $S_\rho = \{g \mid g \rho g^{-1}= \rho \}$
coincides with the center $C_G = \{ h \mid gh=hg \text{ for all } g
\in G\}$ of $G$, in our case with $C_{\SU(2)} = \{\pm
1\}$.\footnote{By Schur's Lemma, this agrees with the usual notion
of irreducibility when $G = U(n)$ acts on its Lie algebras
$\fraku(n)$ by the adjoint representation.} We will denote by
$R^*(M)$ the $\SU(2)$--representation variety of irreducibles.

\begin{exe}\label{ExeDimRep}
 $R^*(X_k)$, $k=1,2$, is a smooth open manifold of dimension $3g-3$ and $R(\Sigma)$ is a smooth open manifold of dimension $6g-6$.
\end{exe}

The inclusions $i_k \co \Sigma \to X_k$ and $j_k \co X_k \to M$ induce the commutative diagram
\[
\begin{diagram}
 \node{} \node{R^*(X_1)}\arrow{se,t}{i^*_1}\\
 \node{R^*(M)} \arrow{ne,t}{j^*_1}\arrow{se,t}{j^*_2}\node{} \node{R^*(\Sigma)}\\
 \node{} \node{R^*(X_2)}\arrow{ne,t}{i^*_2}
\end{diagram}
\]
with all its maps being injective. Therefore we can view $R^*(M)$ as the intersection of $R^*(X_1)$ and $R^*(X_2)$ inside $R^*(\Sigma)$.

Since every reducible $\SU(2)$--representation factors through a copy of $U(1) \subset \SU(2)$, it factors through the abelianization $H_1(M,\Z)$ of the fundamental group, which is trivial in our case. Therefore there is only one reducible representation, namely the trivial representation $\theta$. By a Mayer-Vietoris argument and the identification of the Zariski tangent spaces at $\rho$ with the group cohomology $H^1(\pi_1(M),\fraksu(2)_\rho)$ (see \cite{goldman84}) and therefore with the cohomology $H^1(M,\fraksu(2)_\rho)$ of the Eilenberg-Maclane space $M = K(\pi_1(M),1)$ twisted by $\rho$ we can show that the intersection $R(X_1) \cap R(X_2)$ is transversal at $\theta$ so that $R^*(X_1) \cap R^*(X_2)$ is compact (see \cite[Lemma 3.6]{saveliev2002}). Then we can choose an isotopy of $R^*(\Sigma)$ with compact support that carries $R^*(X_2)$ to $\tilde R^*(X_2)$ so that $\tilde R^*(X_2)$ is transversal to $\R^*(X_1)$. By Exercise \ref{ExeDimRep} $R^*(X_1) \cap \tilde R^*(X_2)$ is a finite number of points. There is a natural way to orient the representation varieties, so that the algebraic intersection number of $R^*(X_1)$ with $\tilde R^*(X_2)$ depends only on the orientation of $M$, not on the Heegaard decomposition or the choices of orientations. This algebraic count is called the {\em Casson invariant} of $M$.

The Casson invariant can be extended to more general classes of manifolds and knots, has lots of nice properties and is related to other topological invariants of $3$--manifolds and knots, particularly the ones stemming from Chern-Simons theory. See \cite{akbulut-mccarthy90} or \cite{saveliev99} for a gentle introduction to the Casson invariant.

\subsection{The Euler characteristic}

Let us for a moment consider a finite dimensional Riemannian manifold
$X$ and a Morse function $f$ on $X$. Then by the
Poincar\'e-Hopf-Theorem
\[
\chi(X)=\sum_{p\in X: \grad(f)|_p=0} (-1)^{\Hess f|_p}.
\]

\begin{exe} At
critical points $p$ we have $\la\hs f|_p (X_p),Y_p\ra = \Hess f|_p(X_p,Y_p)$, where $\la\cdot,\cdot\ra$ denotes the Riemannian metriic on $M$ and $\nabla$ is the Levi-Cevita connection.
\end{exe}
Since $\hs f|_p$ is symmetric for all points $p\in X$ we can rewrite
\[
\chi(X)=(-1)^{\ind(\Hess f|_{p^*})}\sum_{p\in X: \grad(f)|_p=0} (-1)^{-\SF(\hs f (p_t))}
\]
where the spectral flow $\SF$ counts how many times (with sign) the paths of eigenvalues of $\hs f (p_t)$ for an arbitrary path $p_t$ from a fixed point $p^*$ to $p$ cross $0$.

Taubes' idea was to apply the above thoughts to an infinite-dimensional manifold and a well-studied function on it, such that $\SF$ makes sense.
\begin{defn}
 A connection $A \in \cA$ is called irreducible, if $d_A$ is injective. Denote by $\cA^*$ the space of irreducible $\SU(2)$--connections and by $\cB^* = \cA^*/\cG$ the space of gauge equivalence classes of irreducible $\SU(2)$--connections.
\end{defn}

We will see in the next sections $\cB^*$ is a smooth connected infinite-dimensional manifold and we can consider a perturbation of the Chern-Simons action as a Morse function. In the case of a homology 3--sphere we can define an Euler characteristic on $\cB^*$ in the spirit of the Poincar\'e-Hopf-Theorem, where the fixed point $p^*$ corresponds to the unique reducible flat connection, namely the trivial connection.

\subsection{Fr\'echet manifolds}

Roughly speaking, infinite-dimensional manifolds are manifolds modelled on an infinite-dimensional locally convex vector space.

\begin{defn}
A {\em seminorm} on a vector space $V$ is a positive function $p\co V \to \R$ satisfying
\begin{align*}
 p(av) = |a| p(v) \quad \text{ and} \quad p(v+w) \le p(v) + p(w).
\end{align*}
A {\em locally convex space} is a vector space $V$, equipped with a countable family of seminorms $p_j$ satisfying
\begin{equation}\label{EqSeparated}
v \neq 0 \Longrightarrow p_j(v) \neq 0, \text{ for some } j.
\end{equation}
\end{defn}

We have a distance function on a locally convex vector space defined by
\[
d(u,v) = \sum_{j=0}^\infty 2^{-j} \frac{p_j(u-v)}{1+p_j(u-v)}
\]

\begin{exe}
 Show that $d$ is a metric on $V$.
\end{exe}

Note that Property \eqref{EqSeparated} is equivalent to $V$ being Hausdorff. Sometimes this is not included in the definition of a locally convex space.

\begin{defn} A {\em Fr\'echet space} is a complete locally convex space.
\end{defn}

The prototype of a Fr\'echet space is $C^\infty([0,1])$ with the seminorms
\[
p_k(f) \coloneqq \sup_{x\in [0,1]} \left\{|f^{(k)}(x)|\right\}.
\]
In this space a sequence $(f_n)$ of functions converges towards the element $f$ of $\C^\infty([0, 1])$ if and only if for every integer $k \ge 0$, the sequence $(f_n^{(k)})_n$ converges uniformly towards $f^{(k)}$. This can be extended to $\C^\infty(\R)$ by defining
\[
p_{k,l}(f) \coloneqq \sup_{x\in [-l,l]} \left\{|f^{(k)}(x)|\right\}.
\]
Certainly this can easily be generalized to $\C^\infty(M)$ for a manifold $M$.

\begin{defn}
 Let $V,W$ be Fr\'echet spaces and let $U$ be an open subset of $V$. A map $f\co U \subset V \to W$ is {\em differentiable} at a point $u \in U$ in a direction $v \in V$ if the limit
\[
d_u f(v) = \lim_{t \to 0} \frac{f(u+tv) -f(u)}{t}
\]
exists. The function is continuously differentiable on $U$, if the limit exists for all $u \in U$ and all $v\in V$, and if the function $df \co U \times V \to W$ is continuous as a function on $U \times V$. In the same way we may get higher derivatives
\[
d^{k} f\co U \times V^k \to W
\]
A function $f \co U \to W$ is called smooth, if all its derivatives exist and are continuous.
\end{defn}

\begin{defn}
 A {\em Fr\'echet manifold} is a Hausdorff space with a coordinate atlas taking values in a Fr\'echet space such that all transition functions are smooth maps.
\end{defn}

\begin{exe}
 Show that $\cA$, $\cG$ and $\cB^*$ are connected Fr\'echet manifolds.
\end{exe}

On a Fr\'echet manifold we can define tangent vectors and the tangent spaces in the usual way. The tangent bundle can be given the structure of a Fr\'echet manifold. A smooth vector field is a smooth section $M \to TM$. We also have directional derivatives of a function, the Lie bracket of two vector fields as well as the differential of a map between two Fr\'echet manifolds.

It is often more convenient to consider Banach and Hilbert manifolds.

\subsection{The differential topology of $\cA$, $\cG$ and $\cs$}\label{SecB*}

Since $\cA$ is an affine space modelled on $\Omega^1(M;\fraksu(2))$ the tangent space of $\cA$ at a connection $A$ can be indentified with $\Omega^1(M;\fraksu(2))$. A smooth path $g_t$ in $\cG \cong C^\infty(M,\SU(2))$ can be viewed as a smooth map from $M$ to $C^\infty(I,\SU(2))$. Therefore the differential of $g_t$ at $t=0$ is a map from $M$ to $T_{g_0} \SU(2)$. In particular this shows, that the tangent space at the identity map 1 is
\[
T_{1}\cG = C^\infty(M,\fraksu(2)) = \Omega^0(M;\fraksu(2)).
\]
Fix a connection $A$ and consider the action map
\begin{align*}
f:\cG&\rightarrow\cA\\
g&\mapsto A \cdot g = \Ad_{g^{-1}} A + g^{-1}dg.
\end{align*}

\begin{exe} Tor a path $g_t$ with $g_0 = 1$ and $\dot g_0 = a \in \Omega^0(M,\fraksu(2))$ we have
 \begin{enumerate}
  \item  $\eval{\tfrac{d}{dt}}{t=0} g_t^{-1} = -a$ and
  \item $\eval{\tfrac{d}{dt}}{t=0} \tfrac{g_t^{-1}dg_t}{t} = da$.
 \end{enumerate}
\end{exe}

Therefore we get
\[
df_1 (a) = \ad(-a)A + da = -[a,A] + da = da + [A,a] = d_A a
\]
By considering the identifications in the following commutative diagram
\begin{equation}
\begin{diagram}
  \node{T_1\cG}\arrow{s,r,<>}{\cong}\arrow{e,t}{df_1}
    \node{T_A\cA^*}\arrow{s,r}{\cong}\\
  \node{\Omega^0(M;\fraksu(2))}\arrow{n}\arrow{e,t}{d_A}\node{\Omega^1(M;\fraksu(2))}\arrow{n}
\end{diagram}\label{identifications}
\end{equation}
we find, that at an irreducible connection $A$ the derivative $df_1$
is injective and thus
\[
T_{[A]}\cB^* \cong T_A\cA^*/{\im(df_1)} \cong \coker d_A \cong \ker d_A^*,
\]
which allows us to work equivariantly on $\bigcup_{A\in\cA^*}\ker d_A^*
\subset T_*\cA$ rather than on $T_*\cB^*$. Choosing a connection $A \in \cA$ with $d_A^*=0$ also known as Lorentz gauge fixing.

We can compute for
\[
\cs(A) =  \int_M \la dA \wedge A +
\tfrac{2}{3} A \wedge A \wedge A \ra \quad \text{for } A\in \Omega^1(M;\fraksu(2)).
\]
the differential at $A$ as
\begin{align*}
d\cs_A(a) &= \left.
\frac{d}{d s}\right|_{s=0} cs(A+sa)\\
& = \int_M \la dA \wedge a + da \wedge A +
2 A \wedge A \wedge a \ra\\
& = 2 \int_M \la dA \wedge a +
A \wedge A \wedge a \ra \quad \quad \text{by Stokes' Theorem}\\
& = 2 \int_M \la F_A \wedge a \ra.
\end{align*}
If we use the $L^2$ inner product on $\Omega^1(M;\fraksu(2))$ given by
\[
 \la A,B \ra_{L^2} = 2 \int_M \la A \wedge * B \ra
\]
to identify vectors with covectors, we can define $\grad cs|_A$ via $\langle \grad cs|_A , a
\rangle_{L^2} = d cs_A(a)$, we get
\[
\grad cs|_A = * F_A \co \Omega^1(M;\fraksu(2)) \longrightarrow \Omega^1(M;\fraksu(2)).
\]
Using the Bianchi-Identity from Exercise \ref{ExeBianchi} we see that $d_A^* (\grad cs|_A) = 0$. Therefore $\grad cs_A$ descends to $T_{[A]}\cB^*$ as
\[
\grad \cs|_A = * F_A \co  \ker d_A^* \longrightarrow \ker
d_A^* \cong T_{[A]}\cB^*.
\]

\begin{exe} Similarily we get the linearization of $\grad \cs$
\[
H_A\coloneqq *d_A \co \Omega^1(M;\fraksu(2)) \longrightarrow \Omega^1(M;\fraksu(2))
\]
by the rule $\langle H_A(a),b \rangle_{L^2} = \Hess \cs_A(a,b) =
\left.\frac{\partial}{\partial s\partial
t}\right|_{s,t=0}cs(A+sa+tb)$.
\end{exe}
Notice that the $\im(H_A) \subset \ker d_A^*$ if and only if $A$ is flat. This motivates setting
\[
\tilde H_A\coloneqq  {\rm proj}_{\ker{d_A^*}}*d_A\co \ker
d_A^* \longrightarrow \ker d_A^*.
\]
(If the Levi-Civita connection $\nabla$ on $\cB^*$ exists, one can even see that $\hs cs|_{A} = \tilde H_A$, i.e. $\hs cs|_{A}$ satisfies the defining properties of the Levi-Civita connection. However, since $\cB^*$ is only a Fr\'echet manifold and not a Hilbert manifold with respect to the $L^2$--inner product, it is not clear if the Levi-Civita connection exists on $\cB^*$.\kommentar{Another way to say this is that $\tilde H_A$ is defined by $langle \tilde H_A(a),b \rangle = \Hess \cs_A(a,b)$})

\kommentar{fiber bundles haette ich einmal kurz am Anfang machen sollen}

\subsection{Differential operators on manifolds}

This is a brief overview of the necessary terminology and results. See \cite{higson-roe2009} for a detailed account. For further information see \cite{taylor96}, \cite{schechter2002}, \cite{shubin2001}, \cite{egorov-shubin94}.

A differential operator of order $k$ on $U\subset \R^r$ is a linear combination
\[
 D = \sum_{|\alpha|<k} a_\alpha D^\alpha \quad \text{for some } a_\alpha \in C^\infty(U),
\]
where $D^\alpha = \frac{\partial^{|\alpha|}}{\partial x^\alpha}$ and $\alpha$ is a multi-index. More generally we can define a differential operator from $U \subset \R^r \to \R^s$ if we let $a_\alpha \in C^\infty(U,\Hom(\R^r,\R^s))$. We write $D \in \Diff^k(U;\R^r,\R^s)$ or $D \in \Diff(U;\R^r,\R^s)$.

Let $E,F$ be vector bundles over a smooth manifold $M$. A differential operator $D \in \Diff^k(E,F)$ of order $k$ between sections of $E$ and $F$ is a linear map
\[
 D \co \Gamma(E) \to \Gamma(F)
\]
such that
\begin{enumerate}
 \item $D$ is local, i.e. $\supp(D s) \subset \supp(s)$ for $s\in \Gamma(E)$.
 \item For $U \subset M$  open, any bundle charts $\Phi \co E|_U \to U \times \R^r$, $\Phi \co E|_U \to U \times \R^s$ induce a differential operator in $\Diff^k(U;\R^r,\R^s)$.
\end{enumerate}
If $M$ is closed, $E$ comes equipped with an inner product and $D \in \Diff(E,E)$, then $D$ is {\em formally self-adjoint} if $\la Df, g\ra_{L^2} = \la f,Dg \ra_{L^2}$ for $f,g\in \Gamma(E)$ in the induced $L^2$--metric on $\Gamma(E)$.

The {\em principal symbol} $\sigma_D^k \in \Gamma (T^*M, \pi^*\Hom(E,F))$ of $D \in \Diff^k(E,F)$ is defined to be
\[
 \sigma_D^k(p,\xi) e \coloneqq \frac{1}{k!} D(f^ks) (p),
\]
where $p\in M$, $\xi \in T^*_p M$, $e\in E_p$, $f\in C^\infty_0(M)$ and $s \in C^\infty_0(E)$ satisfy $f(p) = 0$, $df|_p = \xi$, $s(p) =e$. $D$ is called {\em elliptic}, if $\sigma_D^k(p,\xi)$ is an isomorphism for all $(p,\xi) \in T^*M$ with $\xi \neq 0$. For example the symbol of the Laplacian $\Delta = -\sum \tfrac{\partial^2}{\partial x_k^2}$ on the Euclidean space is elliptic with principal symbol $-|\xi|^2$. Similarly, the Hodge Laplacian $dd^* + d^*d$ as well its twisted versions are elliptic.

A linear operator $T\co B_1 \to B_2$ between Banach spaces is {\em bounded} if
\[
||T|| := \sup_{x\neq 1} \frac{\|Tx\|_{B_2}}{\|x\|_{B_1}} < \infty.
\]
It is sometimes convenient to allow more general operators. An {\em unbounded} operator $T\co B_1 \to B_2$ between Banach spaces $B_1$ and $B_2$ is a linear operator on a linear subspace of $B_1$ (the {\em domain} $\cD(T)$) to $B_2$. Two operators are equal, if their domains are equal and they coincide on the common domain. The graph $G_T \coloneqq \{ (x,Tx) \in B_1\oplus B_2 \mid x\in \cD(T)\}$ is a linear subspace of $B_1\oplus B_2$. If $G_T$ is closed in $B_1 \oplus B_2$, $T$ is called {\em closed}. By the {\em closed-graph theorem}, $T$ is bounded, if $T$ is closed and $\cD(T) = B_1$. $T$ is called {\em Fredholm}, if $\cD(T)$ is dense in $B_1$, $T$ is closed, $\im T$ is closed in $B_2$ and $\ker T$ as well as $\coker T$ are finite-dimensional. The index of a Fredholm operator $T$ is defined to be $\ind T \coloneqq \dim (\ker T) - \dim (\coker T)$. A bounded operator between Banach spaces is {\em compact} if the image of bounded sets are {\em relatively compact}, i.e. the closure of the image is compact. Compact operators are a generalization of finite rank operators in an infinite-dimensional setting.

\begin{thm}\label{ThmFred}
Bounded Fredholm operators are invertible modulo compact operators. If $T$ is Fredholm and $K$ is compact, then $\ind(T) = \ind (T+K)$.
\end{thm}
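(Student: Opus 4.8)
The plan is to establish both assertions via the classical device of a \emph{parametrix} (Atkinson's theorem). \emph{Invertibility modulo compacts.} Let $T\co B_1\to B_2$ be a bounded Fredholm operator. Since $\ker T$ is finite-dimensional it has a closed complement, so $B_1=\ker T\oplus V$; since $\im T$ is closed of finite codimension it has a (finite-dimensional) complement $W$, so $B_2=\im T\oplus W$. The restriction $T|_V\co V\to\im T$ is a continuous bijection of Banach spaces, hence a topological isomorphism by the open mapping theorem; let $R\co\im T\to V$ be its inverse and extend $R$ by $0$ on $W$ to a bounded operator $S\co B_2\to B_1$. Then $ST$ is the projection of $B_1$ onto $V$ along $\ker T$ and $TS$ is the projection of $B_2$ onto $\im T$ along $W$, so $I-ST$ and $I-TS$ are finite-rank --- hence compact --- operators. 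Thus $T$ is invertible modulo the compact operators (indeed modulo the finite-rank ones).

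\emph{Reduction for the index statement.} Given a compact $K$, with $S$ as above one has $S(T+K)=(I-F_1)+SK$ and $(T+K)S=(I-F_2)+KS$ with $F_i$ finite-rank and $SK,KS$ compact; so $T+K$ again possesses a two-sided parametrix modulo compacts, and is therefore itself Fredholm (the converse half of Atkinson's theorem, using that $I$ plus a compact operator has finite-dimensional kernel and cokernel and closed range by Riesz theory). Consequently $T_t\coloneqq T+tK$ is Fredholm for every $t\in[0,1]$ and $t\mapsto T_t$ is norm-continuous. It therefore suffices to prove that the index is \emph{locally constant} in the operator norm on the set of bounded Fredholm operators $B_1\to B_2$: then $t\mapsto\ind T_t$ is a continuous integer-valued function on the connected interval $[0,1]$, hence constant, which yields $\ind T=\ind(T+K)$.

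\emph{Local constancy of the index.} Fix a bounded Fredholm $A$ and split $B_1=\ker A\oplus X_1$, $B_2=\im A\oplus Y_1$ with $\dim Y_1=\dim\coker A<\infty$. Writing any $A'$ near $A$ in block form $\mat{a}{b}{c}{d}$ with respect to these splittings, the blocks $a,c,d$ are small in norm (they vanish when $A'=A$, and each is a fixed bounded composition applied to $A'-A$) while $b\co X_1\to\im A$ is close to the isomorphism $A|_{X_1}$, hence invertible for $A'$ near $A$. A direct computation with the Schur complement $L\coloneqq c-db^{-1}a\co\ker A\to Y_1$ shows $\ker A'\cong\ker L$ (via $u\mapsto(u,-b^{-1}au)$) and $\coker A'\cong\coker L$, so
\[
\ind A'=\dim\ker L-\dim\coker L=\dim\ker A-\dim Y_1=\dim\ker A-\dim\coker A=\ind A ,
\]
using that the index of a linear map between finite-dimensional spaces depends only on their dimensions. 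This establishes local constancy, and with it the theorem.

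\emph{Main obstacle.} The parametrix construction and the reduction via compactness of $[0,1]$ are routine; the real content is the last step --- checking that operators near $A$ genuinely have the claimed block shape with small off-diagonal corners and invertible $b$, and carrying through the Schur-complement bookkeeping that pins down $\ker$ and $\coker$. (An alternative route derives local constancy from the multiplicativity $\ind(PQ)=\ind P+\ind Q$ of the Fredholm index, but that identity is the corresponding sticking point in that approach.)
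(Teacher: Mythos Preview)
The paper does not actually prove this theorem: it is stated without proof as a background fact in the section on differential operators, which refers the reader to Higson--Roe for details. So there is no ``paper's own proof'' to compare against.

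That said, your argument is correct and is essentially the standard proof. The parametrix construction (Atkinson's theorem) is carried out cleanly, and your Schur-complement verification of local constancy is sound: the factorization
\[
\mat{a}{b}{c}{d}=\mat{1}{0}{db^{-1}}{1}\mat{0}{b}{L}{0}\mat{1}{0}{b^{-1}a}{1}
\]
with $L=c-db^{-1}a$ shows that $A'$ differs from $\mat{0}{b}{L}{0}$ by isomorphisms, whence $\ker A'\cong\ker L$ and $\coker A'\cong\coker L$ as you claim, and rank--nullity on the finite-dimensional map $L\co\ker A\to Y_1$ gives $\ind A'=\dim\ker A-\dim Y_1=\ind A$. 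One small point you might make explicit for completeness: this factorization also shows $\im A'=E_2(\im A\oplus\im L)$ is closed, so $A'$ is genuinely Fredholm and the index is well-defined before you compute it.
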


We will mainly consider Hilbert spaces $H_1$ and $H_2$. If $T \co H_1 \to H_2$ is densely defined, then the {\em adjoint $T^*$ of $T$} is the operator $T^*\co H_2 \to H_1$ with \[\cD(T^*) = \{ y \mid x\mapsto \la  T x, y\ra_{H_2} \text{ is a continuous on } \cD(T)\}\] such that $\la Tx , y\ra_{H_2} = \la x , T^*y \ra_{H_1}$ for all $x \in \cD(T)$. $T$ is {\em self-adjoint}, if $T=T^*$. If $T\co H \to H$ is densely defined, then $T$ is {\em symmetric} if $\la Tx, y\ra_{H} = \la x,Ty \ra_H$. A symmetric operator $T$ is {\em essentially self-adjoint} if has a self-adjoint extension.

\begin{exe}\label{ExeOperators}
 Show the following:
\begin{enumerate}
 \item A densely defined operator $T$ with $\im T$ closed satisfies $\ker T^* \cong \coker T$.
 \item A bounded operator is closed. (Note that by the closed graph theorem, a closed operator is
 \item A bounded operator is Fredholm, if $\im T$ is closed and $\ker T$ as well as $\coker T$ are finite-dimensional.
 \item \label{ExeUnBounded}Let $T$ be an unbounded Fredholm operator. Then $\cD(T)$ equipped with the graph norm \[
 \|x\|_{\gr} \coloneqq \|x \| + \| Tx \|.
\] is a Banach space and $T$ is bounded on $\cD(T)$.
 \item A self-adjoint Fredholm operator has index 0.
 \item If $B\co B_1 \to B_2 $ is bounded and $K \co B_2 \to B_3$ is compact, then $K \circ B$ is compact.
\end{enumerate}
\end{exe}

The {\em resolvent set} of a densely defined operator $T \co H \to H$ on a Hilbert space $H$ is the set of all complex numbers $\lambda$ for which $T_\C-\lambda I$ is injective on $\cD(T)$ with dense range such that $(T-\lambda I)^{-1}$ can be extended to a bounded operator, where $T_\C\co H_\C \to H_\C$ is the complexified operator given by $T_\C(x \tensor \lambda) = T(x) \tensor \lambda$ and $H_\C = H \tensor_\R \C$. The operators $(T_\C-\lambda)^{-1}$ are the {\em resolvents} of $T$. The complement (in $\C$) of the resolvent set is called the {\em spectrum} $\spec(T)$ of $T$. If $(T-\lambda)^{-1}$ is compact for some $\lambda \in \C$, we say that $T$ has {\em compact resolvent}.

Let us collect a few relevant results from \cite{higson-roe2009}, particularly Theorem 2.16, Proposition 2.25 and Theorem 3.18.

\begin{thm}\label{ThmDiffOp} Let $E$ and $F$ be vector bundles over a closed manifold $M$ equipped with a metric, then we can extend $\Gamma(E)$ and $\Gamma(F)$ to a Hilbert space using the induced $L^2$--inner product. A formally self-adjoint first-order elliptic operator $T \in \Diff(E,F)$ can be extended to an unbounded self-adjoint operator $T \co L^2(\Gamma(E)) \to L^2(\Gamma(F))$ with compact resolvent (which is in particular also Fredholm). Furthermore, there is an orthonormal basis for $H$ of eigenvectors $\{u_j\}$ of $T$ belonging to $\cD(T)$ with eigenvalues $\{\lambda_j\}$ such that
\[
lim_{j\to \infty} | \lambda_j | = \infty \quad \text{and}\quad  \{\lambda_j\} = \spec(T).
\]
\end{thm}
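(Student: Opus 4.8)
The plan is to run the standard functional-analytic argument: build Sobolev spaces over $M$, prove the fundamental elliptic estimate and elliptic regularity, realise $T$ as an unbounded self-adjoint operator whose resolvent is compact by Rellich's lemma, and then invoke the spectral theorem for compact self-adjoint operators. (These results are essentially \cite[Theorems 2.16 and 3.18, Proposition 2.25]{higson-roe2009}; note that $E=F$ here, since ``formally self-adjoint'' requires it.) First I would fix a finite atlas over which $E$ is trivial together with a subordinate partition of unity, and for each nonnegative integer $s$ define $\|u\|_{H^s}^2$ to be the sum of the squared Euclidean Sobolev $s$-norms of the localised pieces of $u$; a compactness argument shows the norm is well-defined up to equivalence, that $\Gamma(E)$ is dense in the completion $H^s(\Gamma(E))$, and that $H^0(\Gamma(E)) = L^2(\Gamma(E))$. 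Since $T$ has order one, it extends to a bounded map $H^{s+1}(\Gamma(E)) \to H^s(\Gamma(E))$ for every $s$.

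The analytic core is the elliptic estimate: because the principal symbol $\sigma_T^1(p,\xi)$ is invertible for $\xi\neq 0$, one constructs a local parametrix $Q$ with $QT = \Id + R$ and $R$ smoothing, and patches to obtain the \emph{a priori} inequality $\|u\|_{H^{s+1}} \le C(\|Tu\|_{H^s} + \|u\|_{H^s})$ on the closed manifold $M$. From this one reads off elliptic regularity: if $Tu = f$ holds distributionally with $f\in H^s(\Gamma(E))$, then $u\in H^{s+1}(\Gamma(E))$; bootstrapping, $f$ smooth forces $u$ smooth by the Sobolev embedding theorem. I expect this parametrix construction --- a slice of pseudodifferential calculus --- to be the main obstacle; the remaining steps are comparatively soft.

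Next I would define the unbounded operator $T\co L^2(\Gamma(E)) \to L^2(\Gamma(E))$ with domain $\cD(T) = H^1(\Gamma(E))$. It is densely defined since $\Gamma(E)$ is dense in $L^2$, and closed because the elliptic estimate with $s=0$ shows a sequence that is Cauchy in the graph norm is Cauchy in $H^1$. As $T$ is formally self-adjoint it is symmetric, and the crucial identity $\cD(T^*) = \cD(T)$ is precisely elliptic regularity: $v\in\cD(T^*)$ forces the distributional $Tv$ to lie in $L^2 = H^0$, hence $v\in H^1 = \cD(T)$. Thus $T$ is self-adjoint; by Exercise \ref{ExeOperators} it then has index $0$, and in particular $\pm i$ lie in its resolvent set.

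Finally, $(T^2+\Id)^{-1} = (T+i)^{-1}(T-i)^{-1}$ is a bounded, self-adjoint, positive operator on $L^2$ which, by elliptic regularity, factors through $H^1(\Gamma(E))$; since $M$ is closed, the inclusion $H^1(\Gamma(E)) \hookrightarrow L^2(\Gamma(E))$ is compact (Rellich), so $(T^2+\Id)^{-1}$ is compact, i.e.\ $T$ has compact resolvent and hence is Fredholm (cf.\ Theorem \ref{ThmFred} and Exercise \ref{ExeOperators}). The spectral theorem for compact self-adjoint operators yields an orthonormal basis of $L^2$ consisting of eigenvectors of $(T^2+\Id)^{-1}$ with eigenvalues $\mu_j \to 0$; each eigenspace is finite-dimensional and $T$-invariant, so diagonalising $T$ on it produces an orthonormal eigenbasis $\{u_j\}$ for $T$ with real eigenvalues $\lambda_j$ satisfying $\mu_j = (\lambda_j^2+1)^{-1}$, whence $|\lambda_j| \to \infty$. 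Applying elliptic regularity to $Tu_j = \lambda_j u_j \in L^2$ and bootstrapping shows each $u_j$ is smooth, so $u_j\in\cD(T)$. Lastly $\spec(T) = \{\lambda_j\}$: discreteness is forced by the compact resolvent, and for $\lambda\notin\{\lambda_j\}$ the operator $T-\lambda$ is inverted by $u\mapsto \sum_j (\lambda_j-\lambda)^{-1}\la u,u_j\ra u_j$, which is bounded because $|\lambda_j - \lambda|$ is bounded below.
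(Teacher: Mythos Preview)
The paper does not prove this theorem at all: it is stated as a compilation of results imported verbatim from \cite{higson-roe2009} (Theorem 2.16, Proposition 2.25, Theorem 3.18), with no argument given. Your sketch is the standard proof one finds in that reference---Sobolev spaces, elliptic estimate and regularity via a parametrix, self-adjointness from $\cD(T^*)=\cD(T)$, compact resolvent from Rellich, then the spectral theorem for compact self-adjoint operators---and it is correct. One small streamlining: once $T$ is self-adjoint its spectrum is real, so $\pm i$ lie in the resolvent set automatically and you can argue compactness directly for $(T-i)^{-1}\co L^2\to H^1\hookrightarrow L^2$ rather than passing through $(T^2+\Id)^{-1}$; the index-$0$ remark is true but not what gives you nonempty resolvent set.
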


Even though in these notes we only need to consider first-order operators on closed manifolds, the above theorem is true for higher order operators and compact manifolds with boundary (see \cite{bruening-lesch2001}). An elliptic differential operator of order $k$ has a bounded Fredholm extension $L^2_s(\Gamma(E)) \to L^2_{s-k}(\Gamma(F))$ (compare with Exercise \ref{ExeOperators}(\ref{ExeUnBounded})), where the $L^2_k$--Hilbert-space completion is taken with respect to a generalization of the Sobolev-norm on $C^\infty(\R^n)$ given by
\[
 \| f\|_k \coloneqq \sum_{|\alpha|\le k} \int_{\R^n} \left(| D^{\alpha} f |^2\right)^{\frac{1}{2}}.
\]
to $\Gamma(E)$. By the {\em Sobolev embedding theorem} the inclusion $L^2_{k} \hookrightarrow L^2_{k-1}$ is compact. Since the composition of a compact operator with a bounded operator is again compact by Exercise \ref{ExeOperators}, we get the following useful result.
\begin{thm}\label{ThmCpctDiffOp}
A differential operator of order $k-l$ is compact when extended as $L^2_s(\Gamma(E)) \to L^2_{s-k}(\Gamma(F))$ for $l\ge 1$.
\end{thm}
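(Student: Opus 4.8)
The plan is to factor the extended operator through an intermediate Sobolev completion where it is bounded, and then to land in the target space via a Sobolev inclusion that drops the regularity index by $l\ge 1$ and is therefore compact. The final compactness then follows formally from Exercise \ref{ExeOperators}, since the composite of a bounded operator followed by a compact one (or vice versa) is compact.

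First I would record the boundedness statement: if $D\in\Diff^{m}(E,F)$ with $m=k-l$, then $D$ extends to a bounded operator $D\co L^2_s(\Gamma(E))\to L^2_{s-m}(\Gamma(F))$ for every $s$. This is the only step requiring genuine (if standard) work. On a coordinate chart with trivialized bundles, $D$ is a matrix of differential operators of order $\le m$ with smooth, compactly supportable coefficients, and the definition of the Sobolev norm $\|\cdot\|_{k}$ makes it immediate that $D^\alpha$ with $|\alpha|\le m$ maps $L^2_s\to L^2_{s-m}$ boundedly; multiplication by a smooth compactly supported coefficient is bounded on each $L^2_t$. One then globalizes over the closed manifold $M$ by choosing a finite cover by such charts and a subordinate partition of unity, so that $D=\sum_i \rho_i D$ is a finite sum of locally bounded pieces, hence bounded. (For $s$ or $s-m$ not in the range where one has defined $L^2_t$ for all real $t$ directly, one uses that the paper's operators are first order and $k$ is fixed; in any case the statement is only invoked for the indices appearing in the applications.)

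Next I would write the factorization. Since $s-k=(s-m)-l$, the extension in question decomposes as
\[
L^2_s(\Gamma(E))\xrightarrow{\ D\ }L^2_{s-m}(\Gamma(F))\hookrightarrow L^2_{s-k}(\Gamma(F)),
\]
where the second map is the canonical inclusion, which lowers the Sobolev index by $l$. Because $l\ge 1>0$ and $M$ is closed, the Sobolev embedding (Rellich) theorem quoted before the statement guarantees this inclusion $L^2_{s-m}\hookrightarrow L^2_{s-k}$ is compact: it factors (if $l\ge 2$) through the compact inclusions $L^2_{j}\hookrightarrow L^2_{j-1}$ one index at a time, and the composite of a compact operator with bounded ones is compact.

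Finally, the desired conclusion follows: $D$ as an operator $L^2_s(\Gamma(E))\to L^2_{s-k}(\Gamma(F))$ is the composite of the bounded operator $D\co L^2_s\to L^2_{s-m}$ with the compact inclusion $L^2_{s-m}\hookrightarrow L^2_{s-k}$, hence compact by Exercise \ref{ExeOperators}(6). The main obstacle is the boundedness step on a manifold, i.e. the partition-of-unity patching together with the local Sobolev estimates; everything after that is a formal consequence of the Sobolev embedding theorem and the ideal property of compact operators.
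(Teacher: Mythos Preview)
Your proposal is correct and follows exactly the argument the paper sketches in the paragraph immediately preceding the theorem: factor the operator as its bounded extension $L^2_s\to L^2_{s-(k-l)}$ followed by the compact Sobolev inclusion $L^2_{s-(k-l)}\hookrightarrow L^2_{s-k}$, then invoke Exercise~\ref{ExeOperators}(6). The only addition you make is to spell out the chart-and-partition-of-unity argument for boundedness of a differential operator of order $m$ as a map $L^2_s\to L^2_{s-m}$, which the paper implicitly assumes.
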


\kommentar{  For an unbounded, closed operator $T$, $\cD(T)$ is a Banach space in the {\em graph norm}
\[
 \|x\|_{\gr} \coloneqq \|x \| + \| Tx \|.
\]\begin{exe}
 The following are equivalent:
\begin{enumerate}
 \item $T\co B_1 \to B_2$ is an unbounded Fredholm operator
 \item $T \co \cD(T) \to B_2$ is a bounded Fredholm operator, where $\cD(T)$ is equipped with the graph norm.
 \item If $T$ is self-adjoint, the bounded
\end{enumerate}
\end{exe}}

\kommentar{Finde raus, in welcher Allgemeinheit das gilt! If $M$ is a closed Riemannian manifold and we have vector bundles $E$ and $F$ over $M$ with a metric, then we can extend $\Gamma(E)$ and $\Gamma(F)$ to a Hilbert space using the induced $L^2$--inner product and every elliptic operator $D \in \Diff(E,F)$ extends to an unbounded Fredholm operator from the $L^2$--extension $L^2(\Gamma(E))$ of $\Gamma(E)$ to $L^2(\Gamma(F))$.}

\subsection{Signature and spectral flow}

Let $W$ be a closed, orientable $4$-manifold. If $W$ is smooth, then $a,b\in
H_2(W;\Z)$ can be represented by oriented surfaces $A,B$ in $M$ which
intersect transversely. Then define $Q(a,b)=A \cdot B$, where $A\cdot B$ is
the (oriented) intersection number. E.g. for the torus ($n=2$) we get
$$
Q = \left( \begin{array}{cc} 0 & 1 \\ -1 & 0 \end{array} \right).
$$
\begin{exa} In four dimensions is symmetric, because $A$ and $B$ are
$2$-dimensional.
\begin{enumerate}
\item For $S^4$, $Q=0$, because the second homology is $0$.
\item For $S^2 \times S^2$ we get
$Q=\sigma_1$, where $\sigma_1$ is the Pauli spin matrix
$$
\sigma_1 = \left( \begin{array}{cc} 0 & 1 \\ 1 & 0 \end{array} \right)
$$
\item For $\C P^2$ and $\overline{\C P^2}$ we have $Q=(1)$ and $Q=(-1)$ respectively.
\end{enumerate}
\end{exa}

The {\em signature} of a closed, oriented $4$--manifold $W$ is the signature of its intersection form. {\em Hirzebruch's Signature Theorem} states that
\begin{equation}\label{EqSignThm}
 \sign(W) = \frac{1}{3}\int_W p_1(TW,\nabla),
\end{equation}
where for a real vector bundle $E$ equipped with a connection $\nabla$
\[
p_1(E,\nabla) = -\frac{\tr(F_\nabla^2)}{8\pi^2}
\]
is the first Ponrjagin form\footnote{The total Pontrjagin form of $\nabla$ is
\[
p(E,\nabla) = \det\left(\left(I - \left(\frac{F_\nabla}{2\pi}\right)^2\right)^{\frac{1}{2}}\right) =\exp(\tfrac{1}{2}\tr[\log(1+(\tfrac{i}{2\pi}) F_\nabla)]).
\]}
The integral in \eqref{EqSignThm} is independent of $\nabla$ because $W$ is closed.

If $\rho \co \pi_1(W) \to \Aut(V)$ is a representation of the fundamental group, one defines the twisted signature $\sign_{\rho}(W)$ using cohomology with local coefficients. An application of the signature formula \eqref{EqSignThm} and its twisted version to the closed double $W \cup_M -W$ shows that the difference, the {\em signature defect},
\[
 \sign_\rho(W) - k\cdot \sign(W)
\]
only depends on the topology of the boundary $\partial W = M$ as well as the restriction of $\rho$ to $\pi_1(M)$. This can be defined for arbitrary closed and oriented 3--manifolds $M$ independently of $W$ and is called the {\em rho invariant of $M$}.

The {\em Atiyah-Patodi-Singer Index Theorem} identifies for manifolds with boundary identifies the correction term in great generality. For a formally self-adjoint elliptic differential operator $D$ of first order, acting on sections of a vector bundle over a closed manifold $M$, one defines the eta function
\[
 \eta(D,s) \coloneqq \sum_{0 \neq \lambda \in \spec(D)} \frac{\sgn(\lambda)}{|\lambda|^s}, \quad \re(s) \text{ large}.
\]
The function $\eta(D,s)$ admits a meromorphic continuation to the whole $s$-plane with no pole at the origin. Then $\eta(D) \coloneqq \eta(D,0)$ is called the {\em eta invariant of $D$}. A special case of the Atiyah-Patodi-Singer Index Theorem for a compact, oriented 4--manifold $W$ with boundary $M$ is
\[
 \sign_\rho W = \frac{k}{3} \int_W p_1(TW,\nabla^g) - \eta(D_A)
\]
where $\hol(A) = \rho|_{\pi_1 M}$ and $D_A$ is the odd signature operator\kommentar{already defined?}.

\subsection{The Hessian of the Chern-Simons
function}\label{HessianOfCS}

We need to consider a family of operators, for which we can study spectral flow. In particular, in the most natural definition of spectral flow, the spectrum of each operator needs to be real and discrete with 0 not an accumulation point, vary continuously along the path with some restrictions to the starting and endpoint of the path. A detailed study is bound to be long and technical, therefore we refer to \cite{kato76,booss-lesch-phillips2005} for details and proofs. In view of this it is essential, that $H_A$ is formally self-adjoint operator with respect to the $L^2$ inner product on $\Omega^1(M;\fraksu(2))$:
\[
\la * d_A f, g \ra_{L^2} = \la f, *d_A g \ra_{L^2}.
\]
At a flat connection $A$, the kernel of $H_A$ contains the infinite-dimensional subspace of tangent vectors to the $\cG$--orbit through $A$ because of the gauge invariance of $\cs$. This is the underlying reason for considering the Morse theory of the quotient space $\cB^*$.

The differential operator $\tilde H_A$ is also formally self-adjoint with respect to the $L^2$--inner product restricted to $\ker d_A^*$. Furthermore, it is an elliptic first order differential operator and we can therefore apply Theorem \ref{ThmDiffOp} to get a self-adjoint extension to the $L^2$-completion $L^2(\Omega^1(M;\fraksu(2)))$ with compact resolvent. Furthermore, the spectrum consists only of the eigenvalues of $\tilde H_A$ with no finite accumulation point.

The spaces $\ker d_A^*$ form a smooth vector bundle over $\cA^*$. One can therefore show that the eigenvalues change continuously along a path of irreducible connections. However, it is unclear how to define spectral flow for paths starting at the trivial connection. Taubes observed that the spectral flow of $\tilde H_{A_t}$ for $A_t$ irreducible equals the spectral flow of a better behaved family of operators.

Consider, for any connection $A$, the twisted de Rham sequence
\begin{equation}\label{EqDeRhamSeq}
 0 \to \Omega^0(M;\fraksu(2)) \stackrel{d_A}{\to} \Omega^1(M;\fraksu(2)) \stackrel{d_A}{\to} \Omega^2(M;\fraksu(2)) \stackrel{d_A}{\to} \Omega^3(M;\fraksu(2)) \to 0
\end{equation}
The sequence \eqref{EqDeRhamSeq} is an elliptic complex\footnote{i.e. a complex of differential operators, whose induced complex of principal symbols is exact.}, when $A$ is flat. It is not a complex at a non-flat connection, but can be made into a complex if one substitutes $d'A \coloneqq \tilde H_A \circ \proj_{\ker d_A^*}$ for the middle map. When $A$ is flat we have $d'_A = d_A$. Whether or not $A$ is flat, the {\em odd signature operator} (obtained by folding up the sequence \eqref{EqDeRhamSeq})
\begin{align*}
D_A\co \Omega^{0}(M;\fraksu(2)) \oplus \Omega^{1}(M;\fraksu(2))&\longrightarrow \Omega^{0}(M;\fraksu(2)) \oplus \Omega^{1}(M;\fraksu(2))\\
(\alpha,\beta) &\longmapsto (d_A^* \beta, d_A \alpha+ *d_A \beta)
\end{align*}
is a formally self-adjoint, elliptic, first order differential operator. One also has the formally self-adjoint operator
\[
D'_A(\alpha,\beta)\coloneqq (d_A^* \beta, d'_A \alpha+ *d_A \beta).
\]
For $A$ flat, $D_A = D'_A$. For a general irreducible connection $A$, the difference of the self-adjoint Fredholm extensions of $D_A$ and $D_A'$ to $L^2$ can be extended to a compact operator. To see this, consider the Hodge Laplacian $\Delta_A = d_A^* d_A$ on the 0-forms. It is injective for $A$ irreducible because $d_A^* d_A \alpha  = 0$ implies $\|d_A \alpha\|_{L^2} = \la d_A^*d_A\alpha, \alpha\ra_{L^2} = 0$ so that $\ker \Delta_A \subset \ker d_A = \{0\}$. By a ``boot-strapping'' argument for the bounded extensions $L^2_{k+2}\to L^2_{k}$ of $\Delta_A$ for all $\beta \in \Omega^0(M;\fraksu(2))$ and $k\in \N$ the equation $\Delta \alpha = \beta$ has a smooth solution (a property known as {\em elliptic regularity}). $\Delta^{-1}_A\co \Omega^0(M;\fraksu(2)) \to \Omega^0(M;\fraksu(2))$ extends to a bounded operator $L^2_k \to L^2_{k+2}$ for $k \in \N$.  Since $\proj_{\ker d^*_A} \beta = \beta - d_A \Delta_A^{-1} d^*_A \beta$ we can compute
\[
 (D_A - D_A') (\alpha,\beta) = (*F_A \Delta_A^{-1} d^*_A - d_A \Delta_A^{-1} *F_A + d_A \Delta_A^{-1} * F_A d_A \Delta_A^{-1} d_A^*) (\beta).
\]
which extends to a bounded operator $L^2_k\to L^2_{k+1}$ and by the Sobolev embedding theorem can be considered as a compact operator
\[
L^2\left(\Omega^{0}(M;\fraksu(2)) \oplus \Omega^{1}(M;\fraksu(2))\right) \to L^2\left(\Omega^{0}(M;\fraksu(2)) \oplus \Omega^{1}(M;\fraksu(2))\right).
\]
Since the space of compact operators is contractible and the spectral flow is a homotopy invariant rel endpoints, the spectral flow of $D_{A_t}$ and $D'_{A_t}$ agree for $A_t$ irreducible, $t\in [0,1]$, and $A_0$ and $A_1$ flat. If $A$ is an irreducible connection, then $\coker d_A = \ker d_A^*$ gives the decomposition
\[
\Omega^0(M;\fraksu(2))\oplus \Omega^1(M;\fraksu(2))\cong
\Omega^0(M;\fraksu(2)) \oplus \im d_A \oplus \ker d_A^*
\]
and allows us to define
\[ D''_A\coloneqq d_A \oplus d_A^* \oplus \tilde H_A \co \Omega^0(M;\fraksu(2)) \oplus \im d_A \oplus \ker d_A^* \to \im d_A \oplus \Omega^0(M;\fraksu(2)) \oplus \ker d_A^*.
\]
Again, the spectrum of (the $L^2$--extensions of) $D''_A$ consists only of their respective eigenvalues with no finite accumulation point by Theorem \ref{ThmDiffOp}.
\begin{exe}
 Show that $\lambda$ is an eigenvalue of $d_A$ if and only if $-\lambda$ is an eigenvalue of $d_A^*$. Therefore, $D''_A$ differs from $\tilde H_A$ only by an operator with symmetric spectrum.
\end{exe}
It follows that the spectral flow of $\tilde H_{A_t}t$ equals the spectral flow of $D_{A_t}$ for $A_t$ irreducible, $t\in [0,1]$, $A_0$ and $A_1$ flat, the main advantage of $D_{A_t}$ being, that the spectral flow of $D_{A_t}$ makes sense whether or not $A_t$ is a path of irreducible connections, i.e. even when the subspaces $\ker d_A^* \subset \Omega^1(M;\fraksu(2))$ do not vary continuously, notably for paths starting at the trivial connection.

Using the {\em Atiyah-Patodi-Singer index theorem} one can express the spectral flow of a path $A_t$ of $\SU(2)$--connections as twice the integral of the second Chern-class of $A_t$ over $M\times I$ up to correction terms which vanishes when $A_0$ is gauge-equivalent to $A_1$. By applying Exercises \ref{ExeCSform}(\ref{CWform}), \ref{ExeSU2normalization} and Stokes' Theorem we see that that on $\cB$, $\SF$ is well-defined modulo 8.
\kommentar{The last paragraph needs to be carefully checked and preferably explicitely written down.}

\subsection{Perturbations}

Just like in the original definition by Casson, the set of critical points of $\cs$ may be complicated and it is often necessary to perturb the $\cs$ to make it nice. We want to replace $\cs$ by $\cs + h$ where the function $h$ satisfies the following basic requirements:
\begin{enumerate}
 \item The perturbation $h \co \cB \to \R$ should be smooth.
 \item The operator defined by the hessian of $\cs + h$ at $A$ should be a compact perturbation of $\tilde H_A$.
 \item The family of admissible $h$ should be large enough so that one can prove various general position results about $\cs + h$ on the strata and their normal bundles of $\cB$.
\end{enumerate}

We will be content with defining the perturbations without proving the above requirements. First, fix a collection of smooth embeddings $\gamma_i \co D^2 \times S^1 \to M$, $i=1,\ldots, N$. Taubes requires that the embeddings have disjoint images, however we will assume that  $\gamma_i(x,1) = \gamma(x)$ for all $i$ and $x \in D^2$ as it is done \cite{floer88}.

Denote by $\cP$ the set of $C^r$ functions $f\co \SU(2)^N \to \R$ invariant under the conjugation action of $\SU(2)$ (for some fixed large $r$). Let $P|_{D^2}$ denote the restriction of the principal bundle $P$ to the disc $D^2 = \gamma_i(D^2\times \{1\})$. Given a connection $A \in \cA$, consider the map $\Hol_A \co P|_{D^2} \to \SU(2)^N$ defined by $\Hol_A(x) = (\hol_A(\gamma^x_1,x),\ldots,\hol_A(\gamma^x_N,x))$ where $\gamma^x_i (t)= \gamma (x,t)$ and $t\in [0,2\pi]$ parametrizes $S^1$. Then for $f\in \cP$, $f \circ \Hol$ descends to a well-defined function on $\cB \times D^2$. Fix a smooth cut-off function $\eta$ on $D^2$ which vanishes near the boundary. The space of {\em admissible perturbations} is defined to be the Fr\'echet space of functions of the form
\begin{align*}
 h_f \co \cA &\to \R\\
A &\mapsto \int_{D^2} f(\Hol_A(x))\eta(x) \, d^2x.
\end{align*}
Just like in \ref{EqDeRhamSeq} the sequence
\[ 0 \to \Omega^0(M;\fraksu(2)) \stackrel{d_A}{\to} \Omega^1(M;\fraksu(2)) \stackrel{d_{A,f}}{\to} \Omega^2(M;\fraksu(2)) \stackrel{d_A}{\to} \Omega^3(M;\fraksu(2)) \to 0
\]
is a complex at an $f$-perturbed flat connection and can be folded up to a self-adjoint operator, the {\em perturbed twisted odd signature operator}
\begin{align*}
D_{A,f}\co \Omega^{0}(M;\fraksu(2)) \oplus \Omega^{1}(M;\fraksu(2))&\longrightarrow \Omega^{0}(M;\fraksu(2)) \oplus \Omega^{1}(M;\fraksu(2))\\
(\alpha,\beta) &\longmapsto (d_A^* \beta, d_{A,f} \alpha+ *d_A \beta).
\end{align*}
It can be shown that $D_{A,f}$ is a compact perturbation of $D_A$ (see \cite[Lemma 2.3]{himpel-kirk-lesch2004}).

\begin{exe} Show the following:
 \begin{enumerate}
  \item $d_{A,f}$ is independent of the metric.
  \item $\ker(D_{A,f})$ is independend of the metric.
 \end{enumerate}
\end{exe}
Let $A_i$ be an $f_i$--perturbed flat connection, $i=0,1$. Since the space of connections as well as the space of perturbations are contractible and the spectral flow is a homotopy invariant, $\SF(D_{A_t,f_t})$ is independent of the choice of paths $A_t$ and $f_t$. By the above exercise $\SF(D_{A_t,f_t})$ is also independent of the Riemannian metric.

Taubes \cite{taubes90} proves that for a generic perturbation, his invariant equals Casson invariant for homology 3--spheres. Note that this also shows the independence of the perturbation. For a new proof (up to an overall sign) see \cite{himpel-kirk-lesch2004}.

\section{Classical Chern-Simons theory}

\subsection{The Chern-Simons line bundle}

Recall that the behaviour of the Chern-Simons function $\cs_s\co \cA_P \to \R$ for a section $s \co X \to P$ of a principal $G$--bundle $P \to X$ under gauge transformations \eqref{EqCSGauge} only depends on the restriction of the gauge transformation to the boundary and the fact that
\[
 \cs_{\Phi\circ s}(A) = \cs_s(\Phi^* A) \quad \text{for } A \in \cA_P, \Phi \in \cG_P.
\]
It can easily be seen that any two sections are related via a gauge transformation, which implies that $\cs_s$ only depends on the restriction of $s$ to the boundary. We will use this dependence to define a principal $U(1)$--bundle over $\cA_{P|_{\partial X}}$ such that
\begin{equation}\label{EqVectorInLinebundle}
\CS_X(A) \in \cL_{A|_{\partial X}}
\end{equation}
for $\CS_X(A)(s) \coloneqq \exp(2\pi i \cs_s(A))$\footnote{Freed \cite{freed95} formally uses the expression $\exp(2\pi i \cs_X(A))$ instead of $\CS_X(A)$. Despite its advantage of having a suggestive meaning, it is a bit clumsy and can possibly be misunderstood.}, which is essential for defining a {\em local Lagrangian field theory}\footnote{To a closed oriented 2--manifold we assign the set of possible boundary values of fields, the space of connections on $\Sigma$. To a compact oriented 3--manifold we assign a {\em space of fields}, the space of connections on $X$ with these boundary values. The dynamics are determined by a local Lagrangian, in our case the Chern-Simons form.}. The fact, that all principal $G$--bundles $P \to X$ are trivializable and sections correspond to trivializations of $P$, allows us to suppress the reference to $P$ in \eqref{EqVectorInLinebundle}.

Let us therefore consider a principal $G$--bundle $Q \to \Sigma$ over a closed surface $\Sigma$. We may think of a principal $U(1)$--bundle $\cL$ over $\cA_Q$ as the (complex) line bundle $\cL$ associated to the defining representation $U(1) \hookrightarrow \C^*=\GL(\C)$ over $\cA_Q$ known as the {\em Chern-Simons line bundle}.

 As the space of connections $\cA_Q$ is contractible, $\cL$ will be trivializable. We could therefore describe it using one single chart and even define it to be $\cA_Q \times \cL$. However, we need the trivialization \[\phi_s \co \cL_Q \to \cA_Q\times \C\] to depend (non-trivally) on the section $s\co \Sigma \to Q$ in the same way the Chern-Simons function behaves, so that Equation \eqref{EqVectorInLinebundle} is satisfied. In order for $\cL$ to be a line bundle, the transition functions $\phi_{ss'} \coloneqq \phi_{s'}\phi_{s}^{-1}$ must then satisfy the cocycle condition
\begin{equation}\label{EqCocycleCondition}
\phi_{s_3s_2} \phi_{s_2s_1} = \phi_{s_3s_1}.
\end{equation}
If we want to have a concrete description of $\cL$, we can set $\cL \coloneqq \cA_Q \times \C$ for a fixed section $s$ and require that the cocycle condition \eqref{EqCocycleCondition} is satisfied. However, setting $\cL \coloneqq \cA_Q \times \C$ for a section $s'$ will give an isomorphic bundle (via $\phi_{ss'}$), which satisfies the same cocycle condition and will therefore be equivalent for our purpose. Notice, that fixing a section $s\co \Sigma \to Q$ then also gives an isometry $\cL_a \cong \C$.

A section $s\co \Sigma \to Q$ gives identifications $s^*\co \cA_Q \to \Omega^1(\Sigma;\frakg)$ and $g^s \co \cG_Q \to C^\infty(\Sigma,G)$ determined by $\Phi\circ s(x) = s(x) \cdot g^s(\Phi)(x)$. In view of the behaviour \eqref{EqCSGauge} of the Chern-Simons function under a gauge transformation $\Phi$ we want to have for $A\in \cA_Q$
\begin{equation}
\phi_{\Phi\circ s} = c_{\Sigma}(s^*A,g^s(\Phi))\phi_s
\end{equation}
where
\begin{equation}\label{TransitionLineBundle}\textstyle
 c_{\Sigma}(a,g) \coloneqq \exp\left(2\pi i \left(\int_\Sigma \la \Ad_{g^{-1}} a \wedge g^*\theta\ra + W_\Sigma(g)\right)\right)
\end{equation}
for $a\in \cA_Q$ and $W_\Sigma$ the Wess-Zumino-Witten functional defined in Exercise \ref{ExeWZW}. The cocycle condition \eqref{EqCocycleCondition} then translates to
\[
 c_\Sigma(s_2^*A,g^{s_2}(\Phi_2))  c_\Sigma(s_1^*A,g^{s_1}(\Phi_1)) = c_\Sigma(s_1^*A,g^{s_1}(\Phi_2\circ\Phi_1))
\]
where $s_3 = \Phi_2 s_2$ and $s_2 = \Phi_1 s_1$. If we let $g_1 = g^{s_1}(\Phi_1)$, $g_2 = g^{s_2}(\Phi_2)$ and $a = s_1^*A$, then we have
\begin{align*}
s_1(x) \cdot g^{s_1}(\Phi_2\circ \Phi_1)(x) &= \Phi_2 \circ \Phi_1 \circ s_1 (x)\\
&= \Phi_1\circ s_1(x)\cdot g^{\Phi_1\circ s_1}(\Phi_2)(x)\\
&= s_1 (x) \cdot g^{s_1}(\Phi_1)(x)\cdot g^{s_2}(\Phi_2)(x)
\end{align*}
and $s_2^*A = s_1^*\Phi_1^*A = s_1^*A \cdot g^{s_1}(\Phi) = a \cdot g_1$ so that we can rewrite the cocycle condition as
\begin{equation}\label{EqCocycleCondSimple}
c_\Sigma(a\cdot g_1,g_2)c_\Sigma(a,g_1) = c_\Sigma(a,g_1g_2).
\end{equation}

\begin{prop}
 The map $c_\Sigma$ satisfies the cocycle condition \ref{EqCocycleCondSimple}.
\end{prop}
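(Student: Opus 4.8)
The plan is to verify the cocycle condition \eqref{EqCocycleCondSimple} by a direct computation, expanding both sides using the definition \eqref{TransitionLineBundle}. Writing $c_\Sigma(a,g) = \exp\left(2\pi i \left(\int_\Sigma \la \Ad_{g^{-1}} a \wedge g^*\theta\ra + W_\Sigma(g)\right)\right)$, it suffices to show that the exponents satisfy
\[
\int_\Sigma \la \Ad_{g_2^{-1}}(a\cdot g_1)\wedge g_2^*\theta\ra + W_\Sigma(g_2) + \int_\Sigma \la \Ad_{g_1^{-1}}a\wedge g_1^*\theta\ra + W_\Sigma(g_1) = \int_\Sigma \la \Ad_{(g_1g_2)^{-1}}a\wedge (g_1g_2)^*\theta\ra + W_\Sigma(g_1g_2)
\]
modulo $\Z$ (since we are inside $\exp(2\pi i \cdot)$). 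The key preliminary facts I would assemble are: (i) the transformation rule $a\cdot g_1 = \Ad_{g_1^{-1}}a + g_1^*\theta$ from Lemma \ref{LemGaugeAssociatedTransform}; (ii) the pullback behaviour of the Maurer-Cartan form, namely $(g_1g_2)^*\theta = \Ad_{g_2^{-1}}g_1^*\theta + g_2^*\theta$, which follows from Exercise \ref{ExeMaurerCartan} together with the fact that $g_1g_2 = r_{g_2}\circ (\text{pointwise }g_1)$ composed appropriately (this is the standard "product rule" for the Maurer-Cartan form); and (iii) the $\Ad$-invariance of $\la\cdot,\cdot\ra$, which lets one move $\Ad_{g_2^{-1}}$ across the pairing.

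Next I would substitute. The term $\int_\Sigma \la \Ad_{g_2^{-1}}(a\cdot g_1)\wedge g_2^*\theta\ra$ becomes, using (i),
\[
\int_\Sigma \la \Ad_{g_2^{-1}}\Ad_{g_1^{-1}}a\wedge g_2^*\theta\ra + \int_\Sigma \la \Ad_{g_2^{-1}}g_1^*\theta\wedge g_2^*\theta\ra = \int_\Sigma \la \Ad_{(g_1g_2)^{-1}}a\wedge g_2^*\theta\ra + \int_\Sigma \la \Ad_{g_2^{-1}}g_1^*\theta\wedge g_2^*\theta\ra.
\]
Meanwhile, using (ii) and $\Ad$-invariance, the right-hand side term $\int_\Sigma\la \Ad_{(g_1g_2)^{-1}}a\wedge(g_1g_2)^*\theta\ra$ splits as $\int_\Sigma\la\Ad_{(g_1g_2)^{-1}}a\wedge \Ad_{g_2^{-1}}g_1^*\theta\ra + \int_\Sigma\la\Ad_{(g_1g_2)^{-1}}a\wedge g_2^*\theta\ra$, and the first summand equals $\int_\Sigma\la\Ad_{g_1^{-1}}a\wedge g_1^*\theta\ra$ after moving $\Ad_{g_2^{-1}}$ over (again $\Ad$-invariance). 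So after cancellation the identity reduces to the purely $g$-dependent statement
\[
\int_\Sigma \la \Ad_{g_2^{-1}}g_1^*\theta\wedge g_2^*\theta\ra + W_\Sigma(g_1) + W_\Sigma(g_2) = W_\Sigma(g_1g_2) \pmod{\Z},
\]
which is exactly the Polyakov--Wiegmann formula for the Wess-Zumino-Witten functional. I would either cite this (e.g.\ Freed \cite{freed95}, Appendix A) or prove it directly: the $W_\Sigma$ are defined via bounding $3$-manifolds, and substituting $(g_1g_2)^*\theta = \Ad_{g_2^{-1}}g_1^*\theta + g_2^*\theta$ into $-\tfrac16\la\theta_{g_1g_2}\wedge[\theta_{g_1g_2}\wedge\theta_{g_1g_2}]\ra$, expanding the cube, and using the Maurer-Cartan equation $d\theta + \tfrac12[\theta\wedge\theta]=0$ together with Stokes' theorem produces precisely the cross term $\int_\Sigma\la \Ad_{g_2^{-1}}g_1^*\theta\wedge g_2^*\theta\ra$ as the boundary contribution, the bulk integrals over the bounding $3$-manifold accounting for $W_\Sigma(g_1)+W_\Sigma(g_2)$.

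The main obstacle is the last step — establishing the Polyakov--Wiegmann identity for $W_\Sigma$, since $W_\Sigma$ is only defined up to $\Z$ via a choice of bounding $3$-manifold, so one must check the cross term is genuinely a well-defined $2$-dimensional integral (it is: it involves only pullbacks of forms on $G$ by maps defined on $\Sigma$) and that the ambiguities match up. Everything else is bookkeeping with $\Ad$-invariance and the Maurer-Cartan product rule. I would present the reduction to the Polyakov--Wiegmann identity in full and then either invoke \cite{freed95} or sketch the Stokes' theorem computation, flagging the mod-$\Z$ subtleties explicitly.
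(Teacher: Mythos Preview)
Your proposal is correct and follows essentially the same approach as the paper: both expand $c_\Sigma(a\cdot g_1,g_2)\,c_\Sigma(a,g_1)$ and $c_\Sigma(a,g_1g_2)$ using the product rule $(g_1g_2)^*\theta = \Ad_{g_2^{-1}}g_1^*\theta + g_2^*\theta$, use $\Ad$-invariance of $\la\cdot,\cdot\ra$, and reduce everything to the Polyakov--Wiegmann identity for $W_\Sigma$ (which the paper records as the pointwise formula $\omega(g_1g_2)=\omega(g_1)+\omega(g_2)+d\sigma(g_1,g_2)$ with $\sigma(g_1,g_2)=\la g_1^*\theta\wedge \Ad_{g_2}g_2^*\theta\ra$, equivalent to your cross term by $\Ad$-invariance). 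The paper simply asserts this last identity, whereas you propose to derive it or cite Freed; otherwise the arguments coincide.
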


\begin{proof}
If $\omega(g) = -\frac{1}{6}\la g^*\theta\wedge [g^*\theta\wedge g^*\theta]\ra$, then \[\omega(g_1g_2) = \omega(g_1) + \omega(g_2) + d\sigma(g_1,g_2),\] where $\sigma(g_1,g_2) = \la g_1^* \theta \wedge \Ad_{g_2}g_2^*\theta\ra$.\kommentar{Notice that $(g_1g_2)^*\theta (x) = \theta(g_1(x)g_2(x)) = $.} Then we can compute
\[
\textstyle
 \exp(2\pi i W_\Sigma(g_1g_2)) = \exp\left(2\pi i \int_\Sigma \sigma(g_1,g_2)\right) \exp(2\pi i W_\Sigma (g_1)) \exp(2\pi i W_\Sigma(g_2))
\]
and
\[
 c_\Sigma(a,g_1g_2) = \exp\left(2\pi i\left(\la \Ad_{g_1^{-1}}a \wedge g_1^*\theta + \Ad_{(g_1g_2)^{-1}} a \wedge g_2^*\theta\ra + W_{\Sigma}(g_1g_2)\right)\right).
\]
Comparing this to
\begin{align*}
 c_\Sigma(a\cdot g_1, g_2) & = \exp(2\pi i \la (\Ad_{g_2^{-1}}(\Ad_{g_1^{-1}} a) + \Ad_{g_2^{-1}}(g_1^*\theta)) \wedge g_2^*\theta + W_\Sigma (g_2) )\\
\tag*{and} c_\Sigma(a,g_1) &= \exp(2\pi i \la \Ad_{g_1^*}a \wedge g_1^{-1}\theta + W_\Sigma (g_1) )
\end{align*}
completes the proof.
\end{proof}

If $u \mapsto A_u$ is a smooth family of connections varying over a smooth manifold $U$. Then it is clear from the definition of $c_\Sigma$, that the transition functions $u\mapsto c_\Sigma(s^*A_u,g^s(\Phi))$ are smooth, so that $\cL$ is a smooth vector bundle over $U$. Furthermore, we constructed it to satisfy \eqref{EqVectorInLinebundle} so that $A \mapsto \CS_X(A)$ is a section of $\cL$, which is the {\em Chern-Simons invariant for manifolds $X$ with boundary}. It is convenient to define $\CS_\emptyset(A) \coloneqq 1$ and $\cL_\emptyset \coloneqq \C$ so that for any 3--manifold $X$ with $\partial X = \emptyset$ we can write $\CS_X(A) \in \cL_{A|_{\partial X}}$.

For a fixed (connected, simply-connected, compact) Lie group $G$ we introduce the category \[\cA_X = \bigcup_{P} \cA_P\] where $\{P\}$ is the collection of all principal $G$--bundles over $X$ with morphisms being bundle maps covering the identity. Objects $A_i \in \cA_X$ are called equivalent if there is a morphism $\Phi$ with $\Phi^*A_2 = A_1$. We denote the set of equivalence classes by $\overline{\cA_X}$. Certainly $\overline{\cA_X} \cong \cA_P/\cG_P$ for a fixed principal $G$--bundle $P$, but ${\overline\cA_X}$ has the advantage of clearly being independent of $P$.

  Now it is straight-forward to see why $\cs$ is the action of a local Lagrangian field theory, i.e. that the assignments
\begin{alignat*}{3}
 a &\mapsto \cL_a, \quad &&a &\in \cA_\Sigma\\
\tag*{and} A &\mapsto \CS_X(A), \quad &&A &\in \cA_X
\end{alignat*}
for a closed oriented 2--manifold $\Sigma$ and a compact oriented 3--manifold $X$ satisfy the following four properties.
\begin{setlength}{\leftmargini}{2em}
\begin{description}
 \item[Functoriality] If $\Psi\co Q' \to Q$ is a bundle map covering an orientation preserving diffeomorphism $\psi\co \Sigma' \to \Sigma$ and $a \in \cA_Q$, a section $s'\co \Sigma' \to Q'$ gives an isometry $\cL_{\psi^*a} \cong \C$, the induced section $s = \Psi s' \psi^{-1}\co \Sigma \to Q$ gives an isometry $\cL_a \cong \C$, then there is an induced isometry
\[
\Psi^*\co \cL_a \to \cL_{\Psi^*a}
\]
given by the identity map, satisfying $\Psi_1^*\Psi_2^* = (\Psi_2\circ\Psi_1)^*$.
\begin{exe}$\Psi^*$ is independent of the choice of $s'$.
\end{exe}
If $\Phi\co P'\to P$ is a bundle map covering an orientation preserving diffeomorphism $\phi\co X' \to X$ and $A\in \cA_P$, then similarly
\[
(\Phi|_{\partial X})^*\CS_X(A) = \CS_{X'}(\Phi^*A).
\]
\item[Orientation] Consider the line bundle $\overline{\cL_{\Sigma,a}}$ determined by $-c_\Sigma$. There is a natural isometry\kommentar{Duales Linienbuendel einfuehren!}
\begin{align*}
 \cL_{-\Sigma,a} &\cong \overline{\cL_{\Sigma,a}},\\
\CS_{-X}(A) &= \overline{\CS_X(A)},
\end{align*}
since the $c_\Sigma$ and $\cs_s$ (after fixing $s\co X \to P$) change sign when the orientation of $\Sigma$ and $X$ are reversed.
\item[Additivity] If $\Sigma = \Sigma_1  \sqcup \Sigma_2$ and $a_i \in \cA_{\Sigma_i}$, then---since the integral over a disjoint union is the sm of the integrals---there is a natural isometry
\[
 \cL_{a_1 \sqcup a_2} \cong \cL_{a_1} \tensor \cL_{a_2}.
\]
If $X=X_1 \sqcup X_2$ is a disjoint union and $A_i \in \cA_{X_i}$, then
\[
 \CS_{X_1\sqcup X_2} (A_1 \sqcup A_2) = \CS_{X_1}(A_1) \tensor \CS_{X_2}(A_2).
\]
\item[Gluing] Suppose $\Sigma \hookrightarrow X$ is a closed, oriented submanifold and $X^c$ is the manifold obtained by cutting $X$ along $\Sigma$. Then $\partial X^c = \partial X \sqcup \Sigma \sqcup -\Sigma$.  For $A\in \cA_X$, $A^c \in \cA_{X^c}$ the induced connection and $a = A|_\Sigma$ the fact $\int_{X^c} = \int_X$ implies
\[
 \CS_X(A) = \tr_a(\CS_{X^c}(A^c)),
\]
where $\tr_a$ is the contraction
\[
\tr_a \co \cL_{A^c|_{\partial A^c}} \cong \cL_{A|_{\partial X}} \tensor \cL_a \tensor \overline{\cL_a} \to \cL_{A|_{\partial X}}
\]
using the Hermitian metric on $\cL_a$.
\end{description}
\end{setlength}

\kommentar{\subsection{Index Theory}}

\subsection{Classical solutions and the Hamiltonian Theory}

We have seen that for a closed connected oriented manifold $X$, the map $\cs_s\co \cA_P \to \R$ induces a map $\cs_X \co \cA_X\to \R/\Z$ with
\[
 d\cs_A(\eta) = 2 \int_X \la F_A\wedge \eta \ra,
\]
so that the critical points are precisely the flat connections. Even if $X$ has non-empty boundary, the equivalence classes $\cM_X = \cM(X,G)$\kommentar{Notation vorher veraendern.} of flat $G$--connections on $X$ are a subset of $\overline{\cA_M}$.\kommentar{As we have seen in Theorem \ref{ThmFlatConnEqualRep}, there is a bijection between $\cM_M$ and $\Hom(\pi_1(M),G)/G$ for any connected manifold $M$.}

For a compact oriented 3--manifold $X$ with boundary we consider the category\kommentar{Kategorie frueher einfuehren...}
\begin{align*}
 \cA_X(a)  & = \{ A\in \cA_X \mid A|_{\partial X} = a\} \quad \text{for } a \in \cA_X
\end{align*}
with morphisms being bundle maps which are the identity over $\partial X$. Then we set
\[
 \overline{\cA_X(a)} \coloneqq \cA_X(a)/\sim,
\]
where $A,A' \in \cA_X(a)$ are equivalent if $A' = \Phi^*A$ by some morphism $\Phi$.

It is not difficult to see that for $A \in \cA_X(a)$ we still have
\[
 d\cs_A(\eta) = 2 \int_M \la F_A\wedge \eta \ra,
\]
since and any path of connections in $\cA_X(a)$ is constant along the boundary and therefore tangent vectors vanish on the boundary.
The equivalence classes of the critical points form the subcategory of equivalence classes of flat connections $\cM_X(a) \subset \overline{\cA_X(a)}$ which restrict to $a$ on the boundary, which is certainly empty if $a$ is not flat. In order to relate $\cM_X(a)$ to $\cM_X$ we need to divide out by the symmetries/morphisms on the boundary. One can formulate this in terms of a functor $F$ from $\cA_\partial X$ to a category containing all $\cM_X(a)$, e.g. the category of algebraic varieties.\kommentar{This needs to be checked... category of semi-algebraic sets? What are the morhpisms?}

Consider $a' = \Psi^* a$ for a morphism $\Psi$ in $\cA_{\partial X}$. If $A$ is a connection on $P\to X$ with $A|_{\partial X} = a$ and $\Phi\co P' \to P$ a morphism for $\cA_{X}(a)$ with $\Psi = \Phi|_{\partial X}$, then the $\Phi$ induces a functor from $\overline{\cA_X(a')}$ to $\overline{\cA_X(a)}$ mapping $[\Phi^*A]$ to $[A]$.
\begin{exe}
Check that $\Phi$ always exists, that the functor is independent of the choice of $\Phi$. Define the functor on morphisms of $\overline{\cA_X(a)}$.
\end{exe}
The functor $\overline{\cA_X(a')} \to \overline{\cA_X(a)}$ maps $\cM_X(a')$ isomorphically to $\cM_X(a)$.
Therefore, there is a functor from $\cA_{\partial X}$ to the category of algebraic varieties sending $a$ to $\cM_{X}(a)$ and morphisms to isomorphisms. $\cM_X$ is simply the space of equivariant sections of this functor. Alternatively,
\[
\cM_X = \bigcup_{a} \cM_X(a)/\sim,
\]
where $A \in \cM_X(a)$ and $A'= \cM_X(a')$ are equivalent, if $A' = \Phi^*(A)$ for a morphism $\Phi$ such that $a' = (\Phi|_{\partial X})^* a$.
The restriction to $\partial X$ gives the diagram
\[
\begin{diagram}
 \node{\cM_X} \arrow{e,J} \arrow{s,l}{r_X} \node{\overline{\cA_X}}\arrow{s,r}{r_X}\\
\node{\cM_{\partial X}} \arrow{e,J} \node{\overline{\cA_{\partial X}}}
\end{diagram}
\]

We would like to construct a Hamiltonian theory consisting of assignments
\begin{align*}
 \Sigma &\mapsto \cM_\Sigma\\
 X & \mapsto (r_X \co \cM_X \to \cM_{\partial X}).
\end{align*}
It turns out that (a subset of) $\cM_\Sigma$ is a symplectic manifold, namely the sympletic quotient corresponding to the curvature as the moment map on the space of connections over $\Sigma$ with the group of gauge transformations acting freely. More precisely, we we will have
\begin{align*}
 \Sigma &\mapsto \cL'_\Sigma\\
 X & \mapsto (\CS\co \cM_X \to r^*_X\cL'_{\partial X}),
\end{align*}
where $\cL' = \cL|_{\cM_\Sigma}$.

\begin{prop}
 Let $\Sigma$ be a closed oriented 2--manifold and $Q\to \Sigma$ a principal $G$--bundle. Then the Chern-Simons action defines a unitary connection $B$ on the Hermitian line bundle $\cL_Q \to \cA_Q$ and under the assumption that $\la\cdot,\cdot\ra$ is nondegenerate
\[
 \omega(\eta_1,\eta_2) \coloneqq - 2\int_\Sigma \la\eta_1\wedge\eta_2\ra = \tfrac{i}{2\pi}F_B (\eta_1,\eta_2)\quad \text{for } \eta_1,\eta_2 \in T_a \cA_Q = \Omega^1(\Sigma;\frakg).
\]
is a symplectic form. The action of $\cG_Q$ on $\cA_Q$ lifts to $\cL_Q$, and the lifted action preserves the metric and connection. The induced moment map is
\[
 \mu_\xi(a) = 2 \int_\Sigma \la F_a \wedge \xi\ra,
\]
where $\xi \in T_{\Id} \cG \cong \Omega^0_\Sigma(\frakg)$ and $a \in \cA_Q \cong \Omega^1(\Sigma;\frakg)$ using a section $s \co \Sigma \to Q$. $\cG_Q$ acts freely on the space $\cA_Q^*$ of irreducible connections, so that the moduli space $\cM^*_Q$ of irreducible flat connections is the symplectic or Marsden-Weinstein quotient $\cA^*_Q//\cG_Q$ and therefore a symplectic manifold. There is an induced line bundle $\overline{\cL_Q} \to \cM^*_Q$ with metric and connection $\overline{B} = B|_{\cM^*_Q}$, and $\tfrac{i}{2\pi} F_{\overline{B}}$ is the symplectic form on $\cM^*_Q$.
\end{prop}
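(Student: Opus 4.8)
The plan is to unpack each assertion in turn, leaning on the cocycle property of $c_\Sigma$ already established and on Exercise \ref{ExeCSform}, especially the Chern--Weil identity $d\alpha(A) = \la F_A\wedge F_A\ra$. First I would construct the connection $B$ on $\cL_Q \to \cA_Q$. Since $\cA_Q$ is affine on $\Omega^1(\Sigma;\frakg)$, a connection is a choice of $\frakg$--valued (here $i\R$--valued, after using the Hermitian structure) $1$--form on $\cA_Q$ with values in $\operatorname{End}(\cL_Q)=\C$; in the trivialization $\phi_s$ attached to a section $s\co\Sigma\to Q$ I would declare the connection $1$--form at $a\in\cA_Q$ to be $\vartheta^s_a(\eta) = 2\pi i \int_\Sigma \la a\wedge\eta\ra$ for $\eta\in T_a\cA_Q = \Omega^1(\Sigma;\frakg)$ --- this is exactly the ``boundary term'' one gets by differentiating $\cs_s$ on a $3$--manifold bounding $\Sigma$, i.e. the transgression of $\alpha$. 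To see these local $1$--forms patch to a global connection I must check that under a change of section $s\rightsquigarrow \Phi\circ s$ they transform by $\vartheta^{\Phi\circ s} = \vartheta^s + c_\Sigma^{-1}dc_\Sigma$, where $c_\Sigma$ is the transition function \eqref{TransitionLineBundle}; this is a direct computation differentiating $\int_\Sigma \la\Ad_{g^{-1}}a\wedge g^*\theta\ra + W_\Sigma(g)$ in the $\cA_Q$--direction, and the Wess--Zumino term contributes nothing since by Exercise \ref{ExeWZW} it depends only on $g$, not on $a$. Unitarity (compatibility with the Hermitian metric) is immediate because $\vartheta^s$ is purely imaginary in the trivialization where the metric is standard.

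Next, the curvature. $F_B$ is computed in a trivialization as $d\vartheta^s$ (the form $\vartheta^s\wedge\vartheta^s$ vanishes, being $\C$--valued in degree $1$ on each of two arguments that are genuine $1$--forms on the infinite-dimensional $\cA_Q$, so the wedge of the abelian connection form with itself is zero). Differentiating $\vartheta^s_a(\eta) = 2\pi i\int_\Sigma \la a\wedge\eta\ra$ with respect to $a$ in a direction $\eta_1$, applied to $\eta_2$, and antisymmetrizing gives $d\vartheta^s(\eta_1,\eta_2) = 2\pi i\left(\int_\Sigma\la\eta_1\wedge\eta_2\ra - \int_\Sigma\la\eta_2\wedge\eta_1\ra\right) = -4\pi i\int_\Sigma\la\eta_1\wedge\eta_2\ra$, using graded antisymmetry $\la\eta_2\wedge\eta_1\ra = -\la\eta_1\wedge\eta_2\ra$ from \eqref{gradedliebracket} for $1$--forms. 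Hence $\tfrac{i}{2\pi}F_B(\eta_1,\eta_2) = -2\int_\Sigma\la\eta_1\wedge\eta_2\ra = \omega(\eta_1,\eta_2)$, as claimed. That $\omega$ is a symplectic form needs: bilinearity and smoothness (clear), closedness ($\omega$ is a constant $2$--form on the affine space $\cA_Q$, so $d\omega=0$; alternatively it is $\tfrac{i}{2\pi}$ times a curvature, hence closed by the Bianchi identity), and nondegeneracy, which is where the hypothesis that $\la\cdot,\cdot\ra$ is nondegenerate enters together with Poincar\'e duality on $\Sigma$: pairing with $\int_\Sigma\la\cdot\wedge*\cdot\ra$-type arguments shows $\eta\mapsto\omega(\eta,-)$ has trivial kernel on $\Omega^1(\Sigma;\frakg)$.

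Then I would lift the $\cG_Q$--action. A gauge transformation $\Phi$ with associated map $u\co Q\to G$ acts on $\cA_Q$; using the trivialization $\phi_s$ it acts on the base by $a\mapsto a\cdot g^s(\Phi)$ and I lift it to $\cL_Q$ by multiplication by the unit-modulus factor $c_\Sigma(a,g^s(\Phi))^{-1}$ (or rather the natural factor making $\CS$ equivariant, dictated by \eqref{EqCSGauge}). The cocycle condition \eqref{EqCocycleCondSimple}, just proved, is precisely what makes this a genuine group action on $\cL_Q$; that it preserves the Hermitian metric is clear since the lift is by a phase, and that it preserves $B$ follows because $c_\Sigma(-,g)$ relates $\vartheta^s$ to $\vartheta^{\Phi\circ s}$ --- i.e. the transformation law of $\vartheta$ under change of section is exactly the transformation law under gauge transformation, so $B$ is gauge-invariant by construction. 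For the moment map: $\mu$ should satisfy $d\mu_\xi = \iota_{X_\xi}\omega$ where $X_\xi$ is the fundamental vector field of $\xi\in\Omega^0(\Sigma;\frakg)$, namely $X_\xi(a) = d_a\xi$. Then $\omega(d_a\xi,\eta) = -2\int_\Sigma\la d_a\xi\wedge\eta\ra = -2\int_\Sigma\la\xi\wedge d_a\eta\ra$ by Stokes (no boundary), and differentiating $a\mapsto 2\int_\Sigma\la F_a\wedge\xi\ra$ in the direction $\eta$ gives $2\int_\Sigma\la d_a\eta\wedge\xi\ra$, which matches up to the sign conventions; equivariance of $\mu$ follows from $\Ad$--equivariance of the curvature (Lemma \ref{LemGaugeAssociatedTransform}). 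Finally, $\cG_Q$ acts freely on $\cA_Q^*$ (this is the irreducibility characterization: stabilizer of $A$ is $\ker d_A$ on $\Omega^0$, trivial for irreducible $A$, modulo the center which one quotients by), $\mu^{-1}(0) = \cF^*_Q$ is the irreducible flat connections, and the Marsden--Weinstein quotient $\cA_Q^*/\!/\cG_Q = \cF^*_Q/\cG_Q = \cM^*_Q$ is a symplectic manifold with form $\overline\omega$, while $\overline{\cL_Q} := \cL_Q/\cG_Q\to\cM^*_Q$ inherits metric and connection $\overline B$ with $\tfrac{i}{2\pi}F_{\overline B} = \overline\omega$ by naturality of curvature under the quotient.

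The main obstacle is the analytic care needed to make ``symplectic quotient'' and ``symplectic manifold'' rigorous in the Fr\'echet (or rather Sobolev-completed Banach/Hilbert) setting: verifying that $\mu^{-1}(0)$ is a smooth submanifold, that the $\cG_Q$--action on $\cA_Q^*$ is free \emph{and proper} with slices, and that the quotient $\cM^*_Q$ is a genuine finite-dimensional smooth manifold (dimension $2(g-1)\dim\frakg$ for a genus-$g$ surface, by the twisted-cohomology computation of tangent spaces) requires the elliptic theory of Section \ref{SecB*} --- the gauge-fixing condition $d_A^*=0$, the Hodge decomposition of $\Omega^1(\Sigma;\frakg)$, and elliptic regularity. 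I would either invoke these from the standard references (Atiyah--Bott, Donaldson--Kronheimer \cite{donaldson-kronheimer90}) or restrict the statement to the formal/local level, noting that the nondegeneracy of $\omega$ on $\ker d_a^*\cap\ker d_a$ is precisely the statement that $H^1(\Sigma;\frakg_\rho)$ carries a nondegenerate pairing, which is twisted Poincar\'e duality in dimension $2$.
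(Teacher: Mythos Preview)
Your argument is correct and covers all the claims, but it diverges from the paper's proof in two places worth noting.

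For the connection $B$ and its curvature you do exactly what the paper does: define $\vartheta^s_a(\eta)=2\pi i\int_\Sigma\la a\wedge\eta\ra$, check the transformation law under change of section against the logarithmic derivative of $c_\Sigma$, and compute $F_B$ by differentiating. The paper then goes further and interprets parallel transport of $B$ along a path $a_t$ as the Chern--Simons invariant $\CS_{[0,1]\times\Sigma}(A)$ of the induced connection $A$ on the cylinder. You do not do this, and you do not need it for the bare statement, but it is the conceptual point the paper is after: the connection $B$ is \emph{produced by} the Chern--Simons action, not merely compatible with it.

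For the moment map the approaches genuinely differ. You verify $d\mu_\xi=\iota_{X_\xi}\omega$ directly via Stokes and the variation of $F_a$, which is clean and elementary. The paper instead uses the prequantization recipe: when a group action on a symplectic manifold lifts to the prequantum line bundle preserving metric and connection, the canonical moment map is $\mu_\xi(p)=\tfrac{-i}{2\pi}\,\vertical(\dot\rho(\xi)_l)$, the vertical component of the infinitesimal lifted action at a unit vector $l$. The paper computes $\dot\rho(\xi)_l$ from the derivative of $c_\Sigma(\alpha,g_t)$ and subtracts the infinitesimal parallel transport to isolate the vertical part, arriving at $2\int_\Sigma\la F_a\wedge\xi\ra$. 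Your route is shorter; the paper's route has the advantage of showing that this $\mu$ is precisely the moment map \emph{induced} by the lift (the word ``induced'' in the statement is doing work), rather than merely \emph{a} moment map satisfying the Hamiltonian condition --- your verification leaves the additive constant in $\mu_\xi$ unfixed in principle, and you would need an extra word to pin it down (e.g.\ checking $\mu_\xi(a)=0$ when $a$ is flat and $\xi$ is appropriate, or invoking equivariance more carefully).
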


\begin{proof}Fix a section $s\co \Sigma \to Q$. We have seen in the previous section that this induces a (unitary) trivialization $\phi_s \co \cL_Q \to \cA_Q \times \C$.
Define the $1$--form $B_s \in \Omega^1(\cA_Q,\R i)$

\[
 (B_s)_a (\eta) = 2\pi i \int_\Sigma s^*\la a\wedge \eta\ra, \quad a \in \cA_Q, \eta \in T_a\cA_Q = \Omega^1(\Sigma,\frakg),
\]
where $\R i$ is the Lie algebra of $U(1)$. In order for this 1--form to give a connection in the line bundle $\cL_Q \to \cA_Q$ we must check that this transforms properly under gauge transformations. We have seen that
\[
 \phi_{\Psi s} = c_\Sigma(s^*a,g^s(\Psi))\phi_s.
\]
for a gauge transformation $\Psi  \co Q \to Q$, equivalently for the corresponding sections $\sigma_s(a) = \phi_s^{-1}(a,1)$
\[
c_\Sigma(s^*a,g^s(\Psi)) \sigma_{\Psi s} = \sigma_s.
\]
Then by applying Exercise \ref{trivializationchange} to $\cL$ (thinking of it as a $U(1)$--bundle) we deduce that in order for $B_s$ to induce a connection $B$ on $\cL$ it needs to satisfy
\begin{align*}
 (B_{\Psi s})_a(\eta) &= (B_{s})_a(\eta) - \frac{d c_\Sigma(s^*a,g^s(\Psi))(\eta)}{c_\Sigma(s^*a,g^s(\Psi))}\\
&=(B_{s})_a(\eta) - 2\pi i\int_\Sigma \la \Ad_{g^{-1}}\eta \wedge g^*\theta\ra,
\end{align*}
where $d c_\Sigma(s^*a,g^s(\Psi))$ is the exterior derivative at $a$ of the map
\begin{align*}
\cA_Q &\to U(1)\\
a &\mapsto c_\Sigma(s^*a,g^s(\Psi)).
\end{align*}
We compute for $u\co P \to G$ associated to $\Psi$ and $g = g^s(\Psi) = u\circ s$ using Lemma \ref{LemGaugeAssociatedTransform} that this is indeed the case
\begin{align*}
(B_{\Psi s})_a(\eta) &= 2\pi i \int_\Sigma s^*\la \Psi^*a\wedge \Psi^*\eta\ra\\
&=2\pi i \int_\Sigma s^*\la\Ad_{u^{-1}}a\wedge \Ad_{u^{-1}}\eta\ra + 2\pi i\int_\Sigma s^*\la u^*\theta\wedge \Ad_{u^{-1}}\eta\ra\\
&=(B_s)_a(\eta) - 2\pi i\int_\Sigma \la \Ad_{g^{-1}}\eta \wedge g^*\theta\ra.
\end{align*}
\kommentar{Waere wirklich gut, das mit g_s und u etc. abzugleichen...}Therefore, $B$ is a unitary connection on $\cL_Q \to \cA_Q$.

Since the structure group is abelian we have
\begin{align*}
 \tfrac{i}{2\pi}(F_B)_a(\eta_1,\eta_2) &= \tfrac{i}{2\pi}(dB)_a(\eta_1,\eta_2) = \tfrac{i}{2\pi} \left(\eval{\frac{d}{dt}B_{a+t\eta_1}(\eta_2)}{t=0} - \eval{\frac{d}{dt}B_{a+t\eta_2}(\eta_1)}{t=0}\right)\\
& =  -2\int_\Sigma\la\eta_1,\eta_2\ra = \omega(\eta_1,\eta_2).
\end{align*}
If $\la\cdot,\cdot\ra$ is nondegenerate, then so is $\omega$. Since the curvature on a line bundle is always closed, it follows that $\omega$ is a symplectic form.

In the trivialization $\phi_s$, a lift of $a_t$, $t \in [0,1]$, to $\cL_t$ corresponds to a path $g_t \in U(1)$. The horizontal lift then satisfies
\[
 B_s(\dot a_t) = -\frac{\dot g_t}{g_t}.
\]
We have
\[
 B_s(\dot a_T) = 2\pi i \int_\Sigma s^*\la a_T\wedge \dot a_T\ra = - \frac{\eval{\frac{d}{dt}\exp\left(\pi i\int_\Sigma s^* \la a_t \wedge a_t\ra\right)}{t=T}}{\exp\left(\pi i\int_\Sigma s^* \la a_T \wedge a_T\ra\right)}.
\]
Therefore the parallel transport $\PT_s(a_t)$ along $a_t$ is given by multiplication by
\[
\exp\left(\pi i\int_\Sigma s^* \la a_1 \wedge a_1\ra\right) \cdot \exp\left(-\pi i\int_\Sigma s^* \la a_0 \wedge a_0\ra\right) = \exp\left(-2\pi i\int_0^1\left(\int_\Sigma s^* \la a_t \wedge \dot a_t\ra\right) dt\right).
\]

On the other hand, a smooth path $a_t$ in $\cA_Q$ determines a connection $A$ on the principal bundle $P = [0,1]\times Q$ over $X = [0,1]\times \Sigma$ via $A_{(t,q)}(T,X) \coloneqq (a_t)_q(X)$.
Then $A|_{\partial X} = a_1 \sqcup a_0$ is a connection on $Q\sqcup Q \to \Sigma \sqcup -\Sigma$ and $\cL_{A|_{\partial X}} \cong \overline{\cL_{a_0}} \tensor \cL_{a_1}$. Using the metric on $\cL_{a_0}$, we identify $\overline{\cL_{a_0}} \tensor \cL_{a_1}$ as the line bundle $\cL_{a_0}^* \tensor \cL_{a_1}$ or equivalently as the (linear) bundle maps $\cL_{a_0} \to \cL_{a_1}$. The Chern-Simons action of $A$ considered as a bundle map
\[
 \CS_X(A)\co \cL_{a_0} \to \cL_{a_1}
\]
has unit norm, since the corresponding element of $\overline{\cL_{a_0}} \tensor \cL_{a_1}$ has unit norm.

We will see that $\CS_X(A)$ as a bundle map is the parallel transport along $a_t$. We have
\begin{equation}\label{dAOnCylinder}
dA_{(t,q)}(T,X) = (d a_t)_q (X)- (\dot a_t)_q \wedge dt_t (T,X).
\end{equation}
Since all summands in a 3--form without a $dt$ component vanish, we have
\[
 \alpha(A) = - \la A \wedge \dot A\ra  \wedge dt.
\]
where $\dot A$ is the time derivative of $A$. Therefore $\CS_X(A)$ is multiplication by
\[
\exp\left(-2\pi i\int_{[0,1]}\left(\int_{\Sigma} s^* \la a_t \wedge \dot a_t\ra \right) dt\right).
\]
and $\CS_X(A) = \PT(a_t)$.

By functoriality the action by $\cG_Q$ on $\cA_Q$ lifts to $\cL_Q$ and preserves $\CS$ and therefore the parallel transport\kommentar{Wurde bewiesen, dass Paralleltransport und Zusammenhang equivalent sind?}, the connection $B$ as well as $\omega = \tfrac{i}{2\pi} F_B$. We compute the moment map for this action using the lift to $\cL_Q$ using the trivialization $\phi_s$.\footnote{In general, if $L \to M$ is a Hermitian line bundle (or $U(1)$--bundle) over a symplectic manifold with a connection $A$, and $\rho\co G \to \Aut(L)$ is a $G$ action on $L$ preserving the metric and $A$, then the moment map of the quotient $G$ action on $M$ is
\[
 \mu_\xi(p) = \tfrac{-i}{2\pi}\vertical(\dot\rho(\xi)_l) \in \R, \quad \xi \in \frakg, p \in M
\]
where $l \in L_p$ is a point of unit norm,
\[
\dot\rho(\xi)_l \coloneqq \eval{\tfrac{d}{dt} \rho(g_t)(l)}{t=0} \in T_lL,\quad g_0 = 1, \dot g_0 = \xi
\]
is the vector field on $L$ corresponding to $\xi \in \frakg$, and $\vertical(\cdot)$\kommentar{alternativ: $g_t = \exp(t\xi)$}
is the vertical part of a vector in $T_lL$ computed with respect to $A$. The moment map is the obstruction for the connection to descend to the quotient $L/G$.} If $\alpha = s^* a$ and $g = g_s(\Psi)$ for a gauge transformation $\Psi \co Q\to Q$, then
\[
 \phi_{\Psi s} = c_\Sigma(\alpha,g) \phi_s.
\]
and the action
\[
\rho\co \cG_Q \to \Aut(\cL_Q)
\]
for any $l\in (\cL_Q)_a$ is multiplication by $c_\Sigma(\alpha,g)$. Let $g_t$ be a path in $C^\infty(\Sigma,U(1))$ with $g_0 = 1$ and $\dot g_0 = \xi \in \Omega^0(\Sigma;\R i)$. Since
\[
g_0^*\theta = 0 \quad \text{and} \quad \eval{\tfrac{d}{dt}g_t^*\theta}{t=0} = d\xi
\]
we get by the Definition of $c_\Sigma$ in \eqref{TransitionLineBundle}
\[
\dot\rho(\xi)_l = \eval{\tfrac{d}{dt}c_\Sigma(\alpha,g_t)}{t=0} =2\pi i \int_\Sigma\la \alpha \wedge d\xi\ra \in T_l\cL_Q.
\]
On the other hand, for $f(g) =a\cdot g$ and $a_t = f(g_t)$ we have by \eqref{identifications}
\[
\dot a_0 = \eval{\tfrac{d}{dt} a g_t }{t=0} = df_1(\xi) = d_\alpha \xi.
\]
Then the infitesimal parallel transport in the direction $\xi$ is given by
\begin{align*}
\eval{\tfrac{d}{dt}\PT(a_t)(l)}{t=0} & = \eval{\frac{d}{dt}\exp\left(\pi i\int_\Sigma s^* \la a_t \wedge a_t\ra\right) \cdot \exp\left(-\pi i\int_\Sigma s^* \la a_0 \wedge a_0\ra\right)}{t=0}\\
 &= -2\pi i\int_\Sigma s^*\la a_0 \wedge \dot a_0\ra = -2\pi i\int_\Sigma \la \alpha \wedge d_\alpha\xi\ra \in T_l\cL_Q.
\end{align*}

Since the horizontal part of $\rho(\xi)_l$ equals $\eval{\tfrac{d}{dt}\PT(a_t)}{t=0}l$, we get
\[
\mu_\xi(a) = -\tfrac{i}{2\pi} \left(\dot\rho(\xi)_l - \eval{\tfrac{d}{dt}\PT(a_t)(l)}{t=0}\right)= \int_\Sigma \la \alpha \wedge (2d\xi + [\alpha \wedge \xi])\ra = 2\int_\Sigma \la F_a \wedge \xi\ra.
\]
Since the flat connections are the zeros of $\mu$ and since $\cG_Q$ acts freely on the irreducibles, the space of equivalence classes of irreducible flat connections $\cM^*_Q$ is the symplectic quotient $\cA^*_Q //\cG_Q$. The line bundle, together with its metric and connection, as well as the symplectic form, pass to a line bundle $\overline{\cL_Q} \to \cM^*_Q$ on the quotient.
\end{proof}

The above proposition can be generalized: $\cM_\Sigma$ is a stratified symplectic space (see Huebschmann \cite{huebschmann95,huebschmann95b}). The  de Rham cohomology classes of the symplectic form on $\overline{\cL_Q}$ is an integral element of $H^2(\cM_Q;\R)$, since it is the first Chern class of $\overline{\cL_Q}$. Thus, the symplectic form satisfies the integrality constraint in the geometric quantization theory.

\begin{prop}
 The image of $r_X \co \cM_X \to \cM_{\partial X}$ is a Lagrangian submanifold (on the smooth part). More precisely, the action $\CS_X$ is a flat section of the pullback bundle $r_X^* \overline{\cL_{\partial X}} \to \cM^*_{X}$, and therefore the induced symplectic form $r^*_X \omega$ vanishes.
\end{prop}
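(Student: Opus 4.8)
The plan is to show that the Chern-Simons invariant $\CS_X$, viewed as a section of the pullback line bundle $r_X^*\overline{\cL_{\partial X}}$ over $\cM^*_X$, is flat with respect to the pullback of the connection $\overline B$, and then deduce that $r_X^*\omega = \tfrac{i}{2\pi}r_X^*F_{\overline B}$ vanishes, which forces $\im(r_X)$ to be Lagrangian (isotropic of half dimension). First I would set up the computation locally: pick a section $s\co X\to P$, which restricts to a section of $Q=P|_{\partial X}$, and use the induced trivialization $\phi_s\co\cL_Q\to\cA_Q\times\C$ together with the formula $\CS_X(A)(s)=\exp(2\pi i\,\cs_s(A))$. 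Over a smooth family $A_u$ of flat connections on $X$ parametrized by a chart $U\subset\cM^*_X$, with boundary values $a_u = A_u|_{\partial X}$, the section $\CS_X$ is represented in the trivialization $\phi_s$ by the function $u\mapsto\exp(2\pi i\,\cs_s(A_u))$.

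The key computation is to differentiate $\cs_s(A_u)$ along $U$ and compare with the connection form $B_s$ of $\overline B$ along the path $u\mapsto a_u$ in $\cA_Q$. By the variational formula established earlier, $d\cs_A(\eta) = 2\int_X\la F_A\wedge\eta\ra$ when the variation $\eta$ is tangent to a path of connections with varying boundary values, plus a boundary term: more precisely, for a path $A_u$ one gets $\tfrac{d}{du}\cs_s(A_u) = 2\int_X\la F_{A_u}\wedge\dot A_u\ra + \int_{\partial X}\la a_u\wedge\dot a_u\ra$, where the interior term is the pairing coming from Exercise \ref{ExeCSform}(\ref{CWform}) and Stokes' theorem, and the boundary term is exactly $\tfrac{1}{2\pi i}(B_s)_{a_u}(\dot a_u)$ by the definition $(B_s)_a(\eta)=2\pi i\int_\Sigma s^*\la a\wedge\eta\ra$ from the previous proposition. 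Since $A_u$ is flat, $F_{A_u}=0$, so the interior term drops out and we are left with $\tfrac{d}{du}(2\pi i\,\cs_s(A_u)) = (B_s)_{a_u}(\dot a_u)$. This is precisely the statement that $\CS_X$, in the trivialization $\phi_s$, satisfies the parallel transport equation for $\overline B$ along $r_X$: the logarithmic derivative of the representing $U(1)$-valued function equals the connection form. Hence $\nabla^{r_X^*\overline B}\CS_X = 0$, i.e. $\CS_X$ is a flat section.

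The consequence is immediate: a line bundle admitting a nowhere-zero flat section has zero curvature, so $F_{r_X^*\overline B} = r_X^* F_{\overline B} = 0$, and therefore $r_X^*\omega = \tfrac{i}{2\pi}r_X^*F_{\overline B} = 0$ on the smooth part $\cM^*_X$. This says the smooth part of $\im(r_X)$ is an isotropic submanifold of $(\cM^*_{\partial X},\omega)$. To upgrade "isotropic" to "Lagrangian" I would invoke Proposition \ref{PropHalfDimension}: identifying $T_{[\rho]}\cM_{\partial X} = H^1(\partial X;\frakg_\rho)$ and $T_{[A]}\cM_X = H^1(X;\frakg_\rho)$ via the earlier theorem on tangent spaces to representation varieties, the map $(r_X)_*$ is the restriction $j^*\co H^1(X;\frakg_\rho)\to H^1(\partial X;\frakg_\rho)$, whose image has dimension exactly $\tfrac12\dim H^1(\partial X;\frakg_\rho)$ since $\partial X$ is even-dimensional and $X$ is odd-dimensional (taking $2n+1 = 3$, $n=1$). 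Combined with isotropy, this forces $\im(r_X)$ to be Lagrangian.

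The main obstacle will be making the boundary-term bookkeeping in the first variation of $\cs_s$ completely rigorous: one must carefully track that when the connection varies over $X$ (not with fixed boundary), the integration-by-parts producing $d\alpha(A)=\la F_A\wedge F_A\ra$ throws off exactly the boundary contribution $\int_{\partial X}\la a_u\wedge\dot a_u\ra$ with the correct sign and normalization, matching the $2\pi i\int_{\Sigma}s^*\la a\wedge\eta\ra$ appearing in $B_s$. A secondary subtlety is the invariance of the whole argument under the choice of section $s$: this follows because both $\CS_X$ and $B$ are genuinely defined on the bundle $\overline{\cL_{\partial X}}$ independently of $s$ (the transition functions $c_\Sigma$ and the gauge behaviour of $\cs_s$ were designed to be compatible, as verified in the cocycle proposition and Lemma \ref{LemGaugeAssociatedTransform}), so the local verification in one trivialization suffices. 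I would also remark that flatness of $\CS_X$ as a section is really the Hamiltonian-theory shadow of the functoriality/gluing axioms already established, so the computation is expected to go through cleanly once the variational formula with boundary is pinned down.
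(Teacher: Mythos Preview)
Your proposal is correct and reaches the same conclusion as the paper by the same underlying mechanism (Stokes' theorem plus the vanishing of the bulk curvature term for flat connections, then Proposition~\ref{PropHalfDimension} for the half-dimension count). The packaging differs slightly: the paper works in integrated form by building the cylinder $I\times X$, observing that for a path $a_t$ of flat connections the associated $4$--dimensional connection $A$ has $F_A=-\dot A\wedge dt$ and hence $\la F_A\wedge F_A\ra=0$, and then applies Stokes on $I\times X$ together with the identification $\PT(a_t)=\CS_{I\times\partial X}(A)$ from the preceding proposition to obtain $\CS_X(a_1)=\PT(a_t)\cdot\CS_X(a_0)$ directly. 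You instead differentiate $\cs_s(A_u)$ on $X$ and pick up the boundary term $\int_{\partial X}\la a_u\wedge\dot a_u\ra$ by Stokes on the $3$--manifold, matching it against the connection form $(B_s)_{a_u}(\dot a_u)$. Your route is the infinitesimal version of the paper's cylinder argument; it is perhaps more elementary (no $4$--manifold with corners, no prior identification of parallel transport with $\CS_{I\times\partial X}$), while the paper's version makes the r\^ole of the Chern--Weil identity $d\alpha(A)=\la F_A\wedge F_A\ra$ and the gluing behaviour of $\CS$ more transparent. The sign bookkeeping you flag as the main obstacle is exactly what the paper sidesteps by invoking that identity globally rather than differentiating term by term.
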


\begin{proof}
If $a_t$ is a path of flat connections, there is a corresponding connection $A$ over $I \times X$. The curvature of $A$ is $F_A = -\dot A \wedge dt$ by \eqref{dAOnCylinder} and therefore,
\begin{equation}\label{ChernClassZero}
\la F_A \wedge F_A\ra = 0.
\end{equation}
Hence by Exercise \ref{ExeCSform}, Stokes' theorem and $\PT(a_t) = \CS_{I\times \partial X}(A)$,
\[
\exp\left(2\pi i\int_{I\times X} \la F_A \wedge F_A\ra\right) = \overline{\PT(a_t)} \CS_X(a_1) \overline{\CS_X(a_0)}.
\]
Therefore $\CS$ is a flat section of the bundle $r_X^* \overline{\cL_{\partial X}} \to \cM^*_{X}$. In particular if $a_0 = a_1$, then the holonomy of $r^*_X\overline{\cL_{\partial X}}$ around $S^1$ is $\PT(a_t) = 1$ so that $r^*_X\overline{\cL_{\partial X}}$ is flat. Therefore, the symplectic form $r^*_X\omega$ vanishes.

Furthermore, Proposition \ref{PropHalfDimension} implies that the image of $(r_X)_*$ is in fact a Lagrangian subspace
\[
 \dim \im[H^1(X;\frakg_{\hol(A)}) \to H^1(\partial X;\frakg_{\hol{A}})] = \tfrac{1}{2}\dim H^1(\partial X ;\frakg_{\hol(A)}).\qedhere
\]
\end{proof}

\subsection{Quantization}

Notice, that $(\overline{\cL_\Sigma},\overline{B})$ are known as the {\em prequantization of $\cM^*_\Sigma$} respectively, where $\overline{\cL_\Sigma}$ is the Hermitian line bundle over $\cM_\Sigma$ with connection $B$ compatible with the metric such that $\tfrac{i}{2\pi}F_B = \omega$ is the symplectic form on $\cM_\Sigma$. (Similarly $(\cL_\Sigma,B)$ is the prequantization of $\cA_\Sigma$.) The {\em prequantum Hilbert space} consists of the $L^2$--sections $L^2(\overline{\cL_\Sigma})$ of $\overline{\cL_\Sigma}$, the gauge transformations lifted to an action on $\cL$ are the diffeomorphisms of $\cL$ preserving its structure (the fibration over $\cM_\Sigma$, $B$ and the Hermitian structure), which induces an action on $L^2(\overline{\cL_\Sigma})$. An element $Z$ in the Lie algebra of the group of gauge transformations is just a vector field on $\cM_\Sigma$ lifted to act on $\cL$. Acting on $L^2(\overline{\cL_\Sigma})$, this corresponds to a first-order differential operator $D_Z$ on $L^2(\overline{\cL_\Sigma})$. If $h \in C^\infty(\cM_\Sigma)$ and we choose $Z$ to be the flow of $h$ defined by $V_f \coloneqq \omega^{-1}(dh)$, this yields an assignment $f \mapsto D_{V_f}$.

The {\em geometric quantization} of $\cM_\Sigma$ is the assignment
of operators on (a subspace of) $L^2(\overline{\cL_\Sigma})$ to
functions in $C^\infty(\cM_\Sigma)$ satisfying a set of axioms.
Prequantization is the first step, which satisfies most of the
axioms. In order to further {\em quantize} $\cM_\Sigma$, we need to
first introduce a {\em polarization}, that is, an integrable
Lagrangian subbundle $\cP$ of the complexified tangent bundle
$T^{\C}_{\cM_\Sigma}$ of $M_\Sigma$, in order to cut down the
prequantum Hilbert space to all sections $s \in
L^2(\overline{\cL_\Sigma})$ satisfying $D_Z s= 0$ for all $Z \in
\cP$. This is the content of \cite{axelrod-dellapietra-witten91}.
See Blau \cite{blau} for more information on Geometric Quantization
in general.

\kommentar{
\chapter{Asymptotic analysis}

In asymptotic analysis we are trying to describe the limiting
behavior of a function. An asymptotic expansion of a function $f\co
\N \to \C$ is a series $\sum_k g_k$ of functions $g_k \co \N \to
\C$, whose partial sums do not necessarily converge, but such that
any initial sum satisfies
\[
f- \sum_{i=1}^{k} g_k = o(g_k),
\]
where $o(g_k)$ stands for some function $h\co \N \to \C$ such that
$\frac{h(k)}{g(k)}$ tends to 0 as $k \to \infty$.

\section{The method of stationary phase}

The stationary-phase method formally applied to Witten's Feynman
path integral for Chern-Simons theory involves a few classical
topological invariants, which are infinite-dimensional
generalizations of well-known quantities from linear algebra.

\section{Reidemeister torsion}

Reidemeister torsion is a generalization of the notion of
determinant. It was the first invariant in algebraic topology that
could distinguish homotopy equivalent spaces which are not
homeomorphic and can be used to classify lens spaces. For a detailed
introduction to torsion see
\cite{milnor66,nicolaescu2003,turaev2002}.

\subsection*{Definition of Reidemeister torsion for a CW-complex}

The notation has been adapted from \cite{freed92} and
\cite{jeffrey92}. A line bundle is a vector bundle with a fiber of
dimension 1. We are particularly concerned with the determinant line
bundle $\Lambda^d E$, where $E$ is a vector bundle of rank $d$. The
isomorphism classes of line bundles over a fixed base space $M$ form
a group under tensor product. The inverse $L^{-1}$ of a line bundle
$L$ is determined by $L\tensor L^{-1} \cong M \times \C$. Since
transition functions are given by multiplication by non-zero complex
numbers on the overlaps of the charts, the inverse line bundle can
be constructed by taking the inverses of these numbers. Then for an
element $h$ of a line bundle $L$, we define $h^{-1} \in L^{-1}$ by
$h^{-1} \tensor h = 1$ in the trivial line bundle $L \tensor
L^{-1}$.

\begin{defn}\label{DefnTorsion} Given a finite
cochain complex $(C^\bullet, d)$, then the torsion is an element
\[
\tau_{C^\bullet, d} \in (\det C^\bullet)^{-1} \tensor (\det
H^\bullet(C,d)),
\]
where
\[
\det C^\bullet = \bigotimes_{j=0}^n (\det C^j)^{(-1)^j}.
\]
It is given by
\[
\tau_{C_\bullet,d} = \bigotimes_{j=0}^n \left[(d s^{j-1} \wedge s^j
\wedge \hat h^j)^{(-1)^{j+1}} \tensor (h^j)^{(-1)^j},\right]
\]
after an arbitrary choice of \begin{itemize}
\item $s^j \in \bigwedge^{k_j} C^j$ with $d s^j \neq 0$, where $k_j$ is the rank of $d\co C^j \to C^{j+1}$,
\item $h^j \in \det H^j(C)$ non-zero and
\item a lift $\hat h^j \in \bigwedge^{l_j} C^j$ of $h^j$, where $l_j = \dim H^j(C^\bullet,d)$.
\end{itemize}
\end{defn}

{\bf Maybe I should use sign-determined torsion by Turaev, because
otherwise, everything in the future will only be up to sign.}

\subsection*{Reidemeister torsion of a manifold}

If each $C^j$ comes equipped with a volume form, then the torsion is
an element of $\det H^\bullet(C^\bullet,d).$ If $X$ is a smooth
manifold, $W$ an inner product space and $\rho \co \pi \to \GL(W)$ a
representation of $\pi=\pi_1(X)$, then we can consider the cellular
chain complex with local coefficients in $W$ twisted by $\rho$ given
by
\[C^\bullet(X,W_{\rho}) = \hom_{\Z\pi}(C_\bullet(\widetilde X),
W).\] Note that $C_\bullet(\widetilde X)$ has a natural inner
product, by which the cells are orthonormal. If furthermore $\rho$
preserves the inner product on $W$, then $C^\bullet(X,W_\rho)$
carries an induced inner product and therefore volume forms. Then
the {\em Reidemeister torsion of $X$} is given by
\[
\tau_X(W_\rho) = |\tau_{(C^\bullet(X,W_\rho),d)}| \in  \det
H^\bullet(C^\bullet,d)
\]
and is independent of the choice of singular chain complex. The use
of cochain complexes rather than chain complexes in defining
Reidemeister torsion simplifies the notation in our arguments
considerably when interpreting the torsion in terms of de Rham
cohomology twisted by a flat connection $A$. For $\hol(A)=\rho$ we
write $\tau_X(A) = \tau_X(W_\rho)$. Note that, it is possible to
lead this discussion in the context of sign-determined Reidemeister
torsion as defined in \cite{turaev2002} (see for example
\cite{dubois2006}), but it is sufficient for us to consider the
absolute value.

\begin{exa} Consider $M=S^1$ with the CW-decomposition
consisting of one 0--cell $h_0$ and one 1--cell $h_1$. We have
$\tilde M = \R$, and $\pi_1 M = \Z$ with generator $t$ acting on
$\tilde M$ by translation
\[
t^n \cdot x = x + n.
\]
With $\hat h_0 = 0 \in C_0(\tilde M)$ and $\hat h_1 = [0,1] \in
C^1(\tilde M)$ define $\hat h^i$ by $\hat h^i(\hat h_j) =
\delta_{ij}$ and extending linearly.
\end{exa}

\subsection*{The gluing formula}

Consider the inclusions $i_{U\cap V} \co U\cap V \to U\cup V$,
$i_{U} \co U\to U\cup V$ and $i_{V} \co U\to U\cup V$, an inner
product space $W$ and a representation $\rho \co \pi_1(U\cup V) \to
\GL(W)$. The long exact Mayer-Vietoris sequence
$H^\bullet_\text{MV}$ associated to the short exact sequence
\begin{equation}\label{shortMV}
C^{\bullet\bullet}\co 0 \to C^\bullet(U\cup V,W_\rho)
\stackrel{\nu^*}{\to} C^\bullet(U,W_{i_{U}\circ\rho}) \oplus
C^\bullet(V,W_{i_{V}\circ\rho}) \stackrel{\mu^*}{\to}
C^\bullet(U\cap V,W_{i_{U\cap V}\circ\rho})\to 0
\end{equation}
of complexes allows us to identify
\begin{equation}\label{detMV}
\det(H^\bullet(U,W_{i_U\circ\rho})) \tensor
\det(H^\bullet(V,W_{i_V\circ\rho}))
 = \det(H^\bullet(U\cap V,W_{i_{U\cap V}\circ\rho})) \tensor
\det(H^\bullet(U\cup V,W_\rho))
\end{equation}
by tensoring the left side with $\det(H_\text{MV}^\bullet)$. From
now on let us drop the coefficients in the notation with the
understanding that for cohomology and chains/cochains we consider
coefficients twisted by representations compatible via $i_{U\cap
V}$, $i_U$ and $i_V$.

After a choice of volume elements compatible with
\eqref{shortMV}---that is, $\omega_U\wedge \omega_V =
\nu^*(\omega_{U\cup V}) \wedge \omega'$ with $\mu^*(\omega') =
\omega_{U\cap V}$--- we have $|\tau_{C^{\bullet j}_\text{MV}}| = 1$
for every $j$ (see \cite[Lemma 1.28 {\bf or shall I cite this as
Lemma 1.18?}]{freed92}). Then---assuming the identification
\eqref{detMV}---the multiplication property \cite[Theorem
3.2]{milnor66} of Reidemeister torsion with respect to short exact
sequences immediately gives the gluing formula (see \cite[Theorem
2.16]{nicolaescu95} and \cite[Corollary 1.29]{freed92})
\begin{equation}\label{gluing}\tau_{U\cap V}\tau_{U \cup V} =
\tau_{U} \tau_{V}
\end{equation}
for a choice of compatible volume elements.

\subsection{Reidemeister torsion of mapping tori}

Let $\cM \coloneqq \cM(\Sigma)$. We would like to find a formula
like
\[
\int_{\cM(\Sigma_f)} \tau_{\Sigma_f}(A)^{\frac{1}{2}} =
\sum_{c}\int_{|\cM|_c}
\frac{|\omega_c^{d_c}|}{|\det(1-df|_{\cN_c})|^{\frac{1}{2}}},
\]%
where the normal bundle $\cN_c$ is given by
\[
\frac{T\cM |_{|\cM|_c}}{T|\cM|_c}.
\]
Notice, that the normal bundle $\cN_c$ over $|\cM|_c$ is the
cokernel of $1-d f^* \co T_{[a]} \cM \to T_{[a]}\cM$---the
linearization of $\Id - f^* \co \cM \to \cM$---for a fixed point
$[a]$ of $f$, and is therefore isomorphic to the bundle of
1--eigenspaces of $f^*\co H^1(\Sigma,d_a) \to H^1(\Sigma,d_a)$ for
$a = A|_\Sigma \in |\cM|_c$.

\kommentar{Let $(\rho,g) \in R(\Sigma_f)$, where we identify
\[
R(\Sigma_f) = \{(\rho,g) \in R(\Sigma) \times G \mid g\rho g^{-1} =
f^* \rho\}.
\]
Let $\cM_\rho$ be the subset of $\cM(\Sigma_f)$ consisting of all
conjugacy classes of pairs with first coordinate $\rho$. Then
$\cM_\rho \subset |\cM|_c$ and $T\cM_\rho$ is a sub-bundle of
$\cN_c|_{\cM_\rho}$. Let $S$ be the stabilizer of $\rho$ and $C(g)$
be the centralizer of $g$.

\begin{lem} $\cM_\rho$ may be identified with
$Sg/S$, where $S$ acts on $Sg$ by conjugation.
\end{lem}

\begin{proof} Let $ \in S$. Since $f^*(s\rho s^{-1}) = s f^*\rho
s^{-1}$ we have
\[
g^{-1} s g \rho g^{-1} s^{-1} g = g^{-1} s f^* \rho s^{-1} g =
g^{-1}f^*(s \rho s^{-1}) g = g^{-1} f^* \rho g = \rho.
\]
Then $g^{-1} s g \in S$. Since $s\in S$ is arbitrary, $g$ lies in
the normalizer $N(S)$ of $S$. Let $R_\rho$ be the subset of
$R(\Sigma_f)$ consisting of alle pairs with first coordinate $\rho$.
Any other $h\in G$ with $(\rho,h) \in R_\rho$ satisfies
\[
g^{-1} h \rho h^{-1} g = g^{-1}f^* \rho g = \rho,
\]
so that $g^{-1} h \in S$. Therefore $R_\rho$ can be identified with
$Sg$ and $\cM_\rho = R_\rho/S$. Since $g\in N(S)$, $S$ preserves the
space $Sg$ and we have $\cM_\rho = Sg / S$.
\end{proof}

\begin{rem} The space $H^0(\Sigma,d_A))$ can be identified with $S$, and $H^0(\Sigma_f,d_A)$ is isomorphic to the Lie algebra of $S \cap C(g)$, because the stabilizer of $(\rho,g)$ is $S \cap
C(g)$.
\end{rem}}

\subsection*{General mapping tori}

Consider a CW complex $M$ and an orientation preserving simplicial
homeomorphism $f\co M \to M$. The torsion for the mapping torus
$M_f$ of $f$ has been computed in \cite[Proposition 3]{fried83} (see
also \cite[Section 6.2]{felshtyn2000} and \cite[Example
2.17]{nicolaescu2003}) only when $M_f$ is an acyclic CW complex. In
this section we will give a generalization to the non-acyclic case.
Let $\mu^\bullet = \Id - f^\bullet \co C^\bullet (M) \to
C^\bullet(M)$ and $\nu \co M \to M_f$ the inclusion map. The long
exact sequences associated to the short exact sequences of (twisted)
cellular chain complexes
\[0 \to C^\bullet(M_f) \stackrel{\nu^\bullet}{\to} C^\bullet(M) \stackrel{\mu^\bullet}{\longrightarrow} C^\bullet(M)\to
0\kommentar{\quad \text{and} \quad 0 \to C_\bullet(M)
\stackrel{\mu}{\longrightarrow} C_\bullet(M) \stackrel{\nu}{\to}
C_*(M_f) \to 0}
\]
are known as Wang exact sequences. Instead of $\mu^i$ and $\nu^i$ we
will sometimes use the more familiar notation $\mu^*$ and $\nu^*$,
when the grading is clear. {\bf (Can $\mu^i$ be confused with taking
the $i$-th power?)}

Before we can compute Reidemeister torsion of a general mapping
torus, we need a few technical facts. For finite order mapping tori
the situation simplifies considerably and the result is much more
satisfying.

\begin{lem}\label{technical} Let $0 \neq h_i \in \det(\im(\nu_i))$. Then we can find
$h^\cap_i\wedge h^\oplus_i \in \det(H_i(M))$ such that
$\nu(h^\oplus_i) = h_i$ and $\mu(h^\cap_i) \wedge h^\oplus_i \neq
0$.
\end{lem}

\begin{proof} Let $h^\cap_i\wedge h^\oplus_i \in \det(H_i(M))$ such that $\nu(h^\oplus) = h_i$. If $\mu(h_i^\cap) \wedge
h_i^\oplus =0$, then let $k_i \in \Lambda(H_i(M))$ with $0\neq
\mu(k_i) \wedge h_i^\oplus \in \det (H_i(M))$. Now choose $\lambda
> 0$ small enough that for $\tilde h^\cap_i \coloneqq h_i^\cap +
\lambda k_i$ \[\tilde h^\cap_i \wedge h_i^\oplus \neq 0.\] Then we
also have \[\mu(\tilde h_i^\cap) \wedge h_i^\oplus =\lambda \mu(k_i)
\wedge h_i^\oplus \neq 0.\qedhere\]
\end{proof}

\begin{thm}\label{torsionGeneral}
Let $M_f$ be an mapping torus of a homeomorphism $f\co M \to M$,
$\dim M = n$. Then we may choose $h^i \in \Lambda(H^i(M_f))$ and
$h^i_-, h^i_+ \in \Lambda(H^i(M))$ for all $i$ with
\begin{equation}\label{hypothesis}
\begin{split}
0\neq \nu^*(h^i) \wedge h^i_+ &\in \det(H^i(M)),\\
0\neq \mu^*(h^i_+) \wedge h^i_- &\in \det(H^i(M))\\
\text{and}\quad  0\neq \delta^*(h^{i-1}_-) \wedge h^i &\in
\det(H^i(M_f)).\end{split}\end{equation} so that they satisfy
\begin{equation}\label{conditions}
\la h^i,\PD(\delta^*(h^{n-i}_-))\ra = 1 \quad \text{and} \quad h_-^i
\wedge h_+^i = \nu^*(h^{i})\wedge h^i_+,
\end{equation}
Furthermore, the Reidemeister torsion is
\[\tau(M_f) = \left(\bigotimes_{i=0}^{n+1} (\delta^*(h^{i-1}_-) \wedge h^i)^{(-1)^{i+1}}\right) \prod_{i=0}^{n} |\det(\tilde\mu^i)|^{(-1)^i}.\]
where $\tilde\mu^i$ is determined by \[\tilde \mu^i (h^i_- \wedge
h^i_+) =h^i_- \wedge \mu^*(h^i_+).\]
\end{thm}

\begin{rem}
Note, that even though the system of equations \eqref{conditions}
seems to be overdetermined, by naturality and Poincar\'e duality
half of them are equivalent to the other half.
\end{rem}

\begin{proof}  Consider the following commutative diagram
induced by Poincar\'e duality and the Wang exact sequences
\[
\dgARROWLENGTH=1em
\begin{diagram}
\node{\hspace{1.5cm}\cdots} \arrow{e}\node{H^{i}(M_f)}
\arrow{e,t}{\nu^*}\arrow{s,l}{\PD}
\node{H^{i}(M)}\arrow{e,t}{\mu^*}\arrow{s,l}{\PD}\node{H^{i}(M)}\arrow{e,t}{\delta^*}\arrow{s,l}{\PD}\node{H^{i+1}(M_f)}\arrow{e}\arrow{s,l}{\PD}\node{\cdots\hspace{1.5cm}}\\
\node{\hspace{1.5cm}\cdots} \arrow{e}\node{H_{n+1-i}(M_f)}
\arrow{e,t}{\delta} \node{H_{n-i}(M)}
\arrow{e,t}{\mu}\node{H_{n-i}(M)} \arrow{e,t}{\nu}
\node{H_{n-i}(M_f)}\arrow{e}\node{\cdots.\hspace{1.5cm}}
\end{diagram}
\]
The Wang exact sequence allows us to choose $h^i \in
\Lambda(H^i(M_f))$ and $h^i_-, h^i_+ \in \Lambda(H^i(M))$ for all
$i$ with
\begin{align*}
0\neq \nu^*(h^i) \wedge h^i_+ &\in \det(H^i(M)),\\
0\neq \mu^*(h^i_+) \wedge h^i_- &\in \det(H^i(M))\\
\text{and}\quad  0\neq \delta^*(h^{i-1}_-) \wedge h^i &\in
\det(H^i(M_f)).\end{align*} By rescaling we can assume $\la
h^i,\PD(\delta^*(h^{n-i}_-))\ra = 1$. Notice that, if $h^i$ and
$h^{n-i}_-$ satisfy this condition, so do $\lambda h^i$ and
$\frac{1}{\lambda}h^{n-i}_-$ for $\lambda\neq 0$. By Lemma
\ref{technical} and by choosing $\lambda$ appropriately we may also
assume that $h_-^i \wedge h_+^i = \nu^*(h^{i})\wedge h^i_+$. Then,
by the gluing formula for torsion \eqref{gluing}, the theorem
follows.
\end{proof}

\subsection*{Analytic torsion}

The most relevant aspect for us is the analytic definition for
torsion. Analytic torsion is an invariant defined by Ray and Singer,
which proved to be equal to Reidemeister torsion by work of Cheeger
and M\"uller.

Let $A$ be a flat connection and consider the twisted Laplace
operator
\begin{align*}
\Delta_A \co \Omega^\bullet(M;\fraksu(n)) &\to \Omega^\bullet(M;\fraksu(n))\\
\alpha & \mapsto (d_A d_A^* + d_A^* d_A) \alpha.
\end{align*}
Similarly, we can use other local coefficient systems.

We denote by $\Delta^{(\sum_{j} k_j)}$ the Laplacian restricted to
$\bigoplus_{j}\Omega^{k_j}(M;\fraksu(n))$. If the eigenvalues of
$\Delta^{(k)}_A$ are $\lambda_k$ then the zeta function $\zeta_k$ is
given by
\[
\zeta_k(s) = \sum_{\lambda_j>0}\lambda_j^{-s}
\]
for $s$ large, and this can be extended to all complex $s$ by
analytic continuation. The $\zeta$-regularized determinant of the
twisted Laplacian acting on $k$--forms is
\[
\det \Delta^{(k)}_{A} = \exp(-\zeta'_k(0)),
\]
which is formally the product of the positive eigenvalue of
$\Delta^{(k)}_A$, and is therefore a generalization of the matrix
determinant.

\begin{defn} Analytic torsion is defined as
\[
\exp\left(\sum_k(-1)^k k \zeta'_k(0)/2\right) = \prod_k
(\det\Delta_A^{(k)})^{(-1)^{k+1}k/2}.
\]
\end{defn}

Furthermore, since $D_A^2 = \Delta_A^{(0+1)}$, where $D_A$ is the
odd signature on 0-- and 1--forms, we get\[ (\det D_A)^2 = \det
\Delta_A^{(0)} \det \Delta_A^{(1)},\] where $\det D_A$ is defined
just like in the case of the Laplacian. Since $\det\Delta_A^{(k)} =
\det \Delta_A^{(3-k)}$ by Poincar\'e duality, we have
\[
\sqrt{\tau_M(A)} = (\det \Delta_A^{(0)})^{3/4}(\det
\Delta_A^{(1)})^{-1/4} = |\det D_A|^{-1/2} \det \Delta^{(0)}_A.
\]

The right term is precisely what you get from a heuristic stationary
phase argument after incorporating the Faddeev-Popov-determinant
$\det \Delta^{(0)}_A$.

\section{The $\rho$--invariant}

The $\rho$--invariant is a topological invariant for closed
$3$--manifolds and was introduced by Atiyah, Patodi and Singer
\cite{atiyah-patodi-singer75b} in their study of the signature
defect.

\subsection{The signature} Let $M$ be a closed, oriented and
connected manifold of dimension $m$. On the real cohomology group,
there exists the {\em intersection pairing}
\begin{align*}
H^p(M) \times H^{m-p}(M) \to \R\\
(a,b)\mapsto \la a \cup b ,[M]\ra
\end{align*}
where $\la \cdot,\cdot \ra$ is the Kronecker pairing, $\cup$ is the
cup product and $[M]$ denotes the fundamental class of $M$
determined by the orientation. Alternatively, the intersection
pairing is induced in de Rham cohomology by
\begin{align*}
\Omega^p(M) \times \Omega^{m-p}(M) \to \R\\
(\alpha,\beta)\mapsto \int_M  \alpha \cup \beta.
\end{align*}
By Poincar\'e duality, the above pairing is non-degenerate. In
particular, if $m$ is even, there is a non-degenerate bilinear form
\[
Q \co H^{m/2}(M,\R) \times H^{m/2}(M,\R) \to \R.
\]

If $m$ is a multiple of $4$, then the intersection form is
symmetric, for all other even $m$, the intersection form is
skew-symmetric. Recall that the signature $\Sign(Q)$ is the number
of positive eigenvalues minus the number of negative eigenvalue for
$Q$ symmetric and the number of positive imaginary eigenvalues minus
the number of negative imaginary eigenvalues for $Q$ skew-symmetric.

\begin{defn} The {\em signature} of a closed, oriented and connected
manifold of even dimension is defined as
\[
\Sign(M) = \Sign(Q).
\]
\end{defn}

\subsection*{The twisted signature}

Let $M$ be a connected manifold, not necessarily closed and
$\alpha\co \pi \to U(k)$ a unitary representation for $\pi =
\pi_1(M)$. Recall from Definition \ref{DefnTwistedHomology} that the
{\em cohomology of $M$ with local coefficients given by $\rho$} is
the homology of
\[
C^\bullet(M,\C^k_\alpha) \coloneqq \Hom_{\C[\pi]}(C_\bullet(\tilde
M),\C^k),
\]
where $\pi$ acts from the left on the universal cover $\tilde M$ of
$M$, $\C[\pi]$ is the group algebra of $\pi$ so that $C_p(\tilde M)$
and $\C^k$ are $\C[\pi]$ left modules. Similarly we can choose local
coefficients given by a representation $\alpha\co \pi \to \SU(n)$
acting on the Lie algebra $\fraksu (n)$ by conjugation.

Before we give the Definition of the $\rho$--invariant let us note
that for $(M,\alpha) = \partial (W,\beta)$ the $\rho$--invariant is
shown by \cite[Theorem 2.4]{atiyah-patodi-singer75b} to be equal to
\[
\rho_\alpha(M) = k\Sign(M) - \Sign_\alpha(M).
\]

We are really interested in spectral flow from the trivial
connection to flat connections, but it they are intimately related
by applying the Atiyah-Patodi-Singer-Index theorem to the Signature
operator over the cylinder $M \times [0,1]$. In the case of a path
of $\SU(2)$--connections $A_t$ from the trivial connection to a flat
connections $A$ we get (see \cite[Section
7]{kirk-klassen-ruberman94} for a proof)
\[
\SF(D_{A_t}) = 8 \cs_M(A) - 4\rho_A(M) - \frac{3(1+b^1(M))}{2} -
\frac{\dim(H^0(M,d_A)) + \dim(H^1(M,d_A))}{2}.
\]}

\bibliographystyle{himpel_gtart}
\bibliography{../references}

\end{document}